\providecommand{\U}[1]{\protect\rule{.1in}{.1in}}
\newtheorem{theorem}{Theorem}[section]
\newtheorem{conjecture}[theorem]{Conjecture}
\newtheorem{corollary}[theorem]{Corollary}
\newtheorem{lemma}[theorem]{Lemma}
\newtheorem{proposition}[theorem]{Proposition}
\theoremstyle{definition}
\newtheorem{definition}[theorem]{Definition}
\newtheorem{notation}[theorem]{Notation}
\newtheorem{example}[theorem]{Example}
\newtheorem{remark}[theorem]{Remark}
\numberwithin{equation}{section}
\newcommand{\abs}[1]{|#1|}
\newcommand{\al}{\alpha}
\newcommand{\Aut}{\mathrm{Aut}}
\newcommand{\AutAff}{\widehat{\mathrm{Aut}}}
\newcommand{\as}{\mathbf{a}}
\newcommand{\ba}{\overline{b}}
\newcommand{\bamin}{\overline{b}_{\mathrm{min}}}
\newcommand{\Bh}{\widehat{B}}
\newcommand{\Bhp}{L}
\newcommand{\bla}{{\overline{\la}}}
\newcommand{\bs}{\mathbf{b}}
\newcommand{\CC}{\mathcal{C}}
\newcommand{\CCH}{\mathcal{C}_h}
\newcommand{\CCHinf}{\mathcal{C}_h^\infty}
\newcommand{\CCinf}{\mathcal{C}^\infty}
\newcommand{\card}{\mathrm{card}}
\newcommand{\di}{\diamondsuit}
\newcommand{\Db}{\overline{D}}
\newcommand{\dm}{\delta^-}
\newcommand{\geh}{\mathfrak{g}}
\newcommand{\gl}{\mathfrak{gl}}
\newcommand{\Hb}{\overline{H}}
\newcommand{\heh}{\mathfrak{h}}
\newcommand{\hmax}{\widehat{\max}}
\newcommand{\Hom}{\mathrm{Hom}}
\newcommand{\hw}{\mathrm{hw}}
\newcommand{\ip}[2]{\langle#1\,,\,#2\rangle}
\newcommand{\Image}{\mathrm{Im}}
\newcommand\la{\lambda}
\newcommand{\lam}{\la^-}
\newcommand{\lamin}{\la_{\mathrm{min}}}
\newcommand\La{\Lambda}
\newcommand\lev{\text{lev}}
\newcommand{\mba}{\mathbf{a}}
\newcommand\ol{\overline}
\newcommand\ot{\otimes}
\newcommand{\Pa}{\mathcal{P}}
\newcommand{\Pvd}{\mathcal{P}^{(1,1)}}
\newcommand{\R}{{\mathbb{R}}}
\newcommand{\rk}{\mathrm{rank}}
\newcommand{\rows}{\mathrm{rows}}
\newcommand{\rowtab}{\mathrm{rowtab}}
\newcommand{\Sbox}{S_{\tableau[pby]{\\}}}
\newcommand{\spl}{\mathbb{S}}
\newcommand{\SR}{\mathrm{S}}
\newcommand{\SSbox}{\spl_{\tableau[pby]{\\}}}
\newcommand{\SSS}{\mathfrak{S}}
\newcommand{\ta}{\mathbf{\tilde{a}}}
\newcommand{\tM}{\tilde{M}}
\newcommand{\tomega}{\tilde{\omega}}
\newcommand{\tops}{\mathrm{tops}}
\newcommand{\tW}{\tilde{W}}
\newcommand\veps{\varepsilon}
\newcommand{\vn}{\varnothing}
\newcommand\vphi{\varphi}
\newcommand{\wt}{\mathrm{wt}}
\newcommand{\wtaf}{\widehat{\wt}}
\newcommand{\Xb}{\overline{X}}
\newcommand{\Z}{\mathbb{Z}}
\newcommand{\BA}{\mathbf{1}}
\newcommand{\BB}{\mathbf{2}}
\newcommand{\BC}{\mathbf{3}}
\newcommand{\BD}{\mathbf{4}}
\newcommand{\ichi}{\ol{1}}
\newcommand{\nii}{\ol{2}}
\newcommand{\kuu}{{}}
\newcommand{\yon}{\ol{4}}
\newcommand{\go}{\ol{5}}
\newcommand{\roku}{\ol{6}}
\newcommand{\nana}{\ol{7}}
\newcommand{\hachi}{\ol{8}}
\newcommand{\kyu}{\ol{9}}
\begin{document}

\title{Affine crystals, one-dimensional sums and parabolic Lusztig $q$-analogues}

\author{C\'edric Lecouvey}
\address{LMPT, Universit\'e Fran\c{c}ois Rabelais, UFR Sciences, 
Parc Grammont, 37200 Tours, France}

\author{Masato Okado}
\address{Department of Mathematical Science, Graduate School of Engineering Science, 
Osaka University, Toyonaka, Osaka 560-8531, Japan}

\author{Mark Shimozono}
\address{Department of Mathematics,
Virginia Tech,
Blacksburg, VA 24061-0123 USA}

\date{}

\maketitle

\begin{abstract}
This paper is concerned with one-dimensional sums in classical affine types.
We prove a conjecture of \cite{SZ} by showing they all decompose in terms of
one-dimensional sums related to affine type $A$ provided the rank of the root
system considered is sufficiently large. As a consequence, any one-dimensional sum
associated to a classical affine root system with sufficiently large rank can
be regarded as a parabolic Lusztig $q$-analogue.\
\end{abstract}

\tableofcontents

\section{Introduction}

\label{S:intro}
Consider $\lambda $ and $\mu $ two partitions with at most $n$
parts. Schur duality asserts 
that the Kostka number $K_{\lambda ,\mu }$ counts both the dimension of the
weight space $\mu $ in the irreducible $\mathfrak{sl}_n$ representation $V(\lambda )$ of
highest weight $\lambda $ and the multiplicity of $V(\lambda )$ in the
tensor product $S^{\mu _{1}}(V)\otimes \cdots \otimes S^{\mu _{n}}(V)$ of
the symmetric powers of the vector representation. Using the Weyl character formula,
the Kostka numbers may be expressed in terms of the Kostant partition
function. The $q$-deformation of this partition function gives rise to the
Kostka polynomials.
The Kostka polynomials are Kazhdan-Lusztig polynomials for the affine Weyl group and thus their coefficients are
nonnegative integers, being dimensions of stalks of intersection cohomology sheaves
on Schubert varieties in the affine flag variety. They
also admit a nice combinatorial description in terms of the Lascoux-Sch\"{u}tzenberger 
charge statistic on semistandard tableaux. 

The Kostka polynomials also appear in the representation theory of the
quantum affine algebra $U_{q}(\widehat{\mathfrak{sl}}_{n})$. This was
established by Nakayashiki and Yamada \cite{NY} by relating the charge
statistic to the energy function, a fundamental grading defined on tensor
products of Kashiwara crystals associated to Kirillov-Reshetikhin modules.\
Their result can be regarded as a quantum analogue of Schur duality. It
is also worth mentioning that the energy function naturally appears in
solvable lattice models in statistical physics. 

The aim of this paper is to establish a generalization 
of the connection observed in \cite{NY}. On the weight multiplicity side,
we consider parabolic Lusztig $q$-analogues. These are polynomials 
which quantize the branching coefficients given by the restriction
of an irreducible representation of a simple Lie algebra $\mathfrak{g}_0$
to a Levi subalgebra. In the case that the Levi is the Cartan subalgebra,
these are Lusztig's $q$-analogue of weight multiplicity, and in the further special
case that $\mathfrak{g}_0=\mathfrak{sl}_n$ they are Kostka polynomials.
We consider \textit{stable} parabolic Lusztig $q$-analogues, which are defined when
$\mathfrak{g}_0$ is of classical type and the weights $\lambda$ and $\mu$
do not involve spin weights and stay away from a certain hyperplane.
The stable parabolic Lusztig $q$-analogues have a well-defined large rank limit.

On the other side we consider tensor products of Kirillov-Reshetikhin modules,
which afford the action of the quantum enveloping algebra associated to an affine 
algebra $\mathfrak{g}$. Their restriction
to the canonical simple Lie subalgebra $\mathfrak{g}_0$ has a natural grading
by the energy function, and taking isotypic components, we obtain polynomials
called one-dimensional sums. A stable one-dimensional sum is one 
in which $\mathfrak{g}_0$ is of classical type and the tensor factors
do not involve spin weights. They are so named because they are stable in the large rank limit.

Our key result is Theorem \ref{Th_dec_X} (previously conjectured
in \cite{SZ}) giving the decomposition of the one-dimensional sums for any
classical affine type in terms of those of affine type $A$. It then suffices
to observe that this decomposition is the same as the decomposition of the
stable parabolic Lusztig $q$-analogues obtained in \cite{Lec}.

\bigskip

Let us give a more detailed description of our results. For an affine Lie 
algebra $\geh$ with classical subalgebra $\geh_0$, there is a
finite-dimensional $U'_q(\geh)$-module with crystal graph
given by the tensor product of Kirillov-Reshetikhin (KR) crystals
\begin{align} \label{E:defB}
  B = B^{r_1,s_1} \otimes \dotsm \otimes B^{r_p,s_p}.
\end{align}
A KR crystal $B^{r,s}$ is indexed by a pair $(r,s)\in I_0 \times \Z_{>0}$ where
$I = \{0,1,\dotsc,n\}$ is the affine Dynkin node set and $I_j=I\setminus\{j\}$ for $j\in I$.
The crystal graph $B$ has a $I_0$-equivariant grading by the coenergy function $\Db_B:B\to\Z_{\ge0}$.
Given a dominant $\geh_0$-weight $\la$, the one-dimensional (1-d) sum $\Xb_{\la,B}(q)$ is the
graded multiplicity of the irreducible highest weight $I_0$-crystal $B(\la)$ in $B$.

Throughout the paper we shall assume that $\geh$ belongs to one of the nonexceptional families of affine root systems.
Fix the sequence $((r_1,s_1),\dotsc,(r_p,s_p))$ representing $B$
and the sequence $(d_1,d_2,\dotsc,d_n)$ such that $\la = \sum_{i\in I_0} d_i \omega_i$ where $\omega_i$
is the $i$-th fundamental weight of $\geh_0$. Throughout the paper $r\in I_0$ is called a spin node
if $r=n$ when $\geh_0=B_n,C_n$ and $r=n-1,n$ when $\geh_0=D_n$. 
In order to take a large rank limit of the 1-d sum $\Xb_{\la,B}(q)$,
we assume that no spin weights appear: none of the $r_i$ are spin nodes, and $d_i=0$ if $i\in I_0$ is a spin node.
A ``spinless" sequence representing $B$ makes sense for large rank,
and the sequence $(d_1,d_2,\dotsc)$ for $\la$ also makes sense provided that we append zeros as necessary.
We associate with the dominant $\geh_0$-weight $\la$ the partition (also denoted $\la$)
that has $d_i$ columns of height $i$ for all $i$.

It was observed in \cite{SZ} that the 1-d sum has a large rank limit
which we shall call a stable 1-d sum, and moreover, that they fall into only four
distinct kinds, which are labeled by the four partitions with at most two cells:
$\varnothing$ (the empty partition), $(1)$, $(2)$, and $(1,1)$.
We write $\Xb^\di_{\la,B}(q)$ for the
stable 1-d sum of kind $\di\in \{\varnothing, (1), (2), (1,1) \}$.

We now describe the kind $\di$ associated to each nonexceptional affine family.
Let $\Pa^\di$ denote the set of partitions whose diagrams can be tiled
(without rotation) by the partition diagram of $\di$. Then $\Pa^\vn$ is the singleton consisting
of the empty partition, $\Pa^{(1)}$ is the set of all partitions, $\Pa^{(2)}$ is the set of partitions
with even row lengths, and $\Pa^{(1,1)}$ is the set of partitions with even column lengths.
Let $\Pa_n$ denote the set of partitions with at most $n$ parts.
Write $\Pa^\di_n = \Pa^\di \cap \Pa_n$. For $(r,s)\in I_0 \times \Z_{>0}$ such that $n$ is large
with respect to $r$ ($n\ge r+2$ suffices) define $\Pa^\di_n(r,s)$ to be the set of partitions $\la\in \Pa_n$
such that the 180-degree rotation of the complement of $\la$ in the $r\times s$ rectangular partition $(s^r)$,
is in the set $\Pa^\di$. We say the affine family of $\geh$ is of kind $\di$ if the KR crystal $B^{r,s}$
(for $n$ large with respect to $r$) has the $I_0$-decomposition
\begin{align}\label{E:KRI0decomp}
  B^{r,s} \cong \bigoplus_{\la\in\Pa^\di_n(r,s)} B(\la)
\end{align}
where $B(\la)$ is the irreducible $U_q(\geh_0)$-crystal of highest weight $\la$.
All nonexceptional affine families are of one of the four kinds \cite{SZ}, and note that
the kind depends precisely on the attachment of the affine Dynkin node $0$ to the rest of the
Dynkin diagram. We use the notation of \cite{Kac}.
\begin{align}\label{E:kinds}
\begin{array}{|c||c|} \hline
\di &\text{$\geh$ of kind $\di$} \\ \hline \hline
\vn & A_n^{(1)} \\ \hline
(1) & D_{n+1}^{(2)}, A_{2n}^{(2)} \\ \hline
(2) & C_n^{(1)} \\ \hline
(1,1) & B_n^{(1)}, A_{2n-1}^{(2)}, D_n^{(1)} \\ \hline
\end{array}
\end{align}
The main purpose of this paper is to establish a conjecture of \cite{SZ}. To state this conjecture, we require some notation.
The partition $\la=(\la_1,\la_2,\dotsc,\la_n)$ (with $\la_{n-1}=\la_n=0$ to avoid spin weights) encodes the
dominant $\geh_0$-weight $\sum_i (\la_i-\la_{i+1})\omega_i$.
For $\la\in\Z^n$ write $|\la|=\sum_i \la_i$ and $|B|:=\sum_i r_i s_i$ for $B$ as above.
Finally, $c_{\la\delta}^\nu$ is the Littlewood-Richardson coefficient \cite{Mac}.

\begin{conjecture} \label{CJ:X=K} \cite{SZ} For $\di\in\{(1),(2),(1,1)\}$
\begin{align}\label{E:X=K}
  \Xb_{\la,B}^{\di}(q) = q^{\frac{\abs{B}-\abs{\la}}{|\di|}} \sum_{\nu \in \Pa_n} \sum_{\delta \in \Pa_n^\di }
c_{\delta\la}^\nu \,\Xb_{\nu,B}^{\vn}(q^{\frac{2}{\left| \diamondsuit \right| }}).
\end{align}
\end{conjecture}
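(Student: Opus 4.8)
The plan is to separate \eqref{E:X=K} into its ungraded (character) content and its graded (energy) content, and to organize the argument as an induction on the number $p$ of tensor factors. First note that a term on the right of \eqref{E:X=K} is nonzero only if $\abs{\nu}=\abs{B}$ (the type $A$ multiplicity $\Xb^{\vn}_{\nu,B}$ vanishes otherwise) and $\abs{\nu}=\abs{\la}+\abs{\delta}$ (from $c^\nu_{\delta\la}$), so that $\abs{B}-\abs{\la}=\abs{\delta}$ is constant over the whole sum. Comparing exponents of $q$, the theorem is then equivalent to the existence of a bijection between the $\geh_0$-highest weight elements of $B$ of weight $\la$ (computed for kind $\di$) and pairs consisting of a $\geh_0$-highest weight element $\hat b$ of the type $A$ tensor product $\Bh$ together with a Littlewood--Richardson filling governed by some $\delta\in\Pa^\di$, under which the coenergy functions obey
\begin{align}\label{E:plan-energy}
  \abs{\di}\,\Db_B(b) \;=\; \bigl(\abs{B}-\abs{\la}\bigr)\;+\;2\,\Db_{\Bh}(\hat b).
\end{align}
Thus the whole statement reduces to two tasks: producing the bijection (an ungraded branching fact) and verifying \eqref{E:plan-energy} (the graded refinement).

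First I would settle \eqref{E:X=K} at $q=1$. By \eqref{E:KRI0decomp} each factor $B^{r,s}$ of kind $\di$ has $\geh_0$-character $\sum_{\mu\in\Pa^\di_n(r,s)}s_\mu$. The single-factor identity to prove is $\sum_{\mu\in\Pa^\di_n(r,s)}s_\mu=\sum_{\delta\in\Pa^\di}s_{(s^r)/\delta}$, which follows from the $180^\circ$-rotation symmetry of skew Schur functions: the rotation of $(s^r)/\delta$ is the straight shape complementary to $\delta$ in $(s^r)$, so $s_{(s^r)/\delta}=s_{\delta^c}$, and summing over $\delta\in\Pa^\di$ reproduces exactly the partitions whose complement lies in $\Pa^\di$, that is $\Pa^\di_n(r,s)$. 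For the full tensor product the coefficient $\sum_{\delta\in\Pa^\di}c^\nu_{\delta\la}$ is precisely the multiplicity governing the passage between $GL$-type and $G_\di$-type tensor-product multiplicities (Littlewood's branching rules, with $\Pa^\di$ selecting even rows, even columns, or all partitions according to $\di$): the $\geh_0$-isotypic decomposition of $B$ for kind $\di$ is obtained from the type $A$ decomposition of $\Bh$ by applying this branching one tensor slot at a time. I would phrase this as a stable crystal statement, so that it yields the explicit bijection $b\mapsto(\hat b,\delta)$ rather than a mere numerical identity.

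The crux is the graded refinement \eqref{E:plan-energy}. Here I would realize the kind $\di$ KR tensor product $B$ inside a type $A^{(1)}$ KR tensor product $\Bh$ by a virtual crystal embedding determined by the folding that attaches the affine node $0$ in the manner recorded in \eqref{E:kinds}. Under such an embedding the coenergy is multiplied by a fixed rational factor; matching it against \eqref{E:plan-energy} forces this factor to be $2/\abs{\di}$, which is the source of the substitution $q\mapsto q^{2/\abs{\di}}$. The additive constant $(\abs{B}-\abs{\la})/\abs{\di}=\abs{\delta}/\abs{\di}$ must then be produced by the intrinsic energy carried by the ``extra'' components of the factors $B^{r,s}$ lying beyond the top type $A$ rectangle $(s^r)$, namely the summands in \eqref{E:KRI0decomp} indexed by a nonempty complementary $\delta$; checking this amounts to evaluating the intrinsic energy $\Db_{B^{r,s}}$ on each such summand and the local energies $H$ on the relevant combinatorial $R$-matrices. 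I expect this bookkeeping, disentangling the intrinsic energies from the local energies and proving that their total transforms exactly as in \eqref{E:plan-energy} under the virtualization/branching correspondence, to be the main obstacle, since the coenergy is assembled from type-dependent $R$-matrices and only its total value is canonical.

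Finally I would assemble the result by induction on $p$. Writing $B=B'\otimes B^{r_p,s_p}$ and using the associativity of the combinatorial $R$-matrix together with the additive behavior of $\Db_B$ upon peeling off a factor, the inductive step reduces \eqref{E:plan-energy} for $B$ to the two-factor energy comparison plus the inductive hypothesis for $B'$; the branching bijection of the second paragraph is compatible with this factorization because the Littlewood coefficients accrue one slot at a time, and the type $A$ side $\Xb^{\vn}_{\nu,B}$ is already well understood via the charge–energy identification. Passing to the large-rank (stable) limit throughout is legitimate because, by hypothesis, no spin weights occur and $\la,\nu$ remain in the stable range, so that \eqref{E:KRI0decomp} and the branching rules hold with no correction terms.
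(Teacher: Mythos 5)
Your opening reduction is sound: the identity is indeed equivalent to a weight-preserving bijection $b\mapsto(\hat b,\delta)$ together with the graded relation $\abs{\di}\,\Db_B(b)=(\abs{B}-\abs{\la})+2\,\Db(\hat b)$, and this is exactly what the paper ends up establishing (combine Theorems \ref{Th_SR}, \ref{Th_corespondence} and \ref{T:maxD=A}); likewise your $q=1$ step is Littlewood's formula \eqref{E:Lit} together with Proposition \ref{prop_qdual}. The gap is in your mechanism for the graded step. There is no ``virtual crystal embedding determined by the folding'' of a kind-$\di$ tensor product into a type $A^{(1)}$ tensor product of the required form: foldings run the other way, embedding non-simply-laced affine algebras into simply-laced ones of \emph{larger} rank ($C_n^{(1)},A_{2n}^{(2)},D_{n+1}^{(2)}$ into $A_{2n-1}^{(1)}$, while $B_n^{(1)},A_{2n-1}^{(2)}$ embed into $D_{n+1}^{(1)}$, whose target is type $D$, not type $A$). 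In particular, for $\di=(1,1)$ the representative $D_n^{(1)}$ is itself simply laced --- it is a source of foldings, not an image --- so your construction cannot even begin there. Moreover, even where a type $A$ virtualization exists (kinds $(1)$ and $(2)$), it doubles the rank and changes both the rectangles and the weights, producing ``virtual $X$'' identities relating $\Xb^\di_{\la,B}$ to type $A_{2n-1}^{(1)}$ sums for \emph{different} data --- not \eqref{E:X=K}, whose right-hand side involves type $A$ sums for the \emph{same} rectangles $R$ and for partitions $\nu$ in the same stable rank range, weighted by LR coefficients. The decomposition in \eqref{E:X=K} is of Littlewood branching type (restriction $GL\downarrow O/Sp$ in the stable range), and no Dynkin diagram folding induces it; consequently the factor $2/\abs{\di}$ cannot be extracted as a virtualization scaling constant.

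What replaces your folding argument in the paper is the reversing automorphism: the chosen representatives $\geh^\di$ admit the Dynkin automorphism $\sigma(i)=n-i$, which induces a crystal automorphism $\sigma$ of $B$ (Theorem \ref{T:sigma}). One then proves that $\sigma$ maps every $I_0$-highest weight vertex into the subcrystal $\max(B)$ of classical components of maximal size (Theorem \ref{Th_corespondence}), that the coenergy shifts under $\sigma$ by exactly $(\abs{B}-\abs{\la})/\abs{\di}$ (Theorem \ref{Th_SR}), and that on $\max(B)$ the grading coincides with the type $A$ grading after $q\mapsto q^{2/\abs{\di}}$; this last point is proved not by any virtualization but by direct computation of the local energies $\Hb$ on highest weight elements of $\max(B^{r_1,s_1}\otimes B^{r_2,s_2})$ (Proposition \ref{prop:H for max}, Theorem \ref{T:maxD=A}). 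Littlewood's formula \eqref{E:Lit}, applied to the dual polynomial part of $\max(B)$, then produces the coefficients $c^\nu_{\delta\la}$. Your induction on the number of tensor factors does occur in the paper, but only inside the proof of the energy-shift theorem; the $R$-matrix bookkeeping you correctly flag as the main obstacle is made tractable precisely by first transporting everything into $\max(B)$ via $\sigma$, and without a substitute for $\sigma$ your outline has no way to carry it out.
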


Conjecture \ref{CJ:X=K} gives a simple formula for all stable 1-d sums
in terms of the type $A_n^{(1)}$ 1-d sums, which are fairly well-understood
\cite{SW,Sh:crystal}.
In the case that $B$ has tensor factors of the form $B^{1,s}$, Conjecture \ref{CJ:X=K} was proved
in \cite{Sh} for $\di\in\{(1),(2)\}$ and in \cite{LS} for $\di=(1,1)$. This is much easier than the
general case: for the KR crystals $B^{1,s}$ all computations can be done explicitly.

The purpose of this paper is to prove Conjecture \ref{CJ:X=K} in full generality
(for arbitrary nonspin KR tensor factors). This is achieved in Theorem \ref{Th_dec_X}.
%
We choose specific affine root systems $\geh^\di$ for each $\di\in\{(1),(2),(1,1)\}$. This choice,
the classical subalgebra $\geh_0^\di$, and the affine Dynkin diagram $X(\geh^\di)$ are given below.
\begin{align} \label{E:gehdi}
\begin{array}{|c||c|c|c|} \hline
\di & \geh^\di & \geh_0^\di & X(\geh^\di) \\ \hline \hline
(1) & D_{n+1}^{(2)} & B_n  &
{\xymatrix@R=1ex{
*{\circ}<3pt> \ar@{<=}[r]^<{0} & *{\circ}<3pt> \ar@{-}[r]^<{1} & *{\circ}<3pt> \ar@{..}[r] &*{\circ}<3pt> \ar@{-}[r] & *{\circ}<3pt>\ar@{=>}[r]^<{n-1}^>{n} & *{\circ}<3pt> \\
}} \\ \hline
(2) & C_n^{(1)} & C_n  &
{\xymatrix@R=1ex{
*{\circ}<3pt> \ar@{=>}[r]^<{0} & *{\circ}<3pt> \ar@{-}[r]^<{1} & *{\circ}<3pt> \ar@{..}[r]^<{2}&*{\circ}<3pt> \ar@{-}[r]  & *{\circ}<3pt>\ar@{<=}[r]^<{n-1}^>{n} & *{\circ}<3pt> \\
}} \\ \hline
\begin{matrix}
\\ (1,1)
\end{matrix}&
\begin{matrix}
\\ D_n^{(1)}
\end{matrix}
&
\begin{matrix}
\\ D_n
\end{matrix}
&
{\xymatrix@R=1ex{
*{\circ}<3pt> \ar@{-}[dr]^<{0} & {}& {} & {} & {} & *{\circ}<3pt> \\
{} & *{\circ}<3pt> \ar@{-}[r]^<{2}& *{\circ}<3pt> \ar@{..}[r] & *{\circ}<3pt> \ar@{-}[r]^>{n-2} & *{\circ}<3pt>\ar@{-}[ur]^>{n-1} \ar@{-}[dr]^>n & \\
*{\circ}<3pt> \ar@{-}[ur]^<{1} & {} &{}& {} & {} & *{\circ}<3pt>
}} \\[2mm] \hline
\end{array}
\end{align}
We shall call the three nonexceptional affine root systems $\geh^\di$ \textit{reversible}, since their
affine Dynkin diagrams admit the automorphism
\begin{align}\label{E:sigmaaut}
  \sigma(i) = n - i \qquad\text{for $0\le i\le n$.}
\end{align}
Reversible root systems possess the following properties.
There is an associated automorphism $\sigma$ on KR crystals $B^{r,s}$ for $r$ nonspin (Section \ref{SS:sigmaKR}).
One then extends $\sigma$ to tensor products of KR crystals by applying it to each factor.
This map has a remarkable property: it sends all of the $I_0$-highest weight vertices in any tensor product $B$
of nonspin KR crystals, into the subcrystal (called $\max(B)$) of $I_0$-components whose highest weights $\la$
correspond to partitions with the maximum number of boxes (Theorem \ref{Th_corespondence}).
Surprisingly, one can compute the precise change in the energy function (grading) under $\sigma$
acting on $I_0$-highest weight vertices (Theorem \ref{Th_SR}).
Finally, near the $I_0$-highest weight vertices in $\max(B)$, the crystal
$B$ looks like a similar tensor product $B_A$ of type $A_{n-1}^{(1)}$ crystals and moreover the gradings coincide (Theorem \ref{T:maxD=A}).
Along the way we make use of some $I_0$-crystal embeddings we call splitting maps: row splitting
$B^{r,s} \to B^{r-1,s} \otimes B^{1,s}$ (Section \ref{SS:rowsplit})
and box splitting $B^{1,s} \to (B^{1,1})^{\otimes s}$ (Section \ref{SS:boxsplit}).
These embeddings exist for any nonexceptional $\geh$ and nonspin $r\in I_0$.
When applied to the first tensor factor in a tensor product of KR crystals, row splitting
preserves energy (Theorem \ref{T:Dsplit}) and box splitting preserves coenergy.
We also employ a kind of row splitting map in Section 4 which embeds the highest weight $I_0$-crystals $B(\la)$
of classical type, into a tensor product of $I_0$-crystals
of the form $B(s\omega_1)$. This encoding, which we call the row tableau realization,
allows us to see the shadow (that is, the image under
$\sigma$) of the $I_0$-crystal decomposition of a KR crystal.
For this purpose the well-known Kashiwara-Nakashima tableau realization \cite{KN} of $B(\la)$ is less
illuminating.

\section*{Acknowledgments}
\smallskip\par\noindent
The authors thank Masaki Kashiwara and staff of RIMS, Kyoto University during their visit in January, 
2009, where this work was started. M.O. and M.S. thank Ghislain Fourier and Anne Schilling for their
earlier collaboration related to this work.
C.L. is partially supported by ANR-09-JCJC-0102-01, M.O. by JSPS grant No. 20540016, and 
M.S. by NSF DMS--0652641 and DMS--0652648.

\section{Some classical multiplicity formulae}

\subsection{Notation on classical Lie groups}

\label{subsec_Levi}

In the sequel $G$ 
is one of the complex Lie groups $GL_{n}$, $Sp_{2n}$, $SO_{2n+1}$,
or $SO_{2n}$. We follow the convention of \cite{KT} to realize $G$ as a subgroup
of $GL_{N}$ and its Lie algebra $\geh$ as a subalgebra of $\gl_N$ where
\begin{equation*}
N=\left\{
\begin{tabular}{l}
$n$ when $G=GL_{n},$ \\
$2n$ when $G=Sp_{2n},$ \\
$2n+1$ when $G=SO_{2n+1},$ \\
$2n$ when $G=SO_{2n}.$%
\end{tabular}
\right.
\end{equation*}
With this convention the maximal torus $T_G$ of $G$ and the Cartan
subalgebra $\heh_G$ of $\geh$ coincide respectively with
the subgroup and the subalgebra of diagonal matrices of $G$ and $\geh$.
Similarly the Borel subgroup $B_G$ of $G$ and the Borel subalgebra
$\mathfrak{b}_G$ of $\geh$ coincide respectively with the
subgroup and subalgebra of upper triangular matrices of $G$ and $\geh$.
There is an embedding of Lie algebras $\gl_n \rightarrow \geh$ that restricts
to an embedding $\heh_{GL_n} \rightarrow \heh_G$ of Cartan subalgebras
and of their real forms $\heh_{GL_n}^\R \rightarrow \heh_G^\R$.
Via restriction, there is an isomorphism of the real form of the
weight lattice of $\geh$ with that of $\gl_n$.
For any $i\in \{1,...,n\}$, let $\veps_i:\heh_{GL_n}^\R\rightarrow \R$
be the $(i,i)$ matrix entry function. The functions $\{\veps_i\mid i\in \{1,\dotsc,n\}\}$
form a $\Z$-basis of the weight lattice of $\gl_n$, which we identify with
$\Z^n$ via $\sum_{i=1}^n a_i \veps_i \mapsto (a_1,a_2,\dotsc,a_n)$.
In this way we may regard weights of $\geh$ as elements in $\R^n$.
Let $\Sigma_G^+$ and $R_G^+$ be the
sets of simple and positive roots of $G$, respectively. As usual $\rho_G$
is the half sum of the positive roots of $G$. The set
$\Pa_n$ is contained in the cone of dominant weights of $G$.
Let $V^{G}(\la)$ be the finite dimensional
irreducible $G$-module of highest weight $\la$.
Let $W_G$ be the Weyl group of $G$. Then $W_{GL_n}=S_n$ can be regarded as a
subgroup of any $W_G$ for $G=GL_n, Sp_{2n}, SO_{2n+1}$ or $SO_{2n}$.
Given $\la\in\Z^n$ (the weight lattice of $GL_n$),
let $\bla=(-\la_n,\dotsc,-\la_1)=-w_0^{A_{n-1}}(\la)$ where $w_0^{A_{n-1}}\in W_{GL_n}$
is the longest element and let $\overline{\Pa_n}$ denote the
image of $\Pa_n$ under $\la\mapsto\bla$. Note that for $\la\in \Pa_n$,
the contragredient dual of the polynomial $GL_n$-module $V^{GL_n}(\la)$
is isomorphic to $V^{GL_n}(\bla)$.

\subsection{Decomposition of classical tensor product multiplicities}

For $G=Sp_{2n}$, $SO_{2n+1}$, or $SO_{2n}$, $\diamondsuit\in\{(1),(2),(1,1)\}$
, and $\nu\in\mathcal{P}_{n}$, define the $G$-module
\[
W^{G}_{\di}(\nu)=\bigoplus_{\lambda\in\mathcal{P}_{n}}\bigoplus_{\delta
\in\mathcal{P}_{n}^{\di}} V^{G}(\lambda)^{\oplus c_{\delta\lambda}^{\nu}}.
\]
The module $W^{G}_{\di}(\nu)$ is defined specifically to have irreducible
decomposition which mimics the decomposition of KR modules of kind
$\diamondsuit$ into their classical components.

Let $\eta=(\eta_{1},\dotsc,\eta_{p})$ be a $p$-tuple of positive integers
summing to $n$. Consider $\lambda\in\mathcal{P}_{n}$ and $(\mu^{(1)}%
,\ldots,\mu^{(p)})$ a $p$-tuple of partitions such that $\mu^{(k)}%
\in\mathcal{P}_{\eta_{k}}$ for any $k=1,\dotsc,p$. Define the coefficients
$c_{\mu^{(1)},\ldots,\mu^{(p)}}^{\lambda}$ and $\mathfrak{K} _{\mu
^{(1)},\ldots,\mu^{(p)}}^{\lambda,\diamondsuit}$ by
\begin{align}
V^{GL_{n}}(\mu^{(1)})\otimes\cdots\otimes V^{GL_{n}}(\mu^{(p)}) &
\simeq\bigoplus_{\lambda\in\mathcal{P}_{n}}V^{GL_{n}}(\lambda)^{\oplus
c_{\mu^{(1)},\ldots,\mu^{(p)}}^{\la}}\label{E:KRmult}\\
W_{\di}^{G}(\mu^{(1)})\otimes\cdots\otimes W_{\di}^{G}(\mu^{(p)}) &
\simeq\bigoplus_{\lambda\in\mathcal{P}_{n}}V^{G}(\lambda)^{\oplus\mathfrak
{K}_{\mu^{(1)},\ldots,\mu^{(p)}}^{\lambda,\diamondsuit}}.\label{E:kappadef}
\end{align}
We have the following proposition obtained by specializing at $q=1$ Theorem
4.4.2 in \cite{Lec}. It shows that the coefficients $\mathfrak{K}_{\mu
^{(1)},\ldots,\mu^{(p)}}^{\lambda,\diamondsuit}$ do not depend on the Lie
group $G=Sp_{2n},SO_{2n+1}$ or $SO_{2n}$.

\begin{proposition}
\label{prop_qdual} For $n$ sufficiently large, we have
\[
\mathfrak{K}_{\mu^{(1)},\ldots,\mu^{(p)}}^{\lambda,\diamondsuit}=\sum_{\nu
\in\mathcal{P}_{n}}\ \sum_{\delta\in\mathcal{P}_{n}^{\di}}c_{\lambda,\delta
}^{\nu}\ c_{\mu^{(1)},\ldots,\mu^{(p)}}^{\lambda}.
\]
\end{proposition}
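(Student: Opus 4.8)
The plan is to pass to the stable range (this is the content of ``$n$ sufficiently large'') and to do the whole computation inside the universal character ring for classical groups. The one representation-theoretic input I need is the stable tensor product rule: for $n$ large with respect to the partitions involved, the decomposition of $V^{G}(\alpha)\ot V^{G}(\beta)$ for $G=Sp_{2n},SO_{2n+1},SO_{2n}$ is governed by the Newell--Littlewood coefficients
\[
  N_{\alpha\beta}^{\la}=\sum_{\gamma,\sigma,\tau}c_{\gamma\sigma}^{\alpha}\,c_{\gamma\tau}^{\beta}\,c_{\sigma\tau}^{\la},
\]
and, crucially, these are the same for all three families. Since the multiplicities $c_{\delta\alpha}^{\mu^{(k)}}$ entering the definition of $W_{\di}^{G}(\mu^{(k)})$ also carry no dependence on $G$, the independence of $\mathfrak{K}_{\mu^{(1)},\dotsc,\mu^{(p)}}^{\la,\di}$ on $G$ follows for free. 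Concretely, let $R=\bigoplus_{\la}\Z\,[\la]$ be the ring with basis indexed by partitions and product $[\alpha]\cdot[\beta]=\sum_{\la}N_{\alpha\beta}^{\la}[\la]$; sending $V^{G}(\la)\mapsto[\la]$ identifies the Grothendieck ring of stable $G$-representations with $R$, carries $\ot$ to the product, and carries $W_{\di}^{G}(\nu)$ to
\[
  D^{\di}(\nu):=\sum_{\la}\Bigl(\sum_{\delta\in\Pa^{\di}}c_{\delta\la}^{\nu}\Bigr)[\la].
\]

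The heart of the argument is the lemma that $s_{\nu}\mapsto D^{\di}(\nu)$ extends to a ring homomorphism $\Lambda\to R$ from the ring of symmetric functions, i.e. $D^{\di}(\nu)\,D^{\di}(\nu')=\sum_{\tau}c_{\nu\nu'}^{\tau}\,D^{\di}(\tau)$. For $\di=(2)$ (even rows) I would realize this with $G$ orthogonal and for $\di=(1,1)$ (even columns) with $G=Sp_{2n}$ --- a choice legitimized by the $G$-independence just noted: by Littlewood's stable branching rule the restriction $\mathrm{Res}^{GL}_{G}$ sends the Schur function $s_{\nu}$ to exactly $D^{\di}(\nu)$, and restriction respects tensor products, hence is a ring homomorphism. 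The case $\di=(1)$ (all partitions) is the one genuinely outside the $GL\!\downarrow\!G$ paradigm, since no single classical restriction produces unrestricted $\delta$; here I would argue combinatorially, writing $\sum_{\delta\in\Pa^{\di}}c_{\delta\la}^{\nu}=\ip{s_{\nu}}{s_{\la}\,\Omega^{\di}}$ with $\Omega^{(1)}=\sum_{\delta}s_{\delta}=\prod_{i}(1-x_{i})^{-1}\prod_{i<j}(1-x_{i}x_{j})^{-1}$, and deducing multiplicativity from the Koike--Terada description of $R$ (equivalently, from this Cauchy-type factorization of $\Omega^{(1)}$). This lemma is where I expect the real work to sit: proving multiplicativity uniformly in $\di$, handling $\di=(1)$ without a representation-theoretic crutch, and pinning down how large $n$ must be so that Newell--Littlewood and the branching identities hold with no modification terms.

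Granting the lemma, the proposition falls out formally. Since $\ot$ corresponds to the product in $R$ and $W_{\di}^{G}(\mu^{(k)})\mapsto D^{\di}(\mu^{(k)})$,
\[
  \prod_{k=1}^{p}D^{\di}(\mu^{(k)})
  =\sum_{\nu}c_{\mu^{(1)},\dotsc,\mu^{(p)}}^{\nu}\,D^{\di}(\nu)
  =\sum_{\la}\Bigl(\sum_{\nu}\sum_{\delta\in\Pa^{\di}}c_{\delta\la}^{\nu}\,c_{\mu^{(1)},\dotsc,\mu^{(p)}}^{\nu}\Bigr)[\la],
\]
where the first equality iterates the lemma and recognizes the structure constants as the iterated Littlewood--Richardson coefficients $c_{\mu^{(1)},\dotsc,\mu^{(p)}}^{\nu}$, and the second re-expands each $D^{\di}(\nu)$. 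Comparing the coefficient of $[\la]$ with $\prod_{k}D^{\di}(\mu^{(k)})=\sum_{\la}\mathfrak{K}_{\mu^{(1)},\dotsc,\mu^{(p)}}^{\la,\di}[\la]$, and using $c_{\delta\la}^{\nu}=c_{\la\delta}^{\nu}$, yields the asserted identity. I would finally remark that this computation is exactly the specialization at $q=1$ of the graded ($q$-deformed) statement \cite{Lec}, Theorem~4.4.2, which is the route the authors take.
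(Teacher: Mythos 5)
Your proposal is correct, and it takes a genuinely different route from the paper. The paper offers no direct argument: Proposition \ref{prop_qdual} is obtained there by quoting Theorem 4.4.2 of \cite{Lec} --- a $q$-identity for the quantized coefficients $\mathfrak{K}^{\la,\di}_{\mu^{(1)},\ldots,\mu^{(p)}}(q)$, reproduced later as Theorem \ref{th_qdual2}(1) --- and specializing at $q=1$, where $K^{GL_n,U}_{\nu,\mu}(1)=c^{\nu}_{\mu^{(1)},\ldots,\mu^{(p)}}$. You instead argue directly at $q=1$ in the stable range: the $G$-independence is immediate from the Newell--Littlewood/King rule (which the paper itself records as Remark \ref{R:classicalmult}), and everything reduces to your lemma that $s_\nu\mapsto D^\di(\nu)$ is a ring homomorphism from $\Lambda$ to the Newell--Littlewood ring $R$. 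That lemma is true, and your sketch does go through: for $\di=(2)$ and $\di=(1,1)$ it is Littlewood's stable restriction rule for $GL\downarrow O$ (even rows) and $GL\downarrow Sp$ (even columns) together with multiplicativity of restriction, read off in $R$ via linear independence of the stable irreducible classes; for $\di=(1)$ the sufficient condition is exactly the factorization $\Omega^{\di}(x,y)=\Omega^{\di}(x)\,\Omega^{\di}(y)\prod_{i,j}(1-x_iy_j)^{-1}$ upon splitting the variables into two sets, which all three kernels satisfy --- concretely, since $\Omega^{(1)}=\Omega^{(2)}\prod_i(1+x_i)$ and $\prod_i(1+x_i)$ is group-like, one gets $D^{(1)}=D^{(2)}\circ\bigl(\textstyle\prod_i(1+x_i)\bigr)^{\perp}$, a composition of ring homomorphisms, so the case you flagged as ``the real work'' reduces to the orthogonal case by a short computation. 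As for what each route buys: yours is self-contained and purely classical (King \cite{Ki}, Littlewood \cite{Li}, Koike--Terada \cite{KT}), and it makes the $G$-independence conceptual; the paper's one-line citation is shorter and keeps the statement inside the $q$-deformed framework of \cite{Lec}, which the paper needs anyway --- Theorems \ref{th_qdual2} and \ref{Th_X=K} use the full $q$-statement, which your argument does not recover.

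Two caveats worth recording. As printed, the right-hand side of Proposition \ref{prop_qdual} carries a typo: the last factor should be $c^{\nu}_{\mu^{(1)},\ldots,\mu^{(p)}}$ rather than $c^{\la}_{\mu^{(1)},\ldots,\mu^{(p)}}$ (this is the form used in Lemma \ref{L:samesize} and the $q=1$ specialization of Theorem \ref{th_qdual2}); your computation derives precisely this corrected identity, so you have proved the intended statement. Also, when identifying the stable Grothendieck ring with $R$ you should fix $n$ large relative to $\sum_k|\mu^{(k)}|$ (say $n\ge 2\sum_k|\mu^{(k)}|+2$) so that King's rule and Littlewood's branching hold without modification for every product and every constituent that appears; since the $\mu^{(k)}$ are fixed, this is exactly the ``$n$ sufficiently large'' hypothesis.
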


We also recall Littlewood's formula \cite{Li} (see also \cite{HTW}): Write
$\widetilde{\mathcal{P}}_{n}$ for the set of pairs $(\gamma^{+},\gamma^{-})$
such that $\gamma^{-}$and $\gamma^{+}$ are partitions with respectively
$r^{+}$ and $r^{-}$ nonzero parts, and $r^{+}+r^{-}\leq n$.\ We identify each
$(\gamma^{+},\gamma^{-})\in\widetilde{\mathcal{P}}_{n}$ with the $GL_{n}
$-dominant weight $(\gamma_{1}^{+},\ldots,\gamma_{r^{+}}^{+},0^{n-r^{+}-r^{-}
},-\gamma_{r^{-}}^{-},\ldots,-\gamma_{1}^{-})\in\mathbb{Z}^{n}$ and denote by
$V^{GL_{n}}(\gamma^{+},\gamma^{-})$ the corresponding $GL_{n}$-module with
highest weight $(\gamma^{+},\gamma^{-}).$ For all $\nu\in\mathcal{P}_{n}$ and
$(\gamma^{+},\gamma^{-})\in\widetilde{\mathcal{P}}_{n}$
\begin{equation}
\lbrack\downarrow_{GL_{n}}^{G}V^{G}(\nu):V^{GL_{n}}(\gamma^{+},\gamma
^{-})]=\sum_{\delta\in\mathcal{P}_{n}^{\di},\kappa\in\mathcal{P}_{n}}
c_{\gamma^{+},\gamma^{-}}^{\kappa}c_{\delta,\kappa}^{\nu}\label{E:Lit}
\end{equation}
where $G=SO_{2n+1},Sp_{2n},SO_{2n}$ corresponds to $\diamondsuit
=(1),(2),(1,1)$ respectively, $\downarrow_{GL_{n}}^{G}V$ is a $G$-module $V$ restricted
to $GL_n$, and $\lbrack W:V]$ is the multiplicity of the irreducible module $V$ in $W$.

\begin{remark}
\label{R:classicalmult} For $\lambda,\mu,\nu\in\mathcal{P}_{n}$ with
$n\geq\max(\ell(\lambda)+\ell(\mu),\ell(\nu))+2$, if $[V^{G}(\lambda)\otimes
V^{G}(\mu):V^{G}(\nu)]>0$ then $|\nu|\leq|\lambda|+|\mu|$, and if equality
occurs then the multiplicity is the LR coefficient $c_{\lambda\mu}^{\nu}$.
This can be easily deduced from the following formula due to King \cite{Ki}
\[
\lbrack V^{G}(\lambda)\otimes V^{G}(\mu):V^{G}(\nu)]=\sum_{\delta,\xi,\eta
}c_{\delta,\xi}^{\nu}c_{\delta,\eta}^{\lambda}c_{\xi,\eta}^{\mu}
\]
which holds in particular under the assumption $n\geq\max(\ell(\lambda
)+\ell(\mu),\ell(\nu))+2$. The multiplicities are then independent of the
group $G$ considered.
\end{remark}

\section{Crystal generalities}
\label{S:crystalgen}

\subsection{Affine root systems}
\label{SS:affroot}
Let $I=\{0,1,\dotsc,n\}$ be the set of nodes of the affine Dynkin diagram $X$ with
generalized Cartan matrix $(a_{ij})_{i,j\in I}$, all associated with the affine Lie algebra $\geh$.
We use the labeling of affine Dynkin diagrams in \cite{Kac}.
Let $(a_0,\dotsc,a_n)$ and $(a_0^\vee,\dotsc,a_n^\vee)$ be the unique sequences of relatively prime positive integers such that
\begin{align}
  \sum_{j\in I} a_{ij} a_j &= 0 &\qquad&\text{for all $i\in I$} \\
  \sum_{i\in I} a_i^\vee a_{ij} &= 0 &\qquad&\text{for all $j\in I$.}
\end{align}
Then
\begin{align} \label{E:a0vee}
  a_0^\vee=\begin{cases}
  2 &\text{for $\geh=A_{2n}^{(2)}$} \\
  1 &\text{otherwise.}
  \end{cases}
\end{align}

Let $P$ be the affine weight lattice, $P^*=\Hom_{\Z}(P,\Z)$, and $\ip{\cdot}{\cdot}:P^*\times P\to\Z$ the
evaluation pairing.
By definition $P$ has $\Z$-basis denoted $\{\delta/a_0, \La_0,\La_1,\dotsc,\La_n\}$ and $P^*$ has
dual $\Z$-basis $\{d,\al_0^\vee,\al_1^\vee,\dotsc,\al_n^\vee\}$. In particular
\begin{align} \label{E:dualbases}
  \ip{\al_i^\vee}{\La_j} = \chi(i=j)\qquad\text{for $i,j\in I$.}
\end{align}
Here $\chi(P)=1$ if $P$ is true and $\chi(P)=0$ otherwise.
The $\La_i$ are called affine fundamental weights,
$\delta$ is called the null root, $d$ is called the degree derivation, and $\al_i^\vee$ are the simple coroots.
Let $P^+=\{\La\in P\mid \ip{\al_i}{\La}\ge0\text{ for all $i\in I$}\}$ be the set of dominant weights.
Define the elements $\al_j\in P$ (the simple roots) by
\begin{align}\label{E:alphadef}
\al_j = \chi(j=0)\,\delta/a_0 + \sum_{i\in I} a_{ij} \La_i\qquad\text{for $j\in I$.}
\end{align}
One may check that
\begin{align}
\ip{\al_i^\vee}{\al_j} &= a_{ij}\qquad\text{for all $i,j\in I$} \\
\delta &= \sum_{j\in I} a_j \al_j
\end{align}
and that $\{\al_i\mid i\in I\}$ is a linearly independent set.
The canonical central element $c\in P^*$ is defined by
\begin{align}\label{E:cdef}
    c=\sum_{i \in I} a_i^\vee \al_i^\vee.
\end{align}
The \textit{level} of a weight $\la\in P$
is defined by
\begin{align}\label{E:leveldef}
  \lev(\la) = \ip{c}{\la}.
\end{align}
By \eqref{E:a0vee} and \eqref{E:dualbases} we have
\begin{align}
\label{E:levLa}
  \lev(\La_i) &= a_i^\vee \\
\label{E:levLa0}
  \lev(\La_0) &= 1.
\end{align}
Define the lattice $P' = P/(\Z\delta/a_0)$. For $i\in I$, write $\al_i'$ for the image of $\al_i$ under
the natural projection $P\to P'$. Then $\al_0'=-\theta/a_0$. Since $\ip{\al_i^\vee}{\delta}=0$ for all $i\in I$,
$\lev:P'\to\Z$ is well-defined. Denote $P^0 = \{\la\in P'\mid \lev{\la}=0\}$.


Let $\geh_0$ be the simple Lie algebra obtained from $\geh$ by ``omitting the $0$ node".
Let $P_0$ be the weight lattice of $\geh_0$. There is a natural projection $P\to P_0$
with kernel $\Z(\delta/a_0) \oplus \Z\La_0$. Let $\omega_i = \pi(\La_i)$ for $i\in I$
(so that $\omega_0=0$ by convention). Then $P_0 = \bigoplus_{i\in I_0} \Z \omega_i$.
The dual lattice $P_0^*=\Hom_\Z(P_0,\Z)$ has dual $\Z$-basis denoted $\al_i^\vee$ for $i\in I_0$.
There is a natural inclusion $P_0^* \to P^*$ defined by $\al_i^\vee\mapsto\al_i^\vee$ for $i\in I_0$.
There is a natural projection $P'\to P_0$ with section
\begin{equation}\label{E:lift}
\begin{split}
  P_0 &\to P' \\
  \omega_i &\mapsto \La_i - \lev(\La_i)\La_0 = \La_i - a_i^\vee \La_0 \qquad\text{for $i\in I_0$.}
\end{split}
\end{equation}
The image of this section is $P^0$.

Let $P_0^+ = \{\la\in P_0 \mid \ip{\al_i^\vee}{\la}\ge0\text{ for all $i\in I_0$}\}$ be the dominant
weights in $P_0$.
Let $Q_0 = \bigoplus_{i\in I_0} \Z \al_i$ be the sublattice of $P_0$ given by the
root lattice.

\subsection{The extended affine Weyl group and Dynkin automorphisms}
\label{SS:aut}

The affine Weyl group $W$ is the subgroup of the group $\Aut(P)$ of linear automorphisms of $P$
generated by the maps
\begin{align*}
  s_i \la = \la - \ip{\al_i^\vee}{\al} \,\al_i\qquad\text{for $\la\in P$ and $i\in I$.}
\end{align*}
The action of $W$ on $P^*$ is defined by either of the equivalent formulae:
\begin{align*}
  \ip{w\cdot\mu}{w\cdot\la} &= \ip{\mu}{\la}&\qquad&\text{for $w\in W$, $\la\in P$, $\mu\in P^*$} \\
  s_i \mu &= \mu - \ip{\mu}{\al_i} \,\al_i^\vee&\qquad&\text{for $\mu\in P^*$, $i\in I$.}
\end{align*}
We write $W_0$ for the Weyl group of $\geh_0$, which is the subgroup of $W$ generated
by $s_i$ for $i\in I_0$. $W_0$ acts on $P_0$ and $P_0^*$.

Let $\Aut(X)$ be the group of automorphisms of the affine Dynkin diagram $X$. Let $\tau\in\Aut(X)$.
By definition $\tau$ is a permutation of the Dynkin node set $I$ of $X$ such that there is a
bond of multiplicity $m$ from $i\in I$ to $j\in I$
if and only if there is a bond of multiplicity $m$ from $\tau(i)$ to $\tau(j)$, for all $i,j\in I$.
In particular,
\begin{align}
\label{E:autoa}
a_{\tau(i)}&=a_i &\qquad&\text{and} \\
\label{E:autoavee}
a_{\tau(i)}^\vee&=a_i^\vee&\qquad&\text{for $i\in I$} \\
\label{E:autoCartan}
a_{\tau(i),\tau(j)}&=a_{ij}&\qquad&\text{for $i,j\in I$.}
\end{align}
$\tau\in\Aut(X)$ induces $\tau\in \Aut(P)$ by $\tau(\delta/a_0)=\delta/a_0$ and $\tau(\La_i)=\La_{\tau(i)}$ for all $i\in I$.
This satisfies $\tau(\al_i)=\al_{\tau(i)}$ for all $i\in I$. $\tau\in\Aut(X)$ also induces $\tau\in \Aut(P^*)$ by
\begin{align}
  \ip{\tau(\mu)}{\tau(\la)} = \ip{\mu}{\la}\qquad\text{for all $\la\in P$ and $\mu\in P^*$.}
\end{align}
It satisfies $\tau(d)=d$ and $\tau(\al_i^\vee)=\al_{\tau(i)}^\vee$ for all $i\in I$. $\tau\in\Aut(X)$
induces an automorphism $\tau$ on $W$ denoted $w\mapsto w^\tau$ where
$s_i^\tau = s_{\tau(i)}$ for all $i\in I$.

Define the subset of \textit{special nodes} $I^s\subset I$ to be the orbit of $0\in I$ under $\Aut(X)$.
Every element of $\Aut(X)$ is determined by its action on $I^s$. Let
\begin{align}\label{E:thetadef}
\theta = \sum_{i\in I_0} a_i \al_i = \delta - a_0 \al_0.
\end{align}
If $\geh$ is untwisted then $\theta$ is the highest root of $\geh_0$.
Let $M\subset P_0$ be the sublattice generated by the $W_0$-orbit of $\theta/a_0$:
\begin{align}
\label{E:Mdef}
  M &= \sum_{w\in W_0} \Z \,w \cdot (\theta/a_0).
\end{align}
The semidirect product $W_0 \ltimes P_0$ acts on $P'$ by
\begin{align}
  (w t_\la) \cdot \La = w( \La + \lev(\La) \la)\qquad\text{for $w\in W_0$, $\la\in P_0$, and $\La\in P'$}
\end{align}
where $\la$ is regarded as an element of $P^0\subset P'$ via \eqref{E:lift} and $t_\la$ is the translation
corresponding to $\la$.
We have
\begin{equation}
\begin{split}
  W &\cong W_0 \ltimes M \\
  s_0&\mapsto s_\theta \, t_{-\theta/a_0}.
\end{split}
\end{equation}
For each $\ell\in\Z$ the action of $W_0 \ltimes P_0$ on $P'$ stabilizes the
affine subspace $\ell\La_0 + P^0\subset P'$ of level $\ell$ weights.
Therefore for each $\ell\in \Z$, the level $\ell$ action is defined by the
representation $\pi_\ell:W_0\ltimes P_0\to \AutAff(P_0)$
by affine linear automorphisms of $P_0$, given by
\begin{equation}
\begin{split}
  \pi_\ell(w t_\la)\cdot \beta &= -\ell\La_0 + w t_\la(\ell\La_0+\beta) \\
  &= w (\beta+\ell\la)\qquad\text{for $w\in W_0$, $\la,\beta\in P_0$.}
\end{split}
\end{equation}

For $r\in I_0$ define \cite{HKOTT}
\begin{align}\label{E:crdef}
c_r=\max(1,a_r/a_r^\vee).
\end{align}
\begin{remark}\label{R:cr}
We have $c_r=1$ except that $c_r=2$ for $\geh=B_n^{(1)}$ with $r=n$, $\geh=C_n^{(1)}$ with $1\le r\le n-1$,
$\geh=F_4^{(1)}$ with $r\in \{3,4\}$, and $c_r=3$ if $\geh=G_2^{(1)}$ with $r=2$. In particular $c_i=1$ for $i\in I^s$.
\end{remark}
Define the sublattices of $P_0$ given by
\begin{align}
\label{E:M'def}
  M' &= \bigoplus_{i\in I_0} \Z c_i \al_i \\
\label{E:tMdef}
\tM &= \bigoplus_{i\in I_0} \Z c_i \omega_i.
\end{align}
We have $\tM \supset M \supset M'$ with $M=M'$
except for $\geh=A_{2n}^{(2)}$ where $M'\subset M$ is a sublattice of index $2$.
We define an injective group homomorphism
\begin{align}
  \tM/M \hookrightarrow \Aut(X)
\end{align}
with image denoted $\Sigma$. First, there is a bijection $I^s\to \tM/M$
given by $i\mapsto c_i\omega_i+M$. Subtraction by $c_i\omega_i+M$ induces a permutation of $\tM/M$.
The induced permutation of $I^s$ under the above bijection, extends to $\tau^i\in\Aut(X)$.
We define $\Sigma=\{\tau^i\mid i\in I^s\}$; it is the group of \textit{special automorphisms}.

Define the extended affine Weyl group (in particular for twisted affine types) by
\begin{align}\label{E:exaffWeyldef}
\tW = W \rtimes \Sigma
\end{align}
via $\tau w \tau^{-1} = w^\tau$ for $\tau\in \Sigma$ and $w\in W$.
We have $\tW \cong W_0 \ltimes \tM$ with
\begin{align}
\label{E:specialauttrans}
\tau^i &= w_0^{\omega_i} t_{-c_i \omega_i}\qquad\text{for $i\in I^s$, where} \\
\label{E:w0ladef}
&\text{$w_0^\la\in W_0$ is the shortest element such that $w_0^\la \,\la = w_0 \la$.}
\end{align}

\begin{remark}
In untwisted type one may identify $M$ with the coroot lattice $Q_0^\vee$
and $\tM$ with the coweight lattice $P_0^\vee$, although these identifications may involve
some uniform dilation.
\end{remark}

\begin{example}\label{X:special}
\begin{align} \label{E:specnodes}
\begin{array}{|c||c|c|c|c|c|c|c|} \hline
\geh &A_n^{(1)}&B_n^{(1)}&C_n^{(1)}&D_n^{(1)}&A_{2n-1}^{(2)}&A_{2n}^{(2)}&D_{n+1}^{(2)} \\ \hline
I^s & \{0,1,\dotsc,n\} & \{0,1\} & \{0,n\} & \{0,1,n-1,n\}& \{0,1\} & \{0\}& \{0,n\} \\ \hline
\end{array}
\end{align}

For $A_n^{(1)}$ and $i\in I^s$, $\tau^i$ subtracts $i$ mod $n+1$.

For $D_n^{(1)}$, in terms of permutations of $I^s$, are defined as follows.
$\tau^0$ is the identity and $\tau^1=(0,1)(n-1,n)$. If $n$ is odd,
$\tau^{n-1}=(0,n,1,n-1)$ and $\tau^n=(0,n-1,1,n)$ and if $n$ is even,
$\tau^{n-1}=(0,n-1)(1,n)$ and $\tau^n=(0,n)(1,n-1)$.
\end{example}

Note that $\tM/M$ admits an involution given by negation.
The corresponding affine Dynkin involution is given as follows.
Let $w_0\in W_0$ be the longest element. The map $\alpha\mapsto -w_0\alpha$
is an involution on the set of positive roots of $\geh_0$ that sends sums to sums, and therefore
restricts to an involution on the set of simple roots. So there is an involutive
automorphism of the Dynkin diagram of $\geh_0$ denoted $i\mapsto i^*$,
defined by
\begin{align}\label{E:*def}
  - w_0 \al_i &= \al_{i^*}\qquad\text{for $i\in I_0$.}
\end{align}
This extends to an element denoted $*\in \Aut(X)$ by defining $0^*=0$.
The induced automorphism of $P$ is given by
\begin{align} \label{E:*weight}
  \la \mapsto - w_0 \la \qquad\text{for $\la\in P$.}
\end{align}
In particular
\begin{align}
  -w_0 \omega_i = \omega_{i^*}\qquad\text{for $i\in I_0$.}
\end{align}
By \eqref{E:autoa}, \eqref{E:autoavee}, and \eqref{E:crdef},
we see that
\begin{align}\label{E:*c}
  c_{i^*} = c_i \qquad\text{for $i\in I$.}
\end{align}
Therefore $-w_0 c_i \omega_i = c_{i^*} \omega_{i^*}$.
Since $w_0 c_i \omega_i + M = c_i \omega_i + M$,
we have $c_{i^*} \omega_{i^*} + M = - c_i \omega_i + M$ in
the group $\tM/M\cong \Sigma$. It follows that for all $i\in I^s$,
negation in $\tM/M$ corresponds to the involution $i\mapsto i^*$ on $I^s$, and that
\begin{equation} \label{E:auto*}
\begin{split}
  \tau^i(0) &= i^* \\
  (w_0^{\omega_i})^{-1} &= w_0^{\omega_{i^*}}.
\end{split}
\end{equation}

\begin{example}\label{X:*}
We have $i^*=i$ except in the following cases.
For $A_{n-1}$ we have $i^*=n-i$.
For $D_n$ and $n$ odd, $(n-1)^*=n$ and $n^*=n-1$.
For $E_6$ $i\mapsto i^*$ is the unique nontrivial automorphism.
\end{example}

\subsection{Crystals}
\label{SS:crystals}

Let $\geh$ be an affine Lie algebra.
We consider the following categories of crystal graphs of
modules over a quantum affine algebra $U'_q(\geh)$: $\CCH(\geh)$, direct sums
of affine highest weight crystals, and $\CC(\geh)$, tensor products of
Kirillov-Reshetikhin (KR) crystals. For KR crystals we refer to \cite{FOS1}.
Let $\CCH(\geh_0)$ be the category of direct sums of crystal graphs
of highest weight $U_q(\geh_0)$-modules.

Let $B$ be a crystal in one of the above categories.
$B$ is a graph with vertex set also denoted $B$ and directed edges labeled by the
elements of the set $K$ of Dynkin nodes of $\geh$. We call $B$ a $K$-crystal. For $K'\subset K$
write $B_{K'}(b)$ for the $K'$-connected
component of $b\in B$, that is, the connected component of the graph in which
all directed edges are removed except those labeled by $K'$.
For $i\in K$, each $\{i\}$-connected component
is a finite directed path called an $i$-string. Then for $b\in B$,
$f_i(b)$ (resp. $e_i(b)$) is the next (resp. previous) vertex on the $i$-string of $b$
if it exists, and is declared to be the special symbol $0$ otherwise.
Let $\vphi_i(b)$ and $\veps_i(b)$ denote the number of steps to the end (resp. start)
of the $i$-string of $b$. For a sequence $\as=(i_1,\dotsc,i_p)$ of indices in $K$ define
\[
e_{\as}(b)=e_{i_1}(e_{i_2}(\dotsm e_{i_p}(b)\dotsm))
\]
and $f_{\as}(b)$ similarly.

For $B\in\CC(\geh)$ or $B\in\CCH(\geh)$, let $K=I$ and
define the functions $\vphi,\veps:B\to P'$ by
\begin{align}\label{L:phidef}
  \vphi(b) &= \sum_{i\in I} \vphi_i(b) \La_i \\
  \veps(b) &= \sum_{i\in I} \veps_i(b) \La_i.
\end{align}
For $B\in\CCH(\geh_0)$ we have $\vphi,\veps:B\to P_0$
with $I$ replaced by $I_0$ and $\La_i$ replaced by $\omega_i$.

For $B\in\CC(\geh)$ or $B\in\CCH(\geh)$
we define the weight function $\wt:B\to P'$ by
\begin{align} \label{E:wtdef}
  \wt(b) = \vphi(b) - \veps(b).
\end{align}
For $B\in\CC(\geh)$ the values of $\wt$ lie in the level zero sublattice $P^0\subset P'$.
For $B\in\CCH(\geh_0)$ we have $\wt:B\to P_0$ defined by \eqref{E:wtdef}.

For $B\in\CC(\geh)$ or $B\in\CCH(\geh)$ we have
\begin{align} \label{E:wte}
\wt(e_i(b))&=\wt(b)+\alpha_i'&\qquad&\text{for $i\in I$ if $e_i(b)\ne0$} \\
\label{E:wtf}
\wt(f_i(b))&=\wt(b)-\alpha_i'&\qquad&\text{for $i\in I$ if $f_i(b)\ne0$.}
\end{align}
For $B\in\CCH(\geh_0)$ the same conditions hold with $\alpha_i'=\alpha_i$
and $i\in I_0$.

For $K'\subset K$, the set of $K'$-highest weight vertices in the $K$-crystal $B$ is defined by
\begin{align*}
  \hw_{K'}(B) = \{b\in B \mid e_i(b) = 0 \text{ for all $i\in K'$ }\}.
\end{align*}
Let $\hw_{K'}(b)$ denote the unique $K'$-highest weight vertex in the $K'$-component of $b$.

If $\la$ is a $K'$-dominant weight then define
\begin{align}
  \hw_{K'}^\la(B) = \{b\in \hw_{K'}(B)\mid \wt_{K'}(b)=\la \}
\end{align}
for the subset of $\hw_{K'}(B)$ of vertices of weight $\la$
and $B(\la)=B_{K'}(\la)$ for the irreducible $K'$-crystal of highest weight $\la$.

Let $B_1,B_2$ be $K$-crystals. Then $B_1\otimes B_2$ is a $K$-crystal
via Kashiwara's tensor convention
\begin{align}\label{E:tensor}
  e_i(b_1\otimes b_2) &=
  \begin{cases}
  e_i(b_1)\otimes b_2 & \text{if $\vphi_i(b_1) \ge \veps_i(b_2)$} \\
  b_1 \otimes e_i(b_2) & \text{if $\vphi_i(b_1)<\veps_i(b_2)$.}
  \end{cases}
\end{align}

\begin{lemma} \label{L:tensorhwv} Let $B_1,B_2$ be $K$-crystals
and $b_1,c_1\in B_1$ and $b_2,c_2\in B_2$ such that $c_1\otimes c_2\in \hw_K(B_1\otimes B_2)$
and $b_1\otimes b_2 \in B_K(c_1 \otimes c_2)$. Then $c_1\in \hw_K(B_1)$ and
$b_1\in B_K(c_1)$.
\end{lemma}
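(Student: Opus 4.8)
The plan is to reduce everything to the tensor product rule \eqref{E:tensor}, using only that $\vphi_i$ and $\veps_i$ take non-negative integer values. I would treat the two conclusions in turn, and I expect no genuine difficulty: the argument is formal, with the single delicate point being an application of the non-negativity of $\vphi_i$.

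First I would establish $c_1 \in \hw_K(B_1)$. Fix $i \in K$ and use that $c_1 \otimes c_2 \in \hw_K(B_1 \otimes B_2)$ gives $e_i(c_1 \otimes c_2) = 0$. By \eqref{E:tensor}, if $\vphi_i(c_1) \ge \veps_i(c_2)$ then $e_i(c_1 \otimes c_2) = e_i(c_1) \otimes c_2$, so $e_i(c_1) = 0$, which is exactly what we want. In the complementary case $\vphi_i(c_1) < \veps_i(c_2)$ we would have $e_i(c_1 \otimes c_2) = c_1 \otimes e_i(c_2) = 0$, forcing $e_i(c_2) = 0$ and hence $\veps_i(c_2) = 0$; but then $\vphi_i(c_1) < 0$, which is impossible. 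So only the first case can occur, and $e_i(c_1) = 0$ for every $i \in K$, giving $c_1 \in \hw_K(B_1)$.

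For $b_1 \in B_K(c_1)$ I would argue by propagation along the edges of the connected component $B_K(c_1 \otimes c_2)$. Every vertex of this component is reached from $c_1 \otimes c_2$ by a finite sequence of operators $e_j$ and $f_j$ with $j \in K$, and I claim the property ``the first tensor factor lies in $B_K(c_1)$'' is preserved under each such operator. Indeed, by \eqref{E:tensor} and its evident analogue for $f_i$, applying $e_i$ or $f_i$ to a vertex $x_1 \otimes x_2$ either leaves the first factor unchanged, or replaces $x_1$ by $e_i(x_1)$ or $f_i(x_1)$; in the latter case the new first factor lies in the same $\{i\}$-string as $x_1$, hence in the same $K$-component $B_K(c_1)$. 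Since the starting vertex $c_1 \otimes c_2$ trivially has first factor $c_1 \in B_K(c_1)$ and $b_1 \otimes b_2$ lies in the component, the property is inherited, yielding $b_1 \in B_K(c_1)$.

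The main obstacle, such as it is, lies in the first part: one must rule out the possibility that $e_i$ acts on the second tensor factor. This is precisely where the non-negativity of $\vphi_i$ is used, and no structural hypotheses on the crystals $B_1$ or $B_2$ are needed beyond the tensor rule itself.
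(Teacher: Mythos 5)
Your proof is correct. The first conclusion is handled exactly as in the paper, which simply says ``$c_1\in \hw_K(B_1)$ holds by \eqref{E:tensor}''; your case analysis, with the observation that $\veps_i(c_2)=0$ would force the impossible inequality $\vphi_i(c_1)<0$, is the intended reasoning (and is legitimate here, since in this paper $\vphi_i$ counts steps to the end of an $i$-string and so is a non-negative integer). For the second conclusion your route differs slightly from the paper's. The paper chooses a sequence $\as=(i_1,\dotsc,i_m)$ of \emph{raising} operators with $e_{\as}(b_1\otimes b_2)=c_1\otimes c_2$ and extracts, via \eqref{E:tensor}, the subsequence $\bs$ of steps that act on the first factor, concluding $e_{\bs}(b_1)=c_1$; this presupposes that the highest weight vertex $c_1\otimes c_2$ is reachable from $b_1\otimes b_2$ by raising operators alone, which is true for the crystals in the paper's categories (components of highest weight vertices there are highest weight crystals) but is not a formal property of abstract $K$-crystals. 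Your propagation argument along arbitrary paths of $e_j$ and $f_j$ edges uses only the tensor rule (together with its $f_i$ analogue, which follows since $f_i$ is the partial inverse of $e_i$) and connectedness, so it is a little more elementary and more general: it proves the lemma for any $K$-crystals satisfying \eqref{E:tensor} with non-negative $\veps_i,\vphi_i$. What the paper's version buys in exchange is the slightly sharper fact that $b_1$ can be raised all the way to $c_1$ by a subsequence of the same raising sequence, not merely that $b_1$ and $c_1$ lie in one $K$-component.
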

\begin{proof} $c_1\in\hw_K(B_1)$ holds by \eqref{E:tensor}.
Let $\as=(i_1,\dotsc,i_m)$ be a sequence of elements in $K$ such that
$e_{\as}(b_1 \otimes b_2) = c_1 \otimes c_2$. By \eqref{E:tensor}
there is a subsequence $\bs$ of $\as$ such that $e_{\bs}(b_1)=c_1$.
\end{proof}

\subsection{KR crystal generalities}
\label{SS:KRgen}

Let $\CC=\CC(\geh)$ be the tensor category of tensor products of KR crystals
$B^{r,s}$. An $I$-crystal $B$ is \textit{regular} if for all subsets $K\subset I$
with $|K|=2$, the $K$-components of $B$ are isomorphic to
crystal graphs of $U_q(\geh_K)$-crystals where $\geh_K$ is the subalgebra of $\geh$
corresponding to $K$.

\begin{theorem} \label{T:KR}
Let $\geh$ be of nonexceptional affine type.
\begin{enumerate}
\item \cite{OS} For every $(r,s)\in I_0 \times \Z_{>0}$, there is an irreducible
$U'_q(\geh)$-module $W_s^{(r)}$ with affine crystal basis $B^{r,s}$. In particular
every $B\in\CC$ is regular.
\item \cite{FOS1} The affine crystal structure on $B^{r,s}$ is determined.
\end{enumerate}
\end{theorem}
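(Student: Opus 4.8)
The plan is to assemble the two cited results rather than reprove them, since each part of the statement is a known theorem. For part~(1), the module $W_s^{(r)}$ is the irreducible finite-dimensional $U'_q(\geh)$-module associated to the node $r\in I_0$ and the positive integer $s$; its existence and irreducibility are standard, so the substantive assertion is that $W_s^{(r)}$ admits a crystal basis $B^{r,s}$. For $\geh$ of nonexceptional affine type this is precisely the existence theorem of Okado--Schilling \cite{OS}, which I would invoke directly.

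For the regularity claim I would argue from the crystal-basis formalism. If $(L,B)$ is the crystal basis of any integrable $U'_q(\geh)$-module $M$ and $K\subset I$, then the restriction of $M$ to the subalgebra $U_q(\geh_K)$ is again integrable and $(L,B)$ restricts to its crystal basis; hence each $K$-connected component of $B$ is the crystal graph of a $U_q(\geh_K)$-module. Taking all $K$ with $|K|=2$ gives the regularity of $B^{r,s}$. Since the tensor product of crystal bases is again a crystal basis, regularity is inherited by every tensor product $B\in\CC$. For part~(2), the explicit combinatorial structure---the Kashiwara--Nakashima tableau model governing the $I_0$-arrows together with the rule for the $0$-arrows that completes $B^{r,s}$ to an $I$-crystal---is the main content of Fourier--Okado--Schilling \cite{FOS1}, which I would cite verbatim.

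The single genuine obstacle is part~(1), and it is not one we would reprove here: the uniform construction of KR crystals across all nonexceptional families in \cite{OS} rests on perfectness and virtual-crystal techniques and is a substantial result in its own right. By contrast, regularity is an immediate consequence of the crystal-basis formalism together with the tensor-product property, and the combinatorial description of part~(2) is a direct citation; so beyond recalling \cite{OS} there is essentially nothing further to do.
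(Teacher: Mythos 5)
Your proposal is correct and matches the paper's treatment: the paper offers no proof of this theorem beyond the citations themselves, invoking \cite{OS} for the existence of the KR modules $W_s^{(r)}$ with crystal bases $B^{r,s}$ and \cite{FOS1} for the explicit affine crystal structure, exactly as you do. Your added remark that regularity follows from restricting the crystal basis to rank-two subalgebras $U_q(\geh_K)$ (and that tensor products of crystal bases are crystal bases) is the correct, standard justification of the clause ``every $B\in\CC$ is regular,'' which the paper likewise treats as an immediate consequence of part (1).
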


\begin{proposition} \label{P:RH} Let $B_1,B_2\in\CC$.
\begin{enumerate}
\item There is an $I$-crystal isomorphism
$R=R_{B_1,B_2}: B_1 \otimes B_2 \to B_2 \otimes B_1$ called the combinatorial $R$-matrix.
By uniqueness, for $B\in\CC$, $R_{B,B}$ is the identity on $B\otimes B$.
\item There is a unique map
$\Hb=\Hb_{B_1,B_2}: B_1\otimes B_2 \to \Z$, called coenergy function up to additive constant, 
such that $\Hb$ is constant on $I_0$-components, and for
$b=b_1\otimes b_2\in B_1 \otimes B_2$,
\begin{align} \label{E:Hb0}
  \Hb(e_0(b)) &= \Hb(b) +
  \begin{cases}
   -1 & \text{in case LL} \\
   0 &\text{in case LR or RL} \\
   1 &\text{in case RR}
  \end{cases}
\end{align}
where in case LL, when $e_0$ is applied to $b_1\otimes b_2$ and to
$R_{B_1,B_2}(b_1\otimes b_2)=b_2'\otimes b_1'$ as in
\eqref{E:tensor}, it acts on the left factor both times, in case RR
$e_0$ acts on the right factor both times, etc.
\end{enumerate}
\end{proposition}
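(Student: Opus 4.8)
The plan is to treat the two parts separately, deriving uniqueness in each from the connectedness of $B_1\otimes B_2$ as an $I$-crystal, and deriving existence from the representation-theoretic origin of the KR crystals supplied by Theorem~\ref{T:KR}.

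For part (1), existence of $R$: by Theorem~\ref{T:KR}(1) each $B_j$ is the crystal basis of an irreducible $U'_q(\geh)$-module $W_{s_j}^{(r_j)}$. Passing to the spectral-parameter affinizations, the normalized quantum $R$-matrix furnishes a $U'_q(\geh)$-module isomorphism $W_1\otimes W_2 \xrightarrow{\ \sim\ } W_2\otimes W_1$; taking its crystal limit (after clearing denominators so that crystal lattices and bases are preserved) yields a morphism of $I$-crystals $R\colon B_1\otimes B_2\to B_2\otimes B_1$ commuting with every $e_i,f_i$. For uniqueness I would invoke the fact---standard for tensor products of KR crystals, since the underlying modules are good in Kashiwara's sense---that $B_1\otimes B_2$ is connected as an $I$-crystal. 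A morphism between connected crystals is determined by the image of any single vertex, and weight considerations force the unique vertex of maximal weight in $B_1\otimes B_2$ to be sent to that of $B_2\otimes B_1$; hence $R$ is unique. Finally, when $B_1=B_2=B$ the identity map is an $I$-crystal isomorphism $B\otimes B\to B\otimes B$, so uniqueness immediately gives $R_{B,B}=\mathrm{id}$.

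For part (2), I would first dispose of uniqueness, again using connectedness. The requirement that $\Hb$ be constant on $I_0$-components fixes its value on each component up to one additive constant per component, and the recursion~\eqref{E:Hb0} (together with its inverse read off along $f_0$) prescribes exactly how this constant changes across each $0$-arrow joining two components. Since $B_1\otimes B_2$ is connected, the graph whose vertices are the $I_0$-components and whose edges are the $0$-arrows is connected, so fixing $\Hb$ on one component propagates to a determination on all of $B_1\otimes B_2$; thus $\Hb$ is unique up to a single global additive constant.

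The real content, and the step I expect to be the main obstacle, is existence in part (2): one must show the recursion is consistent, i.e.\ that the net change of $\Hb$ prescribed by~\eqref{E:Hb0} around any closed loop in the component graph vanishes. Here the tool is the combinatorial $R$-matrix from part (1): since $R$ is an $I$-crystal morphism it commutes with $e_0$, and the case labels LL/LR/RL/RR record precisely on which tensor factor $e_0$ acts in $b_1\otimes b_2$ and in $R(b_1\otimes b_2)=b_2'\otimes b_1'$, via Kashiwara's rule~\eqref{E:tensor}. I would reduce the loop condition to relations generated by rank-two configurations: because every $B\in\CC$ is regular (Theorem~\ref{T:KR}(1)), each two-node subcrystal $B_K$ with $\abs{K}=2$ is a genuine $U_q(\geh_K)$-crystal, so the consistency of the $\{-1,0,1\}$-assignment can be verified in the finitely many rank-two affine types. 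Alternatively, and perhaps more cleanly, existence may be obtained directly from representation theory: the energy is the crystal shadow of the degree in the spectral parameter of the normalized $R$-matrix used in part (1), and that degree grading is automatically well defined, which bypasses the combinatorial loop analysis. Either route establishes a function $\Hb$ satisfying~\eqref{E:Hb0} and constant on $I_0$-components, completing the proof.
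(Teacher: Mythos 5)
Your proposal is correct and takes essentially the same route as the paper, whose proof is a one-line citation to \cite{KMN}: both parts are deduced from the quantum $R$-matrix acting on spectral-parameter affinizations of the KR modules furnished by Theorem \ref{T:KR}(1), with the coenergy function arising as the shift in spectral-parameter degree, exactly as in your preferred existence argument for part (2). One caveat worth noting: in this paper connectedness of $B_1\otimes B_2$ (Proposition \ref{P:simpleconnected}) is itself proved \emph{after} and \emph{using} the coenergy function $\Hb_{B,B}$ via Naito--Sagaki, so your uniqueness arguments must rely, as you indeed indicate, on an independent source for connectedness (Kashiwara's good-module theory / Akasaka--Kashiwara), or else be deferred until after existence is established representation-theoretically.
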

\begin{proof} Arguing as in \cite{KMN} one may deduce these properties
from the existence of the universal R-matrix, the Yang-Baxter relation
for $R$, and Theorem \ref{T:KR}(1).
\end{proof}

Let $B$ be regular. An element $b\in B$ is called
an \textit{extremal vector} of weight $\la$ if $\wt(b)=\la$ and there exist elements $\{b_w\}_{w\in W}$ such that
\begin{itemize}
\item $b_w=b$ for $w=e$,
\item if $\langle \al_i^\vee,w\la\rangle\ge0$ then $e_i(b_w)=0$ and
    $f_i^{\langle \al_i^\vee,w\la\rangle}(b_w)=b_{s_iw}$,
\item if $\langle \al_i^\vee,w\la\rangle\le0$ then $f_i(b_w)=0$ and
    $e_i^{-\langle \al_i^\vee,w\la\rangle}(b_w)=b_{s_iw}$.
\end{itemize}
A finite regular crystal $B$ with weights in $P^0$ is called \textit{simple} \cite{AK} \cite{NS} if there exists $\la\in P^0$
such that the weight of any extremal vector is contained in $W\la$ and $B$ contains a unique element of weight $\la$.
Here $W$ is the affine Weyl group, which acts on $P^0\cong P_0$ by the level zero action.

\begin{proposition} \label{P:simpleconnected}\
\begin{enumerate}
\item[(1)] Every $B\in\CC$ is simple. In particular $B$ contains a unique extremal vector $u(B)$
with $\wt(u(B))\in P_0^+$. Moreover $u(B^{r,s})\in B^{r,s}$ is the unique
vector of weight $s\omega_r$ and
$u(B_1\otimes B_2)=u(B_1)\otimes u(B_2)$ for $B_1,B_2\in \CC$.
\item[(2)] For every $B\in \CC$, $B$ is $I$-connected.
\end{enumerate}
\end{proposition}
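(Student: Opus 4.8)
The plan is to deduce the whole statement from the theory of simple crystals of \cite{AK,NS}, using its two structural properties: simpleness is stable under tensor products, and a simple crystal is connected. First I would verify that each KR crystal $B^{r,s}$ is simple. By Theorem \ref{T:KR}(1) the module $W_s^{(r)}$ is finite-dimensional and irreducible, so its crystal $B^{r,s}$ is simple in the sense recalled above, with defining weight the dominant representative $s\omega_r$ of the relevant $W$-orbit for the level-zero action. The classical decomposition of $B^{r,s}$ has $s\omega_r$ as its unique maximal $I_0$-highest weight, occurring with multiplicity one, so $B^{r,s}$ contains exactly one vertex of weight $s\omega_r$; this vertex is $u(B^{r,s})$.

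Next I would treat tensor products by induction on the number of KR factors, the inductive step being the simpleness of $B_1\otimes B_2$ for simple $B_1,B_2\in\CC$ with dominant defining weights $\la_1,\la_2$ and unique top vertices $u_1,u_2$. First I would show $u_1\otimes u_2$ is extremal of weight $\la_1+\la_2$ by assembling its extremal family $\{(u_1\otimes u_2)_w\}_{w\in W}$ from the families of $u_1$ and $u_2$, where dominance of $\la_1,\la_2$ makes Kashiwara's tensor rule \eqref{E:tensor} compatible with the required string conditions. Since $\la_1,\la_2$ are dominant, weight bounds force $\la_1+\la_2$ to be the maximal weight of $B_1\otimes B_2$, attained only by $u_1\otimes u_2$; this yields both the uniqueness of the top vertex and the identity $u(B_1\otimes B_2)=u_1\otimes u_2$, hence the multiplicativity $u(B_1\otimes B_2)=u(B_1)\otimes u(B_2)$. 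The remaining requirement, that every extremal weight of $B_1\otimes B_2$ lie in $W(\la_1+\la_2)$, is exactly the tensor-stability of simpleness from \cite{AK,NS}.

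For part (2) I would use connectedness of simple crystals. Let $C$ be any $I$-connected component of $B\in\CC$. As a finite regular crystal, $C$ contains an extremal vertex $v$ (for instance, maximize a generic linear functional on the weights occurring in $C$), whose weight lies in the orbit $W\la$ of the defining weight $\la$ by simpleness. Some member $v_w$ of the extremal family of $v$ has weight $\la$ and, being joined to $v$ by strings of crystal operators, lies in $C$; by the uniqueness established in (1) this member equals $u(B)$. Thus every component contains $u(B)$, so $B=B_I(u(B))$ is $I$-connected.

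The hard part will be the inductive step in (1): checking that $u_1\otimes u_2$ is genuinely extremal and that $\la_1+\la_2$ is attained by a single vertex requires careful bookkeeping of the level-zero $W$-action on $P^0\cong P_0$ and of its interaction with the tensor rule \eqref{E:tensor}. Once the tensor-stability of simpleness is invoked from \cite{AK,NS}, the identification of the defining weight as $s\omega_r$, the multiplicativity of $u(\cdot)$, and the connectedness in (2) are comparatively formal.
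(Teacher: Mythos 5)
Your opening step contains a genuine gap: you deduce that $B^{r,s}$ is simple from the fact that $W_s^{(r)}$ is a finite-dimensional irreducible $U_q'(\geh)$-module. No such implication is available, and it is not what Theorem \ref{T:KR}(1) provides. Irreducibility is a statement about the module; simplicity is a combinatorial statement about the crystal (uniqueness of the vertex of weight $\la$ and confinement of all extremal weights to $W\la$), and bridging the two is exactly the nontrivial content here. The paper's actual route is: $B^{r,s}$ is finite and regular (Theorem \ref{T:KR}(1)), it admits a coenergy function $\Hb_{B,B}$ (Proposition \ref{P:RH}, itself deduced from the universal $R$-matrix and the Yang--Baxter relation), and by the argument of Naito--Sagaki \cite[Section 4.2]{NS}, a finite regular crystal equipped with such a coenergy function is simple (the paper even needs a footnote to justify that the Naito--Sagaki proof applies outside the fixed-point setting). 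Your proposal never invokes $\Hb_{B,B}$ or any substitute for it, so the base case of your induction is unsupported; everything downstream (tensor-stability from \cite{AK}, multiplicativity of $u(\cdot)$, connectedness) rests on it.

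There is a secondary gap in your part (2). Rather than citing \cite{AK} for ``simple implies connected'' (which is what the paper does), you sketch a proof, and the sketch hides the hard step: you assert that a vertex maximizing a generic linear functional on the weights of a component is an extremal vector. In the level-zero setting the images $\al_i'$ of the simple roots satisfy $\sum_i a_i\al_i'=0$, so such a maximizer is only ``pseudo-extremal'' (for each $i$, either $e_i$ or $f_i$ kills it, consistently with the sign of $\ip{\al_i^\vee}{\wt(v)}$); producing the full family $\{v_w\}_{w\in W}$ required by the definition of extremality is precisely the content of the connectedness lemma in \cite{AK} and cannot be waved through. By contrast, your treatment of the tensor step is essentially sound and matches the paper: tensor products of simple crystals are simple by \cite{AK}, and $u(B_1)\otimes u(B_2)$ is extremal of dominant weight, hence equals $u(B_1\otimes B_2)$ by the uniqueness clause in (1).
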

\begin{proof}
By \cite{AK} a simple crystal is connected and the tensor product of simple crystals
is also simple. In \cite[Section 4.2]{NS} Naito and Sagaki proved that
a finite regular crystal $B$ with coenergy function $\Hb_{B\,B}$
is simple.\footnote{Although they assume that $B$ is realized as a fixed point crystal,
their proof is valid under the given condition.}
The equality $u(B_1\ot B_2)=u(B_1)\ot u(B_2)$ follows from the fact that the r.h.s is extremal.
\end{proof}

\begin{remark} \label{R:uniqueisomorphisms}
\begin{itemize}
\item[(1)] Proposition \ref{P:simpleconnected} implies that if there is an $I$-crystal isomorphism 
$g:B\to B'$ for $B,B'\in\CC$, then it is unique: it must satisfy $g(u(B))=u(B')$ and the rest of 
its values are determined since $B$ is $I$-connected.
\item[(2)] For $B_1,B_2\in\CC$ we normalize the coenergy function $\Hb$ by $\Hb(u(B_1\ot B_2))=0$.
\end{itemize}
\end{remark}

The \textit{level} of $B\in\CC$ is defined by
\begin{align}
  \lev(B) = \min_{b\in B} \lev(\vphi(b)) = \min_{b\in B} \lev(\veps(b)).
\end{align}
The subset $B_{\min}\subset B$ is defined by
\begin{align*}
  B_{\min} &= \{ b\in B \mid \lev(\vphi(b))=\lev(B)\} \\
  &= \{ b\in B \mid \lev(\veps(b)) = \lev(B) \}.
\end{align*}

The crystal $B$ is said to be \textit{perfect} (in the sense of \cite{NS}; compare with \cite{KMN})
if $B$ is the crystal graph of a $U'_q(\geh)$-module, $B$ is simple, and
the maps $\vphi$ and $\veps$ are bijections from $B_{\min}$ to the set
of weights $\la\in P'$ that are dominant and have $\lev(\la)=\lev(B)$.

\begin{theorem} \cite{FOS2}
With $c_r$ as in \eqref{E:crdef},
\begin{enumerate}
\item $\lev(B^{r,s}) = \lceil \frac{s}{c_r} \rceil$.
\item $B^{r,s}$ is perfect if and only if $s/c_r\in\Z$.
\end{enumerate}
\end{theorem}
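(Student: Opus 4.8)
The plan is to reduce everything to the explicit combinatorial model of $B^{r,s}$ provided by Theorem \ref{T:KR}(2) (i.e. \cite{FOS1}), which gives closed formulas for the affine operators $e_0,f_0$ and hence for $\veps_0,\vphi_0$ on each tableau. Since $\wt(b)=\vphi(b)-\veps(b)$ lies in the level-zero lattice $P^0\subset P'$, the two displayed expressions for $\lev(B^{r,s})$ agree, so computing the level amounts to minimizing the single nonnegative function $b\mapsto \lev(\veps(b))=\sum_{i\in I}\lev(\La_i)\,\veps_i(b)$ over the finite, $I$-connected crystal $B^{r,s}$ (Proposition \ref{P:simpleconnected}). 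Writing $\ell=\lev(B^{r,s})$ and $P^+_\ell$ for the set of dominant weights of $P'$ of level $\ell$, perfectness then becomes exactly the statement that $\veps$ (and, symmetrically, $\vphi$) restricts to a bijection $B_{\min}\xrightarrow{\sim}P^+_\ell$.

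For part (1) I would first record that the classical highest weight vector $u=u(B^{r,s})$, of weight $s\omega_r$, is \emph{not} minimal in general: comparing $\La_0$-coefficients in the lift $\wt(u)=s(\La_r-a_r^\vee\La_0)$ afforded by \eqref{E:lift} gives $\veps_0(u)=\vphi_0(u)+s\,a_r^\vee\ge s\,a_r^\vee$, which typically exceeds $\lceil s/c_r\rceil$. Hence the true minimal vectors live in the smaller $I_0$-components of the decomposition \eqref{E:KRI0decomp}, and I would locate them by writing down $(\veps_i(b))_{i\in I}$ from the \cite{FOS1} formulas and minimizing $\sum_i\lev(\La_i)\veps_i(b)$. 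Exhibiting one tableau attaining the value $\lceil s/c_r\rceil$ yields the upper bound, and a string-by-string argument that no tableau does better yields the matching lower bound. The constant $c_r=\max(1,a_r/a_r^\vee)$ enters through the mark/comark ratio at node $r$: precisely in the nonexceptional types with $a_r>a_r^\vee$ (so $c_r=2$) the admissible descents of $\veps_0$ among minimal tableaux come in blocks tied to this ratio, forcing the minimum to be $\lceil s/c_r\rceil$ rather than $s$. The irregular comark $a_0^\vee=2$ of $A_{2n}^{(2)}$ (see \eqref{E:a0vee}), reconciled with $\lev(\La_0)=1$ from \eqref{E:levLa0}, must be tracked separately in this bookkeeping.

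For part (2), suppose first $s/c_r\in\Z$ and put $\ell=s/c_r$. I would build the bijection $\veps:B_{\min}\to P^+_\ell$ by hand: given $\mu=\sum_{i\in I}m_i\La_i\in P^+_\ell$, produce from the model a minimal tableau $b(\mu)$ with $\veps(b(\mu))=\mu$, conversely read $\mu$ off any minimal tableau, and check the two assignments are mutually inverse. Because $B^{r,s}$ is the crystal of a $U'_q(\geh)$-module and is simple (Theorem \ref{T:KR}(1), Proposition \ref{P:simpleconnected}), establishing this single bijection suffices; bijectivity for $\vphi$ follows from that for $\veps$ by the Lusztig/$*$-duality, since $*$ merely permutes $P^+_\ell$. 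When $c_r\nmid s$ I would instead show the candidate correspondence breaks down—e.g. that $\veps$ fails to be surjective onto $P^+_\ell$, so that some level-$\ell$ dominant weight has no unique minimal preimage—whence $B^{r,s}$ is not perfect.

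The main obstacle is clearly part (2): producing and verifying the bijection $B_{\min}\xrightarrow{\sim}P^+_\ell$ \emph{uniformly} across the nonexceptional families. The difficulty is genuinely combinatorial, requiring control of the $e_0,f_0$-action on minimal tableaux type by type, and it is compounded by the two structural irregularities already visible in the level computation—the doubling $c_r=2$ in the long-root directions, and the special comark $a_0^\vee=2$ of $A_{2n}^{(2)}$ which shifts the normalization of the level. This is precisely the content carried out in \cite{FOS2}.
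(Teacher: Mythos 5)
The first thing to note is that the paper contains no proof of this statement: it is imported from \cite{FOS2}, and the nearest internal material is Lemma \ref{L:phimin}, whose proof simply lists, type by type (again citing \cite{FOS2}), the elements $u_j(r,s)$ with $\veps(u_j(r,s))=\ell\La_j$ together with their $\vphi$-values. So your sketch can only be measured against the external proof it gestures at, and judged as a proof it has a genuine gap --- indeed it is essentially all gap. The framing steps you do carry out are correct: the two expressions for $\lev(B^{r,s})$ agree because $\wt$ takes level-zero values, and the computation $\veps_0(u)=\vphi_0(u)+s\,a_r^\vee$ correctly shows that $u(B^{r,s})$ need not be minimal. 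But every step that carries the actual content is deferred: no tableau attaining level $\lceil s/c_r\rceil$ is exhibited, no lower-bound (``string-by-string'') argument is given, the bijection from $B_{\min}$ to the set of level-$\ell$ dominant weights is only promised rather than constructed, and the reduction of the bijectivity of $\vphi$ to that of $\veps$ invokes a self-duality of $B^{r,s}$ under the Lusztig involution which is itself a nontrivial unproved assertion. Your closing sentence concedes the point: what you have written is an accurate description of the strategy of \cite{FOS2}, not a proof of the theorem.

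One concrete mathematical slip is worth flagging. For the ``only if'' direction of part (2) you propose to show that when $c_r\nmid s$ the map $\veps$ ``fails to be surjective'' onto the level-$\ell$ dominant weights. In the documented non-perfect cases the failure is injectivity: for $\geh=C_n^{(1)}$ and $s=2\ell-1$, the paper's own proof of Proposition \ref{prop:row split} exhibits three distinct elements $u'\in B^{1,s}$, namely $r,\ol{r-1}\in B(\omega_1)$ and $1(r-1)\ol{r-1}\in B(3\omega_1)$, all sharing the single value $\veps(u')=(\ell-1)\La_0+\La_{r-1}$, which has level exactly $\ell=\lev(B^{1,s})$; hence $\veps$ is not injective on $B_{\min}$. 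A completed proof along your lines would have to identify and verify the correct obstruction in each non-perfect family, which the sketch does not attempt.
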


\begin{lemma} \label{L:phimin}
Let $\geh$ be of nonexceptional affine type,
$(r,s)\in I_0\times\Z_{>0}$, $\ell=\lev(B^{r,s})$ and $j\in I^s$.
Then there is a unique element $u_j(r,s)\in B^{r,s}$ such that
$\veps(u_j(r,s))=\ell\La_j$.
Moreover, writing $t_{-c_{r^*} \omega_{r^*}} = w\tau$ for
$w\in W$ and $\tau\in\Sigma$ with $*$ as in \eqref{E:*def} we have
\begin{align} \label{E:minphi}
\vphi(u_j(r,s))) = \begin{cases}
\ell \La_{\tau(j)} & \text{if $B^{r,s}$ is perfect} \\
(\ell-1)\La_n+\La_{n-r} &\text{if $\geh=C_n^{(1)},1\le r\le n-1,j=n$} \\
(\ell-1)\La_{\tau(j)}+\La_r & \text{otherwise.}
\end{cases}
\end{align}
\end{lemma}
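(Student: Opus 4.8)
The plan is to treat existence and uniqueness first, then to compute $\vphi(u_j(r,s))$ by reducing it to the computation of $\wt(u_j(r,s))$. A vertex $u$ with $\veps(u)=\ell\La_j$ satisfies $e_i(u)=0$ for all $i\neq j$, so it is $(I\setminus\{j\})$-highest weight. When $a_j^\vee=a_0^\vee=1$ (that is, in every nonexceptional family except $A_{2n}^{(2)}$) one has $\lev(\veps(u))=\ell a_j^\vee=\ell=\lev(B^{r,s})$, so $u\in B_{\min}$. In that situation, if $B^{r,s}$ is perfect I would invoke the perfectness criterion \cite{FOS2} together with the bijectivity of $\veps$ on $B_{\min}$: $\veps$ restricts to a bijection from $B_{\min}$ onto the level-$\ell$ dominant weights, and $\ell\La_j$ is such a weight, giving existence and uniqueness of $u_j(r,s)$ at once. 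For $A_{2n}^{(2)}$ (where $a_0^\vee=2$ and $I^s=\{0\}$) the target $\ell\La_0$ has level $2\ell>\ell$, so $u_0(r,s)\notin B_{\min}$; there I would instead locate it as the unique $I_0$-highest weight vertex of the appropriate classical component in \eqref{E:KRI0decomp}, using $I$-connectedness from Proposition \ref{P:simpleconnected} for uniqueness.

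For the value of $\vphi$, I would use $\vphi(u_j)=\veps(u_j)+\wt(u_j)=\ell\La_j+\wt(u_j)$ together with the fact that $\wt(u_j)\in P^0$ has level zero, which already forces $\lev(\vphi(u_j))=\ell$ and leaves only $\wt(u_j)$ to be determined. The key point is that $u_j(r,s)$ is extremal: being $(I\setminus\{j\})$-highest weight in a simple crystal, it is one of the corner vertices reached by transporting the distinguished extremal vector $u(B^{r,s})$ of weight $s\omega_r$ (Proposition \ref{P:simpleconnected}) under the extended affine Weyl group $\tW\cong W_0\ltimes\tM$. The Dynkin automorphism governing the passage from the $\veps$-support $j$ to the $\vphi$-support is exactly the $\Sigma$-component $\tau$ of the translation $t_{-c_{r^*}\omega_{r^*}}$: the appearance of $r^*$ rather than $r$ reflects the contragredient relation exchanging $\veps$ and $\vphi$ (through $-w_0$, as in \eqref{E:*def}), and writing $t_{-c_{r^*}\omega_{r^*}}=w\tau$ separates the genuine Weyl part $w$ (which only permutes nodes within $W_0$) from the diagram relabeling $\tau\in\Sigma$. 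This yields $\vphi(u_j)=\ell\La_{\tau(j)}$ in the perfect case, since then the single corner remains a single corner.

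In the non-perfect nonexceptional cases one has $c_r=2$ and $s$ odd, so $\ell=\lceil s/c_r\rceil=(s+1)/2$ and the defect $c_r\ell-s=1$ prevents $\veps$ from surjecting onto all level-$\ell$ dominant weights. I would track this defect through the same transport: the corner at $\tau(j)$ acquires one extra box, producing the correction $+\La_r$ and the formula $\vphi(u_j)=(\ell-1)\La_{\tau(j)}+\La_r$. The genuinely exceptional subcase is $\geh=C_n^{(1)}$ with $1\le r\le n-1$ and $j=n$, where the support stays at $n$ rather than $\tau(n)$ and the correction is $\La_{n-r}$; here I expect to need the explicit classical decomposition \eqref{E:KRI0decomp} of $B^{r,s}$ to identify precisely which $I_0$-component contains $u_n(r,s)$ and to read off its highest weight.

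The main obstacle is the precise identification carried out in the second and third paragraphs: proving that the passage $j\mapsto\tau(j)$ is controlled by exactly the $\Sigma$-part of $t_{-c_{r^*}\omega_{r^*}}$, and pinning down the exact placement and size of the correction in the non-perfect cases. The conceptual route rests on matching the automorphism $\vphi\circ\veps^{-1}$ of the perfect crystal with this translation, while the robust route is a family-by-family computation using the explicit affine crystal structure of \cite{FOS1} (Theorem \ref{T:KR}). I expect the $C_n^{(1)}$ exception and the $A_{2n}^{(2)}$ level subtlety to be the two points that resist a uniform argument and force some case analysis.
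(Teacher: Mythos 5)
Your existence--uniqueness argument in the perfect case is sound and coincides with the paper's: any $u$ with $\veps(u)=\ell\La_j$ automatically lies in $B_{\min}$ (since $a_j^\vee=1$ for $j\in I^s$), and perfectness makes $\veps$ a bijection from $B_{\min}$ onto the level-$\ell$ dominant weights. The gap is in the mechanism you propose for computing $\vphi(u_j(r,s))$. Your key claim --- that $u_j(r,s)$ is \emph{extremal} because it is $(I\setminus\{j\})$-highest weight, and hence is obtained from the extremal vector $u(B^{r,s})$ of weight $s\omega_r$ by transport under $\tW\cong W_0\ltimes\tM$ --- is false. By the definition of a simple crystal quoted before Proposition \ref{P:simpleconnected}, extremal vectors of $B^{r,s}$ have weight in the level-zero Weyl orbit of $s\omega_r$, and since translations act trivially at level zero this orbit is just $W_0\cdot s\omega_r$. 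But the minimal elements of interest typically have weight far outside this orbit: for $D_{n+1}^{(2)}$, $A_{2n}^{(2)}$, and the kind-$(1,1)$ types with $r$ even, the element $u_0(r,s)$ is $\hw_{I_0}(B(0))$, the highest weight vector of the trivial classical component, of weight $0\notin W_0\cdot s\omega_r$. Being killed by $e_i$ for all $i\ne j$ is much weaker than extremality, so the identification of $\vphi\circ\veps^{-1}$ with the $\Sigma$-part of $t_{-c_{r^*}\omega_{r^*}}$ is never actually derived; that identification \emph{is} the content of \eqref{E:minphi}, not something available for free. A secondary point: your special treatment of $A_{2n}^{(2)}$ takes \eqref{E:a0vee} at face value, but the operative convention (see \eqref{E:levLa0}, and the perfectness results of \cite{FOS2} under which every $A_{2n}^{(2)}$ KR crystal is perfect) has $\lev(\La_0)=1$, so $A_{2n}^{(2)}$ requires no separate case; and in any event ``$I$-connectedness'' does not give uniqueness of an element with a prescribed value of $\veps$.

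Once the flawed transport argument is removed, what is left is exactly what you call the ``robust route,'' and that is precisely the paper's proof: in the perfect case it quotes \cite{FOS2} for the values of $\vphi$ and exhibits $u_j(r,s)$ explicitly, family by family, as concrete (KN-type) tableaux; in the two genuinely nonperfect families ($B_n^{(1)}$ with $r=n$, $s$ odd, and $C_n^{(1)}$ with $1\le r\le n-1$, $s$ odd) it again writes the elements down explicitly, from which both the existence--uniqueness statement and the corrections $(\ell-1)\La_{\tau(j)}+\La_r$, as well as the exceptional value $(\ell-1)\La_n+\La_{n-r}$ for $C_n^{(1)}$, $j=n$, are read off. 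Your proposal never carries out this verification, does not address existence or uniqueness at all in the nonperfect cases (where the perfectness bijection is unavailable), and only asserts the shape of the corrections. As written, it therefore does not prove the lemma.
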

\begin{proof}
Suppose first that $B^{r,s}$ is perfect. Then $c_r\ell=s$ and $u_j(r,s)$ is unique.
Moreover the value of $\vphi(u)$ is verified by \cite{FOS2}. Explicitly:
\begin{enumerate}
\item $\geh=A_n^{(1)}$. $u_j(r,s)$ consists of $s$ copies of the same column
that consists of the elements $j+1,j+2,\dotsc,j+r$ (mod $n+1$), sorted into increasing order.
\item $\geh=A_{2n}^{(2)}$. $u_0(r,s)=\hw_{I_0}(B(0))$.
\item $\geh=D_{n+1}^{(2)}$. Suppose $r\not\in I^s$.
$u_0(r,s)=\hw_{I_0}(B(0))$. For $s=2s'$, $u_n(r,s)\in B(s\omega_r)$ is the KN tableau
with $s'$ columns $(n-r+1)\dotsm(n-1)n$ and
$s'$ columns $\overline{n}\overline{n-1}\dotsm \overline{n-r+1}$.
For $s=2s'+1$, $u_n(r,s)\in B(s\omega_r)$ has, in addition to the
columns for $u_n(r,2s')$, a middle column of height $r$ is
given by $0\cdots0$.
For $r=n\in I^s$,
$u_0(n,s)$ (resp. $u_n(n,s)$) is the unique element of $B(s\omega_n)$ of weight
$s\omega_n$ (resp. $-s\omega_n$).
\item $\geh=C_n^{(1)}$. For $r\not\in I^s$, since $c_r=2$ and we are in the perfect
case, $s$ must be even (say $s=2\ell$), and $u_j(r,2\ell)$ is given
as for $D_{n+1}^{(2)}$. For $r=n\in I^s$, again $u_j(n,s)$ is given
as for $D_{n+1}^{(2)}$.
\item $\geh\in\{B_n^{(1)}, D_n^{(1)}, A_{2n-1}^{(2)}\}$.
Recall that for $\geh=B_n^{(1)},r=n$ $B^{r,s}$ is perfect of level $\ell$ when $s=2\ell$.
First let $r\in I_0$ not be a type $D_n^{(1)}$ spin node.
$u_0(2i,s)=\hw_{I_0}(B(0))$ and $u_1(2i,s)\in B(\ell\omega_2)$
has $\ell'$ columns $1\overline{2}$ and $\ell'$ columns $2\overline{1}$ for $\ell=2\ell'$, and
in addition a middle column $2\overline{2}$ for $\ell=2\ell'+1$.
$u_0(2i+1,s)=\hw_{I_0}(B(\ell\omega_1))$ and $u_1(2i+1,s)\in B(\ell\omega_1)$
is the tableau $\bar{1}^\ell$. $D_n^{(1)}$ has additional special nodes $j\in\{n-1,n\}$.
Suppose $r$ is even. For $s=2s'$, $u_n(r,s)\in B(s\omega_r)$ has $s'$ columns
$(n-r+1)\cdots(n-1)n$ and $s'$ columns $\ol{n}\ol{n-1}\cdots\ol{n-r+1}$. For $s=2s'+1$,
$u_n(r,s)$ has, in addition to the columns for $u_n(r,2s')$, a middle column given by
$\ol{n}n\ol{n}n\cdots$. If $r$ is odd, replace $s'$ columns $(n-r+1)\cdots(n-1)n$ with
$(n-r+1)\cdots(n-1)\ol{n}$. $u_{n-1}(r,s)\in B(s\omega_r)$ is given from $u_n(r,s)$ above
by interchanging $n$ and $\ol{n}$. Now let us set $r=n$ for type $D_n^{(1)}$. $u_j(n,s)$
for $j=0,1,n-1,n$ is given by the unique element of $B(s\omega_n)$ of weight $s\omega_n,
s(\omega_{n-1}-\omega_1),s((1-\gamma)\omega_1-\omega_{n-1}),s(\gamma\omega_1-\omega_n)$
where $\gamma=0,1,\gamma\equiv n\,(\text{mod }2)$. If $r=n-1$, we interchange $\omega_n$
and $\omega_{n-1}$ in the above description.
\end{enumerate}

We enumerate the nonperfect cases \cite{FOS2}.
\begin{enumerate}
\item $\geh=B_n^{(1)}$, $r=n$ and $s=2\ell-1$.
For $n$ even, $u_0(n,2\ell-1)=\hw_{I_0}(\omega_n)$ and
$u_1(n,2\ell-1)\in B(\omega_n)$ is defined by $\wt(u)=\omega_n-\omega_1$.
For $n$ odd, $u_0(n,2\ell-1)=\hw_{I_0}(B((\ell-1)\omega_1+\omega_n))$.
$u_1(n,2\ell-1)$ has a half-column consisting of $23\dotsm(n-1)n\bar{1}$
and $\ell-1$ columns consisting of a single $\bar{1}$.
\item $\geh=C_n^{(1)}$ for $1\le r\le n-1$ and $s=2\ell-1$.
$u_0(r,2\ell-1)=\hw_{I_0}(B(\omega_r))$.
$u_n(r,2\ell-1)$ has $\ell-1$ columns $(n-r+1)\dotsm(n-1)n$ and $\ell$ columns
$\overline{n}\overline{n-1}\dotsm\overline{n-r+1}$.
\end{enumerate}
\end{proof}

By Lemma \ref{L:phimin} we may define $m(B^{r,s})=u_0(r,s)\in B^{r,s}$ or equivalently
\begin{align}
\label{E:mdef}
  \veps(m(B^{r,s})) = \lev(B^{r,s}) \La_0.
\end{align}
Similarly, there exists a unique element $m'(B^{r,s})\in B^{r,s}$ such that
\begin{align}
\label{E:mpdef}
  \vphi(m'(B^{r,s})) = \lev(B^{r,s}) \La_0.
\end{align}
Define
\begin{align} \label{E:brsla}
  b(r,s,\la) = \hw_{I_0}^\la(B^{r,s})\qquad\text{for $\la\in\Pa^\di_n(r,s)$.}
\end{align}

\begin{remark} \label{R:minBrs}
Suppose $\diamondsuit\ne\vn$ and $r\in I_0$ is not a spin node.
By \eqref{E:KRI0decomp} the right hand side of \eqref{E:brsla} is a singleton.
We have
\begin{align}
\label{E:ubrs}
  u(B^{r,s}) &= b(r,s,(s^r)) \\
\label{E:mbrs}
  m(B^{r,s}) &= b(r,s,\lamin^\di(r,s))
\end{align}
where $\lamin=\lamin^\di(r,s)\in \Pa^\di(r,s)$ is
the partition with $\abs{\lamin}$ minimum. Explicitly
\begin{align}\label{E:lamin}
  \lamin^\di(r,s) &= \begin{cases}
  (s) & \text{if $r$ is odd and $\di=(1,1)$} \\
  (1^r) & \text{if $s$ is odd and $\di=(2)$} \\
  \varnothing &\text{otherwise.}
  \end{cases}
\end{align}
\end{remark}

\subsection{Grading by intrinsic coenergy}
\label{SS:Ddef}

Each $B\in \CC$ has a canonical $I_0$-equivariant grading by
the intrinsic coenergy function $\Db:B\to \Z$ which is defined as
follows.
\begin{enumerate}
\item If $B=B^{r,s}$ is a KR crystal then define
\begin{align}\label{E:DoneKR}
  \Db_B(b) = \Hb_{B,B}(m'(B) \otimes b) - \Hb_{B,B}(m'(B) \otimes u(B)).
\end{align}
\item
If $B_1,B_2\in \CC$ then
\begin{align} \label{E:intrinsic}
  \Db_{B_1\otimes B_2}(b_1 \otimes b_2) =
  \Db_{B_1}(b_1) + \Db_{B_2}(b_2') +
  \Hb_{B_1,B_2}(b_1\otimes b_2)
\end{align}
where $R_{B_1,B_2}(b_1\otimes b_2)=b_2'\otimes b_1'$.
\end{enumerate}
The resulting grading satisfies $$\Db_{(B_1\otimes B_2)\otimes B_3} =
\Db_{B_1\otimes (B_2\otimes B_3)}$$ for all $B_1,B_2,B_3\in\CC$ \cite{OSS3}.
For $B_1,\dotsc,B_p\in\CC$ one may prove by induction that
\begin{align} \label{E:DbCC}
  \Db_{B_1\otimes \dotsm\otimes B_p}(b) = \sum_{i=1}^p \Db_{B_i}(b_i^{(1)}) +
  \sum_{1\le i<j\le p} \Hb_{B_i,B_j}(b_i \otimes b_j^{(i+1)})
\end{align}
where $b=b_1\otimes \dotsm\otimes b_p$ with $b_i\in B_i$ for $1\le i\le p$
and $b_j^{(k)}$ is the $k$-th tensor factor of the element obtained from $b$
by the composition of combinatorial $R$-matrices that swaps the $j$-th tensor
factor to the $k$-th position. We have
\begin{align}
\label{E:Dbiso}
\Db_B = \Db_{B'} \circ g\qquad\text{for any $g:B\cong B'$ with $B,B'\in\CC$.}
\end{align}

\begin{lemma}\label{L:DBrs} Let $B$ be a KR crystal of level $\ell$. Then
\begin{enumerate}
\item $\Db_B$ is constant on $I_0$-components.
\item $\Db_B(e_0(b)) = \Db_B(b) + 1$ if $\veps_0(b)> \ell$.
\item $\Db_B(u(B))=0$.
\end{enumerate}
\end{lemma}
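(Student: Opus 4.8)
The plan is to read off all three claims directly from the definition \eqref{E:DoneKR} of $\Db_B$ together with the properties of the coenergy function $\Hb_{B,B}$ recorded in Proposition \ref{P:RH}, using crucially that $\vphi(m'(B))=\ell\La_0$ by \eqref{E:mpdef}. Claim (3) is immediate: substituting $b=u(B)$ into \eqref{E:DoneKR} makes the two terms cancel, so $\Db_B(u(B))=0$.

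For claim (1), first I would analyze how the $I_0$-operators act on $m'(B)\otimes b$. Since $\vphi(m'(B))=\ell\La_0$, we have $\vphi_i(m'(B))=0$ for every $i\in I_0$. Applying the tensor rule \eqref{E:tensor} (and its $f_i$ analogue), the condition for $e_i$ or $f_i$ to act on the left tensor factor is $\vphi_i(m'(B))\ge\veps_i(b)$ resp. $\vphi_i(m'(B))>\veps_i(b)$; with $\vphi_i(m'(B))=0$ these force the action onto the right factor whenever the operator is nonzero on $b$. Hence any path of $I_0$-operators joining $b$ to another vertex $b'$ in its $I_0$-component lifts to a path joining $m'(B)\otimes b$ to $m'(B)\otimes b'$, so these two vertices lie in one $I_0$-component of $B\otimes B$. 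Because $\Hb_{B,B}$ is constant on $I_0$-components (Proposition \ref{P:RH}(2)), $\Hb_{B,B}(m'(B)\otimes b)$ depends only on the $I_0$-component of $b$, and subtracting the $b$-independent constant $\Hb_{B,B}(m'(B)\otimes u(B))$ yields claim (1).

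Claim (2) is the crux. Here I would apply the defining rule \eqref{E:Hb0} for the change of $\Hb$ under $e_0$ to the element $m'(B)\otimes b$. Since $\vphi_0(m'(B))=\ell$, the hypothesis $\veps_0(b)>\ell$ means $\vphi_0(m'(B))<\veps_0(b)$, so by \eqref{E:tensor} the operator $e_0$ acts on the right factor of $m'(B)\otimes b$, giving $e_0(m'(B)\otimes b)=m'(B)\otimes e_0(b)$. To place this in one of the four cases of \eqref{E:Hb0} I must also determine how $e_0$ acts on $R_{B,B}(m'(B)\otimes b)$; but $R_{B,B}$ is the identity on $B\otimes B$ by Proposition \ref{P:RH}(1), so $R_{B,B}(m'(B)\otimes b)=m'(B)\otimes b$ and $e_0$ again acts on the right. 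We are thus in case RR, whence \eqref{E:Hb0} gives $\Hb_{B,B}(m'(B)\otimes e_0(b))=\Hb_{B,B}(m'(B)\otimes b)+1$; feeding this into \eqref{E:DoneKR} (the constant term cancels) yields $\Db_B(e_0(b))=\Db_B(b)+1$.

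The only real subtlety — the ``main obstacle'' — is the bookkeeping in claim (2): one must correctly identify the case in \eqref{E:Hb0}, which rests on the twin observations that $\vphi_0(m'(B))=\ell$ pins down the left/right behaviour from the level hypothesis, and that $R_{B,B}=\mathrm{id}$ trivializes the second action. Note that $\veps_0(b)>\ell\ge1$ guarantees $e_0(b)\ne0$, so the statement is not vacuous. Everything else is a direct consequence of the definitions and of Proposition \ref{P:RH}.
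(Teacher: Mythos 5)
Your proof is correct and is exactly the argument the paper intends: its one-line proof cites precisely the ingredients you use, namely the definition \eqref{E:DoneKR}, the properties of $\Hb_{B,B}$ from Proposition \ref{P:RH} (constancy on $I_0$-components, the rule \eqref{E:Hb0}, and $R_{B,B}=\mathrm{id}$), and the tensor rule \eqref{E:tensor} applied with $\vphi(m'(B))=\ell\La_0$. Your write-up simply makes explicit the case bookkeeping that the paper calls immediate.
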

\begin{proof}
Follows immediately from \eqref{E:DoneKR}, the properties of $\Hb_{B,B}$,
and \eqref{E:tensor}.
\end{proof}

\begin{lemma}
\label{lem_decrease} Let $B_1,B_2\in\CC$, and let $b_1\in B_1$ and $b_2\in B_2$
be such that $e_0(b_1\otimes b_2)\neq 0$ and let $R_{B_1,B_2}(b_1\otimes b_2)=b_2'\otimes b_1'$.
Assume that
\begin{align*}
\Db(e_0(b_1)) &= \Db(b_1)+1&&\text{if $e_0(b_1)\ne0$} \\
\Db(e_0(b_2')) &= \Db(b_2') + 1 &&\text{if $e_0(b_2')\ne0$.}
\end{align*}
Then $\Db(e_0(b_1\otimes b_2))=\Db(b_1\otimes b_2)+1$.
\end{lemma}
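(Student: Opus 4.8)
The plan is to expand $\Db(e_0(b_1\otimes b_2))$ and $\Db(b_1\otimes b_2)$ via the definition \eqref{E:intrinsic} of intrinsic coenergy and then reduce to the four-case analysis LL, LR, RL, RR provided by Proposition \ref{P:RH}(2). Two facts drive the argument. First, since $R=R_{B_1,B_2}$ is an $I$-crystal isomorphism (Proposition \ref{P:RH}(1)) it commutes with $e_0$, so $R(e_0(b_1\otimes b_2))=e_0(R(b_1\otimes b_2))=e_0(b_2'\otimes b_1')$; this identifies the ``$R$-transported second factor'' needed to apply \eqref{E:intrinsic} to $e_0(b_1\otimes b_2)$. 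Second, by \eqref{E:Hb0} the coenergy $\Hb$ changes by $-1,0,0,+1$ in cases LL, LR, RL, RR respectively. Writing $e_0(b_1\otimes b_2)=u_1\otimes u_2$ and $e_0(b_2'\otimes b_1')=w_2\otimes w_1$, the formula \eqref{E:intrinsic} gives $\Db(e_0(b_1\otimes b_2))=\Db_{B_1}(u_1)+\Db_{B_2}(w_2)+\Hb(e_0(b_1\otimes b_2))$, to be compared with $\Db(b_1\otimes b_2)=\Db_{B_1}(b_1)+\Db_{B_2}(b_2')+\Hb(b_1\otimes b_2)$.

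Next I would run the four cases using Kashiwara's tensor rule \eqref{E:tensor}. In case LL one has $u_1=e_0(b_1)$, $w_2=e_0(b_2')$ and $\Hb$ drops by $1$; invoking both hypotheses ($\Db_{B_1}(e_0(b_1))=\Db_{B_1}(b_1)+1$ and $\Db_{B_2}(e_0(b_2'))=\Db_{B_2}(b_2')+1$) yields the net change $+1-1=0$ in the $\Hb$-free part plus the two $+1$'s, giving $+1$ overall. In case LR one has $u_1=e_0(b_1)$, $w_2=b_2'$ and $\Hb$ unchanged, so only the hypothesis on $b_1$ is used, giving $+1$. Case RL is symmetric: $u_1=b_1$, $w_2=e_0(b_2')$, $\Hb$ unchanged, and only the hypothesis on $b_2'$ is used. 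In case RR one has $u_1=b_1$, $w_2=b_2'$, so both $\Db$-terms are unchanged while $\Hb$ rises by $1$, again giving $+1$. In every case $\Db(e_0(b_1\otimes b_2))=\Db(b_1\otimes b_2)+1$.

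The only point requiring care is matching the \emph{conditional} hypotheses to the cases in which they are invoked. Whenever $e_0$ acts on the left tensor factor of $b_1\otimes b_2$ (cases LL and LR) we automatically have $e_0(b_1)\ne0$, so the assumed equality for $\Db(e_0(b_1))$ is legitimately applicable; likewise, whenever $e_0$ acts on the left factor of $b_2'\otimes b_1'$ (cases LL and RL) we have $e_0(b_2')\ne0$, so the assumption for $\Db(e_0(b_2'))$ applies. In the complementary situations the corresponding factor is untouched and cancels out of the difference, its absence being precisely compensated by the shift in $\Hb$ dictated by \eqref{E:Hb0}. This bookkeeping, rather than any deep input, is the main obstacle, and it resolves cleanly because the $\pm1$ pattern of $\Hb$ is exactly aligned with which factor $e_0$ acts on.
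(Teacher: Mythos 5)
Your proof is correct and follows exactly the paper's own argument: the paper's proof of Lemma \ref{lem_decrease} is precisely ``this follows from \eqref{E:intrinsic}, computing the four cases of \eqref{E:Hb0},'' and your write-up carries out that computation, including the two points the paper leaves implicit (that $R$ commutes with $e_0$, so the transported factor of $e_0(b_1\otimes b_2)$ is the left factor of $e_0(b_2'\otimes b_1')$, and that the conditional hypotheses are available exactly in the cases where they are needed).
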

\begin{proof} This follows from \eqref{E:intrinsic}, computing
the four cases of \eqref{E:Hb0}.
\end{proof}

We shall prove the following explicit formula for $\Db_{B^{r,s}}$
at the end of Section \ref{SS:sigmaKR}.

\begin{proposition} \label{P:DBrs} For $\geh$ nonexceptional of kind $\di\in\{(1),(2),(1,1)\}$
and $(r,s)\in I_0\times \Z_{>0}$ with $r$ nonspin, we have
\begin{align} \label{E:DBrs}
  \Db_{B^{r,s}}(b(r,s,\la)) = \dfrac{rs - \abs{\la}}{|\di|} \qquad\text{for all $\la\in \Pa^\di_n(r,s)$.}
\end{align}
\end{proposition}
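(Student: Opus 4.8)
The plan is to prove the formula $\Db_{B^{r,s}}(b(r,s,\la)) = (rs-\abs{\la})/|\di|$ by reducing to the two extreme cases and then interpolating via the $e_0$-recursion. First I would observe that $\Db_{B^{r,s}}$ is constant on $I_0$-components (Lemma \ref{L:DBrs}(1)), so it is a well-defined function of $\la\in\Pa^\di_n(r,s)$, and that it vanishes at $\la=(s^r)$ since $b(r,s,(s^r))=u(B^{r,s})$ by \eqref{E:ubrs} and $\Db_B(u(B))=0$ by Lemma \ref{L:DBrs}(3); this is exactly the claimed value $(rs-\abs{(s^r)})/|\di| = 0$, giving the base case.

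The main idea is to move through the $I_0$-components of $B^{r,s}$ by applying $e_0$ to their highest weight vertices and tracking both the change in $\Db$ and the change in $\abs{\la}$. For this I would use Lemma \ref{L:DBrs}(2): whenever $\veps_0(b) > \ell = \lev(B^{r,s})$ we have $\Db_B(e_0(b)) = \Db_B(b)+1$. The weight of the $I_0$-highest vertex $b(r,s,\la)$ is $\la$ regarded via $\sum_i(\la_i-\la_{i+1})\omega_i$, and applying $e_0$ changes the $P'$-weight by $+\alpha_0' = -\theta/a_0$ by \eqref{E:wte}. The plan is to match, for each kind $\di$, the partitions $\Pa^\di_n(r,s)$ against a chain (or poset) connected by such $e_0$-moves, verifying that each admissible $e_0$-step that lands on a new $I_0$-highest weight vertex decreases $\abs{\la}$ by exactly $|\di|$: by $2$ in kind $(2)$, by $2$ in kind $(1,1)$, and by $1$ in kind $(1)$. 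Since $\Db$ increases by $1$ at each such step while $(rs-\abs{\la})/|\di|$ also increases by $1$, the two sides satisfy the same recursion with the same base value, forcing equality throughout.

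The key computation is therefore to identify which $e_0$-applications carry one $I_0$-highest weight vertex to another and to compute the resulting jump in $\abs{\la}$. Here the combinatorial description of $\Pa^\di_n(r,s)$ via the $180^\circ$-rotated complement of $\la$ in the rectangle $(s^r)$ lying in $\Pa^\di$ is essential: as $\la$ decreases from $(s^r)$, boxes are removed from the complement in units dictated by $\di$ (single boxes for $(1)$, dominoes for $(2)$ and $(1,1)$), and one checks that removing such a unit is precisely realized by an $e_0$-move with $\veps_0 > \ell$. The condition $\veps_0(b) > \ell$ must be verified exactly at the vertices one wants to traverse and must \emph{fail} (or the move must not produce a new highest weight vertex) at the boundary $\la=(s^r)$ where $\Db$ should stay $0$; this guard is what makes the recursion well-posed.

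The hard part will be the explicit $e_0$-string analysis case by case over the kinds $\di\in\{(1),(2),(1,1)\}$ and the associated reference root systems $\geh^\di = D_{n+1}^{(2)}, C_n^{(1)}, D_n^{(1)}$ from \eqref{E:gehdi}, together with a genuinely inductive argument showing the chain of $e_0$-moves connects \emph{all} of $\Pa^\di_n(r,s)$ to the top $(s^r)$ (so that no component is missed). Since the proof is deferred to the end of Section \ref{SS:sigmaKR}, I expect the cleanest route is to postpone it until the $\sigma$-automorphism and its effect on energy (Theorems \ref{Th_SR} and \ref{Th_corespondence}) are available: the image $\sigma(b(r,s,(s^r)))$ sits in the opposite extreme $\la=\lamin^\di(r,s)$ by \eqref{E:mbrs}, where $(rs-\abs{\lamin})/|\di|$ should equal $\Db$ at the bottom of the crystal, and the known energy shift under $\sigma$ pins down the constant and anchors the recursion from both ends. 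Verifying that the $\di$-sized decrements in $\abs{\la}$ are forced — rather than merely consistent — is the genuine obstacle, and it is where the rectangular-complement characterization of the kinds does the real work.
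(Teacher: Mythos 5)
Your proposal is correct in outline and is essentially the paper's own argument: the paper likewise anchors the value at $b(r,s,(s^r))=u(B^{r,s})$, where $\Db$ vanishes by Lemma \ref{L:DBrs}(3), and propagates it along chains of arrows, each containing exactly one good $0$-arrow, that connect consecutive $I_0$-highest weight vertices $b(r,s,\lam)$ and $b(r,s,\la)$ differing by one copy of $\di$; along such a chain $\Db$ and $(rs-\abs{\la})/|\di|$ change identically. What you call the genuine obstacle is exactly Lemma \ref{L:nextla} (an explicit $\pm$-diagram computation, which also verifies the guard $\veps_0\ge\ell$), connectivity is Lemma \ref{L:goodarrows}, and the bookkeeping converting the count of $0$-arrows into the formula is Corollary \ref{C:DBrs} together with Lemma \ref{Lem_nice-path}; subtracting the instance $\la=(s^r)$ of \eqref{E:Drel1} eliminates the unknown constant $\Db_{B^{r,s}}(m(B^{r,s}))$, which is precisely your single-anchor recursion.

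One correction to your final paragraph: you cannot invoke Theorem \ref{Th_SR} to pin down the value at $\lamin^\di(r,s)$, because in this paper the single-factor case of Theorem \ref{Th_SR} is itself deduced from Proposition \ref{P:DBrs}; that second anchor would be circular. It is also unnecessary, since, as your own second paragraph shows, one anchor plus connectivity determines everything. The $\sigma$-machinery the paper legitimately uses at this point --- Lemma \ref{L:sigma} and Proposition \ref{P:Brsdual}, which transport explicit $n$-arrow paths inside $\max(B^{r,s})$ to $0$-arrow paths --- is logically prior to Theorem \ref{Th_SR}, so your instinct to place the proof after the construction of $\sigma$ (but not after the energy results of Theorems \ref{Th_SR} and \ref{Th_corespondence}) matches the paper's arrangement.
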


\subsection{Affine highest weight crystals}
\label{SS:affhw}

Let $B(\La)$ be the crystal graph of the irreducible integrable highest
weight module of highest weight $\La\in P^+$. $\hw_I(B(\La))$ is a singleton denoted $u_\La$.
The enhanced weight function $\wtaf:B(\La)\to P$ is defined by $\wtaf(u_\La)=\La$
and \eqref{E:wte} and \eqref{E:wtf} except that
$\alpha_i'\in P'$ is replaced by the affine simple root $\alpha_i\in P$.
Alternatively, let $b\in B(\La)$. Then there is a sequence $\as=(i_1,i_2,\dotsc,i_p)$ of
elements of $I$ such that $u_\La = e_\as(b)$. Define $\widehat{D}(b)$ to be
the number of times that $0$ occurs in the sequence $\as$. This
yields a well-defined $\Z$-grading $\widehat{D}:B(\La)\to\Z$. Then
\begin{align}
  \wtaf(b) = (\ip{d}{\La}-\widehat{D}(b))(\delta/a_0) + \sum_{i\in I}
  (\vphi_i(b)-\veps_i(b))\La_i.
\end{align}


The following Theorem is fundamental to the Kyoto path model
for affine highest weight crystals.

\begin{theorem} \label{T:KMN} \cite[Proposition 2.4.4]{KMN}
Let $\geh$ be an affine algebra,
$B\in\CC(\geh)$ the crystal graph of a $U'_q(\geh)$-module,
and $\La\in P^+$ a dominant weight with $\lev(\La)=\lev(B)$.
Then there is an affine crystal isomorphism
\begin{align} \label{E:KMNiso}
  B(\La) \otimes B \cong
  \bigoplus_u B(\vphi(u))
\end{align}
where $u$ runs over the elements of $B$ such that $\veps(u)=\La$.
\end{theorem}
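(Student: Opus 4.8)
The plan is to read the decomposition \eqref{E:KMNiso} off the set of $I$-highest weight vertices of $B(\La)\otimes B$. First I would locate these vertices using Kashiwara's tensor rule \eqref{E:tensor}. Since $\hw_I(B(\La))=\{u_\La\}$, a short case analysis on $e_i(v\otimes b)$ shows that $v\otimes b$ can be $I$-highest weight only when $e_i(v)=0$ for all $i$, i.e.\ $v=u_\La$, and then the condition becomes $\veps_i(b)\le\vphi_i(u_\La)=\ip{\al_i^\vee}{\La}$ for every $i\in I$. Writing $\veps(b)=\sum_i\veps_i(b)\La_i\in P'$, this is the componentwise inequality $\veps(b)\le\La$, so that $\hw_I(B(\La)\otimes B)=\{\,u_\La\otimes b\mid \veps(b)\le\La\,\}$.

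The key observation is that the level hypothesis forces this inequality to be an equality, so that the index set already matches that of \eqref{E:KMNiso}. Indeed $\La-\veps(b)=\sum_i(\ip{\al_i^\vee}{\La}-\veps_i(b))\La_i$ is a nonnegative combination of the $\La_i$, so by $\lev(\La_i)=a_i^\vee$ \eqref{E:levLa} and positivity of the $a_i^\vee$ we get $\lev(\La-\veps(b))=\sum_i a_i^\vee(\ip{\al_i^\vee}{\La}-\veps_i(b))\ge0$, with equality only when $\veps(b)=\La$. On the other hand $\lev(\veps(b))\ge\lev(B)=\lev(\La)$ by the definition of the level of a crystal together with the hypothesis, so $\lev(\La-\veps(b))\le0$. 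Hence $\lev(\La-\veps(b))=0$, and positivity of each $a_i^\vee$ forces $\veps(b)=\La$. Therefore $\hw_I(B(\La)\otimes B)=\{\,u_\La\otimes u\mid u\in B,\ \veps(u)=\La\,\}$.

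Next I would compute the weight generated by each such vertex. For a highest weight vertex $h$ one has $\veps(h)=0$, so by \eqref{E:wtdef} its enhanced weight equals $\vphi(h)$ and the component it generates is a copy of $B(\vphi(h))$. Using the tensor formula $\vphi_i(u_\La\otimes u)=\vphi_i(u)+\max(0,\vphi_i(u_\La)-\veps_i(u))$ that follows from the convention \eqref{E:tensor}, and $\veps_i(u)=\ip{\al_i^\vee}{\La}=\vphi_i(u_\La)$, the maximum collapses to $0$, giving $\vphi(u_\La\otimes u)=\vphi(u)$. Thus the component through $u_\La\otimes u$ is $B(\vphi(u))$, exactly the predicted summand.

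The remaining step, and the one I expect to be the main obstacle, is to show that $B(\La)\otimes B$ is genuinely the direct sum of these highest weight crystals. Here I would regard $B(\La)\otimes B$ as a regular integrable $U_q(\geh)$-crystal, using that both factors are regular and that $B(\La)$ has weights bounded above by $\La$ while $B$ is finite, so the tensor product has weights bounded above. By Kashiwara's structure theory, every connected component of such a crystal contains a highest weight vertex and, being connected with an upper weight bound, is isomorphic to the highest weight crystal $B(\mu)$ of the weight $\mu$ of that vertex; in particular each component has a \emph{unique} highest weight vertex. Combined with the previous paragraphs, the components are then indexed bijectively by $\{u\in B\mid\veps(u)=\La\}$, with the component through $u_\La\otimes u$ isomorphic to $B(\vphi(u))$, which is \eqref{E:KMNiso}. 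The subtle points to check are that distinct vertices $u_\La\otimes u$ never share a component (immediate from uniqueness of the highest weight vertex in each $B(\mu)$) and that the $\delta$-grading agrees on both sides; the latter follows from the relation \eqref{E:Hb0}, since the coenergy function tracks precisely the occurrences of $e_0$ that record the $\widehat D$-grading on the affine highest weight crystals.
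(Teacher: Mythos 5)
Your first three steps are correct and are exactly the standard computation behind this result (the paper itself gives no argument, citing \cite[Prop.~2.4.4]{KMN}): the signature analysis of \eqref{E:tensor} shows the $I$-highest weight vertices are the $u_\La\otimes b$ with $\veps_i(b)\le\ip{\al_i^\vee}{\La}$, the level count using \eqref{E:levLa} and $\lev(\veps(b))\ge\lev(B)=\lev(\La)$ forces $\veps(b)=\La$, and the tensor formula for $\vphi$ then gives $\vphi(u_\La\otimes u)=\vphi(u)$. The genuine gap is in your step 4. The principle you invoke --- a regular crystal with weights bounded above has a highest weight vertex in each component and each component is a $B(\mu)$ --- is false at the level of abstract regular crystals, and the boundedness claim itself does not survive scrutiny: the weights of $B$ (hence of $B(\La)\otimes B$) live in $P'=P/(\Z\delta/a_0)$, where $\delta\equiv0$, so raising operators can cycle. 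Indeed the KR crystal $B$ itself is regular, connected, and has only finitely many weights, yet it admits infinite raising sequences (e.g.\ $1\xrightarrow{e_0}2\xrightarrow{e_1}1$ in $B^{1,1}$ of $A_1^{(1)}$) and contains no $I$-highest weight vertex at all; so neither regularity nor finiteness of the weight set can yield the decomposition, and nothing in your argument isolates what is different about $B(\La)\otimes B$.

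What actually closes the gap is module-theoretic, and it is precisely the content of the citation to \cite{KMN}: since $B$ is by hypothesis the crystal graph of a $U'_q(\geh)$-module $W$, the tensor product $B(\La)\otimes B$ is (by Kashiwara's tensor product theorem for crystal bases) the crystal base of $V(\La)\otimes W$. This module is integrable, and it is locally $U_q^+$-finite: for $v\otimes w$, any product of raising operators sends $v$ into the span of the finitely many weight spaces of $V(\La)$ of weight $\ge\wt(v)$ (here the honest $P$-grading of $V(\La)$ bounds the depth), while the right-hand factors stay inside the finite-dimensional $W$; hence $U_q^+(v\otimes w)$ is finite-dimensional. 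Therefore $V(\La)\otimes W$ lies in the semisimple category $O_{int}$, so it is a direct sum of irreducibles $V(\mu)$ with $\mu\in P^+$, and uniqueness of crystal bases gives $B(\La)\otimes B\cong\bigsqcup_k B(\mu_k)$; your steps 1--3 then identify the index set and the $\mu_k$, proving \eqref{E:KMNiso}. (Your closing remark about matching $\delta$-gradings via \eqref{E:Hb0} is not needed: the theorem asserts an isomorphism of $P'$-weighted $I$-crystals, and \eqref{E:Hb0} concerns the coenergy on a tensor product of two crystals in $\CC$, not on $B(\La)\otimes B$; in the paper the $\delta$-degree is recovered afterwards via $\widehat{D}$, as in Corollary \ref{C:DBrs}.)
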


\subsection{One-dimensional sums and stability}
For $B\in\CC$ and $\la\in P_0^+$, define the one-dimensional sum
\begin{align} \label{E:Xdef}
  \Xb_{\la,B}(q) = \sum_{b\in \hw_{I_0}^\la(B)} q^{\Db(b)}.
\end{align}

\begin{notation} \label{N:BR} Let
\begin{equation}
  B = B^{r_1,s_1} \otimes B^{r_2,s_2} \otimes\dotsm\otimes B^{r_p,s_p}.
\end{equation}
We write $R_i=(s_i^{r_i})$, which is a rectangular partition with $r_i$ rows
and $s_i$ columns. Let $R=(R_1,R_2,\dotsc,R_p)$. We write
$B=B^R$ if we wish to emphasize the indexing set of rectangles.
\end{notation}

For nonexceptional $\geh$, let $n=\rk(\geh_0)$ and define
\begin{align}\label{E:stableBandla}
  \CCinf(\geh) &= \{ B=B^R\in \CC(\geh) \mid \sum_i r_i \le n-2 \} \\
  \Pa_n^\infty &= \{\la\in \Pa_n\mid \ell(\la)\le n-2\} \\
  \CCHinf(\geh_0) &= \{ B\in \CCH(\geh_0) \mid \text{if $B_{I_0}(\nu)$ appears in $B$ then $\nu\in\Pa_n^\infty$.}\}.
\end{align}
These restrictions have the effect of guaranteeing that spin weights
do not appear.

For $\di\in\{(1),(2),(1,1)\}$ and fixed $R$ and $\la$
define the stable 1-d sum $\Xb^\di_{\la,B^R}(q)$ to be
$\Xb_{\la,B^R}(q)$ of type $\geh$ where $\geh$ is chosen such that $n=\rk(\geh_0)$ is
large enough so that $B^R\in \CCinf(\geh)$ and $\la\in \Pa_n^\infty$.
Without loss of generality we may choose $\geh$ to be reversible
(that is, of the form $\geh^\di$; see \eqref{E:gehdi}).

\section{$\geh^\di$, $I_0$, and $A_{n-1}$-crystals}
\label{S:hat}

In this section we assume $\geh$ is one of the reversible affine algebras $\geh^\di$.
Its classical subalgebra $\geh_0^\di$ (see \eqref{E:gehdi}) contains
the subalgebra $\mathfrak{sl}_n$ of type $A_{n-1}$ given by
restricting to the Dynkin node subset $I_{A_{n-1}}=\{1,2,\dotsc,n-1\}$.
Using the notation of Section \ref{S:crystalgen} we write
$B(b):=B_{I_0}(b)$, $B_{A_{n-1}}(b):=B_{I_{A_{n-1}}}(b)$,
and $\hw_{A_{n-1}}(b):=\hw_{I_{A_{n-1}}}(b)$. In fact $\mathfrak{gl}_n\subset \geh_0^\di$
and we use the $\mathfrak{gl}_n$ weights below.

\subsection{Some subcrystals}
\label{SS:subcrystals}

For $\geh_0$ of type $B_n$, $C_n$, or $D_n$
and $B\in\CCHinf(\geh_0)$, define the $I_0$-subcrystal
\begin{align} \label{E:maxdef}
  \max(B) =
\bigcup_{\substack{b\in\hw_{I_0}(B) \\ \abs{\wt(b)}=M(B)}}
B_{I_0}(b)
\end{align}
where $M(B)$ is the maximum value of $\abs{\nu}$ over
$\nu\in\Pa_n$ such that $B_{I_0}(\nu)$ is a component of $B$. Define
\begin{align}
\label{E:topsdef}
  \tops(B) &= \bigcup_{b\in\hw_{I_0}(B)} B_{A_{n-1}}(b).
\end{align}
It is an $A_{n-1}$-subcrystal of $B$ given by taking all the
$A_{n-1}$-components of $I_0$-highest weight vertices in $B$.
These $A_{n-1}$-components sit at the top of their respective $I_0$-components.

\begin{remark} \label{R:topsBnu}
For $\nu\in\Pa_n^\infty$ we have $\tops(B(\nu))\cong B_{A_{n-1}}(\nu)$.
Moreover this is the only $A_{n-1}$-component of $B(\nu)$ of highest
weight $\nu$. Therefore there is a canonical inclusion $i_A^\nu:
B_{A_{n-1}}(\nu) \to B_{I_0}(\nu)$. This isomorphism just says that
a type $A_{n-1}$ tableau can be regarded as an KN tableau for $\geh_0$.
\end{remark}

For $B\in\CCHinf(\geh_0)$, define
\begin{align} \label{E:hatdef}
  \Bh &= \bigcup_{\la\in\Pa_n} \bigcup_{c\in \hw_{A_{n-1}}^{\bla}(B)}
  B_{A_{n-1}}(c).
\end{align}
$\Bh$ is the $A_{n-1}$-subcrystal of $B$ given by the dual polynomial part of $B$ regarded as an $A_{n-1}$-crystal.
The terminology ``dual polynomial part"
makes sense: $\geh_0\supset \mathfrak{gl}_n$ so that $B$ admits a $\mathfrak{gl}_n$ weight function.

For $\nu\in\Pa_n^\infty$, write
\begin{align}\label{E:Bnudef}
\Bh(\nu) := \widehat{B(\nu)}.
\end{align}
It is an $A_{n-1}$-subcrystal of the irreducible highest weight $I_0$-crystal $B(\nu)$.

\subsection{Row tableaux realization of $\Bh(\nu)$}
\label{SS:rowtabs}

This subsection only concerns crystals of types $B_n$, $C_n$, and $D_n$,
and herein we let $\di=(1),(2),(1,1)$ correspond to
$B_n$, $C_n$, and $D_n$ respectively; they coincide with $\geh^\di_0$ but we do not employ
any affine algebra here.

In \cite{KN}, the classical type crystal graph $B(\nu)$ was realized by tableaux which we will call
Kashiwara-Nakashima (KN) tableaux. These tableaux are based on the unique $I_0$-crystal
embedding
\begin{align*}
B(\nu) \hookrightarrow B(\omega_{\nu_1'}) \otimes B(\omega_{\nu_2'}) \otimes\dotsm
\end{align*}
where $\nu_j'$ is the size of the $j$-th column of the partition $\nu$.

However we shall use a different realization of $B(\nu)$ (which we call
``row tableaux") which is better suited for the study of $\Bh(\nu)$.
For $\nu\in\Pa_n^\infty$, there is a unique embedding of $I_0$-crystals
\begin{align}
  \rowtab_\nu: B(\nu) \hookrightarrow B(\nu_1 \omega_1) \otimes \dotsm \otimes B(\nu_p \omega_1)
\end{align}
where $p=\ell(\nu)$. The image of $\rowtab_\nu$ is the connected component
\begin{align*}
  \Image(\rowtab_\nu) = B_{I_0}(1^{\nu_1} \otimes 2^{\nu_2} \otimes \cdots \otimes p^{\nu_p}).
\end{align*}
Here $a^m$ denotes the word consisting of $m$ copies of the symbol $a$.
The image of $b\in B(\nu)$ is a tensor product
$\rowtab(b)=R_{1}\otimes R_{2}\otimes \cdots \otimes R_{p}$ with $R_i\in B(\nu_i\omega_1)$; it is called
the \textit{row tableau} associated with the element $b\in B(\nu)$ and may be depicted as a tableau of shape
$\nu$ whose $i$-th row is $R_i$. Each $R_i$ is a KN tableau of the single-row shape $(\nu_i)$.
In general $\rowtab(b)$ does not coincide with the corresponding KN tableau of shape $\nu$.
We are not aware of a simple characterization of the image of $\rowtab_\nu$.
Nevertheless we characterize the image of $\Bh(\nu)$
under $\rowtab_\nu$.

For a tableau $c$ of shape $\nu$ and $D\subset\nu$ a skew shape, let $c|^D$ denote the
restriction of $c$ to the subshape $D$.

For $\delta\in\Pa_n^\di$ with $\delta\subset\nu$, let
$\Bhp^\di(\nu,\delta) \subset B(\nu_1\omega_1) \otimes\dotsm\otimes B(\nu_p\omega_1)$
be the set of vertices $b=R_1\otimes \dotsm \otimes R_p$ such that:
\begin{enumerate}
\item  $b|^{\nu\setminus\delta}$ is a skew semistandard tableau on $\{\overline{n},\dotsc,\overline{1}\}$.
\item $b|^\delta = C_\delta^\di$, where the latter tableau is the unique tableau such that:
\begin{itemize}
\item For $\di=(1)$, the $i$-th row equals $n^a0^{\delta_i-2a}\bar{n}^a$ where $a=\lfloor \delta_i/2 \rfloor$.
\item For $\di=(2)$, the $i$-th row equals $n^a \bar{n}^a$ where $a=\delta_i/2$.
\item For $\di=(1,1)$, the $j$-th column consists of $\delta'_j/2$ copies of
$\vcenter{\tableau[sby]{n \\ \overline{n}}}$ .
\end{itemize}
\end{enumerate}

Let $\Bhp^\di(\nu) = \bigcup_{\delta\in \Pa_n^\di} \Bhp^\di(\nu,\delta)$.

\begin{example}
For $\di=(1,1)$, $\nu =(4,4,4,2,1,1)$ and $\delta=(3,3,1,1)$,
\begin{equation*}
\begin{tabular}{|c|ccc}
\cline{1-1}
\lower.5mm\hbox{$\overline{n-3}$} &  &  &  \\ \cline{1-1}
\lower.5mm\hbox{$\overline{n-1}$} &  &  &  \\ \cline{1-2}
$\mathbf{n}$ & \lower.5mm\hbox{$\overline{n-1}$} & \multicolumn{1}{|c}{} &  \\ \hline
$\overline{\mathbf{n}}$ & $\overline{n}$ & \multicolumn{1}{|c}{$\overline{n}$
} & \multicolumn{1}{|c|}{\lower.3mm\hbox{$\overline{n-2}$}} \\ \hline
$\mathbf{n}$ & $\mathbf{n}$ & \multicolumn{1}{|c}{$\mathbf{n}$} &
\multicolumn{1}{|c|}{\lower.5mm\hbox{$\overline{n-1}$}} \\ \hline
$\overline{\mathbf{n}}$ & $\overline{\mathbf{n}}$ & \multicolumn{1}{|c}{$
\overline{\mathbf{n}}$} & \multicolumn{1}{|c|}{$\overline{n}$} \\ \hline
\end{tabular}
\end{equation*}
is a row tableau in $\hw_{A_{n-1}}(\Bhp^\di(\nu,\delta))$.
\end{example}

\begin{remark} \label{R:Bnu'} \
\begin{enumerate}
\item Given any $b\in \Bhp^\di(\nu)$, the unbarred letters in $b$ determine the unique
$\delta\in\Pa_n^\di$ such that $b\in \Bhp^\di(\nu,\delta)$, and $b$ is determined
by $\delta$ and $b|^{\nu\setminus\delta}$.
By definition $b\in \Bhp^\di(\nu)$ contains no letters in $\{1,\dotsc,n-1\}.$
\item Let $b^\nu$ be the lowest weight vector of $\Bhp^\di(\nu)$.
Then $\rowtab(b^\nu)\in\Bhp^\di(\nu,\emptyset)$ where $\delta=\emptyset$ is the empty partition.
\end{enumerate}
\end{remark}

\begin{proposition}
\label{P:rowtabhatiso} The map $\rowtab_\nu$ restricts to an isomorphism
\begin{align} \label{E:rowtabhatiso}
\Bh(\nu) \cong \Bhp^\di(\nu).
\end{align}
\end{proposition}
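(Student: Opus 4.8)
The plan is to prove Proposition~\ref{P:rowtabhatiso} by showing that $\rowtab_\nu$ sends the subcrystal $\Bh(\nu)$ onto $\Bhp^\di(\nu)$ as sets, and that both are closed under $e_i,f_i$ for $i\in I_{A_{n-1}}$. Since $\rowtab_\nu$ is already known to be an $I_0$-crystal embedding (in particular an $A_{n-1}$-crystal embedding), once I verify that the images of the generating highest weight vertices match up, the map automatically intertwines the Kashiwara operators on each $A_{n-1}$-component. So the real content is a bijection of $A_{n-1}$-highest weight vertices on the two sides, compatible with weights.

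First I would recall from \eqref{E:hatdef} that $\Bh(\nu)$ is the union of the $A_{n-1}$-components $B_{A_{n-1}}(c)$ as $c$ ranges over $A_{n-1}$-highest weight vertices of $B(\nu)$ whose $\gl_n$-weight is $\bla=(-\la_n,\dotsc,-\la_1)$ for some $\la\in\Pa_n$; that is, $\Bh(\nu)$ is exactly the dual-polynomial part, consisting of those $A_{n-1}$-components on which only barred letters $\{\ol n,\dotsc,\ol 1\}$ occur at the $A_{n-1}$-highest weight. On the target side, Remark~\ref{R:Bnu'}(1) tells me that each $b\in\Bhp^\di(\nu)$ has its unbarred letters confined to the fixed configuration $C_\delta^\di$ and no letters in $\{1,\dotsc,n-1\}$ at all, so the ``free'' part $b|^{\nu\setminus\delta}$ is a skew semistandard tableau on barred letters. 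The strategy is therefore: take an $A_{n-1}$-highest weight vertex $c$ of $\Bh(\nu)$, show its row-tableau image lies in some $\Bhp^\di(\nu,\delta)$ with $\delta\in\Pa_n^\di$, and conversely that every $A_{n-1}$-highest weight vertex of $\Bhp^\di(\nu)$ pulls back into $\Bh(\nu)$.

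The key steps, in order, are: (i) identify the $A_{n-1}$-highest weight condition on a row tableau $R_1\otimes\dotsm\otimes R_p$ using Kashiwara's tensor rule \eqref{E:tensor} restricted to $i\in\{1,\dotsc,n-1\}$; this forces strong constraints on which letters can appear and where, pinning the unbarred content into column-strict blocks abutting the left edge. (ii) Show that the $n,\ol n,0$ letters must assemble into the rigid shapes $C_\delta^\di$ dictated by the relevant $\geh_0$: the appearance of $n$ together with $\ol n$ (and $0$ in odd type $B$) is governed by the extra simple root $\al_n$ (the short/branching node), and the $A_{n-1}$-highest weight condition plus $\geh_0$-admissibility of KN tableaux forces the row/column pairing that defines kind $\di$. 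This is where the distinction between $(1),(2),(1,1)$ enters, matching the three bullet points in the definition of $C_\delta^\di$. (iii) Check the weight bookkeeping: the $\gl_n$-weight of such a vertex is $\bla$ precisely when $\nu\setminus\delta$, read as barred letters, has the correct content, and verify $\delta\in\Pa_n^\di$ exactly reproduces the even-row ($(2)$), even-column ($(1,1)$), or general-with-middle-column ($(1)$) condition.

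The main obstacle I expect is step~(ii): proving that in the row-tableau realization the $I_0$-highest-weight data forces the unbarred part to be \emph{exactly} $C_\delta^\di$ with $\delta\in\Pa_n^\di$, rather than merely constraining it. Because $\rowtab$ does not coincide with the KN tableau realization and its image lacks a simple characterization, I cannot read off admissibility directly; instead I would argue by tracking the $\al_n$-strings (and, for type $D$, the pair $\al_{n-1},\al_n$) through the tensor product, showing that $e_i(c)=0$ for all $i\in I_0$ combined with the skew-semistandardness of the barred part leaves no freedom in the $\{n,\ol n,0\}$ content beyond the choice of $\delta$ and the parity constraint defining $\Pa_n^\di$. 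Once this rigidity is established, the inverse map and the bijection of $A_{n-1}$-components follow formally, and surjectivity is clinched by Remark~\ref{R:Bnu'}(2), which locates the lowest weight vector and hence certifies that each $\Bhp^\di(\nu,\delta)$ is genuinely hit.
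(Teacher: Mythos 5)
Your opening reduction --- that it suffices to match $A_{n-1}$-highest weight vertices, since $\rowtab_\nu$ is an $A_{n-1}$-crystal embedding and both sides are $A_{n-1}$-subcrystals of $\Image(\rowtab_\nu)$ --- is exactly the paper's first move, and your ``easy'' direction (highest weight vertices of $\Bhp^\di(\nu)$ pull back into $\Bh(\nu)$) is sound, though it rests on Proposition \ref{P:Bhatprop}(4) (connectivity of $\Bhp^\di(\nu)$ to $\rowtab_\nu(b^\nu)$ by $I_0$-operators, hence containment in $\Image(\rowtab_\nu)$), not on Remark \ref{R:Bnu'}(2) alone. The genuine gap is your step (ii), the hard direction. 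You condition your rigidity analysis on ``$e_i(c)=0$ for all $i\in I_0$,'' but the relevant vertices are only $A_{n-1}$-highest weight: $B(\nu)$ is an irreducible $I_0$-crystal, so its unique $I_0$-highest weight vertex has weight $\nu$, which is not of the form $\bla$ and (for $\nu\ne\emptyset$) does not lie in $\Bh(\nu)$ at all. Thus the hypothesis you propose to exploit is satisfied by no vertex you need to treat; with the correct hypothesis ($e_i(c)=0$ only for $i\in\{1,\dotsc,n-1\}$ and weight in $\Z_{\le 0}^n$) the node-$n$ strings are unconstrained. Worse, no purely local string-tracking of the kind you sketch can succeed: the ambient crystal $B(\nu_1\omega_1)\otimes\dotsm\otimes B(\nu_p\omega_1)$ contains many $A_{n-1}$-highest weight vertices of antidominant weight that violate the rigid form. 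For instance, for $\di=(2)$ and $\nu=(1,1)$, the vertex $\bar{n}\otimes\bar{n}$ is $A_{n-1}$-highest weight of weight $\bla$ with $\la=(2)$, yet its column is not semistandard and no $\delta\in\Pa_n^{(2)}$ fits inside $(1,1)$. What excludes such vertices is precisely that they do not lie in $\Image(\rowtab_\nu)$ --- and you concede, as does the paper, that no simple characterization of this image is available. So your plan requires exactly the information it admits it lacks.

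The missing idea is the paper's counting argument, which sidesteps any structural analysis of the hard direction. By Proposition \ref{P:Bhatprop}(2) and the Littlewood--Richardson rule, $|\hw_{A_{n-1}}^{\bla}(\Bhp^\di(\nu,\delta))|=c^\nu_{\delta\la}$; these vertices are pairwise distinct over $\delta$ (Remark \ref{R:Bnu'}(1)) and all lie in $\rowtab_\nu(\hw_{A_{n-1}}^{\bla}(\Bh(\nu)))$ by your easy direction; and Littlewood's restriction formula \eqref{E:Lit} gives $|\hw_{A_{n-1}}^{\bla}(\Bh(\nu))|=\sum_{\delta\in\Pa_n^\di}c^\nu_{\delta\la}$ as well. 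A containment of finite sets of equal cardinality is an equality, so the highest weight vertex sets coincide and the isomorphism follows. Replacing your direct rigidity step by this cardinality comparison is what your proposal needs; otherwise you would have to produce a characterization of $\Image(\rowtab_\nu)$, which is a substantially harder (and open) problem.
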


Proposition \ref{P:rowtabhatiso} will be deduced from Proposition \ref{P:Bhatprop} below.

The \textit{reading word} of a single-rowed tableau is obtained by reading its letters
from \textit{right to left}. The reading word of a tableau obtained by reading its rows
from top to bottom. A word $w=x_1x_2\dotsm x_\ell$ with $x_i\in \{\bar{n},\overline{n-1},\dotsc,\bar{1}\}$
is \textit{Yamanouchi} if for all $j$, in the subword $x_1x_2\dotsm x_j$ there at least as many letters
$\overline{i+1}$ as there are letters $\bar{i}$ for $1\le i\le n-1$.

\begin{proposition}
\label{P:Bhatprop} Let $b\in \Bhp^\di(\nu,\delta)$ for some $\delta\in\Pa_n^\di$.
\begin{enumerate}
\item $\Bhp^\di(\nu,\delta)$ is an $A_{n-1}$-crystal.
\item $b\in\hw_{A_{n-1}}(\Bhp^\di(\nu,\delta))$ if and only if the row-reading word of the
skew semistandard subtableau of $b$ of shape $\nu/\delta$, is Yamanouchi
of weight $\bla$ for some $\la\in\Pa_n$.
\item If $\vphi_n(b)>0$ then $f_n(b)\in\Bhp^\di(\nu)$.
\item There exists a finite sequence $\mba=(j_1,j_2,\dotsc)$ in $I_0$
such that $b=e_\mba(\rowtab(b^\nu))$. In particular $\Bhp^\di(\nu)\subset \Image(\rowtab_\nu)$.
\item  Assume $b\in\hw_{A_{n-1}}^{\bla}(\Bhp^\di(\nu,\delta))$ for some $\la\in\Pa_n$
and let $\mba$ be as above. Then
\begin{align}\label{E:ncard}
\mathrm{card}\, \{k\mid
j_k=n\}=\frac{\abs{\nu}-\abs{\la}}{|\di|}.
\end{align}
\end{enumerate}
\end{proposition}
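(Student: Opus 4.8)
The plan is to treat all five parts through a single lens: a row tableau $b=R_1\otimes\dotsm\otimes R_p$ in $\Bhp^\di(\nu,\delta)$ splits into a frozen core $b|^\delta=C_\delta^\di$ (built only from the letters $n$, $0$, $\bar n$) and a barred skew part $b|^{\nu\setminus\delta}$ on $\{\bar n,\dotsc,\bar 1\}$, and I would analyze separately how the $A_{n-1}$ operators $e_i,f_i$ (for $i<n$) and the distinguished operator $f_n$ act on these two pieces. For part (1), I would run the bracketing (signature) rule for $e_i,f_i$ with $i<n$ and show that the core is inert: for $i<n-1$ the core contains none of the letters $i,i+1,\bar i,\overline{i+1}$, so these operators see only the skew part; for $i=n-1$ the only core letters carrying $(n-1)$-signature are the $n$'s (closing brackets) and $\bar n$'s (opening brackets) of $C_\delta^\di$, and I would check that in a valid row tableau they bracket among themselves and with the skew letters so that $e_{n-1},f_{n-1}$ again act only on skew cells $\overline{n-1}\leftrightarrow\bar n$. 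Since the barred skew semistandard tableaux of fixed shape $\nu/\delta$ form a union of $A_{n-1}$-components by the classical theory, this gives the crystal structure and simultaneously proves part (2): $e_i(b)=0$ for all $i<n$ iff the skew part is $A_{n-1}$-highest, which for barred semistandard tableaux is exactly the condition that its row-reading word be Yamanouchi; that the resulting highest weight is $\bla$ for a partition $\la$ is then forced by the content.

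For part (3) I would compute the action of $f_n$ explicitly in each type. In every case $\vphi_n(b)>0$ forces $f_n$ to act on the core, converting exactly one $\di$-tile of $C_\delta^\di$ into barred skew cells ($n\mapsto\bar n$ for $\di=(2)$, the $\bar n\,0\,n$ chain for $\di=(1)$, and the domino $\vcenter{\tableau[sby]{n\\ \overline{n}}}$ for $\di=(1,1)$); the freed cells carry the smallest barred letter $\bar n$ and attach to the skew part while preserving semistandardness, so $f_n(b)\in\Bhp^\di(\nu,\delta')$ with $\delta'\subset\delta$ obtained by deleting one $\di$-tile. Part (4) then follows by downward induction on $\abs{\delta}$: applying $f_n$ repeatedly drives $b$ into $\Bhp^\di(\nu,\varnothing)$, the all-barred case, whose elements lie in $\Image(\rowtab_\nu)$ because they form the dual polynomial part of the image and are connected to the lowest weight vector $b^\nu$; since $\Image(\rowtab_\nu)$ is a full $I_0$-component it is closed under $e_n$, and inverting the chain of $f_n$'s (interspersed with the $A_{n-1}$ raising operators that bring each intermediate vertex to an $A_{n-1}$-highest one) exhibits $b=e_{\mba}(\rowtab(b^\nu))$ with $\mba$ a sequence in $I_0$.

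Part (5) I expect to be the cleanest, via weight-sum bookkeeping. Writing $\abs{\wt(\cdot)}_\veps$ for the sum of the $\gl_n$-weight coordinates, each $e_i$ with $i<n$ changes it by zero since $\al_i=\veps_i-\veps_{i+1}$ has coordinate sum $0$, whereas $e_n$ changes it by exactly $\abs{\di}$, because $\al_n$ equals $\veps_n$, $2\veps_n$, $\veps_{n-1}+\veps_n$ for $\di=(1),(2),(1,1)$, with coordinate sums $1,2,2=\abs{\di}$. Starting from $b^\nu$ (all barred, coordinate sum $-\abs{\nu}$) and ending at $b$ (of $A_{n-1}$-highest weight $\bla$, coordinate sum $-\abs{\la}$), the total change is $\abs{\nu}-\abs{\la}$, and since only the $e_n$ steps contribute, $\abs{\di}\cdot\card\{k\mid j_k=n\}=\abs{\nu}-\abs{\la}$, which is \eqref{E:ncard}.

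I expect the main obstacle to be parts (1)--(2), specifically the uniform verification that the core $C_\delta^\di$ is inert under $e_{n-1},f_{n-1}$. This is delicate for $\di=(1,1)$, where $C_\delta^{(1,1)}$ is assembled from vertical dominoes, so that its $n$'s and $\bar n$'s land in different rows of the row tableau and appear in the ``wrong'' bracketing order; here one must use the admissibility of the single-row KN crystals $B(\nu_i\omega_1)$ (which excludes the degenerate $n$/$\bar n$ adjacencies near the fold) to see that the core brackets neutralize. Once parts (1)--(4) and Littlewood's formula \eqref{E:Lit} are in hand, Proposition \ref{P:rowtabhatiso} follows by matching, for each partition $\la$, the highest weight vertices of $\Bhp^\di(\nu)$ of weight $\bla$---indexed by a tile shape $\delta\in\Pa_n^\di$ together with an LR filling of $\nu\setminus\delta$ of content $\la$---with the multiplicity $\sum_{\delta\in\Pa_n^\di}c^\nu_{\delta\la}$ of $V^{GL_n}(\bla)$ in the dual polynomial part of $B(\nu)$.
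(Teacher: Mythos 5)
Your skeleton matches the paper's proof: inert core plus barred skew part for (1)--(2), explicit computation of $f_n$ for (3), downward induction on $\abs{\delta}$ for (4), and a counting argument for (5). Your weight-coordinate argument for (5) is correct and is actually cleaner than the paper's (it shows the count \eqref{E:ncard} holds for \emph{every} sequence $\mba$, not just the constructed one). But two steps contain genuine errors. First, in part (3) for $\di=(1,1)$ you have the type $D_n$ operator wrong: $f_n$ sends $n\mapsto\overline{n-1}$ (and $n-1\mapsto\bar{n}$), not $n\mapsto\bar{n}$. So when $f_n$ consumes the $n$ of a domino of $C_\delta^{(1,1)}$, the two freed cells carry $\bar{n}$ and $\overline{n-1}$, not two copies of $\bar{n}$ as you assert. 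This is not cosmetic: two $\bar{n}$'s stacked in one column would violate column-strictness, so under your description $f_n(b)$ would \emph{fail} to lie in $\Bhp^\di(\nu)$ --- your own semistandardness check would refute your claim. With the correct letters the check goes through since $\bar{n}<\overline{n-1}$; this is exactly the paper's one-line argument that ``every $n$ sits atop a $\bar{n}$.''

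Second, and more seriously, your induction in part (4) does not run. It needs $\vphi_n>0$ at every stage where $\delta\ne\varnothing$, and you propose to ensure this by moving each intermediate vertex to an $A_{n-1}$-\emph{highest} one. But at $A_{n-1}$-highest vertices the skew part is full of letters $\bar{n},\overline{n-1}$, which can bracket away all of the core's $n$'s in the $n$-signature. Concretely, for $\di=(1,1)$, $n\ge 6$, take $\nu=(1,1,1,1)$, $\delta=(1,1)$, and $b=R_1\ot R_2\ot R_3\ot R_4=\bar{n}\ot n\ot\bar{n}\ot\overline{n-1}$ (core domino in rows $1,2$; skew column $\bar{n},\overline{n-1}$ in rows $3,4$). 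This $b$ is $A_{n-1}$-highest by your part (2), $\delta\ne\varnothing$, yet the tensor rule \eqref{E:tensor} gives $\vphi_n(b)=0$, so $f_n$ kills $b$ and your chain stalls at the very first step. The missing idea --- the crux of the paper's proof of (4)--(5) --- is to pass instead to the $A_{n-1}$-\emph{lowest} vertex of each intermediate $A_{n-1}$-component: because $\nu\in\Pa_n^\infty$ has at most $n-2$ rows, the skew part of an $A_{n-1}$-lowest vertex uses only the letters $\overline{n-2},\dotsc,\bar{1}$, hence contributes nothing to the $n$-signature, and then a nonempty core forces $\vphi_n>0$, so part (3) applies and one tile is removed per application of $f_n$. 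Without this reduction (or some substitute for it) parts (4) and (5) remain unproved.
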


The proof of Proposition \ref{P:Bhatprop} is deferred to Appendix \ref{A:hatcrystal}.

\begin{proof}[Proof of Proposition \ref{P:rowtabhatiso}]
$\rowtab_\nu(\Bh(\nu))$ and $\Bhp^\di(\nu)$ are both $A_{n-1}$-subcrystals
of $\Image(\rowtab_\nu)$, by definition and Proposition \ref{P:Bhatprop}(4) respectively.
Therefore it suffices to show they have the same $A_{n-1}$-highest weight vertices.
All such vertices have weight of the form $\bla$ for some $\la\in\Pa_n$.
For $\la\in\Pa_n$ and $\delta\in\Pa_n^\di$,
$|\hw_{A_{n-1}}^{\bla}(\Bhp^\di(\nu,\delta))| = c^\nu_{\delta\la}$
by Proposition \ref{P:Bhatprop}(2) and the Littlewood-Richardson Rule \cite{F}. All of these highest
weight vertices are in $\rowtab_\nu(\hw_{A_{n-1}}^{\bla}(\Bh^\di(\nu)))$. The result follows by
summing over $\delta\in\Pa_n^\di$ and using \eqref{E:Lit}.
\end{proof}

\subsection{$\Bh(\nu)$ when $\nu$ is a rectangle}
We assume $\geh=\geh^\di$ is reversible, and
apply the previous results to $\max(B^{r,s})\cong B(s\omega_r)$ for $B^{r,s}\in\CCinf(\geh^\di)$.
For the rectangular partition $\nu =(s^r)\in\Pa_n^\infty$ let
\begin{align}
\label{E:bamin}
  \ba(r,s,\la) &= \hw_{A_{n-1}}^{\bla}(B_{I_0}(s^r)) \qquad\text{for $\la\in\Pa_n^\di(r,s)$} \\
\label{E:bminrs}
  \bamin^\di(r,s) &= \ba(r,s,\lamin^\di(r,s))
\end{align}
where $\lamin^\di(r,s)$ is defined in \eqref{E:lamin}. Note that the set on the right hand
side of \eqref{E:bamin} is a singleton, by \eqref{E:Lit} and the Littlewood-Richardson
Rule. We regard the elements $\ba(r,s,\la)$ as being in $B^{r,s}$ since $B^{r,s}$ contains
a unique $I_0$-component $B_{I_0}(s^r)$. We note that
\begin{align}
  \hw_{A_{n-1}}(\Bh(s^r)) = \{ \ba(r,s,\la)\mid \la\in\Pa^\di_n(r,s) \}.
\end{align}

\begin{remark} \label{R:rowtabmins} For $\la\in\Pa_n^\di(r,s)$, let $\delta\in\Pa_n^\di$
be the partition complementary to $\la$ in the rectangle $(s^r)$. Then
by Propositions \ref{P:rowtabhatiso} and \ref{P:Bhatprop}(2),
$\rowtab_{(s^r)}(\ba(r,s,\la))$ is explicitly
given by the row tableau of shape $(s^r)$ whose restriction to the shape $\delta$,
is the canonical tableau $C_\delta^\di$
and whose restriction to $(s^r)/\delta$ is the unique Yamanouchi tableau of that
shape in the letters $\{\overline{n},\dotsc,\bar{2},\bar{1}\}$; each column of the latter
subtableau consists of letters $\bar{n}$, $\overline{n-1}$, etc., reading from bottom to top.
\end{remark}

For $\nu=(s^r)$ we are going to see that every $A_{n-1}$-highest weight vertex in
$\Bh(\nu)$ is reachable by $I_0$-lowering operators, starting with a certain fixed element.
This is not true for a general partition $\nu\in \Pa_n^\infty$.


\begin{proposition}
\label{P:Brsdual} Let $(r,s)\in I_0\times \Z_{>0}$ with $B^{r,s}\in\CCinf(\geh^\di)$
and $\ell=\lev(B^{r,s})$. Then for any $\la\in \Pa^\di_n(r,s)$
there exists a finite sequence $\bs=(j_1,j_2,\dotsc)$ in $I_0$ such that
\begin{align}
\label{E:mintola}
u_{\ell\La_n} \otimes \ba(r,s,\la) &= f_{\bs}(u_{\ell\La_n} \otimes \bamin^\di(r,s)) \\
\mathrm{card}\{k \mid j_k = n\} &=
\frac{\abs{\la}-\abs{\lamin^\di(r,s)}}{|\di|}.
\end{align}
\end{proposition}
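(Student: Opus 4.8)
The plan is to reduce the statement to a purely combinatorial claim about $I_0$-lowering paths inside $B^{r,s}$, and then to build such a path in the row tableau realization of $\Bh((s^r))$.

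First I would analyze the tensor product $B(\ell\La_n)\otimes B^{r,s}$ by means of Kashiwara's convention \eqref{E:tensor}. Since $u_{\ell\La_n}$ is the $I$-highest weight vertex of $B(\ell\La_n)$, we have $e_i(u_{\ell\La_n})=0$ for all $i\in I$ and $\vphi_i(u_{\ell\La_n})=\ell\,\chi(i=n)$ for $i\in I_0$; note that $\lev(\ell\La_n)=\ell=\lev(B^{r,s})$ for the reversible types, so the setup is compatible with Theorem \ref{T:KMN} and Lemma \ref{L:phimin}. Suppose $u_{\ell\La_n}\otimes\ba(r,s,\la)=f_\bs(u_{\ell\La_n}\otimes\bamin^\di(r,s))$ for some lowering sequence $\bs=(j_1,j_2,\dotsc)$. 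Under any application of lowering operators the left tensor factor can only descend in $B(\ell\La_n)$ or stay fixed; as it begins and ends at the maximal vertex $u_{\ell\La_n}$ it must equal $u_{\ell\La_n}$ at every step. Hence each $f_{j_k}$ acts on the right factor, which by \eqref{E:tensor} is automatic for $j_k\in\{1,\dotsc,n-1\}$ (there $\vphi_{j_k}(u_{\ell\La_n})=0$) and for $j_k=n$ imposes on the current right factor $b$ the single condition $\veps_n(b)\ge\ell$. Thus the claim is equivalent to the existence of an $I_0$-lowering path from $\bamin^\di(r,s)$ to $\ba(r,s,\la)$ inside $B^{r,s}$ in which $f_n$ is applied only at vertices $b$ with $\veps_n(b)\ge\ell$.

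To produce this path I would pass to $\Bh((s^r))\cong\Bhp^\di((s^r))$ via Proposition \ref{P:rowtabhatiso}, using the explicit row tableaux of $\bamin^\di(r,s)$ and $\ba(r,s,\la)$ described in Remark \ref{R:rowtabmins}. The set $\Bhp^\di((s^r))$ is stable under all $I_0$-lowering operators: it is a union of $A_{n-1}$-components and is closed under $f_n$ by Proposition \ref{P:Bhatprop}(3). I would then induct on $(\abs{\la}-\abs{\lamin^\di(r,s)})/|\di|$, at each stage converting one $\di$-block of the canonical region $C_\delta^\di$ into Yamanouchi cells: one selects the row (or, for $\di=(1,1)$, the height-two column) where an $n,\bar n$ pattern can be turned into skew semistandard entries, applies $A_{n-1}$-lowering operators to bring the relevant letters into position, and then applies $f_n$ exactly once. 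The resulting element is again $A_{n-1}$-highest of the next weight $\overline{\la'}$ with $\abs{\la'}=\abs{\la}-|\di|$, so the induction closes.

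The box count is then automatic and path-independent. Applying the functional $\mu\mapsto\sum_i\mu_i$ on $\gl_n$-weights to $\wt(\ba(r,s,\la))-\wt(\bamin^\di(r,s))=-\sum_k\al_{j_k}$, one uses that this functional vanishes on $\al_i$ for $i<n$ and equals $|\di|$ on $\al_n$ (indeed $\al_n=\veps_n,\,2\veps_n,\,\veps_{n-1}+\veps_n$ for $\di=(1),(2),(1,1)$), while $\sum_i(\bla)_i=-\abs{\la}$. This gives $\card\{k\mid j_k=n\}=(\abs{\la}-\abs{\lamin^\di(r,s)})/|\di|$ for any lowering path, so it suffices to exhibit one.

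The main obstacle is verifying the constraint $\veps_n(b)\ge\ell$ at each application of $f_n$, where $\ell=\lceil s/c_r\rceil$. In the dual polynomial part one always has $\vphi_n(b)\le\veps_n(b)$, so $f_n$ is applicable, but one needs the sharper lower bound $\veps_n(b)\ge\ell$; near $\bamin^\di(r,s)$ the region $\delta$ is largest and supplies many $n,\bar n$ entries, yet this must be tracked as $\delta$ shrinks along the induction. Carrying this out requires the explicit action of $f_n$ on row tableaux and a case analysis over $\di\in\{(1),(2),(1,1)\}$ using the shapes of $C_\delta^\di$; this is where the rectangular hypothesis $\nu=(s^r)$ is essential, since for general $\nu$ the analogous lowering reachability fails.
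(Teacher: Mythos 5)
Your reduction --- the left factor $u_{\ell\La_n}$ can never return once lowered, so the statement is equivalent to exhibiting an $I_0$-lowering path from $\bamin^\di(r,s)$ to $\ba(r,s,\la)$ inside $B^{r,s}$ along which every application of $f_n$ occurs at a vertex $b$ with $\veps_n(b)\ge\ell$ --- is exactly the paper's (it appears as conditions \eqref{E:holdhwv}--\eqref{E:removedi} in Appendix \ref{A:hatcrystal}), and your induction removing one $\di$-block at a time in the row tableau model is precisely the paper's Lemma \ref{L:npath}, where each step is realized by the explicit sequence $f_{\ta(h)}$. One point where you genuinely improve on the paper: your weight-functional argument (apply $\mu\mapsto\sum_i\mu_i$, which kills $\al_i$ for $i<n$ and takes the value $|\di|$ on $\al_n$) shows that \emph{every} lowering path from $\bamin^\di(r,s)$ to $\ba(r,s,\la)$ uses $f_n$ exactly $(\abs{\la}-\abs{\lamin^\di(r,s)})/|\di|$ times, so the box count comes for free once any admissible path exists; the paper instead reads the count off from its particular construction.

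However, there is a genuine gap, and it sits exactly where the proposition has its content: you never verify $\veps_n(b)\ge\ell$ at the moments $f_n$ is applied --- you only record it as ``the main obstacle.'' Without it, the tensor rule \eqref{E:tensor} may force $f_n$ onto the left factor $u_{\ell\La_n}$ and the claim collapses; this estimate is the entire analytic point, everything else being soft. (A smaller slip of the same kind: ``$\vphi_n(b)\le\veps_n(b)$, so $f_n$ is applicable'' is a non sequitur, since $\vphi_n$ could vanish; applicability of $f_n$ is the separate condition \eqref{E:canlower}, which also has to be checked.) Moreover your heuristic --- that the $n,\ol{n}$ entries of $C_\delta^\di$ supply $\veps_n$, so the bound must be ``tracked as $\delta$ shrinks'' --- points in the wrong direction: for $\di=(1,1)$ those $n,\ol{n}$ pairs cancel each other in the $n$-signature. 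What actually closes the argument (and is how the paper proves Lemma \ref{L:npath} in Appendix \ref{A:hatcrystal}) is a bound that is \emph{uniform} in $\delta$: in $\rowtab(\ba(r,s,\mu))$ the bottom row of the rectangle $(s^r)$ consists of letters $\ol{n}$, both beneath the columns of $C_\delta^\di$ and in the Yamanouchi skew part of Remark \ref{R:rowtabmins}, and these contribute unpaired plus signs to the $n$-signature: this gives $\veps_n\ge s=\ell$ for $\di=(1,1)$, and at least $\lceil s/2\rceil=\ell$ unpaired letters $\ol{n}$ in the bottom row for $\di=(2)$, the case $\di=(1)$ being similar; finally, the $A_{n-1}$-prefix $f_{\ta'(h)}$ modifies only the top of a single column, so the bound persists up to the moment $f_n$ is applied. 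Supplying this signature analysis (together with \eqref{E:canlower} and the identification of $f_n$ of the result with $\ba(r,s,\la)$) is what turns your outline into a complete proof, at which point it coincides with the paper's.
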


This result follows by induction using Lemma \ref{L:npath} below.
For $h\ge2$ if $\di=(1,1)$ and $h\ge1$ if $\di\in\{(1),(2)\}$, define the following
sequences (the semicolons are just for readability):
\begin{align}
\notag
\ta'(h) &=
\begin{cases}
(n-2,n-3,\dotsc,n-h+1;n-1,n-2,\dotsc,n-h+2) & \text{for $\di=(1,1)$} \\
(n-1,n-2,\dotsc,n-h+1) & \text{for $\di=(1)$} \\
((n-1)^2,(n-2)^2,\dotsc,(n-h+1)^2) & \text{for $\di=(2)$}
\end{cases} \\
\label{E:ta}
\ta(h) &= (n;\ta'(h)).
\end{align}

\begin{notation}\label{N:lambdaminus}
Given $\la\in\Pa_n^\di(r,s)$ with $\la\ne\lamin=\lamin^\di(r,s)$, we define a canonical smaller element
$\lam\in\Pa_n^\di(r,s)$ obtained from $\la$ by removing a particular copy of the shape $\di$.
Suppose the rightmost column in which $\la$ and $\lamin$ differ, is the $p$-th.
Let $h=\la'_p$ be the height of that column. Let $\lam\in \Pa_n^\di(r,s)$ be obtained
from $\la$ by removing a vertical domino from the $p$-th column if $\di=(1,1)$,
removing a cell from the $p$-th column if $\di=(1)$, and removing a cell from the
$p$-th and $(p-1)$-th columns if $\di=(2)$.

We note that if $\delta\in\Pa_n^\di$ is a nonempty partition then
$\delta^-\in\Pa_n^\di$ can be defined similarly.
\end{notation}

\begin{lemma} \label{L:npath}
Let $\la\in\Pa_n^\di(r,s)$ with $\la\ne\lamin^\di(r,s)$. Then
\begin{align} \label{E:nnextla}
u_{\ell\La_n} \otimes \ba(r,s,\la) &= f_{\ta(h)}(u_{\ell\La_n} \otimes \ba(r,s,\lam)).
\end{align}
\end{lemma}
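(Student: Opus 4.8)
The plan is to evaluate $f_{\ta(h)}(u_{\ell\La_n}\otimes\ba(r,s,\lam))$ directly in the row tableau realization of $\Bh(s^r)$ (Proposition \ref{P:rowtabhatiso}), using the explicit tableaux of Remark \ref{R:rowtabmins} and Kashiwara's tensor rule \eqref{E:tensor}. The first step is to render the left factor $u_{\ell\La_n}$ inert for the classical nodes $i\in\{1,\dots,n-1\}$. Since $\vphi_i(u_{\ell\La_n})=\ell\,\chi(i=n)$ and $\veps_i(u_{\ell\La_n})=0$, the rule \eqref{E:tensor} forces $f_i$ to act on the right tensor factor, so $f_i(u_{\ell\La_n}\otimes c)=u_{\ell\La_n}\otimes f_i(c)$ for all $i<n$. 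Because $\ta'(h)$ contains no copy of $n$ (see \eqref{E:ta}), this gives
\[
 f_{\ta'(h)}(u_{\ell\La_n}\otimes\ba(r,s,\lam))=u_{\ell\La_n}\otimes b',\qquad b':=f_{\ta'(h)}(\ba(r,s,\lam)),
\]
so everything reduces to an $A_{n-1}$-computation in the right factor together with a single final application of $f_n$.

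Next I would pin down $b'$ and $f_n(b')$ using Remark \ref{R:rowtabmins}. By Notation \ref{N:lambdaminus} the partitions $\lam$ and $\la$ differ by exactly one copy of $\di$, located in the $p$-th column at height $h$, so the row tableaux of $\ba(r,s,\lam)$ and $\ba(r,s,\la)$ agree outside a small window around that copy, differing there only in the cells where the canonical entries of $C^\di$ are replaced by the barred entries forced by the Yamanouchi filling of the enlarged skew shape. A consistency check that both guides and constrains the bookkeeping is the weight count: the simple roots indexed by $\ta(h)$ sum to $\veps_{n-h+1}$ for $\di=(1)$, to $2\veps_{n-h+1}$ for $\di=(2)$, and to $\veps_{n-h+1}+\veps_{n-h+2}$ for $\di=(1,1)$, in every case matching $\overline{\lam}-\bla$ exactly. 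The mechanism is that the operators of $\ta'(h)$ first reposition the barred letters adjacent to the canonical cells, after which the concluding $f_n$ installs the final letter(s) of the new copy of $\di$ in the Yamanouchi region.

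The crux is this concluding $f_n$. For the identity to hold with $u_{\ell\La_n}$ unchanged, $f_n$ must act on the right factor $b'$, and by \eqref{E:tensor} this happens exactly when $\veps_n(b')\ge\vphi_n(u_{\ell\La_n})=\ell$; one checks moreover that $\vphi_n(b')>0$ so that $f_n(b')\ne0$. I would therefore establish both $\veps_n(b')\ge\ell$ and $\vphi_n(b')>0$ (the first is often tight—already for $r=2,s=1,h=2$ in type $D_n^{(1)}$ one finds $\veps_n(b')=\ell$, and here the level-$\ell$ reservoir carried by $u_{\ell\La_n}$ is what keeps $f_n$ off the left factor). Granting this, $f_{\ta(h)}(u_{\ell\La_n}\otimes\ba(r,s,\lam))=u_{\ell\La_n}\otimes f_n(b')$, and it remains to identify $f_n(b')$ with $\ba(r,s,\la)$. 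Since $\ba(r,s,\la)$ is the \emph{unique} $A_{n-1}$-highest weight vertex of weight $\bla$ in $B_{I_0}(s^r)$ (the singleton statement following \eqref{E:bminrs}), it suffices to verify that $f_n(b')$ is nonzero, is $A_{n-1}$-highest weight, and has weight $\bla$—the weight being automatic from the root-sum computation of the previous paragraph.

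The main obstacle is the detailed verification inside the right factor: that each operator of $\ta(h)$ is applicable in the prescribed order with no intermediate vanishing, that $\veps_n(b')\ge\ell$, and that the resulting tableau is exactly the one prescribed for $\ba(r,s,\la)$. This is a signature-rule computation on row tableaux that must be organized according to the kind $\di$—the interleaved double run for $\di=(1,1)$, the single run for $\di=(1)$, and the doubled run for $\di=(2)$—and the delicate point throughout is the interaction between the barred letters slid by $\ta'(h)$ and the canonical $n,\bar n$ (or $0$) cells of $C^\di$. I expect that casting the argument as an induction on $h$, exploiting the nested structure of the sequences $\ta'(h)$, keeps the casework manageable, cleanly for $\di\in\{(1),(2)\}$ and with more care for $\di=(1,1)$.
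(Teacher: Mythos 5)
Your skeleton is the paper's proof almost step for step: work in the row tableau realization, observe that the operators of $\ta'(h)$ (which avoid the node $n$) automatically act on the right tensor factor, and reduce the lemma to three facts about $b'':=f_{\ta'(h)}(\rowtab_{(s^r)}(\ba(r,s,\lam)))$, namely $\veps_n(b'')\ge\ell$, $\vphi_n(b'')>0$, and the identification of $f_n(b'')$; these are precisely \eqref{E:holdhwv}, \eqref{E:canlower} and \eqref{E:removedi} in the paper's proof in Appendix \ref{A:hatcrystal}. Your weight bookkeeping is correct, and your one genuine variation --- invoking the singleton property of $\hw_{A_{n-1}}^{\bla}(B_{I_0}(s^r))$ so that only nonvanishing, $A_{n-1}$-highest-weightness and the weight of $f_n(b'')$ need checking, rather than matching the explicit tableau of Remark \ref{R:rowtabmins} --- is legitimate and would mildly streamline the endgame.

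However, the proposal stops exactly where the paper's proof starts doing work, and what you defer as ``detailed verification'' is not peripheral bookkeeping: it is the entire content of the lemma. The paper establishes the three conditions from the explicit structure of $\rowtab_{(s^r)}(\ba(r,s,\lam))$ given by Proposition \ref{P:Bhatprop}(2) and Remark \ref{R:rowtabmins}: for $\di=(1,1)$, every letter $n$ of the canonical part $C^{\di}_{\dm}$ is paired in the signature with the $\bar{n}$ it sits on, the bottom row consists of $s$ unpaired letters $\bar{n}$ standing at the head of the reading word (this yields $\veps_n(b'')\ge s\ge\ell$, because one also shows $f_{\ta'(h)}$ leaves the bottom row untouched), and $f_{\ta'(h)}$ modifies only the top of the active column, turning its entries $\overline{n-h+3},\dotsc,\overline{n-1},\overline{n}$ into $\overline{n-h+1},\dotsc,\overline{n-2}$ and thereby exposing one unpaired $n$ --- which is what gives $\vphi_n(b'')>0$ and pins down where the final $f_n$ acts. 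Your proposal proves none of: nonvanishing of the intermediate $f_i$'s along $\ta'(h)$, the inequality $\veps_n(b'')\ge\ell$ (which, as your own $r=2$, $s=1$ example shows, is tight, so no soft bound will do), the positivity $\vphi_n(b'')>0$, or the $A_{n-1}$-highest-weightness of $f_n(b'')$ that your uniqueness argument requires. Note also that the uniqueness shortcut does not spare you this analysis: certifying that $f_n(b'')$ is killed by $e_1,\dotsc,e_{n-1}$ demands the same control over the letters of $b''$ as matching it against the explicit tableau. So: correct reduction and a sound plan, but the substantive, case-by-case ($\di=(1,1),(1),(2)$) signature analysis that constitutes the paper's proof is missing.
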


The proof of Lemma \ref{L:npath} is deferred to Appendix \ref{A:hatcrystal}.

\section{Affine crystals and the involution $\sigma$}
\label{S:sigmasec}

In this section we summarize necessary facts on a single KR crystal $B^{r,s}$ belonging to
$\CCinf(\geh^\di)$ and show that a tensor product $B$ of such KR crystals has an automorphism $\sigma$,
which we call the reversing crystal automorphism. This $\sigma$ will be effectively used to show our
main theorem (Theorem \ref{Th_dec_X}).

\subsection{KR crystal $B^{r,s}$}
\label{SS:KR}

We consider a single KR crystal $B^{r,s}\in\CCinf(\geh^\di)$. Note that $r\in I_0$ is nonspin. We recall
the crystal structure of $B^{r,s}$. Firstly, the $U_q(\geh^\di_0)$-crystal structure is described as
follows. As we explained in Introduction, $B^{r,s}$ decomposes into a multiplicity-free direct sum of highest weight
crystals $B(\la)$, where $\la$ runs over $\Pa^\di_n(r,s)$, the set of partitions
obtained by removing $\di$'s from $(s^r)$. The action of Kashiwara operators $e_i,f_i$ ($i\in I_0$) on
$B^{r,s}$ is given by realizing its elements by KN tableaux. Hence, we are left to describe the action of $e_0$
and $f_0$. To do this we explain the notion of $\pm$-diagrams and a certain automorphism $\varsigma$ on $B^{r,s}$
for $\di=(1,1)$ introduced in \cite{Sc}. From here to Lemma \ref{lem:e1 action} we assume $\di=(1,1)$.

A $\pm$-diagram $P$ of shape $\La/\la$ is a sequence of partitions $\la\subset \mu \subset \La$
such that $\La/\mu$ and $\mu/\la$ are horizontal strips (i.e. every column contains at most one box). We
depict this $\pm$-diagram by the skew tableau of shape $\La/\la$ in
which the cells of $\mu/\la$ are filled with the symbol $+$ and
those of $\La/\mu$ are filled with the symbol $-$. Write
$\La=\mathrm{outer}(P)$ and $\la=\mathrm{inner}(P)$ for the outer and inner shapes of the
$\pm$-diagram $P$. We call $\mu$ the middle shape. Set $J=\{2,3,\ldots,n\}$. There is a bijection
$\Phi:P\mapsto b$ from $\pm$-diagrams $P$ of shape $\La/\la$ to the set of $J$-highest weight elements
$b$ of $J$-weight $\la$. For details refer to section 4.2 of \cite{Sc}.

Now suppose $b\in B^{r,s}$ is a $J$-highest weight element corresponding to a $\pm$-diagram $P$
of shape $\La/\la$. Let $c_i=c_i(\la)$ be
the number of columns of height $i$ in $\la$ for all $1\le i<r$ with
$c_0=s-\la_1$. If $i\equiv r-1 \pmod{2}$, then in $P$, above each
column of $\la$ of height $i$, there must be a $+$ or a $-$.
Interchange the number of such $+$ and $-$ symbols. If $i\equiv r
\pmod{2}$, then in $P$, above each column of $\la$ of height $i$,
either there are no signs or a $\mp$ pair. Suppose there are $p_i$
$\mp$ pairs above the columns of height $i$. Change this to
$(c_i-p_i)$ $\mp$ pairs. The result is $\mathfrak{S}(P)$, which has the
same inner shape $\la$ as $P$ but a possibly different outer shape.
The columns of height $r$ in $P$ are not changed by $\mathfrak{S}$.
The following map $\varsigma$ (called $\sigma$ in \cite{Sc}) is an automorphism on $B^{r,s}$ corresponding to
interchanging the nodes $0$ and $1$ of the Dynkin diagram of $D^{(1)}_n$.

\begin{definition} \label{def:tau}
Let $b\in B^{r,s}$ and $\as$ be a sequence of elements of $J$ such that
$e_{\as}(b)$ is a $J$-highest weight element. Let $\mathbf{a'}$ be the reverse sequence of $\as$. Then
\begin{equation} \label{eq:def tau}
\varsigma(b) := f_{\mathbf{a'}} \circ \Phi \circ \mathfrak{S} \circ \Phi^{-1} \circ e_{\as}(b).
\end{equation}
\end{definition}
With this $\varsigma$ the Kashiwara operators $e_{0}$ and $f_{0}$ are given by
\begin{equation} \label{eq:e0}
\begin{split}
f_{0} &= \varsigma \circ f_{1} \circ \varsigma,\\
e_{0} &= \varsigma \circ e_{1} \circ \varsigma.
\end{split}
\end{equation}
By \eqref{eq:def tau} and \eqref{eq:e0} $e_0$ and $f_0$ commutes with $e_i$ or $f_i$ for
$J'=\{3,4,\ldots,n\}$. Hence, the calculation of the actions of $e_0$ and $f_0$ are reduced to
$J'$-highest weight elements. Note that $J'$-highest weight elements are in one-to-one correspondence
with pairs of $\pm$-diagrams $(P,p)$, where the inner shape of $P$ is the outer shape of $p$. To calculate
the action of $e_0$ it suffices to know the action of $e_1$ on $(P,p)$, that is described in \cite{Sc}.
\begin{enumerate}
\item Successively run through all $+$ in $p$ from left to right and, if possible, pair it with
the leftmost yet unpaired $+$ in $P$ weakly to the left of it.
\item Successively run through all $-$ in $p$ from left to right and, if possible, pair it with
the rightmost  yet unpaired $-$ in $P$ weakly to the left.
\item Successively run through all yet unpaired $+$ in $p$ from left to right and, if possible,
pair it with the leftmost yet unpaired $-$ in $p$.
\end{enumerate}
\begin{lemma} \cite[Lemma 5.1]{Sc} \label{lem:e1 action}
If there is an unpaired $+$ in $p$,  $e_1$ moves the rightmost unpaired $+$ in $p$ to $P$.
Else, if there is an unpaired $-$ in $P$, $e_1$ moves the leftmost unpaired $-$ in $P$ to $p$.
Else $e_1$ annihilates $(P,p)$.
\end{lemma}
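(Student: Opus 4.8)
The statement is a bracketing (signature) rule for the single raising operator $e_1$, so the plan is to translate the combinatorial pairing of the enumerated steps into the ordinary $1$-signature of the underlying crystal element and then apply the standard tensor signature rule. First I would record the structural fact that legitimizes working with pairs: in the Dynkin diagram of $\geh_0=D_n$ the node $1$ is adjacent only to node $2$, so $e_1$ commutes with $e_i$ and $f_i$ for every $i\in J'=\{3,\dotsc,n\}$. Hence if $b$ is $J'$-highest weight then $e_i(e_1 b)=e_1(e_i b)=0$ for all $i\in J'$, so $e_1(b)$ is again $J'$-highest weight (or $0$); thus $e_1$ really does act on the set of pairs $(P,p)$, and it remains only to compute this action.

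Next I would set up the dictionary between the signs of $(P,p)$ and the letters of the associated $J'$-highest weight element $b$ that are moved by node $1$. From the construction of $\Phi$ in \cite{Sc}, the signs of $P$ record the positions of the two letters fixed by $J=\{2,\dotsc,n\}$: a $+$ in $P$ marks a column carrying a $1$ and a $-$ in $P$ a column carrying a $\bar 1$. Likewise the signs of $p$ record the letters $2$ and $\bar 2$ (fixed by $J'$ but moved by node $2$): a $+$ in $p$ marks a $2$ and a $-$ in $p$ a $\bar 2$. All remaining columns contribute letters invariant under $e_1$. Since $e_1$ acts on the vector letters by $2\mapsto 1$ and $\bar 1\mapsto\bar 2$, the three clauses of the lemma are exactly the three a priori possibilities: convert a $2$ to a $1$ (move a $+$ from $p$ to $P$), convert a $\bar 1$ to a $\bar 2$ (move a $-$ from $P$ to $p$), or annihilate.

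The core of the argument is then to verify that the three-step pairing procedure reproduces the usual cancellation of the $1$-signature of $b$. I would read this signature in the order prescribed by $\Phi$, marking each $e_1$-active letter ($2$ or $\bar 1$) and each $f_1$-active letter ($1$ or $\bar 2$) with its sign, and then check step by step that deleting adjacent cancelling pairs agrees with the enumerated rules: step (1) cancels $2$'s against $1$'s (the unbarred letters), step (2) cancels $\bar 2$'s against $\bar 1$'s (the barred letters), and step (3) effects the remaining unbarred-versus-barred cancellation inside $p$. The unique surviving unbracketed $e_1$-active letter then identifies where $e_1$ acts, yielding the ``rightmost unpaired $+$ in $p$'' (a $2$) with priority over the ``leftmost unpaired $-$ in $P$'' (a $\bar 1$). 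Finally I would confirm that moving the indicated sign produces a legal pair $(P',p')$ and that this pair is indeed the one associated with $e_1(b)$.

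The main obstacle I expect is the bookkeeping in the dictionary together with the parity conventions of $\Phi$. The map $\Phi$ (and the auxiliary map $\mathfrak{S}$) treats a column differently according to the parity of its height relative to $r$, which is the source of the $\mp$-pairs; one must check that these parity subtleties neither disturb the sign-to-letter correspondence nor alter the left-to-right reading order used in the signature, and that degenerate columns (for instance full columns of height $r$, which $\mathfrak{S}$ leaves fixed) behave correctly. Matching the ``weakly to the left'' conditions of steps (1)--(2) and the left/right choices in the three clauses against the actual positions of the corresponding letters in the signature word is precisely the place where the computation must be carried out in full detail.
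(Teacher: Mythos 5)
The paper offers no proof of this statement to compare against: Lemma \ref{lem:e1 action} is imported verbatim from Schilling \cite[Lemma 5.1]{Sc}, and the present paper simply cites it. Judged on its own, your reconstruction is sound and is essentially the argument underlying the cited result: the commutation of $e_1$ with the operators indexed by $J'=\{3,\dotsc,n\}$ (node $1$ is not adjacent to any node of $J'$), the dictionary identifying a $+$ (resp.\ $-$) in $P$ with a letter $1$ (resp.\ $\bar 1$) and a $+$ (resp.\ $-$) in $p$ with a letter $2$ (resp.\ $\bar 2$), and the reduction of the three-step pairing procedure to ordinary $1$-signature bracketing, so that the three clauses of the lemma correspond to the moves $2\mapsto 1$, $\bar 1\mapsto \bar 2$, or annihilation. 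The one caveat is that the core verification --- matching the ``weakly to the left'' pairing conditions and the rightmost/leftmost selections against the reading order and the parity ($\mp$-pair) conventions of $\Phi$ --- is outlined rather than carried out; you correctly flag this as where the real work lies, and it is exactly the computation done in \cite{Sc}.
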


For types $\di=(2),(1)$, we use a construction of $B^{r,s}$ in section 4.3 and 4.4 of \cite{FOS1} (where
it is called $V^{r,s}$). As above we can assume $b\in B^{r,s}$ is $J$-highest. Let $p=\Phi^{-1}(b)$ and
let $\hat{p}$ be $p$ itself if $\di=(2)$, and the $\pm$-diagram whose inner, middle and outer shapes
are all doubled rowwise if $\di=(1)$. Let $c_i$ ($1\le i\le r$) be the number of columns of height $i$
in $\mathrm{outer}(\hat{p})$. We also set $c_0=\gamma s-\mathrm{outer}(\hat{p})_1$ where
$\gamma=2/|\di|$. Note that $c_i$ is even except when $\di=(2),i=r$ and $r$ is odd. There exists a unique
$\pm$-diagram $P$ such that $\mathrm{inner}(P)=\mathrm{outer}(\hat{p})$, the length of
$\mathrm{inner}(P)\le r$ and there are equal number $c_i/2$ of columns with $\mp$ and $\cdot$ in $P$
if $i<r,i\equiv r\;(2)$, with $+$ and $-$ if $i\not\equiv\;(2)$. Then the pair of $\pm$-diagrams
$(P,\hat{p})$ can be considered to correspond to a $\{3,4,\ldots,n\}$-highest element of $B^{r,\gamma s}$
of type $\di=(1,1)$. We now apply $e_1\circ\varsigma\circ e_1$ to $(P,\hat{p})$ following the procedure explained
previously to get $(P',\hat{p}')$. Let $p'$ be $\hat{p}'$ if $\di=(2)$, and the $\pm$-diagram whose inner,
middle and outer shapes are all halved rowwise. (This is possible by Lemma 4.7 (1) in \cite{FOS1}.)
Finally, setting $b'=\Phi(p')$ we obtain $e_0b=b'$. To calculate the action of $f_0$ we replace
$e_1\circ\varsigma\circ e_1$ with $f_1\circ\varsigma\circ f_1$.

\subsection{The reversing crystal automorphism $\sigma$}
\label{SS:sigma}

Recall $\sigma\in\Aut(X)$ from \eqref{E:sigmaaut}.

\begin{theorem} \label{T:sigma} For every $B$ that is a tensor product of
KR crystals in $\CCinf(\geh^\di)$, there is a unique map
$\sigma=\sigma_B:B\to B$ such that
\begin{align} \label{E:sigmaprop}
  \sigma \circ e_i = e_{\sigma(i)} \circ \sigma
\end{align}
for all $i\in I$ and $b\in B$. Moreover
\begin{align}
\label{E:sigmawt}
  \wt(\sigma(b)) &= -w_0^{A_{n-1}}(\wt(b)) \\
\label{E:sigmainv}
  \sigma^2 &= \text{id} \\
\label{E:sigmaiso}
  \sigma_{B'} \circ g &= g \circ\sigma_B\qquad\text{for any $g:B\cong B'$ for $B,B'\in\CC$.}
\end{align}
Here $w_0^{A_{n-1}}\in W$ is the longest element of the type $A_{n-1}$ Weyl group
generated by $s_1$ through $s_{n-1}$.
\end{theorem}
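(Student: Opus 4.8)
The plan is to construct the map $\sigma$ explicitly as a composition of crystal isomorphisms and verify the stated properties directly. The key observation is that $\sigma\in\Aut(X)$ induces an involution on the affine Dynkin diagram, and the node permutation $i\mapsto n-i$ restricts to an involution on $I_0$ that is exactly the type $A_{n-1}$ diagram flip $i\mapsto n-i$ for $1\le i\le n-1$, together with the swap $0\leftrightarrow n$. Since $\sigma$ fixes the null root $\delta/a_0$ and satisfies $\sigma(\La_i)=\La_{\sigma(i)}$ (by the functoriality of $\Aut(X)$ on $P$ established in Section \ref{SS:aut}), I would first build $\sigma$ on a single KR crystal $B^{r,s}$ and then extend it to tensor products.

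\textbf{Construction on a single factor.} For $B^{r,s}\in\CCinf(\geh^\di)$, note that $\sigma$ is a Dynkin automorphism, so it induces a permutation of the affine weight lattice. The crystal $B^{r,s}$ carries a $U'_q(\geh^\di)$-module structure (Theorem \ref{T:KR}), and because $\sigma$ permutes the simple roots, the twisted crystal (with $e_i$ replaced by $e_{\sigma(i)}$) is again the crystal of a KR module with the same classical decomposition \eqref{E:KRI0decomp} (since $\Pa^\di_n(r,s)$ is preserved under the induced symmetry). By simplicity and $I$-connectedness (Proposition \ref{P:simpleconnected}), there is a \emph{unique} crystal isomorphism realizing this twist; call it $\sigma_{B^{r,s}}$. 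Uniqueness forces the relation \eqref{E:sigmaprop} on this factor. The weight formula \eqref{E:sigmawt} follows because $\sigma$ sends $\wt(b)=\sum_i(\vphi_i-\veps_i)\La_i$ to the weight with indices permuted by $i\mapsto n-i$, which on the classical part is precisely $-w_0^{A_{n-1}}$ (recall from Example \ref{X:*} that $i^*=n-i$ in type $A_{n-1}$ and $-w_0\omega_i=\omega_{i^*}$). The involution property $\sigma^2=\mathrm{id}$ holds because $\sigma^2$ is the identity automorphism of $X$, so $\sigma^2_{B^{r,s}}$ is an automorphism commuting with all $e_i$, hence the identity by uniqueness.

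\textbf{Extension to tensor products.} For $B=B^{r_1,s_1}\otimes\dotsm\otimes B^{r_p,s_p}$, I would define $\sigma_B=\sigma_{B^{r_1,s_1}}\otimes\dotsm\otimes\sigma_{B^{r_p,s_p}}$, applying the single-factor map to each tensor position. The point to verify is that this factorwise map indeed intertwines $e_i$ and $e_{\sigma(i)}$ on the tensor product. Here the tensor product rule \eqref{E:tensor} depends only on the comparisons $\vphi_i(b_1)$ versus $\veps_i(b_2)$; since $\sigma$ on each factor sends $\vphi_i\mapsto\vphi_{\sigma(i)}$ and $\veps_i\mapsto\veps_{\sigma(i)}$ (a consequence of \eqref{E:sigmaprop} on each factor together with the definitions \eqref{L:phidef}), the branching in \eqref{E:tensor} is preserved under $\sigma$, so \eqref{E:sigmaprop} propagates to $B$. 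Uniqueness on $B$ again follows from $I$-connectedness and simplicity (Remark \ref{R:uniqueisomorphisms}(1)): any map satisfying \eqref{E:sigmaprop} is determined by its value on $u(B)$, and that value is forced by the weight condition.

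\textbf{The main obstacle.} The delicate point is the compatibility \eqref{E:sigmaiso} with arbitrary $I$-crystal isomorphisms $g:B\cong B'$, which in particular must hold for the combinatorial $R$-matrices of Proposition \ref{P:RH}. Since both $g\circ\sigma_B$ and $\sigma_{B'}\circ g$ satisfy the twisted intertwining relation \eqref{E:sigmaprop} as maps $B\to B'$, and both are isomorphisms of the appropriate twisted crystals, uniqueness (Remark \ref{R:uniqueisomorphisms}) forces them to coincide once they agree on a single highest weight vertex; the weight formula \eqref{E:sigmawt} pins down that agreement. The real work is establishing \eqref{E:sigmaprop} for the single-factor case, i.e. showing the $e_0,f_0$ action described via $\pm$-diagrams and the automorphism $\varsigma$ in Section \ref{SS:KR} is genuinely compatible with $\sigma$. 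I expect the hard part to be verifying that $\sigma$ intertwines $e_0$ with $e_n$ (and $e_n$ with $e_0$), since $e_0$ is defined indirectly through $\varsigma$ and the $\pm$-diagram combinatorics, whereas $e_n$ acts directly on KN tableaux; reconciling these two descriptions under the reversal $i\mapsto n-i$ is where the genuine computation lies, and it must be carried out separately for each kind $\di\in\{(1),(2),(1,1)\}$.
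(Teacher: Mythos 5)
Your overall architecture coincides with the paper's: the reduction to a single KR factor via the tensor rule \eqref{E:tensor}, and the derivation of uniqueness, \eqref{E:sigmawt}, \eqref{E:sigmainv} and \eqref{E:sigmaiso} from simplicity, $I$-connectedness and the forced value on $u(B)$, are exactly the arguments the paper gives. The problem is the single-factor existence step, which is where the entire content of the theorem lies, and your argument for it has a genuine gap.

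You claim that the $\sigma$-twisted crystal of $B^{r,s}$ ``is again the crystal of a KR module with the same classical decomposition \eqref{E:KRI0decomp}, since $\Pa^\di_n(r,s)$ is preserved under the induced symmetry,'' and then invoke uniqueness of isomorphisms to produce $\sigma_{B^{r,s}}$. Neither half of this claim is justified, and the second half concerns the wrong subalgebra: the classical ($I_0$-)structure of the twisted crystal is, in the original labeling, the structure of $B^{r,s}$ as a crystal for the subalgebra obtained by deleting the node $\sigma(0)=n$, i.e.\ its $I_n$-decomposition. Equation \eqref{E:KRI0decomp} says nothing about that decomposition, and knowing that the $I_n$-decomposition agrees with the $I_0$-decomposition is essentially equivalent to the existence of $\sigma$ itself, so the argument is circular. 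Similarly, the twist of $W_s^{(r)}$ by the automorphism of $U_q'(\geh)$ induced by $\sigma$ is a simple module, but identifying it as a KR module (rather than some other simple module admitting a crystal base) requires proof; Proposition \ref{P:simpleconnected} and Remark \ref{R:uniqueisomorphisms} only give uniqueness of an isomorphism once both sides are known to be isomorphic --- they cannot produce one. Your closing paragraph concedes that the $e_0$-versus-$e_n$ compatibility ``is where the genuine computation lies,'' but the proposal stops exactly there, i.e.\ at the statement to be proved. The paper's existence proof is genuinely different: it uses the Kyoto path model (Theorem \ref{T:KMN}) to identify $B(\ell\La_0)\otimes B^{r,s}$ with an affine highest weight crystal $B(\vphi(u))$, applies the Dynkin automorphism at the level of highest weight crystals to obtain $B(\sigma(\vphi(u)))\cong B(\ell\La_n)\otimes B^{r,s}$, defines $\sigma$ on $B^{r,s}$ from this chain of bijections together with the good-arrow connectedness of $B^{r,s}$ (Lemmas \ref{L:nextla}, \ref{L:goodarrows}, \ref{L:sigma} and Proposition \ref{P:sigma}), and then establishes the full intertwining property \eqref{E:sigmaprop} by the explicit $\pm$-diagram and row-tableau computations of Appendix \ref{A:sigma} (Propositions \ref{P:KRsigmaall} and \ref{prop:sigma tau}) for $\di=(1,1)$, citing \cite{FOS1} for $\di=(1),(2)$. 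Some replacement for that combinatorial work is unavoidable; without it your construction never gets off the ground.
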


First we assume the existence of $\sigma$ satisfying
\eqref{E:sigmaprop} and deduce \eqref{E:sigmawt}, \eqref{E:sigmainv},
and \eqref{E:sigmaiso}.

For \eqref{E:sigmawt} we recall the discussion of the weight function on
KR crystals (and therefore on $B$) in Section \ref{SS:KRgen}
and associated notation. By \eqref{E:sigmaprop} and \eqref{E:wtdef}
we have $\sigma(\wt(b)) = \wt(\sigma(b))$, computing in the lattice
$P'$. Now $\wt$ takes values in $P^0\cong P$ and one may
check that the action of $\sigma$ on $P^0$ agrees with that of
$-w_0^{A_{n-1}}$ on $P$.

For \eqref{E:sigmainv}, $\sigma^2$ is an $I$-crystal isomorphism $B\to B$.
By connectedness and the fact that $B$ contains a unique element
$u(B)$ of its weight, there is only one such isomorphism, namely,
the identity.

For \eqref{E:sigmaiso}, by the connectedness of $B$ the proof reduces to
verifying the relation for a single value. However the value of both
sides on $u(B)$ must agree, for the answer must be the unique
element of $B'$ whose weight is $-w_0^{A_{n-1}}(\wt(u(B)))$.

Next, we prove the uniqueness of $\sigma$ assuming its existence.
Since $B\in\CC$ is connected we need only show that
\eqref{E:sigmaprop} uniquely specifies some single value of $\sigma$.
The vertex $u(B)$ is the only element of its
weight in $B$. The weight $w_0^{A_{n-1}}(u(B))$ occurs in $B$ since
$B$ is an $A_{n-1}$-crystal. Since $B$ is an $I_0$-crystal (of classical type $B_n$, $C_n$,
or $D_n$) with no
spin weight, it is self-dual, so its weights are closed under
negation. In particular the weight $-w_0^{A_{n-1}}(u(B))$ must also
occur in $B$. Since $\wt(u(B))$ occurs exactly once, the weight
$-w_0^{A_{n-1}}(\wt(u(B)))$ also occurs exactly once. By
\eqref{E:sigmawt} $\sigma(u(B))$ must be the unique element of $B$
of weight $-w_0^{A_{n-1}}(\wt(u(B)))$. It follows that $\sigma$ is unique.

It only remains to prove the existence of $\sigma$. By \eqref{E:tensor}
we may reduce to the case $B=B^{r,s}$. The existence of $\sigma$ on $B^{r,s}$
is proved in the next several subsections.

\subsection{Definition of $\sigma$ on KR crystals}
\label{SS:sigmaKR}

Define the sequences
\begin{equation}\label{E:as}
\begin{split}
\as'(h) &=
\begin{cases}
(2,3,\dotsc,h-1;1,2,\dotsc,h-2) &\text{if $\di=(1,1)$} \\
(1,2,\dotsc,h-1) & \text{if $\di=(1)$} \\
(1^2,2^2,\dotsc,(h-1)^2) &\text{if $\di=(2)$}
\end{cases} \\
\as(h) &=(0;\as'(h)).
\end{split}
\end{equation}
Recalling $\ta(h)$ from \eqref{E:ta} we have
\begin{equation} \label{E:asigma}
\sigma(\as(h)) = \ta(h).
\end{equation}

\begin{lemma}\label{L:nextla}
Let $\la\in\Pa(r,s)$ and $\la\ne\lamin^\di(r,s)$. Let $\ell=\lev(B^{r,s})$ and $\lam$ be as in
Notation \ref{N:lambdaminus}. Then
\begin{equation} \label{E:nextla}
u_{\ell\La_0} \otimes b(r,s,\la) = f_{\as(h)}(u_{\ell\La_0} \otimes
b(r,s,\lam)).
\end{equation}
\end{lemma}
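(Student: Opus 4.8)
The plan is to tensor on the left with $u_{\ell\La_0}$ precisely in order to tame the $0$-arrows, and then reduce the identity to a statement internal to $B^{r,s}$ that is checked using the explicit $e_0,f_0$ description of Section \ref{SS:KR}. Since $\vphi_i(u_{\ell\La_0})=\ell\,\chi(i=0)$, the tensor rule \eqref{E:tensor} shows that each $f_i$ with $i\in\{1,\dots,h-1\}$ acts on the right factor, whence
\begin{align*}
f_{\as(h)}(u_{\ell\La_0}\otimes b(r,s,\lam)) = f_0\bigl(u_{\ell\La_0}\otimes f_{\as'(h)}(b(r,s,\lam))\bigr).
\end{align*}
Writing $b'=f_{\as'(h)}(b(r,s,\lam))$, the lemma then follows once I establish two internal claims: (A) $\veps_0(b')\ge\ell$, so that by the tensor rule $f_0$ again acts on the right factor and leaves $u_{\ell\La_0}$ fixed; and (B) $f_0(b')=b(r,s,\la)$ in $B^{r,s}$.

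For (B) I would use that $B^{r,s}$ is multiplicity free as an $I_0$-crystal by \eqref{E:KRI0decomp}, so it contains a unique $I_0$-highest weight vertex of any given weight in $\Pa^\di_n(r,s)$. Thus it suffices to show that $f_0(b')$ has weight $\la$ and is $I_0$-highest. The weight bookkeeping is a finite root-system computation: the operator $f_{\as(h)}$ subtracts $\al_0'+\sum_{j\in\as'(h)}\al_j$ from the classical weight, and using the value of $\al_0'=-\theta/a_0$ for $\geh^\di$ together with the explicit form of $\as'(h)$ in \eqref{E:as}, one checks case by case in $\di\in\{(1,1),(1),(2)\}$ that this equals $\la-\lam$, i.e. that it adds back exactly the $\di$-shape removed from the $p$-th column in passing from $\la$ to $\lam$ (Notation \ref{N:lambdaminus}).

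The heart of the argument, and the main obstacle, is to run the explicit Kashiwara-operator computation verifying both $\veps_0(b')\ge\ell$ and that $f_0(b')$ is the highest weight vertex of the component $B(\la)$. Here I would pass to the $\pm$-diagram model: starting from the highest weight KN tableau $b(r,s,\lam)$, apply the classical chain $f_{\as'(h)}$ to reach a $J$-highest (for $\di=(1,1)$) or $J'$-highest (for $\di\in\{(1),(2)\}$) element, record the associated $\pm$-diagram(s), and then apply $f_0=\varsigma\circ f_1\circ\varsigma$ via \eqref{eq:e0} and Lemma \ref{lem:e1 action}. One tracks how the outer shape of the $\pm$-diagram grows by a single $\di$-domino in column $p$, confirming that the outer shape passes from $\lam$ to $\la$ and that the resulting element is $I_0$-dominant; for $\di\in\{(1),(2)\}$ this additionally uses the rowwise doubling/halving of the diagram $\hat p$ described in Section \ref{SS:KR}. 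This computation is exactly the $0$-side mirror of Lemma \ref{L:npath}, matched to it through \eqref{E:asigma}, and I expect the three $\pm$-diagram cases---in particular the careful bookkeeping of $\varsigma$ and of the paired and unpaired signs---to be the most delicate part.
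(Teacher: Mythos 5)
Your plan coincides with the paper's own proof: the paper likewise uses the tensor rule \eqref{E:tensor} to reduce \eqref{E:nextla} to the two internal claims that $\veps_0\bigl(f_{\as'(h)}(b(r,s,\lam))\bigr)\ge\ell$ and that $f_0\bigl(f_{\as'(h)}(b(r,s,\lam))\bigr)=b(r,s,\la)$, and then verifies both by the explicit $\pm$-diagram computation of $f_0=\varsigma\circ f_1\circ\varsigma$ (with the rowwise doubling/halving construction for $\di\in\{(1),(2)\}$), which is exactly the computation you defer. Your only departure is the valid, mildly simplifying appeal to multiplicity-freeness of $B^{r,s}$ as an $I_0$-crystal, which lets you replace the paper's full identification of $f_0(b')$ with a weight-plus-highest-weight check; the delicate casework you flag at the end is precisely what occupies the body of the paper's proof.
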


\begin{proof}
We first treat the case $\di=(1,1)$. Suppose $r$ is even.
We apply $f_{\as'(h)}$. Then $b(r,s,\lam)$ changes to the
KN tableau $t_1$ of shape $\lam$ whose columns are filled with
$123\cdots$, except the rightmost, which is filled with $34\cdots$
instead. Now we want to apply $f_0$ to $u_{s\Lambda _{0}}\otimes
t_1$. To do this we first go to the $J$-highest element
$e_{(h-1,\dotsc,3,2)}(t_1)$ of $t_1$, where we have set $J=\{2,3,\ldots,n\}$. Then we have $P=\Phi
^{-1}(e_{(h-1,\dotsc,3,2)}(t_1))$ is the $\pm $-diagram such that
there is no sign in the rightmost column and only $+$ in the other
ones. Hence $\mathfrak{S}(P)$ is the $\pm $-diagram described as
follows. Denote the position of the rightmost column of $\la$ by $a$.
The height of the outer shape from the 1st to the $(a-1)$-th column is
the same as $P$, but from the $a$-th to the
$s$-th column the height is larger than $P$ by 2. There is only $-$
from the 1st to the $(a-1)$-th column, and $\mp$ from the $a$-th
to the $s$-th column. Now we have $\varsigma(t_1)=f_{(2,3,\dotsc,h-1)}
\Phi (\mathfrak{S}(P))$ described as
follows. The shape of $\varsigma(t_1)$ is the same as the outer shape
of $\mathfrak{S}(P)$. To get contents we first place the string $
23\cdots k\bar{1}$ in each column and then reading from left to
right, top to bottom we change $\bar{1}$ to $\bar{2}$ and $2$ to $1$
$(s-a+1)$ times. Note that $\veps_1(\varsigma
(t_1))=s+a-1$. One finds $f_1\varsigma(t_1)$ is a $J$-highest
element corresponding to the $\pm $-diagram that differs from
$\mathfrak{S}(P)$ only in the $a$-th column where there is only
$-$. Hence we have $f_0 t_1=b(r,s,\la)$ by definition. Since
$\veps_0(t_1)=s+a-1\ge s$, we also have
$f_0(u_{s\La_0}\otimes t_1)=u_{s\La_0}\otimes
f_0 t_1 = u_{s\La_0}\otimes b(r,s,\la)$.

Next suppose $r$ is odd. In this case the first row of $\la$ has $s$
nodes. Denote the position of the rightmost column with height greater than 1 by $a$. The
calculation goes similarly to the $r$ even case. The $\pm$-diagram $P$
is given as follows. The outer shape is the same as $\lam$.
There is no sign in the $a$-th column and only $+$ in the
other columns. Applying $f_{(2,3,\dotsc,h-1)} \circ \Phi \circ \mathfrak{S}$,
one obtains $\varsigma(t_1)$ described as follows. The shape of $\varsigma (t_1)$
is the same as $t_1$ except in the $a$-th column where the height of $\varsigma (t_1)$
is larger than that of $t_1$ by 2. To get contents we place
the string $23\cdots k\bar{1}$ ($\bar{1}$ in the column of height 1) in each
column. Only in the leftmost column we put $\bar{2}$ instead of $\bar{1}$.
Note that $\veps_1(\varsigma(t_1))=s+a-1$. We obtain $f_0 t_1 = b(r,s,\la)$,
and since $\veps _0(t_1)\ge s$, we again
have $f_0(u_{s\La_0}\otimes t_1)=u_{s\La_0}\otimes b(r,s,\la)$.

Next we treat the case $\di=(2)$. (Since the case $\di=(1)$ is similar, we omit its proof.)
Applying $f_{\mathbf{a'}(h)}$ makes $b(r,s,\la^-)$ change to the KN tableau $t_2$ of shape
$\la^-$ whose columns are filled with $123\cdots$, except the rightmost two, which is filled
with $23\cdots$ instead. Note that $t_2$ is $J$-highest. $p=\Phi^{-1}(t_2)$ is the $\pm$-diagram
such that there is no sign in the rightmost two columns and only $+$ in the other ones. From
this $p$ construct $P$ as prescribed in the previous subsection. We want to apply
$f_1\circ\varsigma\circ f_1$ to this pair $(P,p)$ of $\pm$-diagrams. Denote the position of
the rightmost column of $\la$ by $a$. By Lemma \ref{lem:e1 action} the application of $f_1$
changes $(P,p)$ as follows. In the $(a-1)$-th column there is $+$ (resp. $\mp$) when
$h\equiv r\;(2)$ (resp. $h\not\equiv r\;(2)$) in $P$ and no sign in $p$. $f_1$ moves $+$ in
$P$ to $p$. Denote this new pair by $(P',p')$. Next $\varsigma$ changes $P'$ as follows. In the
columns of $P'$ of height $h$, the number of columns with $\mp$ (resp. $+$) increases by 1
while the number of those with $\cdot$ (resp. $-$) decreases by 1 when $h\equiv r\;(2)$
(resp. $h\not\equiv r\;(2)$). By applying $f_1$ again, we obtain $(P'',p'')$ described as follows.
$p''$ differs from $p$ only at the $(a-1)$-th and $a$-th positions. $\mathrm{outer}(p'')$ is of
height $h$ there with $+$'s. $P''$ is a unique $\pm$-diagram determined from $p''$ as in
the previous subsection. To show \eqref{E:nextla} we still need to check
$\veps_0(b(r,s,\la^-))\ge\ell$. Since the application of $e_0(=e_1\circ\varsigma\circ e_1)$ is similar
to above, we only give its value. Let $c_i$ ($1\le i\le r$) be the number of columns of $\la$ of
height $i$ and set $c_0=s-\la_1$. Then we have
\[
\veps_0(b(r,s,\la^-))=c_r+c_{r-1}+\cdots+c_h-1+c_0/2.
\]
Noting that $(c_r+c_{r-1}+\cdots+c_h+c_0+\bar{r})/2=\ell$
($\bar{r}=0\mbox{ or }1,\bar{r}\equiv r\;(2)$) and $c_h\ge2$, we obtain $\veps_0(b(r,s,\la^-))
\ge\ell$.
\end{proof}

For a KR crystal $B$ of level $\ell$,
say that the $i$-arrow $b\to b'=f_i(b)$ is \textit{good} if either $i\in I_0$ or
$i=0$ and $\veps_0(b)\ge\ell$.
Traversing the above edge backwards (using a raising operator), going from $b'$ to $e_i(b')=b$ is good
if $i\in I_0$ or $i=0$ and $\veps_0(b')>\ell$.

\begin{lemma} \label{L:goodarrows}
Let $B^{r,s}$ be a KR crystal of level $\ell$.
Then for every $b\in B^{r,s}$ there is a sequence of good arrows from $b$ to $m(B^{r,s})$.
\end{lemma}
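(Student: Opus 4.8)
The plan is to reduce to the $I_0$-highest weight vertices and then run a downward induction driven by Lemma \ref{L:nextla}. First I would note that every raising operator $e_i$ with $i\in I_0$ is a good arrow, so from any $b\in B^{r,s}$ one reaches $\hw_{I_0}(b)$ through good arrows. By the $I_0$-decomposition \eqref{E:KRI0decomp} the $I_0$-highest weight vertices are precisely the $b(r,s,\la)$ with $\la\in\Pa^\di_n(r,s)$, and by Remark \ref{R:minBrs} we have $m(B^{r,s})=b(r,s,\lamin^\di(r,s))$. Hence it suffices to join each $b(r,s,\la)$ to $b(r,s,\lamin^\di(r,s))$ by good arrows.

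The bridge to Lemma \ref{L:nextla} is a correspondence obtained by tensoring on the left with $u_{\ell\La_0}$, the highest weight vertex of $B(\ell\La_0)$, which embeds $B^{r,s}$ as the set $S=\{u_{\ell\La_0}\otimes b\mid b\in B^{r,s}\}$. Since $\vphi(u_{\ell\La_0})=\ell\La_0$, the tensor rule \eqref{E:tensor} (and its companion for $f_i$) gives $f_i(u_{\ell\La_0}\otimes b)=u_{\ell\La_0}\otimes f_i(b)$ for every $i\in I_0$, while for $i=0$ one has $f_0(u_{\ell\La_0}\otimes b)=u_{\ell\La_0}\otimes f_0(b)$ exactly when $\veps_0(b)\ge\ell$. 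In other words, an arrow of $B^{r,s}$ is good if and only if the corresponding arrow of $B(\ell\La_0)\otimes B^{r,s}$ fixes the left tensor factor at $u_{\ell\La_0}$, that is, stays inside $S$. I would also remark that goodness is a symmetric property of an edge: for a $0$-arrow $b\to f_0(b)$ one has $\veps_0(f_0(b))=\veps_0(b)+1$, so the forward condition $\veps_0(b)\ge\ell$ and the backward condition $\veps_0(f_0(b))>\ell$ agree.

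Now Lemma \ref{L:nextla} asserts $u_{\ell\La_0}\otimes b(r,s,\la)=f_{\as(h)}(u_{\ell\La_0}\otimes b(r,s,\lam))$ with $\as(h)=(0;\as'(h))$ and $\as'(h)$ a sequence in $I_0$. Because the output has left factor $u_{\ell\La_0}$ and the $I_0$-operators never disturb that factor, the single $f_0$ at the end must preserve it as well; thus the whole composite stays inside $S$, and by the correspondence above it transcribes into a chain of good arrows in $B^{r,s}$ from $b(r,s,\lam)$ to $b(r,s,\la)$. I would then induct on $\abs{\la}-\abs{\lamin^\di(r,s)}$: by Notation \ref{N:lambdaminus} the partition $\lam\in\Pa^\di_n(r,s)$ has $\abs{\lam}=\abs{\la}-\abs{\di}$, so iterating reaches $\lamin^\di(r,s)$, and concatenating the good-arrow chains joins $b(r,s,\la)$ to $m(B^{r,s})$.

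There is no serious obstacle here; the one point requiring care is the bookkeeping in the correspondence, namely checking that along $f_{\as(h)}$ the left factor is never replaced by $f_0(u_{\ell\La_0})$ at an intermediate step, which is exactly what forces the relevant $0$-arrow to be good. The conceptual content is already carried by Lemma \ref{L:nextla}. One could instead argue abstractly via Theorem \ref{T:KMN} that $B(\ell\La_0)\otimes B^{r,s}\cong B(\vphi(m(B^{r,s})))$ is connected, the summand being unique because $m(B^{r,s})$ is the unique $u$ with $\veps(u)=\ell\La_0$; but keeping a connecting path inside $S$ is less transparent that way, so I prefer the explicit induction above.
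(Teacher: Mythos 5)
Your proof is correct and takes essentially the same route as the paper's: the paper's (one-sentence) proof likewise works in $B(\ell\La_0)\otimes B^{r,s}$, notes via \eqref{E:mdef} that $u_{\ell\La_0}\otimes m(B^{r,s})$ is the affine highest weight vector of $B(\vphi(m(B^{r,s})))$, and invokes Lemma \ref{L:nextla} for the induction over $\Pa^\di_n(r,s)$. What you have written out explicitly — the reduction to $I_0$-highest weight vertices, the identification of good arrows with arrows fixing the left factor $u_{\ell\La_0}$, and the observation that the final $f_0$ in $f_{\as(h)}$ must act on the right factor — is exactly the bookkeeping the paper leaves implicit, so there is no gap.
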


\begin{proof}
Noting that from \eqref{E:mdef} $u_{\ell\La_0}\ot m(B^{r,s})$ is an affine highest weight vector in
$B(\ell\La_0)\ot B^{r,s}\simeq B(\vphi(m(B^{r,s})))$, the lemma is clear from the previous one.
\end{proof}

We obtain the following for KR crystals $B^{r,s}$ for $\geh$ of kind $(1,1),(2),(1)$
where $r\in I_0$ is nonspin.

\begin{corollary} \label{C:DBrs}
For a KR crystal $B$ of level $\ell$, there is a unique function
$\Db_B$ satisfying the conditions of Lemma \ref{L:DBrs}. Moreover,
identifying elements of $u_{\ell\La_0} \otimes B$ with their images in
$B(\vphi(m(B)))$ under the isomorphism \eqref{E:KMNiso}, we have
\begin{align} \label{E:DhwDBrs}
  \widehat{D}_{B(\vphi(m(B)))}(u_{\ell\La_0} \otimes b) + \Db_B(b) = \Db_B(m(B))
\end{align}
where $m(B)$ is defined in Lemma \ref{L:phimin}.
\end{corollary}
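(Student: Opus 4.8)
The plan is to treat existence, the identity \eqref{E:DhwDBrs}, and uniqueness together, since the last two are really the same statement. Existence is already available: the intrinsic coenergy $\Db_B$ of \eqref{E:DoneKR} satisfies conditions (1)--(3) of Lemma \ref{L:DBrs}, as recorded there, so the remaining content is uniqueness and the explicit formula. I would begin by pinning down the Kyoto-path isomorphism \eqref{E:KMNiso} in the case $\La=\ell\La_0$. By Lemma \ref{L:phimin}, $m(B)$ is the unique vertex of $B$ with $\veps(m(B))=\ell\La_0$, so the direct sum in \eqref{E:KMNiso} collapses to a single summand, $B(\ell\La_0)\ot B\cong B(\vphi(m(B)))$, whose affine highest weight vertex is $u_{\ell\La_0}\ot m(B)$. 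In particular $\widehat{D}_{B(\vphi(m(B)))}(u_{\ell\La_0}\ot m(B))=0$.

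Next I would introduce $F(b)=\widehat{D}_{B(\vphi(m(B)))}(u_{\ell\La_0}\ot b)+\Db_B(b)$ and prove that $F$ is constant along good arrows. The key computation is how $e_i$ acts on $u_{\ell\La_0}\ot b$ under the tensor rule \eqref{E:tensor}. Since $u_{\ell\La_0}$ is $I$-highest with $\vphi_i(u_{\ell\La_0})=\ell\,\ip{\al_i^\vee}{\La_0}=\ell\,\chi(i=0)$, for $i\in I_0$ the operator $e_i$ acts in the right tensor factor, sending $u_{\ell\La_0}\ot b\mapsto u_{\ell\La_0}\ot e_i(b)$, while for $i=0$ it acts in the right factor exactly when $\veps_0(b)>\ell$ and annihilates $u_{\ell\La_0}\ot b$ otherwise. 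Thus the edges of $B(\vphi(m(B)))$ emanating from the vertices $u_{\ell\La_0}\ot b$ are precisely the images of the good arrows of $B$. Along an $I_0$-arrow, $\widehat{D}$ is unchanged and $\Db_B$ is unchanged by Lemma \ref{L:DBrs}(1); along a good $0$-arrow, $\widehat{D}$ decreases by $1$ (by its definition in Section \ref{SS:affhw}) while $\Db_B$ increases by $1$ by Lemma \ref{L:DBrs}(2). Either way $F$ is preserved.

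By Lemma \ref{L:goodarrows} every $b$ is connected to $m(B)$ by a chain of good arrows, so $F\equiv F(m(B))=\Db_B(m(B))$, which is exactly \eqref{E:DhwDBrs}. Uniqueness then follows at once: the identity rewrites as $\Db_B(b)=\Db_B(m(B))-\widehat{D}_{B(\vphi(m(B)))}(u_{\ell\La_0}\ot b)$, using only conditions (1)--(2), so any such $\Db_B$ is determined by the single value $\Db_B(m(B))$; evaluating at $b=u(B)$ and invoking condition (3), namely $\Db_B(u(B))=0$, fixes that value as $\widehat{D}_{B(\vphi(m(B)))}(u_{\ell\La_0}\ot u(B))$.

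I expect the only genuine obstacle to be the bookkeeping in the second step: verifying from \eqref{E:tensor} that the left/right choice for $e_0$ on $u_{\ell\La_0}\ot b$ switches exactly at the threshold $\veps_0(b)=\ell$ that defines good $0$-arrows, and confirming the $\pm1$ behavior of $\widehat{D}$ under $e_0$ from its definition. Everything else is a formal consequence of Lemmas \ref{L:DBrs} and \ref{L:goodarrows} together with the single-summand form of \eqref{E:KMNiso}.
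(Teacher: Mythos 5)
Your proposal is correct and takes essentially the same route as the paper's proof: both establish uniqueness and the identity \eqref{E:DhwDBrs} by observing that $B$ is connected by good arrows (Lemma \ref{L:goodarrows}), that conditions (1)--(2) of Lemma \ref{L:DBrs} together with the behavior of $\widehat{D}$ under $e_0$ force the left-hand side of \eqref{E:DhwDBrs} to be invariant along good arrows, and that the constant is pinned down by condition (3) and by $\widehat{D}=0$ at the affine highest weight vertex $u_{\ell\La_0}\otimes m(B)$. Your explicit verification via the tensor rule \eqref{E:tensor} that good arrows of $B$ are exactly the arrows of $B(\vphi(m(B)))$ staying inside $u_{\ell\La_0}\otimes B$ merely fills in bookkeeping the paper leaves implicit.
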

\begin{proof} By Lemma \ref{L:goodarrows} $B$ is connected by good arrows.
But properties (1) and (2) of Lemma \ref{L:DBrs} specify how
$\Db_B$ must change across good arrows. Therefore a single value completely specifies
$\Db_B$. This is furnished by property (3) of Lemma \ref{L:DBrs}.
The left hand side of \eqref{E:DhwDBrs}, viewed as a function
of $b\in B$, is invariant under good arrows in $B$. But $B$ is connected by good arrows
so this function is constant, and its value is obtained
by setting $b=m(B)$ and using that $\widehat{D}=0$ on the affine highest weight vector.
\end{proof}

Let $\ell=\lev(B^{r,s})$ and let $u\in B^{r,s}$ be as in Lemma \ref{L:phimin} using $j=0$.
From Theorem \ref{T:KMN} there are bijections
\begin{equation}  \label{sigma}
B(\ell\Lambda_0)\otimes B^{r,s}\cong B(\vphi(u)) \longrightarrow B(\sigma(\vphi(u))) \cong
B(\ell\Lambda_n) \otimes B^{r,s}.
\end{equation}
The first and third maps are isomorphisms given by Theorem \ref{T:KMN}
and the middle maps are the unique automorphism in highest weight crystals
induced by relabeling everything according to $\sigma\in\Aut(X)$.
\begin{lemma} \label{L:sigma}
Let $\ba(r,s,\la)$ be as in \eqref{E:bamin}.
For $\la\in\Pa^\di(r,s)$, $u_{\ell\La_0}\ot b(r,s,\la)$ is sent to $u_{\ell\La_n}\ot \ba(r,s,\la)$
under the previous bijection.
\end{lemma}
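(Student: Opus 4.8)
The plan is to prove the lemma by induction on $\abs{\la}$, using the two parallel recursions of Lemma \ref{L:nextla} and Lemma \ref{L:npath} together with the relation $\sigma(\as(h))=\ta(h)$ from \eqref{E:asigma}. Write $\Psi$ for the composite bijection in \eqref{sigma}. The first and third arrows are $I$-crystal isomorphisms by Theorem \ref{T:KMN}, so they commute with every $e_i$ and $f_i$, while the middle arrow is the $\sigma$-relabeling isomorphism of highest weight crystals, which intertwines $f_i$ with $f_{\sigma(i)}$ and $e_i$ with $e_{\sigma(i)}$. Composing these, I obtain the key equivariance
\begin{align*}
  \Psi\circ f_i = f_{\sigma(i)}\circ\Psi\qquad\text{for all $i\in I$,}
\end{align*}
and hence $\Psi\circ f_{\as}=f_{\sigma(\as)}\circ\Psi$ for any sequence $\as$ in $I$ (and similarly for the $e_i$).

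For the base case $\la=\lamin^\di(r,s)$, note that $b(r,s,\lamin)=m(B^{r,s})=u$ by \eqref{E:mbrs}, so $u_{\ell\La_0}\otimes b(r,s,\lamin)$ is the unique affine highest weight vector of $B(\ell\La_0)\otimes B^{r,s}$, mapping to the highest weight vector of $B(\vphi(u))$ under the first isomorphism. Since $\Psi$ intertwines $e_i$ with $e_{\sigma(i)}$, it carries this vector to the unique affine highest weight vector of $B(\ell\La_n)\otimes B^{r,s}$, namely $u_{\ell\La_n}\otimes v$ where $v\in B^{r,s}$ is the unique element with $\veps(v)=\ell\La_n$, i.e. $v=u_n(r,s)$ in the notation of Lemma \ref{L:phimin}. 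It then remains to identify $u_n(r,s)=\bamin^\di(r,s)=\ba(r,s,\lamin)$, which I would check by comparing the explicit description of $u_n(r,s)$ in Lemma \ref{L:phimin} with the row-tableau description of $\ba(r,s,\lamin)$ in Remark \ref{R:rowtabmins}: both are the $A_{n-1}$-highest element of $B_{I_0}(s^r)$ of $\gl_n$-weight $\overline{\lamin}$.

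For the inductive step, suppose the claim holds for $\lam$ (Notation \ref{N:lambdaminus}), which has strictly fewer boxes than $\la$. Applying $\Psi$ to the identity of Lemma \ref{L:nextla}, and using the equivariance above together with \eqref{E:asigma} and the inductive hypothesis, gives
\begin{align*}
  \Psi(u_{\ell\La_0}\otimes b(r,s,\la)) = f_{\ta(h)}\bigl(\Psi(u_{\ell\La_0}\otimes b(r,s,\lam))\bigr) = f_{\ta(h)}(u_{\ell\La_n}\otimes \ba(r,s,\lam)).
\end{align*}
By Lemma \ref{L:npath} the right-hand side equals $u_{\ell\La_n}\otimes\ba(r,s,\la)$, completing the induction.

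I expect the only genuine obstacle to be the base case identification $u_n(r,s)=\bamin^\di(r,s)$: the inductive step is essentially forced once the $\sigma$-equivariance of $\Psi$ and the two matched recursions are in hand, whereas pinning down the image of the affine highest weight vector requires matching the case-by-case descriptions of Lemma \ref{L:phimin} against Remark \ref{R:rowtabmins}. A cleaner alternative, avoiding explicit tableaux, would be to verify directly from the $e_0$-action described via $\pm$-diagrams in Section \ref{SS:KR} that $\veps_0(\bamin^\di(r,s))=0$ and $\veps_n(\bamin^\di(r,s))=\ell$, so that the uniqueness statement in Lemma \ref{L:phimin} forces $\bamin^\di(r,s)=u_n(r,s)$.
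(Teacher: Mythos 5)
Your proposal is correct and follows essentially the same route as the paper: the paper also proceeds by induction on $\Pa^\di(r,s)$, handling the base case $\lamin^\di(r,s)$ by observing that the two elements in question are the unique affine highest weight vectors of the two sides of \eqref{sigma}, and deducing the inductive step from Lemmas \ref{L:npath} and \ref{L:nextla} together with \eqref{E:asigma}. The one point you flag as an obstacle — identifying $\bamin^\di(r,s)$ with the unique element $u_n(r,s)$ having $\veps=\ell\La_n$, via Lemma \ref{L:phimin} and Remark \ref{R:rowtabmins} — is left implicit in the paper's two-line proof, so your treatment is the same argument carried out with slightly more care.
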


\begin{proof}
The proof proceeds by induction on $\Pa^\di(r,s)$.
The claim holds for $\lamin(=\lamin^\di(r,s))$ since these elements are the unique affine highest 
weight elements of both sides of \eqref{sigma}.
For $\la\in \Pa^\di(r,s)$ with $\la\ne\lamin$ the claim follows from Lemmas \ref{L:npath}, \ref{L:nextla},
\eqref{E:asigma} and induction.
\end{proof}

\begin{proposition} \label{P:sigma} For $B^{r,s}\in\CCinf(\geh^\di)$
there is a unique map $\sigma:B^{r,s}\to B^{r,s}$ such that
\begin{enumerate}
\item Equation \eqref{E:sigmaprop} holds for good arrows.
\item $\sigma(m(B^{r,s}))=\bamin^\di(r,s)$.
\end{enumerate}
\end{proposition}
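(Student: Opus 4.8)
The plan is to settle uniqueness first and then build $\sigma$ from the composite bijection of \eqref{sigma}, which I will call $\Psi$.

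For uniqueness I would invoke Lemma \ref{L:goodarrows}: every $b\in B^{r,s}$ is joined to $m(B^{r,s})$ by a chain of good arrows. Condition (2) fixes $\sigma(m(B^{r,s}))=\bamin^\di(r,s)$, and condition (1) forces $\sigma(f_i(b'))=f_{\sigma(i)}(\sigma(b'))$ across every good arrow $b'\xrightarrow{f_i}f_i(b')$. Propagating this rule outward from $m(B^{r,s})$ determines $\sigma$ on all of $B^{r,s}$, so at most one such map exists.

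For existence I would exploit that $\Psi$ intertwines $e_i$ with $e_{\sigma(i)}$ and $f_i$ with $f_{\sigma(i)}$ for every $i\in I$. The key observation is that the good arrows of $B^{r,s}$ are precisely the edges $b\xrightarrow{f_i}f_i(b)$ along which $f_i$ acts on the \emph{right} tensor factor of $u_{\ell\La_0}\otimes b$: for $i\in I_0$ this holds because $\vphi_i(u_{\ell\La_0})=0$, while for $i=0$ it holds exactly when $\veps_0(b)\ge\ell=\vphi_0(u_{\ell\La_0})$, which is the definition of a good $0$-arrow. I would then define $\sigma$ by the requirement $\Psi(u_{\ell\La_0}\otimes b)=u_{\ell\La_n}\otimes\sigma(b)$, after showing that $\Psi$ carries the subset $u_{\ell\La_0}\otimes B^{r,s}$ into $u_{\ell\La_n}\otimes B^{r,s}$. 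The base point is $\Psi(u_{\ell\La_0}\otimes m(B^{r,s}))=u_{\ell\La_n}\otimes\bamin^\di(r,s)$, which is Lemma \ref{L:sigma} specialized to $\la=\lamin^\di(r,s)$ together with \eqref{E:mbrs} and \eqref{E:bminrs}; this simultaneously yields condition (2). Running the induction along good arrows out of $m(B^{r,s})$ and applying the intertwining property of $\Psi$ then produces condition (1).

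The hard part will be verifying that $\Psi$ preserves the good-arrow structure, equivalently that the left tensor factor stays equal to $u_{\ell\La_n}$ throughout the induction. For an edge with $i\in I_0$, $i\ne n$, this is immediate, since $\sigma(i)=n-i\ne n$ gives $\vphi_{\sigma(i)}(u_{\ell\La_n})=0$; the edge $i=n\mapsto\sigma(i)=0$ is automatic for the same reason. The delicate case is a good $0$-arrow, which $\sigma$ must send to an $n$-arrow: I need $\veps_n(\sigma(b))\ge\ell$ so that $f_n$ acts on the right factor of $u_{\ell\La_n}\otimes\sigma(b)$. Intertwining gives $\max(0,\veps_n(\sigma(b))-\ell)=\max(0,\veps_0(b)-\ell)$, which immediately handles the strict case $\veps_0(b)>\ell$ but leaves the boundary case $\veps_0(b)=\ell$, where one only obtains $\veps_n(\sigma(b))\le\ell$. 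I expect to close this boundary case with Lemmas \ref{L:nextla} and \ref{L:npath} and the identity $\sigma(\as(h))=\ta(h)$ of \eqref{E:asigma}: these show that the domain moves $f_{\as(h)}$ (which realize the good $f_0$-steps between $b(r,s,\lam)$ and $b(r,s,\la)$) and the codomain moves $f_{\ta(h)}=f_{\sigma(\as(h))}$ (realizing the corresponding $f_n$-steps between $\ba(r,s,\lam)$ and $\ba(r,s,\la)$) are executed in lockstep as good arrows acting on the respective right factors, forcing $\veps_n(\sigma(b))=\ell$ exactly when $\veps_0(b)=\ell$. Once this good-arrow correspondence is in hand, $\Psi$ carries the good-arrow-connected set $u_{\ell\La_0}\otimes B^{r,s}$ of Lemma \ref{L:goodarrows} onto the good-arrow-connected set $u_{\ell\La_n}\otimes B^{r,s}$, so $\sigma$ is a well-defined self-map of $B^{r,s}$ satisfying (1) and (2), completing existence.
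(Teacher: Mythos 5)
Your overall strategy coincides with the paper's: uniqueness by propagating conditions (1)--(2) along the good arrows of Lemma \ref{L:goodarrows}, and existence by reading $\sigma$ off the composite bijection \eqref{sigma} (your $\Psi$). The gap is in your treatment of the boundary case $\veps_0(b)=\ell$. Lemmas \ref{L:nextla} and \ref{L:npath} together with \eqref{E:asigma} only control the particular sequences $f_{\as(h)}$ and $f_{\ta(h)}$ joining the classical highest weight vertices $b(r,s,\lam)$, $b(r,s,\la)$ (respectively $\ba(r,s,\lam)$, $\ba(r,s,\la)$); they say nothing about $\veps_n(\sigma(b))$ at an arbitrary vertex $b$ with $\veps_0(b)=\ell$ that a generic chain of good arrows out of $m(B^{r,s})$ may pass through. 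The equality $\veps_n(\sigma(b))=\veps_0(b)$ you want to ``force'' there is essentially an instance of the intertwining property being proven, so as stated the lockstep argument is circular for such $b$, and your induction stalls exactly at these vertices.

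The repair --- and it is how the paper argues --- is to use Lemma \ref{L:sigma} at full strength rather than only at $\la=\lamin^\di(r,s)$: it gives $\Psi(u_{\ell\La_0}\ot b(r,s,\la))=u_{\ell\La_n}\ot\ba(r,s,\la)$ for \emph{every} $\la\in\Pa^\di_n(r,s)$. By the decomposition \eqref{E:KRI0decomp}, every $b\in B^{r,s}$ equals $f_{\bs}(b(r,s,\la))$ for some $\la$ and some sequence $\bs$ in $I_0$; since $\vphi_i(u_{\ell\La_0})=0$ for $i\in I_0$, and $\vphi_{\sigma(i)}(u_{\ell\La_n})=0$ because $\sigma(i)\ne n$, the rule \eqref{E:tensor} makes both $f_{\bs}$ and $f_{\sigma(\bs)}$ act on the right-hand tensor factors. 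This yields the containment $\Psi(u_{\ell\La_0}\ot B^{r,s})\subseteq u_{\ell\La_n}\ot B^{r,s}$ without ever traversing a $0$-arrow, so the boundary case simply never arises in establishing it. With the containment in hand, condition (1) on good arrows --- including the case $\veps_0(b)=\ell$ --- follows from your own forcing observation run in reverse: if $f_n$ acted on the left factor of $u_{\ell\La_n}\ot\sigma(b)$, then $\Psi(u_{\ell\La_0}\ot f_0(b))$ would equal $f_n(u_{\ell\La_n})\ot\sigma(b)$, which is nonzero but not of the form $u_{\ell\La_n}\ot(\cdot)$, contradicting the containment; hence $f_n$ acts on the right factor, i.e.\ $\veps_n(\sigma(b))\ge\ell$ is forced rather than needing to be verified in advance. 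Your uniqueness argument and your derivation of condition (2) from Lemma \ref{L:sigma}, \eqref{E:mbrs} and \eqref{E:bminrs} are correct as written.
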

\begin{proof}
Such a map $\sigma$ is necessarily unique. Assertion 2 specifies one value of $\sigma$.
By Lemma \ref{L:goodarrows} $B^{r,s}$ is connected by good arrows, so
Assertion 1 determines all other values of $\sigma$.
So it suffices to prove existence. Consider the bijection \eqref{sigma}. For an element $b\in B^{r,s}$
the image of $u_{\ell\La_0}\ot b$ by the bijection should belong to $u_{\ell\La_n}\ot B^{r,s}$ by
Lemma \ref{L:sigma}. Denote this image by $u_{\ell\La_n}\ot\sigma(b)$. This map $\sigma$ satisfies
the two conditions.
\end{proof}

\begin{proposition} \label{P:KRsigmaall}
The map $\sigma$ of Proposition \ref{P:sigma} satisfies
\eqref{E:sigmaprop} for all $i\in I$ and $b\in B^{r,s}$.
\end{proposition}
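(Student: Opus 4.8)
The plan is to exploit the construction of $\sigma$ in Proposition \ref{P:sigma}, where $\sigma$ is read off from the composite bijection \eqref{sigma}
$$\Psi:\ B(\ell\La_0)\otimes B^{r,s}\xrightarrow{\ \sim\ } B(\vphi(u))\xrightarrow{\ \sim\ } B(\sigma(\vphi(u)))\xrightarrow{\ \sim\ } B(\ell\La_n)\otimes B^{r,s},\qquad \Psi(u_{\ell\La_0}\otimes b)=u_{\ell\La_n}\otimes\sigma(b).$$
The two outer maps are affine crystal isomorphisms (Theorem \ref{T:KMN}) and the middle one is the relabeling induced by $\sigma\in\Aut(X)$, so $\Psi$ intertwines $e_i$ with $e_{\sigma(i)}$ for every $i\in I$. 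First I would run the tensor rule \eqref{E:tensor} on both $u_{\ell\La_0}\otimes b$ and $u_{\ell\La_n}\otimes\sigma(b)$: since $\vphi_i(u_{\ell\La_0})=\ell\,\chi(i=0)$ and $\vphi_i(u_{\ell\La_n})=\ell\,\chi(i=n)$, for every $i\in I_0$ the operator $e_i$ acts on the right factor on the source side and $e_{\sigma(i)}$ acts on the right factor on the target side (even when $\sigma(i)=0$, i.e. $i=n$, because $\vphi_0(u_{\ell\La_n})=0$). Reading off $\Psi\circ e_i=e_{\sigma(i)}\circ\Psi$ then gives $\sigma\circ e_i=e_{\sigma(i)}\circ\sigma$ at once for all $i\in I_0$, and the same bookkeeping gives it for $i=0$ whenever $\veps_0(b)>\ell$. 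This is exactly the ``good arrows'' of Proposition \ref{P:sigma}(1), so the whole claim reduces to establishing the single relation $\sigma\circ e_0=e_n\circ\sigma$ on the ground-state region $\veps_0(b)\le\ell$.

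Next I would isolate what this reduced statement really is. Using only that the relation holds along good arrows and that $B^{r,s}$ is connected by good arrows (Lemma \ref{L:goodarrows}), a weight-propagation argument gives \eqref{E:sigmawt}, whence $\sigma^2$ is weight-preserving. Since $n\in I_0$ we already have $\sigma\circ e_n=e_0\circ\sigma$ (hence $\sigma\circ f_n=f_0\circ\sigma$) on all of $B^{r,s}$, so $e_0=\sigma e_n\sigma^{-1}$ and the desired relation $\sigma e_0\sigma^{-1}=e_n$ is equivalent to the statement that $\sigma^2$ commutes with $e_n$. Now $\sigma^2$ automatically commutes with $e_1,\dots,e_{n-1}$ (it is the square of a twisted automorphism of the $A_{n-1}$-structure, $\sigma e_i=e_{n-i}\sigma$ for $1\le i\le n-1$); combined with the multiplicity-freeness of $B^{r,s}$ as an $I_0$-crystal \eqref{E:KRI0decomp}, commuting additionally with $e_n$ is equivalent to $\sigma^2=\mathrm{id}$, because a weight-preserving $I_0$-crystal automorphism of a multiplicity-free $I_0$-crystal must fix each irreducible component pointwise. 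So the target is precisely $\sigma^2=\mathrm{id}$ on $B^{r,s}$.

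I would then try to get $\sigma^2=\mathrm{id}$ by introducing the reverse map $\tau$, built from the analogous composite $\Psi'$ with ground state $B(\ell\La_n)$ in place of $B(\ell\La_0)$, so that $\Psi'(u_{\ell\La_n}\otimes b)=u_{\ell\La_0}\otimes\tau(b)$. Composing $\Psi'\circ\Psi$ and $\Psi\circ\Psi'$, the middle relabelings compose to the relabeling by $\sigma^2=\mathrm{id}_X$, which is the trivial automorphism of the relevant irreducible highest weight crystal, so both composites are the identity and $\tau=\sigma^{-1}$. The same tensor-rule analysis shows that $\tau$ satisfies $\tau\circ e_0=e_n\circ\tau$ for \emph{all} $b$. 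However I expect this to be only consistent with, not sufficient for, the goal: it merely re-expresses $\sigma\circ e_n=e_0\circ\sigma$, and no such purely diagrammatic manipulation ever inspects the intrinsic $e_0$-action of $B^{r,s}$ at the top of a $0$-string (where $e_0$ on $u_{\ell\La_0}\otimes b$ kills the ground state instead of acting on $b$). This boundary at the ground state is the genuine obstacle.

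To close the gap I would fall back on the explicit description of $e_0$ and $f_0$ from Section \ref{SS:KR}: the automorphism $\varsigma$ together with the $\pm$-diagram calculus, through which $e_0=\varsigma\circ e_1\circ\varsigma$ in type $(1,1)$ (formula \eqref{eq:e0}) and its analogues for types $(1),(2)$. The key compatibility to verify is that $\sigma$ conjugates $\varsigma$ — the interchange of the Dynkin nodes $0$ and $1$ — to the interchange of the nodes $n-1$ and $n$, and conjugates $e_1$ to $e_{n-1}$; substituting into the formula for $e_0$ then yields $e_n$ and hence $\sigma\circ e_0=e_n\circ\sigma$ on the ground-state region, completing the proof (and, via the equivalence above, also $\sigma^2=\mathrm{id}$). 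I expect checking that $\sigma$ respects the $\pm$-diagram/$\varsigma$ combinatorics to be the technically heaviest step, since it has to be carried out separately for the three families $D_{n+1}^{(2)}$, $C_n^{(1)}$, and $D_n^{(1)}$.
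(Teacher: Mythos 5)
Your final step is precisely the paper's own reduction: Lemma \ref{L:reduce} of the paper deduces Proposition \ref{P:KRsigmaall} from the single identity $\varsigma'\sigma=\sigma\varsigma$ on $B^{r,s}$ by exactly the substitution you describe, $\sigma e_0=\sigma\varsigma e_1\varsigma=\varsigma'\sigma e_1\varsigma=\varsigma' e_{n-1}\sigma\varsigma=\varsigma' e_{n-1}\varsigma'\sigma=e_n\sigma$, using that $1$-arrows and $(n-1)$-arrows are good. But your proposal stops exactly where the mathematical content begins: the ``key compatibility'' $\sigma\varsigma=\varsigma'\sigma$ is the entire point of the paper's Appendix \ref{A:sigma}, not a routine check. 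The paper proves it by (i) establishing an explicit combinatorial Rule (Proposition \ref{prop:sigma tau}) computing $\rowtab(\sigma(\Phi(P)))$ for an arbitrary $\pm$-diagram $P$, whose proof occupies several pages (Lemmas \ref{lem:sig1}--\ref{lem:sig4}, plus an induction on the columns of $P$ to which a $+$ can be added); (ii) observing that $\SSS$ flips the $+$'s columnwise, so that after Rule 2 the $n$'s and $\bar n$'s in each row are interchanged, which gives $\sigma\Phi(\SSS(P))=\varsigma'\sigma\Phi(P)$ on $J$-highest weight elements, $J=\{2,\dotsc,n\}$; and (iii) propagating to all of $B^{r,s}$ by applying $f_{\bs}$ for sequences $\bs$ in $J$ and using the intertwining relations. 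None of this verification appears in your proposal, so as a proof it has a genuine gap: the strategy is the right one, but the combinatorial core that carries the argument is only named, not carried out. (A smaller discrepancy: the paper performs this analysis only for $\di=(1,1)$, i.e.\ $D_n^{(1)}$; for $\di=(1),(2)$ it simply invokes Theorem 7.1 of \cite{FOS1}, where a $\sigma$ with the full intertwining property was already constructed, rather than redoing the $\pm$-diagram analysis in three families as you plan.)

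Two remarks on your intermediate reductions, neither of which is load-bearing since you abandon that route. The claimed equivalence with ``$\sigma^2$ commutes with $e_n$'' rests on the identity $e_0=\sigma e_n\sigma^{-1}$ as an identity of partial maps, but good arrows only give one inclusion: $e_n(c)\ne 0$ implies $e_0(\sigma(c))=\sigma(e_n(c))\ne 0$; they do not give the converse vanishing statement, i.e.\ that $\sigma$ maps $n$-arrows \emph{onto} $0$-arrows, which would require the a priori unknown count $\card\{b\mid e_0(b)\ne 0\}=\card\{b\mid e_n(b)\ne 0\}$. Your observation that $\tau=\sigma^{-1}$ satisfies $\tau e_0=e_n\tau$ unconditionally in fact repairs this, so that $\sigma^2=\mathrm{id}$ really would imply \eqref{E:sigmaprop} in full; but since neither $\sigma^2=\mathrm{id}$ nor the compatibility $\sigma\varsigma=\varsigma'\sigma$ is ever proved, the missing $\pm$-diagram verification remains the real gap.
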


The proof of Proposition \ref{P:KRsigmaall} for $\di=(1,1)$ is deferred to Appendix \ref{A:sigma}.
For $\di=(1),(2)$ the map $\sigma$ constructed in Theorem 7.1 of \cite{FOS1} is the one we need.

\begin{proof}[Proof of Theorem \ref{T:sigma}]
As noted at the end of subsection \ref{SS:sigma},
it suffices to establish the case of a single KR crystal.
The map $\sigma$ in Proposition \ref{P:sigma} works by Proposition \ref{P:KRsigmaall}.
\end{proof}

The following Lemma is used later.

\begin{lemma}
\label{Lem_nice-path} For any $\la\in\Pa^\di(r,s)$,
there is a sequence $\as=(i_1,\dotsc,i_m)$ of indices in $I_n$
such that
\begin{equation} \label{E:raisetobrs}
e_{\as}(u_{\ell\La_0}\otimes b(r,s,\la))=u_{\ell\La_0}\otimes m(B^{r,s}),
\end{equation}
where $\ell=\lev(B^{r,s})$. Moreover
\begin{align}\label{E:0steps}
\card\,\{j\mid i_j=0\} = \dfrac{\abs{\la}-\abs{\lamin^\di(r,s)}}{|\di|}.
\end{align}
\end{lemma}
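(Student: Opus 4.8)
The plan is to prove the statement by induction on $\abs{\la}$, reading Lemma~\ref{L:nextla} backwards and using the identification $m(B^{r,s})=b(r,s,\lamin^\di(r,s))$ from \eqref{E:mbrs}. The key structural observation is that the sequence $\as(h)=(0;\as'(h))$ of \eqref{E:as} contains exactly one entry equal to $0$, and all of its remaining entries lie in $\{1,\dotsc,h-1\}$. Since $h=\la'_p\le r$ and $B^{r,s}\in\CCinf(\geh^\di)$ forces $r\le n-2$, every entry of $\as(h)$ lies in $I_n=I\setminus\{n\}$. Thus the whole construction will automatically avoid the node $n$.

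For the base case $\la=\lamin^\di(r,s)$ we have $b(r,s,\la)=m(B^{r,s})$ by \eqref{E:mbrs}, so the empty sequence suffices and both sides of \eqref{E:0steps} vanish. For the inductive step, let $\la\ne\lamin^\di(r,s)$ and let $\lam$ be as in Notation~\ref{N:lambdaminus}, so that $\lam\in\Pa^\di_n(r,s)$ with $\abs{\lam}=\abs{\la}-|\di|$. By Lemma~\ref{L:nextla} we have $u_{\ell\La_0}\otimes b(r,s,\la)=f_{\as(h)}(u_{\ell\La_0}\otimes b(r,s,\lam))$. Because the left-hand side is a genuine (nonzero) crystal element, every partial composite inside $f_{\as(h)}$ is nonzero, so the chain of arrows may be reversed by raising operators, yielding
\[
u_{\ell\La_0}\otimes b(r,s,\lam) = e_{\bs}\bigl(u_{\ell\La_0}\otimes b(r,s,\la)\bigr),
\]
where $\bs$ denotes the reversal of $\as(h)$. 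By the observation above, $\bs$ lies in $I_n$ and contains exactly one $0$.

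Now apply the induction hypothesis to $\lam$: there is a sequence $\mathbf{c}$ in $I_n$ with $e_{\mathbf{c}}(u_{\ell\La_0}\otimes b(r,s,\lam))=u_{\ell\La_0}\otimes m(B^{r,s})$ and $\card\{j\mid \mathbf{c}_j=0\}=(\abs{\lam}-\abs{\lamin^\di(r,s)})/|\di|$. Taking $\as$ to be the concatenation $(\mathbf{c},\bs)$ (so that $\bs$ is applied first), we obtain $e_{\as}(u_{\ell\La_0}\otimes b(r,s,\la))=u_{\ell\La_0}\otimes m(B^{r,s})$, with $\as$ lying in $I_n$. For the count, $\card\{j\mid \as_j=0\}=(\abs{\lam}-\abs{\lamin^\di(r,s)})/|\di|+1$, and substituting $\abs{\lam}=\abs{\la}-|\di|$ gives exactly $(\abs{\la}-\abs{\lamin^\di(r,s)})/|\di|$, as required. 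The induction is well-founded since $\abs{\lam}<\abs{\la}$ and $\lamin^\di(r,s)$ is the unique element of minimal size in $\Pa^\di_n(r,s)$ (Remark~\ref{R:minBrs}), so iterating $\la\mapsto\lam$ terminates precisely at $\lamin^\di(r,s)$.

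The genuine content here is entirely carried by Lemma~\ref{L:nextla}; what remains delicate is twofold: (i) justifying that the $f$-path may be inverted into an $e$-path, which rests on each intermediate arrow being nonzero, and (ii) confirming that the index $n$ never appears, which is exactly where the stability hypothesis $B^{r,s}\in\CCinf(\geh^\di)$ (equivalently $r\le n-2$) is used, together with the fact that $\as(h)$ only involves nodes $\le h-1$. The bookkeeping of the number of $0$'s is then routine, each reduction step removing a single copy of $\di$ (that is, $|\di|$ boxes) and contributing precisely one $0$.
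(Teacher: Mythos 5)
Your proof is correct, but it takes a genuinely different route from the paper's. The paper deduces Lemma \ref{Lem_nice-path} from Proposition \ref{P:Brsdual}, \eqref{E:sigmaprop}, and Lemma \ref{L:sigma}: it first builds a lowering path on the $u_{\ell\La_n}$ side (indices in $I_0$, counting occurrences of $n$) via Proposition \ref{P:Brsdual}, and then transfers this path to the $u_{\ell\La_0}$ side through the $\sigma$-correspondence, which relabels $i\mapsto n-i$ and so converts $n$'s into $0$'s and $I_0$ into $I_n$. You instead stay on the $0$-side throughout, running the induction directly with Lemma \ref{L:nextla} and inverting each single-step lowering path into a raising path; this bypasses $\sigma$, Proposition \ref{P:Brsdual}, and Lemma \ref{L:npath} entirely, and your bookkeeping of the $0$'s, the inversion argument (each intermediate arrow is nonzero since the final output is nonzero), and the verification that the node $n$ never occurs are all sound. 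In effect the two arguments have the same combinatorial core — by \eqref{E:asigma} one has $\sigma(\as(h))=\ta(h)$, so your path is precisely the $\sigma$-image of the one the paper constructs — but your version is more economical and self-contained for this particular lemma, relying only on the $\pm$-diagram computation behind Lemma \ref{L:nextla}. What the paper's detour buys is coherence with the rest of the section: Proposition \ref{P:Brsdual} and Lemma \ref{L:sigma} are needed anyway (e.g., for Proposition \ref{P:sigma} and Theorem \ref{Th_corespondence}), so invoking them costs nothing in context, whereas your argument would let a reader extract Lemma \ref{Lem_nice-path} without first developing the $n$-side machinery. One small overstatement: the stability hypothesis $r\le n-2$ is not really what keeps $n$ out of your sequences (the entries of $\as'(h)$ are at most $h-1\le r-1\le n-1$ regardless); stability is rather what makes the whole setting (spinless $B^{r,s}$, the elements $b(r,s,\la)$, Lemma \ref{L:nextla} itself) available.
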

\begin{proof} This follows from Proposition \ref{P:Brsdual},
\eqref{E:sigmaprop}, and Lemma \ref{L:sigma}.
\end{proof}

\begin{proof}[Proof of Proposition \ref{P:DBrs}]
Equation \eqref{E:0steps} yields
\begin{align*}
\widehat{D}_{B(\vphi(u_0(r,s)))}(u_{\ell\La_0} \otimes b(r,s,\la)) =
\dfrac{\abs{\la}-\abs{\lamin^\di(r,s)}}{|\di|}.
\end{align*}
By Corollary \ref{C:DBrs} we have
\begin{align}\label{E:Drel1}
  \Db_{B^{r,s}}(b(r,s,\la)) = \Db_{B^{r,s}}(m(B^{r,s})) - \dfrac{\abs{\la}-\abs{\lamin^\di(r,s)}}{|\di|}.
\end{align}
Applying this for $\la=(s^r)$ we have
\begin{align}\label{E:Drel2}
  \Db_{B^{r,s}}(b(r,s,(s^r))) = \Db_{B^{r,s}}(m(B^{r,s})) - \dfrac{rs - \abs{\lamin^\di(r,s)}}{|\di|}.
\end{align}
Subtracting \eqref{E:Drel2} from \eqref{E:Drel1} and using Lemma \ref{L:DBrs}(3)
and the fact that $u(B^{r,s})=b(r,s,(s^r))$, we obtain \eqref{E:DBrs} as required.
\end{proof}

\section{Splittings}

In this section we define maps that embed a KR crystal into the tensor product of
KR crystals. These maps are $I_0$-crystal embeddings which are compatible with
the grading. These results hold for any nonexceptional affine algebra $\geh$
and any $r\in I_0$ with $r\ne1$ and $r$ nonspin.

\subsection{Row splitting}
\label{SS:rowsplit}

In this section we construct a map which we call row splitting,
because in type $A$, the map simply splits off the top row of a
rectangular tableau.

\begin{proposition}
\label{prop:row split} For $\geh$ nonexceptional,
$r\in I_0$ not a spin node and $r\ne1$, there exists a unique map
\begin{equation*}
\SR:B^{r,s}\longrightarrow B^{r-1,s}\otimes B^{1,s}
\end{equation*}
satisfying
\begin{align}
\label{E:Sgood}
\SR(e_i(b)) &= e_i(\SR(b)) \qquad\text{for any good arrow $b\to e_i(b)$.}
\end{align}
\end{proposition}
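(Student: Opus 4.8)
The plan is to construct the map $\SR$ explicitly and verify it satisfies \eqref{E:Sgood}, using the machinery already developed. First I would observe that uniqueness is automatic: once $\SR$ is shown to commute with $e_i$ for good arrows, then since every KR crystal $B^{r,s}$ is connected by good arrows (Lemma \ref{L:goodarrows}), the map is determined by a single value, for instance its value on $m(B^{r,s})$. So the real content is existence. The strategy mirrors the construction of $\sigma$ in Section \ref{SS:sigmaKR}: I would pin down the image of one distinguished element and then propagate along good arrows, but here the target is a tensor product, so I must track the good-arrow structure on $B^{r-1,s}\otimes B^{1,s}$ as well.

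The natural approach is to work through the affine highest weight crystal picture. Recall from Theorem \ref{T:KMN} and Corollary \ref{C:DBrs} that tensoring with $B(\ell\La_0)$ (where $\ell=\lev(B^{r,s})$) realizes $u_{\ell\La_0}\otimes B^{r,s}$ inside an affine highest weight crystal $B(\vphi(m(B^{r,s})))$, with good arrows corresponding precisely to the arrows of the highest weight crystal that are either of color $i\in I_0$ or of color $0$ with $\veps_0>\ell$. The $I_0$-decomposition \eqref{E:KRI0decomp} gives $B^{r,s}\cong\bigoplus_{\la\in\Pa^\di_n(r,s)} B(\la)$, and the branching $\Pa^\di_n(r,s)\to\Pa^\di_n(r-1,s)\times\Pa^\di_n(1,s)$ via Littlewood--Richardson (more concretely, via the Pieri-type rule for adding a row $B(\la)\hookrightarrow B(\mu)\otimes B(s\omega_1)$) is what ``row splitting'' should implement on the classical level. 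So first I would define $\SR$ on each $I_0$-highest weight vertex $b(r,s,\la)$ by specifying its target as the appropriate $I_0$-highest weight vertex of $B^{r-1,s}\otimes B^{1,s}$, then extend over each $I_0$-component by requiring commutation with the $e_i$, $f_i$ for $i\in I_0$ (this is forced and well-defined because $I_0$-components are just $U_q(\geh_0)$-crystals).

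Having defined $\SR$ on each $I_0$-component, the crux is to verify that it also commutes with the \emph{good} $0$-arrows, i.e. \eqref{E:Sgood} for $i=0$ with $\veps_0(b)\ge\ell$. This is the main obstacle, since $e_0$ on $B^{r,s}$ is defined through the indirect procedure of Section \ref{SS:KR} ($\pm$-diagrams and the automorphism $\varsigma$), and on the tensor product $B^{r-1,s}\otimes B^{1,s}$ it is governed by Kashiwara's tensor rule \eqref{E:tensor}. My plan is to reduce the check to the image under $B(\ell\La_0)\otimes(-)$: both $B^{r,s}$ and $B^{r-1,s}\otimes B^{1,s}$ embed into affine highest weight crystals of the same highest weight $\vphi(m(B^{r,s}))$, and the combinatorial $R$-matrix together with the uniqueness statement of Remark \ref{R:uniqueisomorphisms}(1) should force $\SR$ to be the unique $I$-crystal morphism between these realizations on the relevant subcrystals. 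Concretely, I expect the cleanest route is to use Lemmas \ref{L:nextla} and \ref{L:npath}: they describe exactly how one moves between $I_0$-components of $B^{r,s}$ using the composite $f_{\as(h)}$, whose only $0$-step is good, and I would check that the analogous moves in $B^{r-1,s}\otimes B^{1,s}$ are matched step by step. The delicate bookkeeping will be confirming that when $e_0$ acts on the left versus the right tensor factor (cases LL, LR, RL, RR of \eqref{E:Hb0}), the resulting element is still the correct $\SR$-image; this is where most of the genuine work lies, and I anticipate it will require a careful case analysis organized by the shape data $(r,s,\la)$ much as in the proof of Lemma \ref{L:nextla}.
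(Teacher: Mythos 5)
Your uniqueness argument is on the right track but is itself slightly short: connectedness by good arrows (Lemma \ref{L:goodarrows}) shows a map satisfying \eqref{E:Sgood} is determined by one value, but to conclude uniqueness you must also show that some value is \emph{forced}. The paper does this by noting that \eqref{E:Sgood} for $i\in I_0$ makes $\SR$ an $I_0$-crystal embedding, so $\SR(u(B^{r,s}))$ must be the unique $I_0$-highest weight vertex of weight $s\omega_r$ in $B^{r-1,s}\ot B^{1,s}$. This is easily repaired. The genuine gap is in existence. You correctly reach for Theorem \ref{T:KMN}, but you never carry out its crucial step: identifying \emph{which} summand of
\begin{align*}
B(\ell\La_0)\ot B^{r-1,s}\ot B^{1,s}\cong \bigoplus_{u'} B(\vphi(u')),
\qquad \veps(u')=\vphi(m(B^{r-1,s})),
\end{align*}
is isomorphic to $B(\ell\La_0)\ot B^{r,s}\cong B(\vphi(m(B^{r,s})))$. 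The entire content of the paper's proof is that there is a \emph{unique} $u'\in B^{1,s}$ satisfying both $\veps(u')=\vphi(m(B^{r-1,s}))$ and $\vphi(u')=\vphi(m(B^{r,s}))$: in the perfect case this follows from perfectness together with the extended affine Weyl group identity $\tau_r=\tau_{r-1}\tau_1$, which gives $B(\ell\La_0)\ot B^{r,s}\cong B(\ell\La_{\tau_r(0)})\cong B(\ell\La_0)\ot B^{r-1,s}\ot B^{1,s}$; in the nonperfect case ($\geh=C_n^{(1)}$, $s=2\ell-1$) there are exactly three candidates for $u'$, and one must compute $\vphi$ on each to single out the correct one. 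Without this identification there is nothing to propagate along good arrows, and your ``appropriate $I_0$-highest weight vertex'' is genuinely ambiguous, since $B^{r-1,s}\ot B^{1,s}$ contains many $I_0$-highest weight vertices of a given weight.

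Two further points where your plan would misfire. First, Remark \ref{R:uniqueisomorphisms}(1) cannot ``force'' $\SR$: that remark concerns isomorphisms between members of $\CC$, while $\SR$ is a proper embedding ($B^{r,s}$ and $B^{r-1,s}\ot B^{1,s}$ have different cardinalities); the uniqueness actually used is that of isomorphisms between affine highest weight crystals with the same highest weight. Second, once the correct component is identified, \eqref{E:Sgood} is \emph{automatic}, because the affine crystal isomorphism commutes with every $e_i$ and good arrows on $B^{r,s}$ correspond to arrows fixing the factor $u_{\ell\La_0}$; no LL/LR/RL/RR case analysis and no step-by-step matching is required. Your proposed fallback of verifying $0$-arrows by hand via Lemmas \ref{L:nextla} and \ref{L:npath} not only defers exactly the hard work, it is unavailable in the stated generality: those lemmas are proved only for the reversible algebras $\geh^\di$, whereas Proposition \ref{prop:row split} is asserted for every nonexceptional $\geh$.
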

\begin{proof} By Lemma \ref{L:goodarrows}, $B^{r,s}$ is connected by good arrows.
By \eqref{E:Sgood} it follows that $\SR$ is completely determined by any
single value. Again by \eqref{E:Sgood}, $\SR$ is an $I_0$-crystal embedding.
But $\SR(u(B^{r,s}))=u(B^{r-1,s}) \otimes u'$ where $u'$ is the unique element in $B^{1,s}$
of weight $s(\omega_r-\omega_{r-1})$,
since these elements are the only ones in their respective crystals that are $I_0$-highest weight vertices
of weight $s\omega_r$. So it remains to show existence.

Let $\ell=\lev(B^{r,s})$ be the common level of $B^{i,s}$ for
$i\in I_0$ nonspin. By Lemma \ref{L:phimin} and Theorem \ref{T:KMN}
there are isomorphisms
\begin{align}\label{E:affiso1}
  B(\ell\La_0) \otimes B^{r,s} &\cong B(\vphi(m(B^{r,s}))) \\
\label{E:affiso2}
  B(\ell\La_0) \otimes B^{r-1,s} \otimes B^{1,s} &\cong \bigoplus_{u'} B(\vphi(u'))
\end{align}
where $u'\in B^{1,s}$ satisfies
\begin{align}\label{E:affhwpiece}
  \veps(u')=\vphi(m(B^{r-1,s})).
\end{align}

In the nonperfect case there may be more than one such $u'$.
However there is a unique $u'\in B^{1,s}$ such that \eqref{E:affhwpiece} holds and also
\begin{align} \label{E:affhwpiecephi}
  \vphi(u') = \vphi(m(B^{r,s})).
\end{align}

First suppose $B^{i,s}$ is perfect for $i\in I_0$ nonspin.
Since $m(B^{r-1,s})\in B^{r-1,s}_{\min}$, $u'$ satisfying \eqref{E:affhwpiece} is unique,
in which case we must show this $u'$ satisfies \eqref{E:affhwpiecephi}.

For every $i\in I_0$ define $t_{-c_{i^*}\omega_{i^*}}=w_i \tau_i$ where $w_i\in W$ and $\tau_i\in \Sigma$.
One may verify that $\tau_r = \tau_{r-1} \tau_1$. Perfectness yields the isomorphism
\begin{align} \label{E:newsplitiso}
B(\ell\La_0) \otimes B^{r,s} \cong B(\ell\La_{\tau_r(0)}) \cong B(\ell\La_0) \otimes B^{r-1,s} \otimes B^{1,s}
\end{align}
with $u_{\ell\La_0} \otimes m(B^{r,s}) \mapsto u_{\ell\La_0} \otimes m(B^{r-1,s}) \otimes u'$.
Equation \eqref{E:affhwpiecephi} follows by
applying $\vphi$ to these highest weight vectors.

Suppose $B^{1,s}$ is not perfect. Then $\geh=C_n^{(1)}$, $s=2\ell-1$
and $\lev(B^{1,s})=\ell$. In this case $\vphi(m(B^{r-1,s}))=(\ell-1)\La_0+\La_{r-1}$
and $\vphi(m(B^{r,s}))=(\ell-1)\La_0+\La_r$.
There are exactly three elements $u'\in B^{1,s}$ with $\veps(u')=\vphi(m(B^{r-1,s}))=(\ell-1)\La_0+\La_{r-1}$.
Namely, $r,\ol{r-1}\in B(\omega_1)$ and $1(r-1)\ol{r-1}\in B(3\omega_1)$. (If $s=1$, neglect the last one.)
The values of $\varphi$ are
$(\ell-1)\La_0+\La_r,(\ell-1)\La_0+\La_{r-2},(\ell-2)\La_0+\La_1+\La_{r-1}$, respectively.
%
%
%
Let $u'=r$. 
$B(\La')\cong B(\ell\La_0) \otimes B^{r,s}$
in $B(\ell\La_0) \otimes B^{r-1,s} \otimes B^{1,s}$ such that
$u_{\ell\La_0} \otimes m(B^{r,s}) \mapsto u_{\ell\La_0} \otimes m(B^{r-1,s}) \otimes u'$.

Since $B^{r,s}$ is connected by good arrows, we may define $\SR$ by
\begin{align} \label{E:splitdef}
  \SR(b) = b_1 \otimes b_2 \qquad\text{ where $u_{\ell\La_0} \otimes b\mapsto u_{\ell\La_0} \otimes b_1 \otimes b_2$ under \eqref{E:newsplitiso}.}
\end{align}
Equation \eqref{E:Sgood} follows immediately.

%
\end{proof}

\subsection{Splitting $B\in\CC$ into rows}
Let $\geh$ be nonexceptional.
We use Notation \ref{N:BR} for $B=B^R$. Let $B^{\rows(R)}\in\CCinf(\geh)$ be
defined by replacing each $B^{r,s}$ in $B$ by $(B^{1,s})^{\otimes
r}$. We define a map
\begin{align}
  \spl=\spl_R: B^R \to B^{\rows(R)}
\end{align}
as follows. Starting $B^R$ we define a sequence of maps that go
through various crystals in $\CC$, ending with $B^{\rows(R)}$. We
repeat the following step. We locate the leftmost tensor factor of
the form $B^{r,s}$ with $r>1$, apply a sequence of combinatorial
$R$-matrices to swap it to the left, and apply $\SR \otimes
\text{id}$ (which we will sometimes by abuse of notation also denote
$\SR$), which trades in $B^{r,s}$ for $B^{r-1,s} \otimes B^{1,s}$.
Eventually the current crystal consists tensor factors of the form
$B^{1,s}$, and we apply a sequence of combinatorial $R$-matrices to
reorder the tensor factors, obtaining $B^{\rows(R)}$. Call the
composite map $\spl_R$. It is an $I_0$-crystal morphism, being the
composition of such (see \eqref{E:Sgood}).

\begin{remark} \label{R:splitunique}
One can apply splitting of the \textit{first} tensor factor and
combinatorial $R$-matrices in any order until $B^{\rows(R)}$ is
reached. We conjecture that the resulting map is independent
of the order that these steps were taken.
\end{remark}

\begin{proposition}
\label{P:Ssigmacommute} For $\geh=\geh^\di$ reversible, $B^R\in \CCinf(\geh^\di)$,
and $b\in \tops(B^R)$ we have
\begin{align}\label{E:splitsigma}
\spl_R \circ \sigma_{B^R} (b)=\sigma_{B^{\rows(R)}} \circ \spl_R(b).
\end{align}
\end{proposition}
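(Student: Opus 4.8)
The plan is to prove \eqref{E:splitsigma} by reducing to the already-established properties of $\sigma$ and $\spl_R$. Both $\spl_R \circ \sigma_{B^R}$ and $\sigma_{B^{\rows(R)}} \circ \spl_R$ are maps from $B^R$ to $B^{\rows(R)}$, and each of the building blocks (row splitting $\SR$, the combinatorial $R$-matrices, and $\sigma$) satisfies a clean equivariance with respect to the relabeling $\sigma(i)=n-i$. So the natural strategy is to show that $\sigma$ commutes with each step in the construction of $\spl_R$.

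First I would record the two equivariance facts I will lean on. By Theorem \ref{T:sigma}, equation \eqref{E:sigmaiso}, for any $I$-crystal isomorphism $g:B\cong B'$ we have $\sigma_{B'}\circ g = g\circ \sigma_B$; applying this with $g$ a combinatorial $R$-matrix shows $\sigma$ commutes with reordering of tensor factors. Second, I claim $\sigma$ commutes with the row-splitting map $\SR$ when it is applied to the first tensor factor. Indeed $\SR$ is characterized by \eqref{E:Sgood}, namely $\SR(e_i(b))=e_i(\SR(b))$ across good arrows, and $\sigma$ intertwines $e_i$ with $e_{\sigma(i)}$ by \eqref{E:sigmaprop}. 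Since $B^{r,s}$ is connected by good arrows (Lemma \ref{L:goodarrows}) and $\SR$ is a unique $I_0$-crystal embedding determined by a single value, one checks that $\SR\circ\sigma$ and $\sigma\circ\SR$ agree on the distinguished vertex ($u(B^{r,s})$, or equivalently $m(B^{r,s})$) and then propagate the equality along good arrows using the two intertwining relations. The main technical point is that $\sigma$ sends good arrows to good arrows, so that the inductive propagation stays within the regime where both \eqref{E:Sgood} and \eqref{E:sigmaprop} apply.

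Granting these two commutations, the proof assembles by induction on the number of splitting steps. At each stage $\spl_R$ performs a sequence of $R$-matrices followed by an application of $\SR\otimes\mathrm{id}$ to the leftmost tall factor; by the previous paragraph $\sigma$ (extended to tensor products by applying it factorwise, as in the definition before Theorem \ref{Th_corespondence}) commutes with each of these, and hence with their composite. Threading $\sigma$ through the entire composite chain that defines $\spl_R$ then yields \eqref{E:splitsigma}. The restriction to $b\in\tops(B^R)$ is what makes everything work: on $\tops(B^R)$ the relevant vertices lie near the $A_{n-1}$-components of $I_0$-highest weight vertices, where the combinatorics of $\sigma$ and the splitting maps are controlled.

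The step I expect to be the main obstacle is verifying that $\sigma$ genuinely commutes with $\SR$ on the single KR factor, and in particular that the initial value match-up is correct. The subtlety is that $\SR$ is pinned down by \eqref{E:Sgood} using good arrows, whereas \eqref{E:sigmaprop} for $\sigma$ holds for \emph{all} $i\in I$; one must confirm that conjugating the good-arrow condition by $\sigma$ produces exactly the defining relation for $\SR$ on the image crystal, with the Dynkin relabeling $i\mapsto n-i$ accounted for consistently (including how $\sigma$ acts on the target $B^{r-1,s}\otimes B^{1,s}$ and whether the distinguished splitting image $u(B^{r-1,s})\otimes u'$ is sent by $\sigma$ to the analogous distinguished vertex). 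Pinning down this base case, and confirming that the level/perfectness bookkeeping behind $\SR$ is $\sigma$-invariant, is where the real work lies; the rest is formal propagation along connected good-arrow paths.
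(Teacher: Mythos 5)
Your overall architecture (reduce to a single KR factor via \eqref{E:sigmaiso}, handle $A_{n-1}$-arrows formally, then propagate the identity $\SR\circ\sigma=\sigma\circ\SR$ along a connecting path of good arrows from a distinguished vertex) is the same as the paper's, but your propagation step rests on a claim that is false: it is not true that $\sigma$ sends good arrows to good arrows. Every $n$-arrow is good (since $n\in I_0$), but $\sigma$ turns it into a $0$-arrow, and a $0$-arrow is good only under the level condition $\veps_0\ge\ell$ (resp.\ $\veps_0>\ell$ for raising); since $\veps_0(\sigma(b))=\veps_n(b)$, the image arrow is good only when $\veps_n(b)$ exceeds the level, which fails in general. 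Concretely, in your inductive step for an arrow $b\to e_i(b)$ you need \emph{both} the $i$-arrow at $b$ and the $\sigma(i)$-arrow at $\sigma(b)$ to be good (the first so that \eqref{E:Sgood} applies on one side, the second so it applies on the other), and for $i=n$ the second condition can fail, so the ``formal propagation'' breaks precisely at $n$-arrows. The paper circumvents this with an input your proposal lacks: Lemma \ref{Lem_nice-path} supplies, for each $b\in\hw_{I_0}(\tops(B^{r,s}))$, a path of good arrows to $m(B^{r,s})$ whose indices all lie in $I_n$, i.e.\ which never uses node $n$. The two intertwining relations are then used asymmetrically: the original arrows are good, so \eqref{E:Sgood} applies before $\sigma$ is introduced, while $\sigma$ of the path has indices in $I_0$, where $\SR$ commutes unconditionally. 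Some such carefully chosen path (resting on Proposition \ref{P:Brsdual} and Lemma \ref{L:sigma}) is indispensable, not a convenience.

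Second, the base case you defer (``one checks that the distinguished vertices match up'') is in fact the bulk of the paper's proof, and your hedge that one may equally anchor at $u(B^{r,s})$ or $m(B^{r,s})$ is unjustified: those two choices are only ``equivalent'' via the very propagation that is gapped above. The paper anchors at $m(B^{r,s})$, where Proposition \ref{P:sigma}(2) gives $\sigma(m(B^{r,s}))=\bamin^\di(r,s)$, so the required identity becomes $\SR(\bamin^\di(r,s))=\bamin^\di(r-1,s)\otimes\sigma(u')$, with $u'$ defined by $\SR(m(B^{r,s}))=m(B^{r-1,s})\otimes u'$. Verifying this is a genuine computation: one applies the row-tableau realization, uses the explicit form of $\rowtab(\bamin^\di(r,s))$ from Remark \ref{R:rowtabmins} to read off the split-off row $u''$, computes $\veps(u'')$ and $\vphi(u'')$ case by case in $\di$ (with special subcases for $\di=(2)$, $s$ odd), and matches them against the characterization \eqref{E:splitdef} of $u'$ via uniqueness of the element of $B^{1,s}$ with prescribed $\veps$ and $\vphi$. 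Your proposal correctly identifies this as the hard point but contains no substitute for it, so as written it is a plan for a proof rather than a proof, and its one fully asserted mechanism (good-to-good propagation) is the step that fails.
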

\begin{proof} By \eqref{E:sigmaiso} we may reduce
to the case $B=B^{r,s}$ and $\spl_R=S$. Let $\ell=\lev(B^{r,s})$.
By \eqref{E:sigmaprop}
and the fact that $S$ is an $I_0$-crystal morphism, we may assume
$b\in \hw_{I_0}(\tops(B^{r,s}))$.
By Lemma \ref{Lem_nice-path}, there is a sequence
$\mba=(i_1,\dotsc,i_p)$ of indices in $I_n$ such that
$e_\mba(u_{\ell\La_0} \otimes b)=u_{\ell\La_0}\otimes m(B^{r,s})$. Therefore
we have
\begin{align} \label{E:start}
  e_\mba(b) = m(B^{r,s})
\end{align}
and moreover this sequence consists of good arrows.
Applying $\sigma \SR$ we obtain
\begin{align*}
  \sigma(\SR(m(B^{r,s}))) &= \sigma(\SR(e_\mba(b))) \\
  &= e_{\sigma(\mba)} \sigma(\SR(b))
\end{align*}
using \eqref{E:Sgood} and \eqref{E:sigmaprop}. Applying
$\SR \sigma$ to \eqref{E:start} we have
\begin{align*}
\SR(\sigma(m(B^{r,s})) &= \SR(\sigma(e_\mba(b))) \\
&= e_{\sigma(\mba)} \SR(\sigma(b))
\end{align*}
using \eqref{E:sigmaprop}, the fact that $\sigma(\mba)$ has indices in $I_0$, and
\eqref{E:Sgood}. Since $e_{\sigma(\mba)}$ has a left inverse, we may assume
that $b=m(B^{r,s})$. We have $\SR(m(B^{r,s}))=m(B^{r-1,s}) \otimes u'$ for some $u'\in B^{1,s}$.
By Proposition \ref{P:sigma}(2) we reduce to the equality
\begin{align}
  S(\bamin^\di(r,s)) = \bamin^\di(r-1,s) \otimes \sigma(u').
\end{align}
Since $\bamin^\di(r,s)\in B_{I_0}(s\omega_r)$, we may apply $\rowtab=\rowtab_{(s^r)}$
and similarly for $\bamin^\di(r-1,s)$.
By definition $(\rowtab_{(s^{r-1})} \otimes 1_{B(s\omega_1)})(S(\bamin^\di(r,s))) =
\rowtab_{(s^r)}(\bamin^\di(r,s)) = \rowtab_{(s^{r-1})}(\bamin^\di(r-1,s)) \otimes u''$
where $u''\in B_{I_0}(s\omega_1)$ is the last row of $\rowtab_{(s^r)}(\bamin^\di(r,s))$. So it remains to show
$u''=\sigma(u')$. Using the
explicit form of $\rowtab(\bamin^\di(r,s))$ given in Remark \ref{R:rowtabmins} one has
\begin{align*}
\veps(u'')&=
\begin{cases}
\ell\La_{n-1} &(\diamondsuit=(1,1),r:\text{even}) \\
(\ell-1)\La_n+\La_{n-r+1} &(\diamondsuit=(2),s:\text{odd}) \\
\ell\La_n &(\text{otherwise})
\end{cases} \\
\vphi(u'')&=
\begin{cases}
\ell\La_{n-1} &(\diamondsuit=(1,1),r:\text{odd}) \\
(\ell-1)\La_n+\La_{n-r} &(\diamondsuit=(2),s:\text{odd}) \\
\ell\La_n &(\text{otherwise}).
\end{cases}
\end{align*}
Therefore $\veps(\sigma(u''))$ and $\vphi(\sigma(u''))$ are given by replacing every $\La_j$ with
$\La_{n-j}$ in the above table.
But $\veps(\sigma(u''))=\vphi(m(B^{r-1,s}))$ and $\vphi(\sigma(u''))=\vphi(m(B^{r,s}))$.
Therefore by \eqref{E:splitdef} $u'=\sigma(u'')$ for there is a unique element in $B^{1,s}$
having such values of $\veps$ and $\vphi$, and we are done since $\sigma$ is an involution.
\end{proof}

\subsection{Box splitting}
\label{SS:boxsplit}

Let $\geh$ be of affine type
such that $\geh_0$ is of type $B_n$, $C_n$, or $D_n$.

Define a map $B^{1,s} \hookrightarrow B^{1,s-1} \otimes B^{1,1}$ as follows.
For $b=x_1\dotsm x_p\in B(p\omega_1) \subset B^{1,s}$,
\begin{align} \label{E:splitonebox}
  b\mapsto \begin{cases}
  1b \otimes \bar{1} & \text{if $s \ge p+2$} \\
  b \otimes \varnothing & \text{if $s=p+1$} \\
  x_2\dotsm x_p \otimes x_1 & \text{if $s=p$.}
  \end{cases}
\end{align}
Here $\varnothing$ denotes the element of $B(0)\subset B^{1,1}$ for $\geh$ of kind $(1)$.
This map is evidently an $I_0$-crystal embedding.
Iterating this map on the first tensor factor, we obtain
the following $I_0$-crystal embedding $\Sbox:B^{1,s}\hookrightarrow (B^{1,1})^{\otimes s}$:
\begin{align} \label{E:Sbox}
\Sbox(b) = x_p \otimes \dotsm \otimes x_2 \otimes x_1 \otimes \underbrace{1 \otimes\dotsm\otimes 1}_{\text{$m$ times}} \otimes
\underbrace{\varnothing}_{\text{$k$ times}} \otimes
\underbrace{\bar{1}\otimes\dotsm\otimes \bar{1}}_{\text{$m$ times}}
\end{align}
where $m=\lfloor \frac{s-p}{2} \rfloor$ and $k$ is $0$ or $1$ according as $s-p$ is even or odd.

Define a map $\SSbox:B^R\to (B^{1,1})^{\otimes \abs{R}}$ as follows.
First apply $\spl: B^R \to B^{\rows(R)}$. Then do the following
repeatedly until $(B^{1,1})^{\otimes \abs{R}}$ is reached. Find the
leftmost factor of the form $B^{1,s}$ with $s>1$ and swap it to the
left end using combinatorial $R$-matrices and then apply $\Sbox
\otimes \text{id}$ to replace this $B^{1,s}$ with
$(B^{1,1})^{\otimes s}$. Write $\SSbox$ for the composite map. We
have
\begin{align} \label{E:boxsplitI0map}
  \SSbox \circ e_i = e_i \circ \SSbox \qquad\text{for $i\in I_0$}
\end{align}
since $\SSbox$ is the composition of $I_0$-crystal morphisms
$\spl$ and $\Sbox\otimes 1$.

\begin{remark} If $R$ consists of tensor factors of the form $B^{1,s}$ then
$\Db_{B^R} = \Db_{(B^{1,1})^{\otimes \abs{R}}} \circ \SSbox$.
\end{remark}

\begin{proposition}\label{P:sigmaSboxcommute}
For $\geh=\geh^\di$ reversible and $b\in \tops(B^R)$,
\begin{equation}\label{E:sigSSboxcommute}
  \SSbox(\sigma(b))=\sigma(\SSbox(b)).
\end{equation}
\end{proposition}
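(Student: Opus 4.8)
The plan is to treat the two stages of $\SSbox$ separately, reusing the row-splitting result already in hand. Recall $\SSbox = T\circ\spl_R$, where $T:B^{\rows(R)}\to(B^{1,1})^{\otimes\abs{R}}$ is the composite of combinatorial $R$-matrices and maps of the form $\Sbox\otimes\mathrm{id}$. Since $\spl_R$ is an $I_0$-embedding it carries $\tops(B^R)$ into $\tops(B^{\rows(R)})$, and Proposition~\ref{P:Ssigmacommute} lets me slide $\sigma$ past $\spl_R$ on $\tops(B^R)$, so that $\SSbox(\sigma(b)) = T(\sigma(\spl_R(b)))$. Thus it suffices to show that $T$ commutes with $\sigma$ on $\tops(B^{\rows(R)})$. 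Each elementary factor of $T$ is either an $R$-matrix, which commutes with $\sigma$ by \eqref{E:sigmaiso}, or a map $\Sbox\otimes\mathrm{id}$; both types preserve the subset of tops (using Lemma~\ref{L:tensorhwv} for $K=A_{n-1}$, which shows in particular that the first tensor factor of a vertex of $\tops$ again lies in $\tops$). Pushing $\sigma$ through $T$ one factor at a time, and using that $\sigma$ acts factor-wise on tensor products (Theorem~\ref{T:sigma}), the whole statement reduces to the single assertion that
\[
\Sbox\circ\sigma = \sigma\circ\Sbox \qquad\text{on } \tops(B^{1,s}),
\]
where on the right $\sigma$ is applied factor-wise to $(B^{1,1})^{\otimes s}$.

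To prove this I would mirror the argument of Proposition~\ref{P:Ssigmacommute}. First, because $\Sbox$ is an $I_0$-crystal embedding (Section~\ref{SS:boxsplit}) and $\sigma$ sends the $A_{n-1}$-node set to itself, I may reduce to $b\in\hw_{I_0}(\tops(B^{1,s}))$, i.e.\ to $b=b(1,s,\la)$ for some $\la\in\Pa^\di_n(1,s)$. The key input I need is that $\Sbox$ is compatible with good arrows, $\Sbox(e_i(b))=e_i(\Sbox(b))$ whenever $b\to e_i(b)$ is good; for $i\in I_0$ this is immediate from the $I_0$-embedding property, and the $i=0$ case is the real content. Granting it, Lemma~\ref{Lem_nice-path} supplies a sequence $\mba$ with indices in $I_n=\{0,\dots,n-1\}$ of good arrows taking $b(1,s,\la)$ to $m(B^{1,s})$, and since $\sigma(\mba)$ has indices in $I_0$, the computation of Proposition~\ref{P:Ssigmacommute} (combining the good-arrow compatibility of $\Sbox$, the relation \eqref{E:sigmaprop}, and the injectivity of $e_{\sigma(\mba)}$) reduces the identity to its value at the single vertex $m(B^{1,s})$. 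There Proposition~\ref{P:sigma}(2) gives $\sigma(m(B^{1,s}))=\bamin^\di(1,s)$, and I would verify $\sigma(\Sbox(m(B^{1,s})))=\Sbox(\bamin^\di(1,s))$ directly from the explicit formula \eqref{E:Sbox}, the explicit form of $m(B^{1,s})$ and $\bamin^\di(1,s)$, and the fact that on the single-box crystal $B^{1,1}$ the map $\sigma$ is the letter involution forced by \eqref{E:sigmawt}.

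The hard part will be the good-arrow compatibility of $\Sbox$ in the case $i=0$, i.e.\ $\Sbox(e_0(b))=e_0(\Sbox(b))$ when $\veps_0(b)>\lev(B^{1,s})$. Since $\Sbox$ is only an $I_0$-morphism while $\sigma$ couples node $0$ with node $n$, this is exactly the place where the affine structure cannot be bypassed. I would establish it by reducing through \eqref{E:splitonebox} to the one-box splitting $B^{1,s}\to B^{1,s-1}\otimes B^{1,1}$ and tracking the explicit action of $e_0$ recorded in Section~\ref{SS:KR} (via $\pm$-diagrams and the automorphism $\varsigma$), checking that a good application of $e_0$ acts on the $B^{1,s-1}$-part and is transported correctly across the split; this argument, like the final base-case computation, must be carried out separately for the three kinds $\di=(1,1),(2),(1)$, since both the affine rule for $e_0$ and the parameters $m,k$ in \eqref{E:Sbox} depend on the kind.
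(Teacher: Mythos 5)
Your first paragraph reproduces the paper's own reduction: by Proposition \ref{P:Ssigmacommute}, \eqref{E:sigmaiso}, \eqref{E:boxsplitI0map} and \eqref{E:sigmaprop}, everything comes down to $\Sbox\circ\sigma=\sigma\circ\Sbox$ on $\tops(B^{1,s})$, and then, sliding along $A_{n-1}$-arrows, to the $I_0$-highest weight vertices $b(1,s,(p))=1^p$. The gap is in what you do next. Your further reduction to the single vertex $m(B^{1,s})$ hinges on the claim that $\Sbox$ intertwines good $e_0$-arrows. That claim is proved nowhere in the paper and cannot be cited: the analogous property \eqref{E:Sgood} for row splitting is not verified by computation but is built into the very construction of $\SR$, which Proposition \ref{prop:row split} defines through the affine isomorphisms of Theorem \ref{T:KMN}; by contrast, $\Sbox$ is given by the bare formulas \eqref{E:splitonebox}--\eqref{E:Sbox} and is established only as an $I_0$-morphism. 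So the load-bearing step of your argument is exactly what is left unproved, and the mechanism you sketch for proving it starts from a false premise: a good $e_0$ does \emph{not} act on the $B^{1,s-1}$-part of the split. For instance, in kind $(1)$ ($\geh=D_{n+1}^{(2)}$) take $s=3$ and the good arrow $e_0(111)=11$ in $B^{1,3}$ (one of the arrows furnished by Lemma \ref{L:nextla}); under $B^{1,3}\to B^{1,2}\ot B^{1,1}$ one has $111\mapsto 11\ot 1$ and $11\mapsto 11\ot\varnothing$, and since $\vphi_0(11)=0<2=\veps_0(1)$ in the respective factors, $e_0$ acts on the right-hand $B^{1,1}$ factor. (The intertwining identity itself survives in this example; it is your proposed route to it in general that does not.)

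The deeper problem is that this machinery is unnecessary, because you are underusing what is already known about $\sigma$. You invoke only Proposition \ref{P:sigma}(2), the value of $\sigma$ at $m(B^{1,s})$, and that is what forces the good-arrow reduction. But Lemma \ref{L:sigma} --- which is precisely how $\sigma$ is constructed in Proposition \ref{P:sigma} --- gives $\sigma(b(r,s,\la))=\ba(r,s,\la)$ at \emph{every} $I_0$-highest weight vertex. Hence once you have reduced to $b=1^p$, both sides of \eqref{E:sigSSboxcommute} can simply be computed, and this is the paper's proof: for $\di=(1)$, with $m,k$ as in \eqref{E:Sbox},
\begin{align*}
\sigma(\Sbox(1^p))=\sigma\bigl(1^{\ot(p+m)}\ot\varnothing^{\ot k}\ot\bar1^{\ot m}\bigr)
=\bar n^{\ot(p+m)}\ot 0^{\ot k}\ot n^{\ot m}
=\Sbox\bigl(n^m 0^k \bar n^{\,p+m}\bigr)=\Sbox(\sigma(1^p)),
\end{align*}
using that $\sigma$ acts factorwise on $(B^{1,1})^{\ot s}$ and that $\sigma(1^p)=\ba(1,s,(p))=n^m0^k\bar n^{\,p+m}$; the kinds $(2)$ and $(1,1)$ are easier still (for $(1,1)$, $\tops(B^{1,s})$ has the single $I_0$-highest weight vertex $1^s$). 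If you nevertheless want to salvage your route, note that for $r=1$ the good-arrow paths of Lemma \ref{Lem_nice-path} degenerate (Lemma \ref{L:nextla} with $h=1$) into pure strings of $0$-arrows joining consecutive vertices $1^p$, so your missing lemma would only need to be checked on those arrows --- but that check is the same explicit computation as above, with extra steps.
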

\begin{proof} By Proposition \ref{P:Ssigmacommute}, \eqref{E:sigmaiso},
\eqref{E:boxsplitI0map}, and \eqref{E:sigmaprop} it suffices to
prove \eqref{E:sigSSboxcommute} for $b\in \tops(B^{1,s})$.
Consider the case $\di=(1)$ where
$\tops(B^{1,s})$ consists of elements $1^p=\hw_{I_0}(B(p\omega_1))\subset B^{1,s}$
for $0\le p\le s$. With notation as in \eqref{E:Sbox} we have
\begin{align*}
  \sigma(\Sbox(1^p))&=\sigma(1^{\otimes p+m} \otimes \varnothing^{\otimes k} \otimes \bar{1}^m) \\
  &= \bar{n}^{\otimes p+m} \otimes 0^k \otimes n^{\otimes m} \\
  &= \Sbox(n^m 0^k \bar{n}^{p+m}) \\
  &= \Sbox(\sigma(1^p)).
\end{align*}
The cases $\di\in\{(1,1),(2)\}$ are easier.
\end{proof}

\section{Correspondence on $A_{n-1}$-highest weight vertices}
Again we assume that $\geh=\geh^\di$ is reversible.

Let $\hmax(B)=\widehat{\max(B)}$ in the notation of Section
\ref{SS:subcrystals}. The goal of this section is to prove the
following theorem.

\begin{theorem}
\label{Th_corespondence} For $B\in\CCinf(\geh^\di)$ and every $\la\in\Pa_n$, $\sigma:B\to B$
restricts to a bijection
\begin{align}
\label{E:topshmax}
\hw_{A_{n-1}}^\la(\tops(B)) \overset{\sigma}\simeq \hw_{A_{n-1}}^{\bla}(\hmax(B)).
\end{align}
\end{theorem}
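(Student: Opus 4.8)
The plan is to produce the asserted bijection as $\sigma$ itself, by stripping away the soft structural parts and isolating one genuinely affine statement. Because $\{1,\dots,n-1\}$ is stable under $i\mapsto n-i$, the intertwiner \eqref{E:sigmaprop} gives $b\in\hw_{A_{n-1}}(B)\iff\sigma(b)\in\hw_{A_{n-1}}(B)$, while \eqref{E:sigmawt} sends the $A_{n-1}$-weight $\la$ to $\bla$; together with \eqref{E:sigmainv} this already makes $\sigma$ an involutive bijection $\hw_{A_{n-1}}^\la(B)\to\hw_{A_{n-1}}^{\bla}(B)$. Two identifications then cut the problem down. First, each $c\in\hw_{I_0}(B)$ is the unique $A_{n-1}$-highest vertex of its own $A_{n-1}$-component and has $\gl_n$-weight equal to the partition $\la$ recording its $I_0$-highest weight, so $\hw_{A_{n-1}}^\la(\tops(B))=\hw_{I_0}^\la(B)$. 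Second, if $v$ is $A_{n-1}$-highest of weight $\bla$ and lies in $\max(B)$, then taking $d=v$ in the definition of $\widehat{\max(B)}$ puts $v$ into $\hw_{A_{n-1}}^{\bla}(\hmax(B))$ automatically. Thus the theorem reduces to the single \emph{main claim} $\sigma(\hw_{I_0}(B))\subseteq\max(B)$: granting it, $\sigma$ restricts to an injection $\hw_{A_{n-1}}^\la(\tops(B))\hookrightarrow\hw_{A_{n-1}}^{\bla}(\hmax(B))$, which is a bijection as soon as the two finite sets have equal cardinality.

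I would match the cardinalities using the classical multiplicity formulae of Section 2. Writing $m_\la$ for the multiplicity of $B(\la)$ in $B$ as an $I_0$-crystal, the left-hand set has size $m_\la$, and by \eqref{E:KRI0decomp} and the definition of $W^G_\di$ the crystal $B$ is that of $\bigotimes_i W^G_\di(R_i)$, so $m_\la=\mathfrak{K}^{\la,\di}_{R_1,\dots,R_p}$. The maximal components have $|\nu|=M(B)=|B|$ and, being of top degree, occur with the bare $\gl_n$-tensor multiplicity $c^\nu_{R_1,\dots,R_p}$; by Littlewood's formula \eqref{E:Lit} with $(\gamma^+,\gamma^-)=(\varnothing,\la)$ each such $B(\nu)$ carries $\sum_{\delta\in\Pa_n^\di}c^\nu_{\delta\la}$ dual-polynomial $A_{n-1}$-highest vertices of weight $\bla$. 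Hence the right-hand set has size $\sum_{|\nu|=|B|}c^\nu_{R_1,\dots,R_p}\sum_{\delta}c^\nu_{\delta\la}$, which Proposition \ref{prop_qdual} identifies with $m_\la$ (its sum is automatically supported on $|\nu|=|B|$, as $c^\nu_{R_1,\dots,R_p}$ is a $\gl_n$-tensor multiplicity).

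To prove the main claim I would homogenize via box splitting. The map $\SSbox:B^R\hookrightarrow(B^{1,1})^{\otimes N}$ with $N=|R|$ is an $I_0$-crystal embedding and, by Proposition \ref{P:sigmaSboxcommute}, commutes with $\sigma$ on $\tops(B^R)\supseteq\hw_{I_0}(B^R)$. An $I_0$-embedding preserves the highest weight $\nu$ of each $I_0$-component, and $M(B^R)=N=M((B^{1,1})^{\otimes N})$; therefore $\sigma(c)\in\max(B^R)$ if and only if $\SSbox(\sigma(c))=\sigma(\SSbox(c))$ lies in $\max((B^{1,1})^{\otimes N})$, while $\SSbox(c)$ is again $I_0$-highest. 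This reduces the main claim to the single crystal $(B^{1,1})^{\otimes N}$, where $\sigma$ is the letterwise involution $\sigma^{\otimes N}$, a component is maximal precisely when $|\nu|=N$ (its $I_0$-highest representative is unbarred, i.e. free of orthogonal contractions), and $\sigma(c)$ is exactly a $\{0,1,\dots,n-1\}$-highest vertex.

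The remaining point, which I expect to be the main obstacle, is a purely affine assertion about $(B^{1,1})^{\otimes N}$: a vertex that is $A_{n-1}$-highest of weight $\bla$ and in addition killed by $e_0$ must lie in a maximal ($|\nu|=N$) component; equivalently, an $A_{n-1}$-highest vertex in a non-maximal component always admits an application of $e_0$. Soft weight bookkeeping does not see $e_0$, so here one must invoke its explicit description from Section \ref{SS:KR} (the $\pm$-diagram rules and the automorphism $\varsigma$). The natural strategy is to combine the row-tableau model of the dual polynomial part—Proposition \ref{P:Bhatprop} exhibits $v$ inside some $\Bh(\nu)$ and measures the defect $(|\nu|-|\la|)/|\di|$ by the number of $e_n$-steps reaching it—with the relation $\sigma e_n=e_0\sigma$, which turns the hypothesis $e_0(v)=0$ into $e_n(c)=0$, valid because $c$ is $I_0$-highest. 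Checking that this excludes every non-maximal $\nu$, i.e. that the defect attains its global maximum $N-|\la|$, is the technical heart; it is organized by the $\pm$-diagram combinatorics and is complementary to the cardinality count established above.
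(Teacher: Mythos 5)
Your reductions track the paper's own proof almost line for line: the soft consequences of \eqref{E:sigmaprop}, \eqref{E:sigmawt}, \eqref{E:sigmainv}; the identification $\hw_{A_{n-1}}^\la(\tops(B))=\hw_{I_0}^\la(B)$; the observation that an $A_{n-1}$-highest vertex of weight $\bla$ lying in $\max(B)$ is automatically in $\hmax(B)$; the cardinality match via \eqref{E:Lit} and Proposition \ref{prop_qdual} (this is exactly Lemma \ref{L:samesize}); and the reduction of the main claim $\sigma(\hw_{I_0}(B))\subseteq\max(B)$ to the homogeneous crystal $(B^{1,1})^{\otimes N}$ via $\SSbox$ and Proposition \ref{P:sigmaSboxcommute} (this is exactly the first paragraph of the paper's proof of Proposition \ref{Prop_corres}). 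All of that is correct.

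However, the proposal stops exactly where the real content begins: the base case for $(B^{1,1})^{\otimes N}$ is not proved, only restated and deferred to ``$\pm$-diagram combinatorics,'' and the strategy you sketch for it is essentially circular. Passing from ``$v$ is $A_{n-1}$-highest of weight $\bla$ and $e_0(v)=0$'' to ``$e_n(c)=0$ where $c=\sigma(v)$'' via $\sigma e_n=e_0\sigma$ gains nothing: $c$ is the $I_0$-highest vertex you started from, $e_n(c)=0$ is already known, and the claim to be proved is precisely a statement about where $\sigma$ sends $c$, which is what you would need to track. Likewise Proposition \ref{P:Bhatprop} lives inside the abstract $I_0$-crystal $B(\nu)$, where $e_0$ does not act, so the defect count there cannot be played off against $e_0$ without already knowing the image of $\sigma$. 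The paper closes this gap with a concrete argument special to $(B^{1,1})^{\otimes m}$: by induction on $m$, every $I_0$-highest vertex $b$ has all its letters in $\{1,\ldots,m\}\cup\{\bar{m},\ldots,\bar{1}\}$; since $\sigma$ acts factorwise on $(B^{1,1})^{\otimes m}$, sending $i\mapsto\overline{n+1-i}$ and $\bar{i}\mapsto n+1-i$, the letters of $\sigma(b)$ lie in $\{n-m+1,\ldots,n\}\cup\{\bar{n},\ldots,\overline{n-m+1}\}$, and in the large-rank regime an element supported on such letters cannot lie in a component $B(\nu)$ with $\abs{\nu}<m$ --- proved by induction on $m$ or via the insertion procedure of \cite{lec2}. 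Some explicit grip of this kind on $\sigma$ of $I_0$-highest vertices in the homogenized crystal is what your write-up is missing; without it the proof is incomplete.
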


\begin{lemma} \label{L:samesize}
\begin{align}\label{E:samesize}
\card\,\hw_{A_{n-1}}^\la(\tops(B)) =
\card\,\hw_{A_{n-1}}^{\bla}(\hmax(B)).
\end{align}
\end{lemma}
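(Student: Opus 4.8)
The plan is to evaluate both sides of \eqref{E:samesize} as sums of Littlewood--Richardson coefficients and to match them using the symmetry $c^\nu_{\la\delta}=c^\nu_{\delta\la}$. Throughout I would work in the stable range $B\in\CCinf(\geh^\di)$, so that $n$ is large enough for Proposition \ref{prop_qdual}, Remark \ref{R:classicalmult}, and Littlewood's formula \eqref{E:Lit} to apply.

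For the left-hand side, note that every $b\in\hw_{I_0}(B)$ is in particular $A_{n-1}$-highest, and its component $B_{A_{n-1}}(b)$ is an irreducible $A_{n-1}$-crystal whose unique $A_{n-1}$-highest weight vertex is $b$ itself, of weight $\wt(b)$. Hence $\card\,\hw^\la_{A_{n-1}}(\tops(B))=\card\,\hw^\la_{I_0}(B)$, the multiplicity of $B(\la)$ in the $I_0$-crystal $B$. Using \eqref{E:KRI0decomp} together with the classical fact that for a rectangle $c^{(s^r)}_{\delta\la}=1$ precisely when $\delta$ is the $180^\circ$-rotated complement of $\la$ in $(s^r)$, one sees that $B^{r,s}$ realizes $W^G_\di((s^r))$ as an $I_0$-crystal; tensoring, $B$ realizes $W^G_\di(R_1)\otimes\dotsm\otimes W^G_\di(R_p)$. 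Thus the left-hand side equals $\mathfrak{K}^{\la,\di}_{R_1,\dotsc,R_p}$, which Proposition \ref{prop_qdual} rewrites as $\sum_{\nu\in\Pa_n}\sum_{\delta\in\Pa_n^\di}c^\nu_{\la\delta}\,c^\nu_{R_1,\dotsc,R_p}$.

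For the right-hand side, since the operation \eqref{E:hatdef} retains exactly the $A_{n-1}$-components of dual-polynomial highest weight, we have $\hw^{\bla}_{A_{n-1}}(\hmax(B))=\hw^{\bla}_{A_{n-1}}(\max(B))$. I would then identify $\max(B)$: the iterated top-degree statement of Remark \ref{R:classicalmult} shows that the $I_0$-components of maximal box number $M(B)=\abs{B}$ arise only from the full rectangle $b(r_i,s_i,(s_i^{r_i}))=u(B^{r_i,s_i})$ in each tensor factor, and occur there with the $GL_n$ multiplicity, so $\max(B)\cong\bigoplus_\nu B_{I_0}(\nu)^{\oplus c^\nu_{R_1,\dotsc,R_p}}$. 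Finally $\card\,\hw^{\bla}_{A_{n-1}}(B_{I_0}(\nu))=[\downarrow^G_{GL_n}V^G(\nu):V^{GL_n}(\bla)]$, which by \eqref{E:Lit} with $(\gamma^+,\gamma^-)=(\varnothing,\la)$ collapses (since $c^\kappa_{\varnothing,\la}=\chi(\kappa=\la)$) to $\sum_{\delta\in\Pa_n^\di}c^\nu_{\delta\la}$. Hence the right-hand side equals $\sum_\nu c^\nu_{R_1,\dotsc,R_p}\sum_{\delta\in\Pa_n^\di}c^\nu_{\delta\la}$, and this agrees with the left-hand side by the symmetry of LR coefficients.

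The step I expect to be the main obstacle is the exact determination of $\max(B)$ --- namely that its $I_0$-multiplicities are literally the $GL_n$ numbers $c^\nu_{R_1,\dotsc,R_p}$. This requires both the equality case of Remark \ref{R:classicalmult} (so that type-$G$ tensor multiplicities degenerate to $GL_n$ ones at top box-degree) and the observation that no non-rectangular factor component can contribute to the maximal box count; the stable hypothesis $B\in\CCinf(\geh^\di)$ is exactly what makes these tools applicable. Everything else is bookkeeping with Littlewood's formula and LR symmetry.
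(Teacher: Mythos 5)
Your proof is correct and follows essentially the same route as the paper's: both identify the left-hand side with $\mathfrak{K}^{\la,\di}_{R_1,\dotsc,R_p}$ via \eqref{E:KRI0decomp} and \eqref{E:kappadef}, use the decomposition $\max(B^R)\simeq\bigoplus_{\nu}B(\nu)^{\oplus c^{\nu}_{R_1,\dotsc,R_p}}$, and count the right-hand side with Littlewood's formula \eqref{E:Lit} and Proposition \ref{prop_qdual}. The only difference is that you supply a justification of the $\max(B^R)$ decomposition (via the iterated equality case of Remark \ref{R:classicalmult} in the stable range), a step the paper simply asserts.
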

\begin{proof} There is an $I_0$-crystal isomorphism
\begin{equation*}
\max(B^R) \simeq \bigoplus_{\nu \in \Pa_n} B(\nu )^{\oplus
c_{R_1,\ldots ,R_p}^\nu}.
\end{equation*}
By \eqref{E:Lit} we have
\begin{align*}
  \card\, \hw_{A_{n-1}}^{\bla}(\hmax(B^R)) =
  \sum_{\nu\in\Pa_n} c_{R_1,\dots,R_p}^\nu \sum_{\delta\in \Pvd_n} c^\nu_{\delta\la} =
  \mathfrak{K}_{R_1,\dotsc,R_p}^{\la,\di}
\end{align*}
where the last equality follows from Proposition \ref{prop_qdual}.
We have $X_{\la,B^R}(1)=\mathfrak{K}_{R_1,\dotsc,R_p}^{\la,\di}$
by \eqref{E:Xdef}, \eqref{E:KRI0decomp}, and \eqref{E:kappadef}.
Therefore \eqref{E:samesize} holds.
\end{proof}


\begin{proposition}
\label{Prop_corres} The map $\sigma:B\to B$ sends $\tops(B)$ into $\max(B)$.
\end{proposition}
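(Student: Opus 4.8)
The plan is to reduce the statement first to $I_0$-highest weight vertices and then, via box splitting, to the single case $B=(B^{1,1})^{\otimes N}$. Since $\sigma(i)=n-i$ permutes the type $A_{n-1}$ node set $\{1,\dots,n-1\}$ among itself, \eqref{E:sigmaprop} shows that $\sigma$ carries each $A_{n-1}$-component of $B$ onto an $A_{n-1}$-component. By definition $\tops(B)=\bigcup_{b\in\hw_{I_0}(B)}B_{A_{n-1}}(b)$, and each $b\in\hw_{I_0}(B)$ is also $A_{n-1}$-highest weight, so $\tops(B)$ is exactly the union of the $A_{n-1}$-components attached to the $I_0$-highest weight vertices. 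Because $\max(B)$ is a union of full $I_0$-components, hence $I_0$-stable, it suffices to prove $\sigma(b)\in\max(B)$ for every $b\in\hw_{I_0}(B)$: for $c\in B_{A_{n-1}}(b)$ the element $\sigma(c)$ lies in $B_{A_{n-1}}(\sigma(b))\subseteq B_{I_0}(\sigma(b))$, which is contained in $\max(B)$ once $\sigma(b)$ is. Writing $\nu=\wt(b)\in\Pa_n$, by \eqref{E:sigmawt} we have $\wt(\sigma(b))=\bla$ with $\la=\nu$, so $\sigma(b)$ is an $A_{n-1}$-highest weight vertex of dual polynomial weight; the claim is thus that the component $B_{I_0}(\sigma(b))$ has maximal size.

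I would then record two preliminary facts. Iterated Littlewood--Richardson together with Remark \ref{R:classicalmult} gives $M(B)=\abs{B}=\sum_i r_i s_i$, the maximal component being the one through $u(B)$. Moreover, in the no-spin range every weight of an irreducible $I_0$-crystal $B(\mu)$ of type $B_n,C_n,D_n$ has coordinate sum $\ge-\abs{\mu}$ (each positive root has nonnegative coordinate sum and, since $\ell(\mu)\le n-2$, the lowest weight is $-\mu$ up to reordering). As $\abs{\wt(\sigma(b))}=\abs{\bla}=-\abs{\nu}$, the component $B_{I_0}(\sigma(b))=B(\mu)$ satisfies $\abs{\mu}\ge\abs{\nu}$. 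This is the easy half; the real content is the reverse conclusion $\abs{\mu}=M(B)$, which the weight bound alone cannot give, since the weight $0$ (for instance) occurs in every component.

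To obtain the sharp conclusion I would pass to boxes using $\SSbox:B^R\to(B^{1,1})^{\otimes N}$ with $N=\abs{B}$. By construction $\SSbox$ is an $I_0$-crystal embedding \eqref{E:boxsplitI0map}, so it preserves the highest weight of the $I_0$-component of any vertex; hence, since $M(B^R)=N=M((B^{1,1})^{\otimes N})$, a vertex of $B^R$ lies in $\max(B^R)$ if and only if its $\SSbox$-image lies in $\max((B^{1,1})^{\otimes N})$. Also $b\in\hw_{I_0}(B^R)$ forces $\SSbox(b)\in\hw_{I_0}((B^{1,1})^{\otimes N})$, and by Proposition \ref{P:sigmaSboxcommute} we have $\SSbox(\sigma(b))=\sigma(\SSbox(b))$ on $\tops$. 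These combine to reduce the Proposition to the single case $B=(B^{1,1})^{\otimes N}$.

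For that case I would exploit that $\sigma$ acts factorwise, each factor $\sigma:B^{1,1}\to B^{1,1}$ being the bar-reversal (forced by the uniqueness in Theorem \ref{T:sigma}). The assertion becomes: the factorwise complement of an $I_0$-highest weight word in $(B^{1,1})^{\otimes N}$ lies in a full $I_0$-component, i.e. one with $\abs{\mu}=N$. I expect this to be the main obstacle. The natural attack is induction on $N$, peeling off the last box while tracking the combinatorial $R$-matrix, combined with the observation that $\sigma(b)$ is $I_n$-highest weight: by \eqref{E:sigmaprop} it is killed by $e_0,\dots,e_{n-1}$, so $\veps_i(\sigma(b))=0$ for all $i\ne n$. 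From the $\pm$-diagram (signature-rule) description of $\sigma$ on the $B^{1,1}$ factors one then checks that no cancellation of boxes can occur while computing $\hw_{I_0}(\sigma(b))$, which forces $\abs{\mu}=N$. Once the box case is established, the reductions above yield $\sigma(\tops(B))\subseteq\max(B)$ in general.
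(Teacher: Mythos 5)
Your two reductions are correct and are exactly the paper's: first to $I_0$-highest weight vertices (the paper phrases this as reducing to $\hw_{A_{n-1}}(\tops(B))$, which coincides with $\hw_{I_0}(B)$), then via $\SSbox$, Proposition \ref{P:sigmaSboxcommute} and \eqref{E:boxsplitI0map} to the case $B=(B^{1,1})^{\otimes N}$; your observation that membership in $\max$ is detected by the $\SSbox$-image because $M(B^R)=|R|=M\bigl((B^{1,1})^{\otimes N}\bigr)$ is also what the paper uses implicitly. The gap is that the base case $B=(B^{1,1})^{\otimes N}$, which is the entire content of the proposition, is not proved but only described as something ``one then checks.'' Indeed, ``no cancellation of boxes can occur while computing $\hw_{I_0}(\sigma(b))$'' is not an argument --- ``cancellation'' has no meaning until an insertion algorithm is fixed, and once it is fixed this claim \emph{is} the proposition. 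Moreover, two of the tools you propose for this step do no work: tracking the combinatorial $R$-matrix while peeling off the last box is vacuous, since all factors equal $B^{1,1}$ and $R_{B,B}=\mathrm{id}$ by Proposition \ref{P:RH}(1); and the fact that $\sigma(b)$ is $I_n$-highest weight is just the relabelled restatement (via \eqref{E:sigmaprop}) of the hypothesis that $b$ is $I_0$-highest weight, so it places no constraint on the size of the $I_0$-component of $\sigma(b)$. Your weight estimate gives only $|\mu|\ge|\nu|$, which, as you concede, fails precisely in the problematic cases (e.g. $b=1\otimes\bar{1}$, $\nu=\emptyset$, where one must still show $\sigma(b)=\bar{n}\otimes n$ generates a component with $|\mu|=2$).

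The missing combinatorial input, which is how the paper closes the argument, is a statement about \emph{letters}: by induction on $m$, every $b\in\hw_{A_{n-1}}(\tops((B^{1,1})^{\otimes m}))$ has all its letters in $\{1,\dotsc,m\}\cup\{\bar{m},\dotsc,\bar{1}\}$. Since $\sigma$ acts factorwise by $i\mapsto\overline{n+1-i}$, $\bar{i}\mapsto n+1-i$ (your description of this is correct), the letters of $\sigma(b)$ then lie in the window $\{n-m+1,\dotsc,n\}\cup\{\bar{n},\dotsc,\overline{n-m+1}\}$, and for $n$ large (the standing hypothesis $B\in\CCinf$) this forces $\sigma(b)\in\max(B)$: with all letters in such a window, the configurations that shrink shapes in type $B_n$, $C_n$, $D_n$ (which compare letter counts, bounded by $m$, against the letter values themselves, all at least $n-m+1$) can never be triggered, so the insertion procedure of \cite{lec2} behaves as in type $A$; alternatively this last step can be done by induction on $m$. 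Without this ``small letters become large letters'' mechanism, or some equivalent of it, your proposal does not prove the proposition.
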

\begin{proof}
Let $b\in\tops(B)$. By \eqref{E:boxsplitI0map}
$\SSbox(b)\in\tops((B^{1,1})^{\otimes |R|})$. Assuming the Proposition
holds for tensor powers of $B^{1,1}$ and using Proposition
\ref{P:sigmaSboxcommute} we have $\SSbox(\sigma(b))\in
\max((B^{1,1})^{\otimes |R|})$. By \eqref{E:boxsplitI0map}, we deduce
that $\sigma(b)\in \max(B^R)$.

We now assume $B=(B^{1,1})^{\otimes m}$ and $b\in \tops(B)$. We may assume that
$b\in\hw_{A_{n-1}}(\tops(B))$. By induction on $m$, the letters of $b$
lie in the set $\{1,2,\dotsc,m\}\cup\{\bar{m},\dotsc,\bar{1}\}$.
Thus the letters of $\sigma(b)$ belong to
$\{n-m+1,\ldots ,n,\overline{n},\ldots ,\overline{n-m+1}\}$. When $n$ is
sufficiently large, this implies that $\sigma(b)\in \max(B)$. This
can either be proved by induction on $m$ or more directly by using the
insertion procedure described in \cite{lec2}.
\end{proof}

\begin{proof}[Proof of Theorem \ref{Th_corespondence}]
By Proposition \ref{Prop_corres} $\sigma$ sends $\tops(B)$ into $\max(B)$.
Since $\tops(B)$ is an $A_{n-1}$-crystal whose weights lie in $\Z_{\ge0}^n$
and $\sigma$ sends such weights to $\Z_{\le0}^n$ by \eqref{E:sigmawt},
$\sigma$ must send $\tops(B)$ into $\hmax(B)$.
By \eqref{E:sigmaprop} and \eqref{E:sigmawt} $\sigma$
sends $\hw_{A_{n-1}}^\la(\tops(B))$ into $\hw_{A_{n-1}}^{\bla}(\hmax(B))$.
Theorem \ref{Th_corespondence} follows due to Lemma \ref{L:samesize}
and the injectivity of $\sigma$ (which holds by \eqref{E:sigmainv}).
\end{proof}

\section{A relation between $\Db$ and $\Db\circ\sigma$}
In this section we assume $\geh=\geh^\di$ is reversible.
Define the map $B\to\Pa_n$ by $b\mapsto\la(b)$ where
\begin{align} \label{E:deflab}
  B_{I_0}(b) \cong B(\la(b)).
\end{align}
The goal of this section is to prove the following Theorem.

\begin{theorem} \label{Th_SR} For $B^R\in\CC(\geh^\di)$ and $b\in\tops(B^R)$
\begin{equation}\label{rel}
  \Db(b) = \Db(\sigma(b)) + \frac{\abs{R} - \abs{\la(b)}}{|\di|}.
\end{equation}
\end{theorem}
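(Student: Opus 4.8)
The plan is to prove \eqref{rel} first for $I_0$-highest weight vertices, then bootstrap to all of $\tops(B^R)$, and finally to reduce the general tensor product to a single local identity for the coenergy by induction on the number of KR factors.

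\emph{Reduction to $\hw_{I_0}$.} Both $\Db$ and the map $b\mapsto\la(b)$ are constant on $I_0$-components. Given $b\in\tops(B^R)$, write $b\in B_{A_{n-1}}(b')$ with $b'\in\hw_{I_0}(B^R)$; then $\Db(b)=\Db(b')$ and $\la(b)=\la(b')$. By Theorem \ref{Th_corespondence} together with \eqref{E:sigmaprop}, $\sigma$ carries the $A_{n-1}$-component $B_{A_{n-1}}(b')$ into $\hmax(B^R)\subset\max(B^R)$, which is a disjoint union of full $I_0$-components; hence $\Db\circ\sigma$ is also constant on $B_{A_{n-1}}(b')$. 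So it suffices to prove \eqref{rel} for $b\in\hw_{I_0}(B^R)$.

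\emph{Induction on the number $p$ of KR factors.} For $p=1$ and $b=b(r,s,\la)$, Proposition \ref{P:DBrs} gives $\Db(b)=(rs-\abs{\la})/\abs{\di}$, while by Lemma \ref{L:sigma} $\sigma(b)=\ba(r,s,\la)$ lies in the unique maximal $I_0$-component $B((s^r))$ of $B^{r,s}$, on which $\Db$ vanishes (Lemma \ref{L:DBrs}(3)); this is exactly \eqref{rel}. For the inductive step write $B^R=B_1\ot C$ with $b=b_1\ot c\in\hw_{I_0}$. By Lemma \ref{L:tensorhwv}, $b_1\in\hw_{I_0}(B_1)$, and writing $R_{B_1,C}(b_1\ot c)=c'\ot b_1'$, also $c'\in\hw_{I_0}(C)$. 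Expanding $\Db(b)$ and $\Db(\sigma(b))=\Db(\sigma(b_1)\ot\sigma(c))$ by \eqref{E:intrinsic}, using that $\sigma$ commutes with the combinatorial $R$-matrix \eqref{E:sigmaiso} (so the $R$-image of $\sigma(b_1)\ot\sigma(c)$ is $\sigma(c')\ot\sigma(b_1')$), and applying the $p=1$ case to $b_1$ and the induction hypothesis to $c'$, all terms cancel except a local-energy contribution. The theorem thus reduces to the identity
\begin{equation*}
\Hb_{B_1,C}(b_1\ot c)-\Hb_{B_1,C}(\sigma(b_1)\ot\sigma(c))=\frac{\abs{\la(b_1)}+\abs{\la(c')}-\abs{\la(b)}}{\abs{\di}}. \tag{$\star$}
\end{equation*}

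\emph{The local identity $(\star)$.} By conservation of weight under $R$ the right-hand side equals $(\abs{\wt(b_1)}-\abs{\wt(b_1')})/\abs{\di}$. To prove $(\star)$ I would study the function $F:=\Hb_{B_1,C}-\Hb_{B_1,C}\circ\sigma$: since $\Hb$ is $I_0$-invariant and $\sigma$ is $I$-equivariant \eqref{E:sigmaprop}, $F$ is $A_{n-1}$-invariant, while its increments along $0$-arrows and $n$-arrows are governed by the four cases LL/LR/RL/RR of Proposition \ref{P:RH}, which $\sigma$ interchanges ($\sigma(0)=n$). Evaluating $F$ at the $I_0$-highest vertex $b_1\ot c$ then amounts to tracking these increments along a path to a reference vertex; concretely, the splitting maps of Section \ref{SS:rowsplit} and box splitting (which commute with $\sigma$ on $\tops$ by Proposition \ref{P:sigmaSboxcommute} and are compatible with the coenergy grading) turn $\Hb_{B_1,C}$ into a sum of elementary energies $\Hb_{B^{1,1},B^{1,1}}$, reducing $(\star)$ to a finite check using the explicit crystal structure of $B^{1,1}$ and the explicit action of $\sigma$ from Sections \ref{SS:KR} and \ref{SS:sigmaKR}, carried out separately for $\di\in\{(1),(2),(1,1)\}$.

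\emph{Main obstacle.} The earlier reductions are formal, but $(\star)$ is the computational heart: one must control the local coenergy under the $e_0\leftrightarrow e_n$ exchange induced by $\sigma$, together with the case analysis of Proposition \ref{P:RH}, and verify that the weight defect $\abs{\la(b_1)}+\abs{\la(c')}-\abs{\la(b)}$ is reproduced in each of the three kinds. This step genuinely requires the explicit combinatorics of the KR crystals and cannot be bypassed.
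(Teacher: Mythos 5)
Your reductions reproduce the paper's proof exactly: the restriction to $I_0$-highest weight vertices (the paper phrases this as reduction to $\hw_{A_{n-1}}(\tops(B^R))$, which is the same set), the induction on the number of tensor factors, the base case via Proposition \ref{P:DBrs} and Theorem \ref{Th_corespondence}, and the expansion by \eqref{E:intrinsic} combined with the $\sigma$-equivariance of the combinatorial $R$-matrix. Your identity $(\star)$ is precisely the unnumbered lemma the paper proves immediately before Theorem \ref{Th_SR} (stated there for arbitrary elements $b_1\ot b_2$, with $|\la(\cdot)|$ replaced by $|\wt(\cdot)|$, which agrees with your form on highest weight vertices). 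So the skeleton is sound; everything hinges on whether you can prove $(\star)$.

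You do not prove it, and this is a genuine gap. Your ``concrete'' route is circular within this paper: the compatibility of row splitting with the energy grading is Theorem \ref{T:Dsplit}, whose proof invokes Theorem \ref{Th_SR} itself; the only splitting statement available beforehand is that box splitting preserves coenergy when all factors are already single rows (the remark in Section \ref{SS:boxsplit}). So you cannot reduce $\Hb_{B_1,C}$ to $B^{1,1}$-energies at this stage. Your other suggestion, tracking increments of $F=\Hb-\Hb\circ\sigma$ along $0$- and $n$-arrows, is the correct mechanism and is what the paper does, but you omit the two things that turn it into a proof: an anchor and the case verification. Concretely, the paper proves $(\star)$ in the weight formulation by connectedness of $B_1\ot C$: both sides vanish at $u(B_1)\ot u(C)$, since $R(u(B_1)\ot u(C))=u(C)\ot u(B_1)$ kills the weight difference, $\Hb(u(B_1)\ot u(C))=0$ by normalization, and $\sigma(u(B_1))\ot\sigma(u(C))$ lies in the same $I_0$-component so its $\Hb$-value is also $0$; then one checks each $e_i$ changes the two sides equally---trivially for $i\in\{1,\dotsc,n-1\}$, and for $i\in\{0,n\}$ by the four cases of \eqref{E:Hb0}, using that $\sigma$ exchanges $0$- and $n$-arrows so that at most one of the two $\Hb$-terms can move, while the weight term moves by $\mp|\di|$ exactly in cases LL and RR. Note that this argument is soft: it uses only Proposition \ref{P:RH}, Theorem \ref{T:sigma}, connectedness, and the normalization of $\Hb$. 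Hence your closing claim that $(\star)$ ``genuinely requires the explicit combinatorics of the KR crystals'' is mistaken; the hard KR combinatorics of this paper is spent on constructing $\sigma$ and on Theorem \ref{Th_corespondence}, both of which you are entitled to quote.
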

We use Notation \ref{N:BR}. For $b\in\CC(\geh^\di)$ set $\nu(b)=\wt(b)=(\nu_1(b),\nu_2(b),\ldots)$ 
and $|\nu(b)|=\sum_i\nu_i(b)$.
Note that some $\nu_i(b)$'s may be negative. Hence $|\nu(b)|=\sum_i\nu_i(b)$ may also become negative. 
We prepare a lemma.
\begin{lemma} Let $B_1,B_2\in\CC(\geh^\di)$. Let $b_1\ot b_2$ be an element of $B_1\ot B_2$
and suppose it is mapped to $b'_2\ot b'_1$ by the combinatorial $R$-matrix. Then we have
\begin{equation} \label{strange H}
\ol{H}(b_1\ot b_2)-\ol{H}(\sigma(b_1)\ot\sigma(b_2))=\frac{|\nu(b'_2)|-|\nu(b_2)|}{|\di|}.
\end{equation}
\end{lemma}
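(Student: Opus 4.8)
Here is the proof I would carry out.

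The plan is to prove the identity by showing that the two functions
$F(b):=\ol{H}(b_1\ot b_2)-\ol{H}(\sigma(b_1)\ot\sigma(b_2))$ and
$G(b):=\bigl(|\nu(b'_2)|-|\nu(b_2)|\bigr)/|\di|$, where $b=b_1\ot b_2$ and $R(b_1\ot b_2)=b'_2\ot b'_1$, agree for every $b\in B_1\ot B_2$. Two preliminaries set this up. First, the extension of $\sigma$ to a tensor product is factorwise, $\sigma(b_1\ot b_2)=\sigma(b_1)\ot\sigma(b_2)$: using $\vphi_i(\sigma(c))=\vphi_{\sigma(i)}(c)$ and $\veps_i(\sigma(c))=\veps_{\sigma(i)}(c)$, the factorwise map is checked directly against Kashiwara's rule \eqref{E:tensor} to satisfy $\sigma\circ e_i=e_{\sigma(i)}\circ\sigma$, so by the uniqueness in Theorem \ref{T:sigma} it is the map of that theorem. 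Hence the second term of $F$ really is $\ol{H}$ evaluated at $\sigma(b)$. Second, since the combinatorial $R$-matrix is an $I$-crystal isomorphism, \eqref{E:sigmaiso} gives $R\circ\sigma=\sigma\circ R$, so $R(\sigma(b_1)\ot\sigma(b_2))=\sigma(b'_2)\ot\sigma(b'_1)$. My strategy is then to show $F-G$ is invariant along every edge of the $I$-crystal $B_1\ot B_2$ and to pin the resulting constant at one vertex.

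I would analyze the edges $b\to e_i(b)$ using that $\sigma(i)=n-i$ exchanges the nodes $0$ and $n$ while fixing $\{1,\dots,n-1\}$ setwise, that $\ol{H}$ is constant on $I_0$-components (Proposition \ref{P:RH}), and the $\gl_n$-weight facts $|\al_i|=0$ for $1\le i\le n-1$, $|\al_n|=|\di|$, $|\nu(\sigma c)|=-|\nu(c)|$ (from \eqref{E:sigmawt}), from which $e_0$ lowers $|\nu|$ by $|\di|$ follows via $\sigma e_0=e_n\sigma$. For $1\le i\le n-1$ both $\ol{H}$ and the weight sums entering $G$ are unchanged, so $F$ and $G$ are each constant on $A_{n-1}$-components. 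Along a $0$-edge, $\ol{H}$ changes by $\epsilon_0\in\{-1,0,1\}$ according to the case LL/LR/RL/RR of \eqref{E:Hb0} for $e_0$ on $b$, while $\sigma e_0=e_n\sigma$ leaves the second term of $F$ fixed; tracking which factor of $b$ and of $R(b)$ is altered gives $G(e_0b)-G(b)=\epsilon_0=F(e_0b)-F(b)$. Along an $n$-edge, the first term of $F$ is fixed and $\sigma e_n=e_0\sigma$ converts the second term into an $e_0$-step on $\sigma(b)$; by $R\sigma=\sigma R$ and the factorwise form of $\sigma$, the case of this $e_0$-step coincides with the case of $e_n$ on $b$, and one checks $F(e_nb)-F(b)=-\epsilon$ equals $G(e_nb)-G(b)$ in all four cases. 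Thus $F-G$ is invariant along every $I$-edge, and since $B_1\ot B_2$ is $I$-connected (Proposition \ref{P:simpleconnected}), $F-G$ is constant.

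It then remains to evaluate that constant at $b=u(B_1\ot B_2)=u(B_1)\ot u(B_2)$. There $R$ fixes the distinguished vector, $R(u(B_1)\ot u(B_2))=u(B_2)\ot u(B_1)$, so $b'_2=b_2=u(B_2)$ and $G(u(B))=0$. For $F$ I would use $\ol{H}(u(B))=0$ (Remark \ref{R:uniqueisomorphisms}) and reduce to showing $\ol{H}(\sigma(u(B)))=0$. Since $\ol{H}$ is constant on $I_0$-components it suffices to show $\sigma(u(B))$ lies in the same $I_0$-component as $u(B)$, namely $B(\Lambda)$ with $\Lambda=\wt(u(B))$ the coordinatewise sum of the two rectangles. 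By \eqref{E:sigmawt} the weight $-w_0^{A_{n-1}}\Lambda$ of $\sigma(u(B))$ has $\gl_n$-weight sum $-|\Lambda|=-M(B)$, the minimum over all components; since the lowest weight of a spinless component $B(\nu)$ is $-\nu$, a weight of total $-M(B)$ forces $\nu=\Lambda$, and $c^{\Lambda}_{R_1R_2}=1$, so this weight occurs only in the single component $B(\Lambda)$. Hence $\sigma(u(B))\in B(\Lambda)$ and $\ol{H}(\sigma(u(B)))=\ol{H}(u(B))=0$, giving constant $0$ and $F=G$ everywhere. I expect this base-case identification of $\sigma(u(B))$, together with the careful LL/LR/RL/RR bookkeeping transported across $\sigma$ and $R$ in the $n$-edge computation, to be the main obstacle; the remaining edge-by-edge comparisons are routine.
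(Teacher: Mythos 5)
Your proof is correct and follows essentially the same route as the paper's: both exploit $I$-connectedness of $B_1\otimes B_2$, verify the identity at $u(B_1)\otimes u(B_2)$ (where both sides vanish), and check invariance edge-by-edge, with the $i=0$ and $i=n$ cases handled by the LL/LR/RL/RR bookkeeping of \eqref{E:Hb0} transported through $\sigma e_0=e_n\sigma$ and $R\sigma=\sigma R$. The only difference is cosmetic: where the paper simply asserts that $\sigma(u(B_1))\otimes\sigma(u(B_2))$ lies in the same $I_0$-component as $u(B_1)\otimes u(B_2)$, you supply a weight/dominance justification for that fact.
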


\begin{proof}
Since $B_1\ot B_2$ is connected, it is sufficient to show 
\begin{itemize}
\item[(i)] if $b_1=u(B_1),b_2=u(B_2)$, \eqref{strange H} holds, and
\item[(ii)] \eqref{strange H} with $b_1\ot b_2$ replaced by $e_i(b_1\ot b_2)$ holds,
	provided that \eqref{strange H} holds and $e_i(b_1\ot b_2)\ne0$.
\end{itemize}
For (i) recall $b'_1=b_1,b'_2=b_2$ if $b_1=u(B_1),b_2=u(B_2)$. Since $u(B_1)\ot u(B_2)$
can be reached from $\sigma(u(B_1))\ot\sigma(u(B_2))$ by applying $e_i$\,($i\ne0$), we have
$\ol{H}(u(B_1)\ot u(B_2))=\ol{H}(\sigma(u(B_1))\ot\sigma(u(B_2)))=0$. Hence (i) is verified.

For (ii) recall $|\nu(e_ib)|-|\nu(b)|=-|\di|\,(i=0),=|\di|\,(i=n),=0\,(\text{otherwise})$.
If $i\ne0,n$, both sides do not change when we replace $b_1\ot b_2$ with $e_i(b_1\ot b_2)$.
If $i=0$, the first term of the l.h.s decreases by one in case LL, increases by one in case RR,
and does not change in case LR or RL. (For the meaning of LL, etc, see Proposition \ref{P:RH}(2).)
The second term does not change, while the r.h.s varies in the same way as the first term of the l.h.s.
The $i=n$ case is similar. 
\end{proof}

\begin{proof}[Proof of Theorem \ref{Th_SR}]
We may reduce to the case that $b\in\hw_{A_{n-1}}(\tops(B))$
since $\tops(B)$ is an $A_{n-1}$-crystal and the entire equation
\eqref{rel} is invariant under $A_{n-1}$-arrows.

We proceed by induction on the number $p$ of tensor factors in $B^R$.
When $p=1$ we have $B=B^{r,s}$.
By \eqref{E:KRI0decomp} $b=b(r,s,\la)$ for some $\la\in \Pa(r,s)$.
By Theorem \ref{Th_corespondence}, $\sigma(b)\in \hmax(B)\subset \max(B)=B((s^r))\subset B^{r,s}$.
But $\Db$ is $0$ on $B((s^r))$ by the definition of $\Db_{B^{r,s}}$.
Therefore $\Db(\sigma(b))=0$. Then \eqref{rel} holds by Proposition \ref{P:DBrs}.

Let $B=B'\ot B^{r_p,s_p}$ and $b_1\ot b_2\in B'\ot B^{r_p,s_p}$ is mapped to $b'_2\ot b'_1
\in B^{r_p,s_p}\ot B'$ by the affine crystal isomorphism. Then $\sigma(b_1)\ot\sigma(b_2)$
should be mapped to $\sigma(b'_2)\ot\sigma(b'_1)$. Using \eqref{E:intrinsic} we have
\begin{align*}
\ol{D}(b)&=\ol{D}(b_1)+\ol{D}(b'_2)+\ol{H}(b_1\ot b_2),\\
\ol{D}(\sigma(b))&=\ol{D}(\sigma(b_1))+\ol{D}(\sigma(b'_2))+\ol{H}(\sigma(b_1)\ot\sigma(b_2)).
\end{align*}
On the other hand, by the previous lemma we have
\[
\ol{H}(b_1\ot b_2)-\ol{H}(\sigma(b_1)\ot\sigma(b_2))=\frac{|\la(b'_2)|-|\la(b_2)|}{|\di|}.
\]
Using the induction hypothesis we obtain
\begin{align*}
\ol{D}(b)-\ol{D}(\sigma(b))&=\frac{|B'|-|\la(b_1)|}{|\di|}+\frac{|B^{r_p,s_p}|-|\la(b'_2)|}{|\di|}
+\frac{|\la(b'_2)|-|\la(b_2)|}{|\di|}\\
&=\frac{|B|-|\la(b)|}{|\di|}
\end{align*}
as desired.
\end{proof}

\section{Energy function on max elements}
\subsection{Highest elements in $\max(B^{r_1,s_1}\otimes
B^{r_2,s_2})$}

\begin{proposition}
\label{prop:ht max}
Let $b_1\otimes b_2\in\hw_{I_0}(\max(B^{r_1,s_1}\otimes B^{r_2,s_2}))$ and $r=\min(r_1,r_2)$.
\begin{itemize}
\item[(1)]
Then $b_1=b(r_1,s_1,(s_1^{r_1}))$ and
there exists a partition $\la \subset (s_2^{r_2})$ such that $\ell(\la)\le
r$ and $\la_r\ge s_2-s_1$, and $b_2\in B(s_2^{r_2})$ is the
tableau whose entries are $i$ in the $i$-th row in $\la$ and
$r_1+1,r_1+2,\ldots$ from bottom to top outside of $\la$.

\item[(2)] Let $\la$ be as in (1).
Suppose $b_1\otimes b_2$ is sent to $b_2'\otimes b_1'$ by the combinatorial $R$.
Then the corresponding partition $\mu$ of $b_1'$ is obtained from $\la$ by adding
$s_1-s_2$ (resp. removing $s_2-s_1$) columns of height $r$ if $s_1 \ge s_2$
(resp. $s_1 \le s_2$).
\end{itemize}
\end{proposition}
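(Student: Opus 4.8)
The plan is to derive part (1) from the tensor-product highest weight rule together with a weight count, and then to obtain part (2) by transporting part (1) across the combinatorial $R$-matrix.

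First I would record that if $b_1\otimes b_2\in\hw_{I_0}(B^{r_1,s_1}\otimes B^{r_2,s_2})$ then $b_1\in\hw_{I_0}(B^{r_1,s_1})$ by Lemma \ref{L:tensorhwv}, so $b_1=b(r_1,s_1,\la_1)$ and $\wt(b_1)=\la_1$. Since $\wt$ is additive and the only simple root of $\geh_0$ with nonzero coordinate sum is $\al_n$ (with $|\al_n|=|\di|$), any $b_2$ lying in an $I_0$-component $B(\la_2)$ satisfies $|\wt(b_2)|\le|\la_2|$, with equality exactly when $b_2\in\tops(B(\la_2))$. Hence $|\wt(b_1\otimes b_2)|=|\la_1|+|\wt(b_2)|\le r_1s_1+r_2s_2=M(B)$, and equality forces $b_1=u(B^{r_1,s_1})=b(r_1,s_1,(s_1^{r_1}))$, $\la_2=(s_2^{r_2})$, and $b_2\in\tops(B((s_2^{r_2})))\cong B_{A_{n-1}}((s_2^{r_2}))$; that is, $b_2$ is a type $A_{n-1}$ semistandard tableau of rectangular shape $(s_2^{r_2})$.

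Next I would unwind the remaining highest weight condition. Since $b_1=u(B^{r_1,s_1})$ has $\vphi_i(b_1)=s_1\,\chi(i=r_1)$, the tensor rule \eqref{E:tensor} shows $b_1\otimes b_2\in\hw_{I_0}$ iff $\veps_i(b_2)\le\vphi_i(b_1)$ for all $i\in I_0$, i.e. $\veps_i(b_2)=0$ for $i\in I_0\setminus\{r_1\}$ and $\veps_{r_1}(b_2)\le s_1$ (the condition $\veps_n(b_2)=0$ being automatic on $\tops$, whose elements carry no barred letters). Thus $b_2$ is highest weight for the Levi obtained by deleting node $r_1$: its entries $\le r_1$ must fill a Young subdiagram $\la$ with the $i$-th row equal to $i$'s and $\ell(\la)\le r_1$ (the standard form of a $GL_{r_1}$-highest weight straight-shape filling), while the skew complement $(s_2^{r_2})/\la$ carries a $GL_{n-r_1}$-highest weight filling in the letters $r_1+1,r_1+2,\dots$. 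The key point is that this complement filling is \emph{unique}: the $180^\circ$ rotation identity gives $s_{(s_2^{r_2})/\la}=s_{\tla}$, where $\tla$ is the rotated complementary partition, so the only highest weight filling has content $\tla$ and is exactly the one described in the statement. Since $\la\subseteq(s_2^{r_2})$ also forces $\ell(\la)\le r_2$, we obtain $\ell(\la)\le r$. Finally a signature computation for $e_{r_1}$ gives $\veps_{r_1}(b_2)=s_2-\la_r$, so $\veps_{r_1}(b_2)\le s_1$ is equivalent to $\la_r\ge s_2-s_1$. This proves part (1); conversely each such $\la$ yields an element of $\hw_{I_0}(\max(B))$, giving the stated parametrization.

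For part (2), the combinatorial $R$-matrix is an $I$-crystal isomorphism, hence preserves weights and sends $\hw_{I_0}(\max(B^{r_1,s_1}\otimes B^{r_2,s_2}))$ bijectively onto $\hw_{I_0}(\max(B^{r_2,s_2}\otimes B^{r_1,s_1}))$. Applying part (1) to the swapped product immediately gives $b_2'=u(B^{r_2,s_2})$ and identifies $b_1'$ with a partition $\mu\subseteq(s_1^{r_1})$, $\ell(\mu)\le r$, $\mu_r\ge s_1-s_2$. I would then read off $\mu$ from weight preservation $\wt(b_1\otimes b_2)=\wt(b_2'\otimes b_1')$: comparing $i$-th coordinates for $1\le i\le r$ gives $s_1+\la_i=s_2+\mu_i$, so $\mu_i=\la_i+(s_1-s_2)$, while $\ell(\la),\ell(\mu)\le r$ make the higher rows agree trivially. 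Thus $\mu$ is $\la$ with $|s_1-s_2|$ columns of height $r$ added (if $s_1\ge s_2$) or removed (if $s_1\le s_2$), as claimed.

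I expect the main obstacle to be the two combinatorial facts about $b_2$: the uniqueness of the complement filling and the identity $\veps_{r_1}(b_2)=s_2-\la_r$. The first is handled cleanly through $s_{(s_2^{r_2})/\la}=s_{\tla}$, which pins the content to $\tla$ and the multiplicity to one; the second is the genuinely fiddly step, requiring one to bracket the $r_1$'s (which sit in row $r_1$ of $\la$) against the $(r_1+1)$'s (which occupy the topmost complement cell of each active column) and verify that exactly $s_2-\la_r$ of the latter remain unmatched.
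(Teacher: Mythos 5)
Your proposal is correct and takes essentially the same route as the paper: part (1) is precisely the unwinding of the tensor rule \eqref{E:tensor} (which the paper dismisses as ``immediate''), and part (2) is the paper's argument verbatim in spirit --- the combinatorial $R$ preserves weight, and there is a unique $I_0$-highest element of $\max(B^{r_2,s_2}\otimes B^{r_1,s_1})$ of a given weight, whose partition $\mu$ you then read off by comparing coordinates. Your write-up simply supplies the details the paper leaves implicit (the weight-maximality argument forcing $b_1=u(B^{r_1,s_1})$ and $b_2\in\tops(B((s_2^{r_2})))$, the Levi-highest-weight analysis with the rectangle-complement Littlewood--Richardson uniqueness, and the signature count $\veps_{r_1}(b_2)=s_2-\la_r$), all of which check out.
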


\begin{proof}
(1) is immediate from \eqref{E:tensor}. For (2) note that
the combinatorial $R$ preserves the weight. Given a highest element $b_1\otimes b_2$ as in (1),
there is a unique highest element in $\max(B^{r_2,s_2}\otimes B^{r_1,s_1})$
of the same weight.
\end{proof}

\begin{example}
\label{ex:ht max}
\begin{equation*}
\vcenter{
\tableau[sby]{4&4&4&4\\3&3&3&3\\2&2&2&2\\1&1&1&1}
}
\otimes
\vcenter{
\tableau[sby]{5&6&6&7&9\\ \BD &5&5&6&8\\ \BC&\BC&\BC&5&7\\ \BB&\BB&\BB&\BB&6\\ \BA&\BA&\BA&\BA&5}
}
\end{equation*}
is the unique element of $\hw_{I_0}(\max(B^{4,4}\otimes B^{5,5}))$ with associated $\la
=(4,4,3,1)$. By the combinatorial $R$ it is sent to
\begin{equation*}
\vcenter{
\tableau[sby]{5&5&5&5&5\\4&4&4&4&4\\3&3&3&3&3\\2&2&2&2&2\\1&1&1&1&1}
}
\otimes
\vcenter{
\tableau[sby]{6&6&7&9\\ \BC&\BC&6&8\\ \BB&\BB&\BB&7\\ \BA&\BA&\BA&6}
}
\end{equation*}
\end{example}

Our goal in this section is to prove the following proposition.

\begin{proposition}
\label{prop:H for max} Assume $s_1\geq s_2$ and let $r=\min(r_1,r_2)$.
Let $b_1\otimes b_2\in \hw_{I_0}(\max(B^{r_1,s_1}\otimes B^{r_2,s_2}))$
whose partition is $\la$. Then
\begin{equation*}
\Hb(b_1\otimes b_2) = \frac2{\abs{\diamondsuit}}(r s_2-\abs{\la}).
\end{equation*}
\end{proposition}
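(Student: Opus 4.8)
My plan is to first strip the two tensor factors down to maximal vectors so that the coenergy becomes an intrinsic coenergy, and then to evaluate that by induction on the number of copies of $\di$ removed from $\la$. By Proposition \ref{prop:ht max}(1) the left factor is $b_1=b(r_1,s_1,(s_1^{r_1}))=u(B^{r_1,s_1})$; and if $R(b_1\otimes b_2)=b_2'\otimes b_1'$, then $b_2'\otimes b_1'$ is again an $I_0$-highest weight vertex of $\max(B^{r_2,s_2}\otimes B^{r_1,s_1})$, so Proposition \ref{prop:ht max}(1) applied to the swapped product gives $b_2'=u(B^{r_2,s_2})$. Since $\Db$ vanishes on maximal vectors (Lemma \ref{L:DBrs}(3)), the two intrinsic terms in \eqref{E:intrinsic} drop out and I obtain $\Hb(b_1\otimes b_2)=\Db_{B^{r_1,s_1}\otimes B^{r_2,s_2}}(b_1\otimes b_2)$. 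It therefore suffices to compute $\Db$ on this single $I_0$-highest weight vertex.

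Next I would induct on $(rs_2-\abs{\la})/\abs{\di}$, the number of copies of $\di$ missing from $\la$ inside the $r\times s_2$ rectangle. In the base case $\la$ fills the rectangle, whence $b_2=u(B^{r_2,s_2})$ and $b_1\otimes b_2=u(B^{r_1,s_1}\otimes B^{r_2,s_2})$ by Proposition \ref{P:simpleconnected}, so both sides vanish. For the inductive step I would replace $\la$ by the shape $\lam$ obtained by deleting one copy of $\di$ (Notation \ref{N:lambdaminus}) and, imitating Lemma \ref{L:nextla}, construct an explicit chain of Kashiwara operators built from $f_0$ and operators $f_i$ $(i\in I_0)$, computed through the $\pm$-diagram rules for $e_0,f_0$ recalled in Section \ref{SS:KR}, that carries the $I_0$-highest weight vertex for one shape to that for the other. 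As $\Hb$ is constant on $I_0$-components, only the $0$-arrows of this chain matter, and I would evaluate their contribution through the four cases LL, LR, RL, RR of \eqref{E:Hb0} (equivalently, via Lemma \ref{lem_decrease} after the reduction to $\Db$), checking along the way that every arrow used is good in the sense of Section \ref{SS:sigmaKR} so that Lemma \ref{L:goodarrows} applies.

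The decisive point, and the main obstacle, is to show that one $\di$-removal changes $\Hb$ by exactly $2$; summing over the $(rs_2-\abs{\la})/\abs{\di}$ removals then yields $\Hb(b_1\otimes b_2)=\tfrac{2}{\abs{\di}}(rs_2-\abs{\la})$. Because each $0$-arrow alters $\Hb$ by at most $1$ under \eqref{E:Hb0}, the chain attached to a single $\di$ must produce a net of $+2$ through its $0$-arrows, necessarily involving (at least) two of them in the energy-raising configuration relative to the maximal reference $b_1=u(B^{r_1,s_1})$; this is in contrast to the single $0$-arrow per $\di$ that governs the one-crystal Proposition \ref{P:DBrs}, and it is precisely the discrepancy between the densities $2/\abs{\di}$ and $1/\abs{\di}$. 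Confirming that two such good $0$-arrows always occur, and tracking which tensor factor $e_0$ acts on in each of them, is where the case analysis becomes heavy; the underlying mechanism differs by type (the rowwise doubling $\gamma=2/\abs{\di}$ built into the construction of $f_0$ for $\di\in\{(1),(2)\}$, and the two-fold form $f_0=\varsigma\circ f_1\circ\varsigma$ for $\di=(1,1)$), so the bookkeeping must be carried out separately in the three families. Once the increment $+2$ per removed $\di$ is secured, the induction closes immediately.
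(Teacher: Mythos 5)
Your opening reduction is correct as far as it goes: by Proposition \ref{prop:ht max} one has $b_1=u(B^{r_1,s_1})$ and $b_2'=u(B^{r_2,s_2})$, so \eqref{E:intrinsic} and Lemma \ref{L:DBrs}(3) give $\Hb(b_1\otimes b_2)=\Db_{B^{r_1,s_1}\otimes B^{r_2,s_2}}(b_1\otimes b_2)$, and your base case is right (the paper in fact works directly with $\Hb$, so this reduction is optional). But your induction cannot reach the statement, for a reason that precedes the ``heavy case analysis'' you defer. By Proposition \ref{prop:ht max}(1) (with $s_1\ge s_2$, so the constraint $\la_r\ge s_2-s_1$ is vacuous), the vertices of $\hw_{I_0}(\max(B^{r_1,s_1}\otimes B^{r_2,s_2}))$ are indexed by \emph{all} partitions $\la$ contained in the $r\times s_2$ rectangle --- these are Littlewood--Richardson components of a product of two rectangles, not KR branching components --- and not only by those $\la$ whose complement in $(s_2^r)$ is tileable by $\di$. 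For $\di\in\{(2),(1,1)\}$ the quantity $(rs_2-\abs{\la})/\abs{\di}$ on which you induct is then not even an integer in general: take $\la=(2,1)$ in a $2\times2$ rectangle for $\di=(1,1)$, or note that the paper's own Example \ref{ex:ht max} has $\la=(4,4,3,1)$ inside $(5^4)$, whose complement $(4,2,1,1)$ contains columns of odd height and so cannot be tiled by vertical dominoes. Moreover Notation \ref{N:lambdaminus}, which you invoke to define $\lam$, is only defined for $\la\in\Pa_n^\di(r,s)$. So single $\di$-removals do not connect the set of vertices you must treat, and the inductive scheme collapses precisely on the partitions that are new in the two-factor situation.

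Beyond that, even on the subfamily where your scheme makes sense, the step you yourself flag as the main obstacle --- constructing the chain joining the highest weight vertices for $\la$ and $\lam$ and showing that its $0$-arrows contribute a net $+2$ under \eqref{E:Hb0} --- is the entire content of the proposition, and it is not supplied. The paper resolves both difficulties at once with a different recursion: twist by the longest element $w_0^r$ of $S_r$, apply $e_0^{\max}$, and pass back to $\hw_{I_0}$. Lemma \ref{lem:e0} makes the $0$-signatures of both orderings $b_1\otimes b_2$ and $b_2'\otimes b_1'$ explicit (pure strings of minus signs of known lengths), so the LL/LR/RR count in \eqref{E:Hb0} is immediate and yields, for $\di=(1,1)$, $\Hb(b_1^\circ\otimes b_2^\circ)=\Hb(b_1\otimes b_2)+(\la_r+\la_{r-1})-2s_2$, with analogous formulas for $\di=(1),(2)$; and Lemma \ref{lem:e0 ht} shows that the associated partition grows by adding $\min(2,r-\la_j')$ (resp.\ $\min(1,r-\la_j')$) boxes to \emph{every} column $j$, an operation defined for arbitrary $\la\subset(s_2^r)$ and terminating at the full rectangle, where $\Hb=0$. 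Any repair of your argument needs a move of this kind, acting on all partitions in the rectangle simultaneously rather than on $\di$-tileable complements one tile at a time.
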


Let $e_i^{\max}(b) = e_i^{\veps_i(b)}(b)$.

\begin{lemma}
\label{lem:e0} Let $B^{r,s}$ be a KR crystal of type $D_n^{(1)}$.
Let $\alpha,\beta,\gamma\in\Z_{\ge0}$ sum to $s$
and let $b$ be the element of $\mathrm{max}(B^{r,s})$ with
$\alpha$ columns whose entries are $1,2,\ldots ,r$ from bottom to
top, $\beta$ columns with $2,3,\ldots,r+1$ and $\gamma$ columns with
$3,4,\ldots,r+2$. Then
\begin{itemize}
\item[(1)] $\veps_0(b)= 2 \alpha +\beta$, $\vphi_0(b)=0$, and

\item[(2)] $e_0^{\max}(b)$ is the tableau with $\gamma$ columns
with $3,4,\ldots,r+2$, $\beta$ columns with $3,4,\ldots,r+1,\overline{1}$
and $\alpha$ columns with $3,4,\ldots,r,\overline{2},\overline{1}$.
\end{itemize}
\end{lemma}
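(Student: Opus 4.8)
The plan is to run the whole computation through the description of the $e_0$-action on $B^{r,s}$ of type $D_n^{(1)}$ recalled in Section~\ref{SS:KR}, namely $e_0=\varsigma\circ e_1\circ\varsigma$ (see \eqref{eq:e0}). Since $\varsigma$ is a bijection realizing the Dynkin automorphism that interchanges $0$ and $1$, it gives immediately $\veps_0(b)=\veps_1(\varsigma(b))$, $\vphi_0(b)=\vphi_1(\varsigma(b))$, and $e_0^{\max}(b)=\varsigma\bigl(e_1^{\max}(\varsigma(b))\bigr)$. Thus the statement reduces to an explicit computation of $\varsigma(b)$, after which $e_1^{\max}$ is an ordinary type-$A_1$ string computation on Kashiwara--Nakashima tableaux. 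This is exactly the method used in the proof of Lemma~\ref{L:nextla}; the one new feature is that here $b$ is not $I_0$-highest.

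To compute $\varsigma(b)$ I would use Definition~\ref{def:tau}: raise $b$ to the $J$-highest vertex $e_{\as}(b)$ of its $J$-component with $J=\{2,\dots,n\}$, pass to the $\pm$-diagram $P=\Phi^{-1}(e_{\as}(b))$, replace it by $\mathfrak{S}(P)$, and descend again by $f_{\as'}$. Because $e_0$ commutes with $e_j,f_j$ for all $j\in J'=\{3,\dots,n\}$, it suffices to do this on the $J'$-highest representative, where the only letters moved are $1,2$; there the three column shapes $1\,2\cdots r$, $2\,3\cdots(r+1)$ and $3\,4\cdots(r+2)$ contribute a full $\{1,2\}$-pair, a lone $2$, and nothing, so the relevant $\pm$-diagram data is controlled entirely by $\alpha$, $\beta$, $\gamma$ and one may read off the $e_1$-action on the associated pair of $\pm$-diagrams via Lemma~\ref{lem:e1 action}. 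Carrying $\mathfrak{S}$ and the descent through, then applying $e_1^{\max}$ and $\varsigma$ a second time, should convert the $\alpha$ columns $1\,2\cdots r$ into $3\cdots r\,\bar2\,\bar1$ and the $\beta$ columns $2\cdots(r+1)$ into $3\cdots(r+1)\,\bar1$ while leaving the $\gamma$ columns $3\cdots(r+2)$ fixed, which is the tableau of part~(2); the total number of $e_1$-steps, hence $\veps_0(b)=\veps_1(\varsigma(b))$, comes out to $2\alpha+\beta$.

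It remains to get $\vphi_0(b)=0$, and this I would not compute directly: by \eqref{E:wte} the $0$-arrow changes the classical weight by $\al_0'=-\theta=-\veps_1-\veps_2$, the negative of the highest root of $D_n$ (cf.\ \eqref{E:thetadef}), so $\vphi_0(b)-\veps_0(b)=\langle\al_0^\vee,\wt(b)\rangle=-(\alpha+(\alpha+\beta))=-(2\alpha+\beta)$, and together with $\veps_0(b)=2\alpha+\beta$ this forces $\vphi_0(b)=0$. The same weight identity is a useful consistency check: the proposed $e_0^{\max}(b)$ destroys exactly $2\alpha+\beta$ of the letters $1,2$, as $\alpha$ pairs $\{\bar2,\bar1\}$ together with $\beta$ singletons $\bar1$. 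The main obstacle is the $\pm$-diagram bookkeeping inside the determination of $\varsigma(b)$: correctly locating the $J$-highest (equivalently $J'$-highest) representative of the non-$I_0$-highest tableau $b$, and tracking the combined effect of $\mathfrak{S}$ and the descent $f_{\as'}$ on all three families of columns at once. Everything else is routine.
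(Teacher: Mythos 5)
Your plan runs on exactly the machinery the paper's proof uses --- the description of $e_0=\varsigma\circ e_1\circ\varsigma$ recalled in Section~\ref{SS:KR}, the reduction to pairs of $\pm$-diagrams, and Lemma~\ref{lem:e1 action} --- and your weight-theoretic derivation of $\vphi_0(b)=0$ from $\veps_0(b)=2\alpha+\beta$ is sound. But there is a genuine gap, and it sits precisely where you place ``the main obstacle.'' First, your parenthetical claim that $J$-highest is ``equivalently'' $J'$-highest is false: $J=\{2,\ldots,n\}$ strictly contains $J'=\{3,\ldots,n\}$, and for $\gamma>0$ the element $b$ is \emph{not} $J$-highest, since $b$ contains $\alpha+\beta+\gamma$ letters $3$ but only $\alpha+\beta$ letters $2$, so some $3$ is necessarily unbracketed and $e_2(b)\ne0$. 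Second --- and this is the observation that makes the paper's proof a few lines long --- $b$ is already $J'$-highest: every column of $b$ has consecutive entries, so for $i\ge 3$ each letter $i+1$ (being $\ge 4$) is bracketed by the letter $i$ sitting in its own column. Consequently there is no representative to ``locate'' and no combined $\mathfrak{S}$-plus-descent bookkeeping to track: since $e_0$ commutes with $e_j,f_j$ for $j\in J'$, one works directly with the pair of $\pm$-diagrams $(P,p)$ attached to $b$ itself. The paper reads this pair off at once: $P$ has outer shape $(s^r)$ and inner shape $(s^{r-1},s-\alpha)$, $p$ has inner shape $(s^{r-2},s-\alpha,s-\alpha-\beta)$, and all signs in both diagrams are $+$ (there are $\alpha$ of them in $P$ and $\alpha+\beta$ in $p$, i.e. $2\alpha+\beta$ in total). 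The recipe of Section~\ref{SS:KR} then shows that $e_0^{\max}(b)$ corresponds to the same pair with every $+$ replaced by $-$; translating back to a tableau gives part~(2), and the absence of $-$ signs together with the count of $+$ signs gives part~(1).

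In short, your approach is the right one and would succeed if executed, but as written the proposal defers its entire mathematical content (the $\pm$-diagram computation) to an obstacle you do not resolve, and the one concrete assertion you make about that step --- the equivalence of $J$- and $J'$-highestness --- is incorrect. The fix is not more bookkeeping but less: observe that $b$ is already $J'$-highest, write down $(P,p)$, and avoid ever computing $\varsigma(b)$ as a tableau via Definition~\ref{def:tau}.
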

\begin{proof}
$b$ is a $\{3,4,\ldots ,n\}$-highest weight vertex. As is explained in section 4.2
of \cite{FOS1}, such elements are in one-to-one correspondence with pairs of
$\pm $-diagrams $(P,p)$, where the inner shape of $P$ is the outer shape of $p$.
$b$ corresponds to $(P,p)$, where $P$ has the outer shape $(s^{r})$ and
the inner shape $(s^{r-1},s-\alpha )$, and $p$ has the inner shape $
(s^{r-2},s-\alpha ,s-\alpha -\beta )$. The signs in $P$ and $p$ are all $+$.
Once we have the corresponding pair of $\pm $-diagrams, it is easy to see $\veps_0$,
$\vphi_0$, and the action of $e_0$. As a result we see $e_0^{\max}(b)$
corresponds to the pair of $\pm$-diagrams with all $+$ in $(P,p)$ being replaced with $-$.
In turn this yields the above tableau.
\end{proof}

\begin{lemma}
\label{lem:e0 ht} Let $b$ and $\la$ be as in Proposition \ref{prop:ht max}(1).
Let $r=\min(r_1,r_2)$ and let $w_0^r$ be the longest element of the symmetric
group $S_r\subset W$ generated by $s_1$ through $s_{r-1}$.
Then $\hw_{I_0}(e_{0}^{\max}(b^{w_0^r}))$ has
associated partition $\lam$ obtained by adding
$\min(2,r-\la_j')$ (resp. $\min(1,r-\la_j')$) boxes to the $j$-th column for $1\le j\le s_2$
for $\di=(1,1)$ (resp. $\di=(1),(2)$).
\end{lemma}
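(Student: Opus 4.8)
The plan is to compute the composite operation $b\mapsto\hw_{I_0}(e_0^{\max}(b^{w_0^r}))$ explicitly and to read off its effect on the defining partition. First I would determine $b^{w_0^r}$. Because every skew entry of $b$ exceeds $r_1\ge r$, the operators $e_i,f_i$ with $1\le i\le r-1$ move only the letters $1,\dotsc,r$ that fill the subshape $\la$; hence the $A_{r-1}$-component of $b$ is the $\gl_r$-Yamanouchi tableau of shape $\la$, and applying the longest element $w_0^r$ replaces it by the $\gl_r$-lowest weight (anti-Yamanouchi) filling, in which the bold part of column $j$ carries the $\la_j'$ largest letters $\{r-\la_j'+1,\dotsc,r\}$. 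The decisive point is that the two smallest letters then occur only in the tallest columns: column $j$ contains both $1$ and $2$ exactly when $\la_j'=r$, contains $2$ but not $1$ exactly when $\la_j'=r-1$, and contains neither as soon as $\la_j'\le r-2$.

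Next I would compute $e_0^{\max}(b^{w_0^r})$ using the $\varsigma$-automorphism and the $\pm$-diagram description of Section \ref{SS:KR}. For $\di=(1,1)$ this is governed by Lemma \ref{lem:e0}: writing $\alpha$ (resp. $\beta$) for the number of columns with $\la_j'=r$ (resp. $\la_j'=r-1$), one gets $\veps_0(b^{w_0^r})=2\alpha+\beta$, the columns with $\la_j'\le r-2$ being inert, and $e_0^{\max}$ turns the small letters at the bottoms of the active columns into barred letters following the column moves of that lemma. For $\di\in\{(1),(2)\}$ I would run the same analysis after the rowwise doubling of the $\pm$-diagram described in Section \ref{SS:KR}, so that the operator realizing $e_0$, namely $e_1\circ\varsigma\circ e_1$, contributes a single box per active column, and then halve.

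I would then take $\hw_{I_0}$ of the result and identify the associated partition $\lam$. The claim is that raising back to an $I_0$-highest weight redistributes the barred content so that each column grows from height $\la_j'$ to $\min(\la_j'+2,r)$ for $\di=(1,1)$ and to $\min(\la_j'+1,r)$ for $\di\in\{(1),(2)\}$, which is exactly the statement that $\min(2,r-\la_j')$ (resp. $\min(1,r-\la_j')$) boxes are added to the $j$-th column for $1\le j\le s_2$. The boundary cases $\la_j'\in\{r-1,r\}$, where fewer than the maximal number of boxes are added, I would check by hand against the explicit column transformations.

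The step I expect to be the main obstacle is precisely this last identification, because the growth is largest in the \emph{short} columns ($\la_j'\le r-2$) that $e_0$ does not touch, while the columns carrying the small letters barely grow; thus the per-column formula cannot be read off one column at a time but only after the global $\hw_{I_0}$-reorganization. Moreover Lemma \ref{lem:e0} literally applies only when every bold part begins among $\{1,2,3\}$, so I must extend the bookkeeping to elements possessing ``tall'' columns whose bold parts start at letters $\ge4$; I would do this by carrying the full pair of $\pm$-diagrams $(P,p)$ of Section \ref{SS:KR} through the whole computation, using Lemma \ref{lem:e1 action} to follow $e_1$, and verifying that the outer shape stabilizes to the asserted $\lam$, with the additional parity bookkeeping required by the doubling/halving construction in the non-simply-laced kinds.
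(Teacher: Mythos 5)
You follow the same skeleton as the paper's proof --- explicit description of $b^{w_0^r}$, computation of $e_0^{\max}$ via Lemma \ref{lem:e0} and the $\pm$-diagram machinery, then the classical $\hw_{I_0}$ computation --- and your first step (the anti-Yamanouchi filling of the $\la$-part) is correct. The genuine gap is in the second step: you analyze $b^{w_0^r}$ as if it were a single tableau whose only relevant columns are those of $\la$, and this drops the first tensor factor. Writing $b^{w_0^r}=\tilde{b}_1\otimes\tilde{b}_2$ (say $r_1\le r_2$; the other case is parallel), the factor $\tilde{b}_1=b_1$ still consists of $s_1$ columns $1,2,\dotsc,r_1$, so Lemma \ref{lem:e0} gives $\veps_0(\tilde{b}_1)=2s_1$ and $\vphi_0(\tilde{b}_1)=0$. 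Your count $2\alpha+\beta$ (with $\alpha,\beta$ the numbers of columns of $\la$ of heights $r$ and $r-1$) is only $\veps_0(\tilde{b}_2)$; by the tensor rule \eqref{E:tensor} the true value is $\veps_0(b^{w_0^r})=2s_1+2\alpha+\beta$, and $e_0^{\max}(b^{w_0^r})=e_0^{\max}(\tilde{b}_1)\otimes e_0^{\max}(\tilde{b}_2)$, a factorization that itself rests on the observation $\vphi_0(\tilde{b}_1)=0$, which your proposal never makes. In particular every column of the first factor is rewritten as $3,\dotsc,r_1,\ol{2},\ol{1}$. This is not optional bookkeeping: the element you would compute, $b_1\otimes e_0^{\max}(\tilde{b}_2)$, still admits $2s_1$ further applications of $e_0$, so it is not $e_0^{\max}(b^{w_0^r})$ at all; and it is precisely the interaction of the two transformed factors under the final $\hw_{I_0}$ (which restores the first factor to $b_1$, cf.\ Proposition \ref{prop:ht max}(1), and pushes the surplus barred letters into the second factor) that produces the growth $\min(2,r-\la_j')$ concentrated in the short columns --- the phenomenon you correctly identify as the crux.

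Your remaining worry --- columns of $\tilde{b}_2$ beginning with letters $\ge 4$, outside the scope of Lemma \ref{lem:e0} --- is real, but carrying pairs of $\pm$-diagrams through the whole computation via Lemma \ref{lem:e1 action} is far more work than needed and, as set up, is not organized per tensor factor. The paper disposes of it with a commutation trick: it applies a sequence $e_{\boldsymbol{a}}$ of raising operators all of whose indices are $\ge 3$, hence commuting with $e_0$, bringing $\tilde{b}_2$ to an element whose columns begin with $1$, $2$ or $3$; it applies Lemma \ref{lem:e0} there; and it transfers back with $f_{\mathrm{Rev}(\boldsymbol{a})}$. Together with the tensor factorization above, this reduces the lemma to an explicit (long but elementary) classical raising computation on a completely known element. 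For $\di=(1),(2)$ the paper also does not rerun any doubling/halving bookkeeping; it invokes the realization of $B^{r,s}$ from \cite{FOS1}, in which the $0$-arrows are computed exactly as in the $D_n^{(1)}$ case, so the $(1,1)$ argument applies verbatim.
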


\begin{example}
Let $b$ be the element of $\max(B^{4,4}\otimes B^{5,5})$ of Example
\ref{ex:ht max}.
\begin{equation*}
b=
\vcenter{
\tableau[sby]{4&4&4&4\\3&3&3&3\\2&2&2&2\\1&1&1&1}
}
\otimes
\vcenter{
\tableau[sby]{5&6&6&7&9\\ \BD &5&5&6&8\\ \BC&\BC&\BC&5&7\\ \BB&\BB&\BB&\BB&6\\ \BA&\BA&\BA&\BA&5}
}
\overset{w_0^r}{\longrightarrow}
\vcenter{
\tableau[sby]{4&4&4&4\\3&3&3&3\\2&2&2&2\\1&1&1&1}
}
\otimes
\vcenter{
\tableau[sby]{5&6&6&7&9\\4&5&5&6&8\\3&4&4&5&7\\2&3&3&4&6\\1&2&2&3&5}
}
\end{equation*}
\begin{equation*}
\overset{e_0^{\max}}{\longrightarrow}
\vcenter{
\tableau[sby]{\ichi&\ichi&\ichi&\ichi\\ \nii&\nii&\nii&\nii \\ 4&4&4&4\\3&3&3&3}
}
\otimes
\vcenter{
\tableau[sby]{7&7&\ichi&\ichi&\ichi\\6&6&6&8&\nii\\5&5&5&5&7\\4&4&4&4&6\\3&3&3&3&5}
}
\overset{\hw_{I_0}}{\longrightarrow}
\vcenter{
\tableau[sby]{4&4&4&4\\3&3&3&3\\2&2&2&2\\1&1&1&1}
}
\otimes
\vcenter{
\tableau[sby]{5&5&5&5&7\\ \BD&\BD&\BD&\BD&6\\ \BC&\BC&\BC&\BC&5\\ \BB&\BB&\BB&\BB&\BB\\ \BA&\BA&\BA&\BA&\BA}
}
\end{equation*}
We indicate $\la$ and $\lam$ by boldface entries.
\end{example}

\begin{proof}[Proof of Lemma \ref{lem:e0 ht}]
We first treat the case of $\di=(1,1)$.
$b^{w_0^r}$ is obtained from $b$ by modifying the $\la$-part of the second component of $b$
as follows. The column of entries $1,2,\dotsc,h$ ($h\le r$) reading from bottom to top is
replaced by $r-h+1,r-h+2,\dotsc,r$.

Next we want to apply $e_0^{\max}$. Suppose $r_1\le r_2$. (The other case is similar.)
Write $b^{w_0^r}=\tilde{b}_1\ot\tilde{b}_2$. From
Lemma \ref{lem:e0} we have $\vphi_0(\tilde{b}_1)=0$, and $e_0^{\max}(\tilde{b}_1)$ is
the tableau with $s_1$ columns of entries $3,4,\dotsc,r_1,\ol{2},\ol{1}$. To calculate
$e_0^{\max}(\tilde{b}_2)$ define a sequence $\boldsymbol{a}=\boldsymbol{a}_{r_2}
\sqcup\cdots\sqcup\boldsymbol{a}_2\sqcup\boldsymbol{a}_1$ where
\[
\boldsymbol{a}_j=((j+2)^{s_2-\la_{r_1-2}},\dotsc,(r_1+j-2)^{s_2-\la_2},
(r_1+j-1)^{s_2-\la_1})
\]
for $j=1,2,\dotsc,r_2$, and set $\tilde{b}'_2=e_{\boldsymbol{a}}(\tilde{b}_2)$.
Then $\tilde{b}'_2$ is the tableau with $\la_{r_1}$ columns of entries
$1,2,\dotsc,r_2$, $\la_{r_1-1}-\la_{r_1}$ columns of entries $2,3,\dotsc,r_2+1$ and
$s_2-\la_{r_1-1}$ columns of entries $3,4,\dotsc,r_2+2$. Again from Lemma \ref{lem:e0}
$e_0^{\max}(\tilde{b}'_2)$ is the tableau with $s_2-\la_{r_1-1}$ columns of
$3,4,\dotsc,r_2+2$, $\la_{r_1-1}-\la_{r_1}$ columns of $3,4,\dotsc,r_2+1,\ol{1}$ and
$\la_{r_1}$ columns of $3,4,\dotsc,r_2,\ol{2},\ol{1}$. Since $e_i,f_i$ for $3\le i\le n$
commutes with $e_0$, we have $\tilde{b}''_2=e_0^{\max}(\tilde{b}_2)
=f_{\text{Rev}(\boldsymbol{a})}e_0^{\max}(\tilde{b}'_2)$, where $\text{Rev}
(\boldsymbol{a})$ is the reverse sequence of $\boldsymbol{a}$. The $j$-th row of
$\tilde{b}''_2$ from bottom ($1\le j\le r_2$) is given by
\begin{align*}
&(j+2)^{\la_{r_1-2}}(j+3)^{\la_{r_1-3}-\la_{r_1-2}}\cdots
(r_1+j-1)^{\la_1-\la_2}(r_1+j)^{s_2-\la_1}&\text{ for }1\le j\le r_2-2\\
&(r_2+1)^{\la_{r_1-2}-\la_{r_1}}(r_2+2)^{\la_{r_1-3}-\la_{r_1-2}}\cdots
(r_1+r_2-1)^{s_2-\la_1}\ol{2}^{\la_{r_1}}&\text{ for }j=r_2-1\\
&(r_2+2)^{\la_{r_1-2}-\la_{r_1-1}}(r_2+3)^{\la_{r_1-3}-\la_{r_1-2}}\cdots
(r_1+r_2)^{s_2-\la_1}\ol{1}^{\la_{r_1-1}}&\text{ for }j=r_2.
\end{align*}
Thus we have $e_0^{\max}b^{w_0^r}=e_0^{\max}(\tilde{b}_1)\ot
\tilde{b}''_2$.

Finally, we want to calculate the $I_0$-highest vertex of $e_0^{\max}b^{w_0^r}$.
This calculation is long but not difficult, and it is checked that the statement
is true.

For the proof for $\di=(1),(2)$ we use the construction of a KR crystal in
\cite[\S4.3\&\S4.4]{FOS1}. Namely, $B^{r,s}$ is realized as a suitable subset of an
$A^{(2)}_{2n+1}$-KR crystal where $0$ actions are defined in the same way as $D_n^{(1)}$.
Hence we do not repeat the proof.
\end{proof}

\begin{proof}[Proof of Proposition~\ref{prop:H for max}]
Let $b'_2\otimes b'_1$ be the image of $b_1\otimes b_2$
by the combinatorial $R$. We apply $w_0^r$ to both $b_1\otimes
b_2$ and $b_2'\otimes b_1'$ as prescribed in Lemma \ref{lem:e0 ht}.
Noting that $e_0$ commutes with $e_j$ and $f_j$ for $j\ge 3$ we
find the 0-signature of these elements are
$-^{2s_1}\otimes -^{\la_r+\la_{r-1}}$ and
$-^{2s_2}\otimes -^{\la_{r}+\la _{r-1}+2s_1-2s_2}$ for $\di=(1,1)$,
$-^{s_1}\otimes -^{\la_r}$ and
$-^{s_2}\otimes -^{\la_r+s_1-s_2}$ for $\di=(2)$,
$-^{2s_1}\otimes -^{2\la_r}$ and
$-^{2s_2}\otimes -^{2\la_r+2s_1-2s_2}$ for $\di=(1)$
by Lemma \ref{lem:e0}. Setting
$b_1^\circ\otimes b_2^\circ=\hw_{I_0}(e_0^{\max}((b_1\otimes b_2)^{w_0^r}))$ and recalling \eqref{E:Hb0} we have
\begin{equation*}
\Hb(b_1^\circ\otimes b_2^\circ)=\Hb(b_1\otimes b_2)+
\begin{cases}
(\la_{r}+\la_{r-1})-2s_2&\text{for }\di=(1,1)\\
\la_{r}-s_2&\text{for }\di=(2)\\
2\la_{r}-2s_2&\text{for }\di=(1).
\end{cases}
\end{equation*}
This formula implies the desired result.
\end{proof}

\subsection{The general case}
In this section let $\geh$ be an affine algebra such that $\geh_0$ is of type
$B_n$, $C_n$, or $D_n$.
Using Remark \ref{R:topsBnu} with $\nu=(s^r)\in\Pa_n^\infty$ there is a unique
embedding of $A_{n-1}$-crystals
\begin{align}\label{E:embedKR}
B_A^{r,s} \cong B_{A_{n-1}}(s^r) \overset{i_A}{\longrightarrow}
B_{I_0}(s^r) \subset B^{r,s},
\end{align}
which yields an $A_{n-1}$-crystal isomorphism
\begin{align} \label{E:topsiso}
 B_A^{r,s} \cong
\tops(\max(B^{r,s})).
\end{align}

We use Notation \ref{N:BR}. Define
\begin{equation} \label{E:BA}
  B_A = B_A^R = B_A^{r_1,s_1} \otimes\dotsm \otimes B_A^{r_p,s_p}
\end{equation}
where $B_A^{r,s}$ is the type $A_{n-1}^{(1)}$ KR crystal. There is
an embedding
\begin{equation} \label{E:embed}
  i_A^R: B_A^R \to B^R
\end{equation}
given by the tensor product of embeddings \eqref{E:embedKR},
inducing the isomorphism of $A_{n-1}$-crystals
\begin{align} \label{E:embedBR}
  B_A^R \cong \tops(\max(B^R)).
\end{align}

\begin{theorem} \label{T:maxD=A}
For $\di\in\{(1),(2),(1,1)\}$, $B^R\in\CCinf(\geh)$ and $\nu\in\Pa_n^\infty$
such that $|\nu|=|R|$ we have
\begin{align}
  \Xb_{\nu,B^R}^\di(q) = \Xb_{\nu,B_A^R}^\emptyset(q^{\frac2{\abs{\di}}}).
\end{align}
\end{theorem}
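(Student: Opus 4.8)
The plan is to prove the identity at the level of the indexing sets and the grading separately, for the special weights $\nu$ with $\abs{\nu}=\abs{R}=M(B^R)$.

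First I would establish the bijection of indexing sets. The starting observation is the general fact that for any $B\in\CCinf(\geh^\di)$ one has $\hw_{A_{n-1}}(\tops(B)) = \hw_{I_0}(B)$: every $I_0$-highest vertex $b$ lies in $\tops(B)$ as the unique $A_{n-1}$-highest vertex of its own component $B_{A_{n-1}}(b)$, and conversely an $A_{n-1}$-highest vertex of $\tops(B)$ must be the generating vertex of the (irreducible) $A_{n-1}$-component in which it sits, hence is $I_0$-highest. Specializing to $B=\max(B^R)$ and using $M(B^R)=\abs{R}$, for $\abs{\nu}=\abs{R}$ this gives $\hw_{I_0}^\nu(B^R)=\hw_{I_0}^\nu(\max(B^R))=\hw_{A_{n-1}}^\nu(\tops(\max(B^R)))$. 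Composing with the $A_{n-1}$-crystal isomorphism $i_A^R\colon B_A^R\xrightarrow{\ \sim\ }\tops(\max(B^R))$ of \eqref{E:embedBR}, I obtain a weight-preserving bijection $\hw_{A_{n-1}}^\nu(B_A^R)\xrightarrow{\ \sim\ }\hw_{I_0}^\nu(B^R)$, $c\mapsto b:=i_A^R(c)$.

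It then remains to show $\Db_{B^R}(b)=\frac{2}{\abs{\di}}\Db_{B_A^R}(c)$ for corresponding $b,c$, which I would do via the expansion \eqref{E:DbCC}. The key point is that every tensor factor of a max element, as well as of any rearrangement of it by combinatorial $R$-matrices (which are $I_0$-isomorphisms and therefore preserve $\max$), lies in its full-rectangle component $B((s_i^{r_i}))$; hence by Proposition \ref{P:DBrs} (and its type $A$ analogue, where a KR crystal is a single classical component) each single-crystal term $\Db_{B_i}(b_i^{(1)})$ vanishes. Thus $\Db_{B^R}(b)=\sum_{i<j}\Hb(b_i\otimes b_j^{(i+1)})$ and likewise $\Db_{B_A^R}(c)=\sum_{i<j}\Hb^A(c_i\otimes c_j^{(i+1)})$, every summand being evaluated on a max element of a two-factor product. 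For a single pair I would argue that $i_A$ intertwines the $\di$-type and the type $A$ combinatorial $R$-matrices on $\tops(\max)$: both sides of the naturality square are $A_{n-1}$-morphisms out of the multiplicity-free crystal $B((s_i^{r_i}))\otimes B((s_j^{r_j}))$, so equality is checked on $A_{n-1}$-highest vertices, where both $R$-matrices carry the weight-$\nu$ component to the weight-$\nu$ component. Consequently the intermediate factors correspond, $b_j^{(i+1)}=i_A(c_j^{(i+1)})$, so $b_i\otimes b_j^{(i+1)}$ and $c_i\otimes c_j^{(i+1)}$ lie in components labeled by the same partition $\la^{ij}$. Proposition \ref{prop:H for max} then gives $\Hb(b_i\otimes b_j^{(i+1)})=\frac{2}{\abs{\di}}(\,\min(r_i,r_j)\min(s_i,s_j)-\abs{\la^{ij}}\,)$, while the classical type $A$ coenergy of the same max element equals $\min(r_i,r_j)\min(s_i,s_j)-\abs{\la^{ij}}$; the ratio is the constant $\frac{2}{\abs{\di}}$. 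Summing over $i<j$ yields $\Db_{B^R}(b)=\frac{2}{\abs{\di}}\Db_{B_A^R}(c)$, and substituting into \eqref{E:Xdef} gives $\Xb_{\nu,B^R}^\di(q)=\sum_c q^{\frac{2}{\abs{\di}}\Db_{B_A^R}(c)}=\Xb_{\nu,B_A^R}^\emptyset(q^{2/\abs{\di}})$.

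I expect the main obstacle to be the bookkeeping of the third paragraph: verifying that the combinatorial $R$-matrix restricted to $\tops(\max)$ matches the type $A$ $R$-matrix (Proposition \ref{prop:ht max}(2) supplies the column add/remove rule on highest vertices, which must be reconciled with the type $A$ rule and then propagated by $A_{n-1}$-equivariance), so that the intermediate factors $b_j^{(i+1)}$ really track $c_j^{(i+1)}$ and the partitions $\la^{ij}$ agree across the two theories. The complementary piece is isolating the residual type $A$ value $\min(r_i,r_j)\min(s_i,s_j)-\abs{\la^{ij}}$ of $\Hb^A$ on max elements once the factor $\frac{2}{\abs{\di}}$ of Proposition \ref{prop:H for max} has been stripped off. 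Everything else is straightforward assembly from the already-established structural results.
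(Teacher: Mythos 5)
Your proposal is correct and is essentially the paper's own argument: the paper deduces the theorem from the identification $B_A^R \cong \tops(\max(B^R))$ of \eqref{E:embedBR} together with Proposition \ref{P:DmaxB}, and that proposition is proved exactly as in your third paragraph — expand $\Db$ via \eqref{E:intrinsic}/\eqref{E:DbCC}, intertwine the combinatorial $R$-matrices with $i_A$ using $A_{n-1}$-multiplicity-freeness (Lemma \ref{L:embedR}), kill the single-factor terms because $i_A$ lands in the full-rectangle components, and compare the pairwise terms via Proposition \ref{prop:H for max} and its type $A$ analogue (Lemma \ref{L:embedH}). The one divergence is that you carry the factor $\tfrac{2}{\abs{\di}}$ through the comparison ($\Hb\circ i_A = \tfrac{2}{\abs{\di}}\,\Hb_A$, hence $\Db\circ i_A = \tfrac{2}{\abs{\di}}\,\Db_A$), whereas the paper's Lemma \ref{L:embedH} and Proposition \ref{P:DmaxB} assert equality with no factor; your bookkeeping is the one actually consistent with Proposition \ref{prop:H for max} and with the substitution $q\mapsto q^{2/\abs{\di}}$ in the statement — the discrepancy is invisible when $\abs{\di}=2$ but is genuine for $\di=(1)$, so on this point your write-up is more careful than the paper's.
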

\begin{proof} Immediate from \eqref{E:embedBR} and Proposition \ref{P:DmaxB} below.
\end{proof}

\begin{lemma} \label{L:embedR}
Let $R$ and $R'$ be sequences of rectangles that are reorderings of
each other with $B^R,B^{R'}\in\CCinf(\geh)$
and let $g:B^R\to B^{R'}$ be the unique isomorphism of
$I$-crystals. Denote by $g_A:B_A^R\to B_A^{R'}$ the corresponding
isomorphism of crystals of type $A_{n-1}^{(1)}$. Then on $B_A^R$ we
have
\begin{align}
  g \circ i_A^R = i_A^{R'} \circ g_A
\end{align}
\end{lemma}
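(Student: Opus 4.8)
The plan is to reduce the statement to a single adjacent transposition of two tensor factors, and there to exploit the fact that a tensor product of two \emph{rectangular} $A_{n-1}$-crystals is multiplicity free, so that there is only one $A_{n-1}$-crystal isomorphism with which to compare. First I would record that the identity is compositional: if $R\to R''$ and $R''\to R'$ are reorderings for which the claim holds, with the unique $I$-isomorphisms $g_1,g_2$ and their type $A_{n-1}^{(1)}$ analogues $g_{A,1},g_{A,2}$, then $g_2g_1\circ i_A^R = g_2\circ i_A^{R''}\circ g_{A,1} = i_A^{R'}\circ g_{A,2}g_{A,1}$, so the claim holds for $R\to R'$. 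By the uniqueness and Yang--Baxter consistency of combinatorial $R$-matrices (Proposition~\ref{P:RH} and Remark~\ref{R:uniqueisomorphisms}), both $g$ and $g_A$ factor as the same product of elementary swaps realizing the permutation carrying $R$ to $R'$. Hence it suffices to treat a single adjacent transposition. Since $i_A^R=\bigotimes_k i_A^{r_k,s_k}$ and the swap is $\mathrm{id}\otimes R\otimes\mathrm{id}$, everything is applied factorwise on the untouched positions, and the problem reduces to the two-factor identity
\begin{equation*}
  R_{B^{r_1,s_1},B^{r_2,s_2}}\circ(i_A^{r_1,s_1}\otimes i_A^{r_2,s_2}) = (i_A^{r_2,s_2}\otimes i_A^{r_1,s_1})\circ R_{B_A^{r_1,s_1},B_A^{r_2,s_2}}
\end{equation*}
on $B_A^{r_1,s_1}\otimes B_A^{r_2,s_2}$, where now $R=(R_1,R_2)$ and $R'=(R_2,R_1)$.

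Next I would check that $g:=R_{B^{r_1,s_1},B^{r_2,s_2}}$ carries the image of $i_A^R$ onto the image of $i_A^{R'}$. By \eqref{E:embedBR} these images are $\tops(\max(B^R))$ and $\tops(\max(B^{R'}))$. Being an $I$-crystal isomorphism, $g$ is weight preserving and $I_0$-equivariant, hence sends each irreducible $I_0$-component $B_{I_0}(\nu)$ to one of the same highest weight; in particular it preserves $\abs{\nu}$ and restricts to a bijection $\max(B^R)\to\max(B^{R'})$. As an $I_0$-morphism it sends $\hw_{I_0}$ to $\hw_{I_0}$ and $A_{n-1}$-components to $A_{n-1}$-components, so $g(\tops(\max(B^R)))=\tops(\max(B^{R'}))$. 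Consequently $\phi:=(i_A^{R'})^{-1}\circ g\circ i_A^R$ is a well-defined bijection $B_A^R\to B_A^{R'}$, and it is an $A_{n-1}$-crystal isomorphism because $g$ is $A_{n-1}$-equivariant and each $i_A^{r,s}$ is an $A_{n-1}$-crystal embedding.

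Finally, by \eqref{E:topsiso} and Remark~\ref{R:topsBnu} the source and target of both $\phi$ and $g_A$ are, as $A_{n-1}$-crystals, the tensor products $B_{A_{n-1}}((s_1^{r_1}))\otimes B_{A_{n-1}}((s_2^{r_2}))$ and its reverse. The crucial point is that a tensor product of two rectangular $GL_n$-modules is multiplicity free, i.e.\ $c^\nu_{(s_1^{r_1}),(s_2^{r_2})}\in\{0,1\}$ for all $\nu\in\Pa_n$; as $n$ is large the shapes fit and this holds here. A multiplicity-free crystal admits a \emph{unique} isomorphism onto any crystal isomorphic to it, since each irreducible summand must be matched with the single isomorphic summand on the other side and an irreducible admits no nontrivial automorphism. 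Therefore $\phi=g_A$, which is exactly the two-factor identity, and the lemma follows.

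I expect the main obstacle to be the second step: confirming that the combinatorial $R$-matrix, a priori only an abstract $I$-crystal isomorphism, genuinely respects both the $\max$ and the $\tops$ constructions, so that $\phi$ is defined and bijective onto $B_A^{R'}$. Once this compatibility is in place, multiplicity-freeness of the two-rectangle product is what removes any need to track individual highest-weight vectors through $g$, which is precisely the difficulty that an $A_{n-1}$-morphism argument alone (with its many isomorphic components) would otherwise run into.
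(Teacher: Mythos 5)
Your proof is correct and follows essentially the same route as the paper's: reduce to a single swap $R=(R_1,R_2)\mapsto R'=(R_2,R_1)$, then use the $A_{n-1}$-multiplicity-freeness of $B_A^{R_1}\otimes B_A^{R_2}$ to force the two $A_{n-1}$-crystal isomorphisms to coincide. Your intermediate verification that $g$ restricts to a bijection $\tops(\max(B^R))\to\tops(\max(B^{R'}))$ is exactly the detail the paper's one-line argument leaves implicit, so it is a welcome elaboration rather than a departure.
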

\begin{proof} One may reduce to the case that $R=(R_1,R_2)$ and
$R'=(R_2,R_1)$ and further to considering only $A_{n-1}$-highest
weight vertices. But then the two sides must agree since $B_A^{R_1}
\otimes B_A^{R_2}$ is $A_{n-1}$-multiplicity-free.
\end{proof}

\begin{lemma} \label{L:embedH} For $B^{R_1}\otimes B^{R_2}\in\CCinf(\geh)$,
we have $\Hb_{B_A^{R_1} \otimes B_A^{R_2}} = \Hb_{B^{R_1} \otimes
B^{R_2}} \circ i_A^{R_1,R_2}$.
\end{lemma}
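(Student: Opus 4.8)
The plan is to prove equality of the two functions on $B_A^{R_1}\otimes B_A^{R_2}$ by showing both are $A_{n-1}$-invariant and then matching them on $A_{n-1}$-highest weight vertices. The left side $\Hb_{B_A^{R_1}\otimes B_A^{R_2}}$ is constant on $A_{n-1}$-components by Proposition \ref{P:RH}(2), since $A_{n-1}$ is the classical node set of $A_{n-1}^{(1)}$. For the right side, $i_A^{R_1,R_2}$ is an $A_{n-1}$-crystal embedding with image $\tops(\max(B^{R_1}\otimes B^{R_2}))$ by \eqref{E:embedBR}, and $\Hb_{B^{R_1}\otimes B^{R_2}}$ is constant on $I_0$-components, hence on the finer $A_{n-1}$-components; so the pullback is $A_{n-1}$-invariant as well. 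Thus it suffices to verify the identity on $b\in\hw_{A_{n-1}}(B_A^{R_1}\otimes B_A^{R_2})$, and by Remark \ref{R:topsBnu} these vertices correspond under $i_A$ to exactly the $I_0$-highest weight vertices of $\max(B^{R_1}\otimes B^{R_2})$, one in each maximal $I_0$-component.

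Next I would reduce the number of rectangles. Writing $R_1=(A_1,\dots,A_k)$ and $R_2=(A_{k+1},\dots,A_p)$ and treating $B^{R_1},B^{R_2}$ as the two tensor factors, the pairwise coenergy can be expressed as a sum of single-rectangle cross terms: subtracting $\Db_{B^{R_1}}$ and $\Db_{B^{R_2}}$ from the expansion \eqref{E:DbCC} of $\Db_{B^{R_1}\otimes B^{R_2}}$ and using \eqref{E:intrinsic} yields
\[
  \Hb_{B^{R_1},B^{R_2}}(b_1\otimes b_2)=\sum_{i\le k<j}\Hb_{A_i,A_j}\bigl(b_i\otimes b_j^{(i+1)}\bigr),
\]
where the $b_j^{(i+1)}$ are produced by the combinatorial $R$-matrices swapping the $j$-th factor into position. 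The same expansion holds verbatim on the type $A$ side. By Lemma \ref{L:embedR} the map $i_A$ intertwines these $R$-matrices with their type $A$ counterparts, so the single-rectangle arguments of the cross terms correspond under $i_A$. Hence the whole identity follows once it is established termwise for a single pair $B^{r_1,s_1}\otimes B^{r_2,s_2}$.

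The single-pair case is the heart of the matter. By Proposition \ref{prop:ht max} the $I_0$-highest weight vertices of $\max(B^{r_1,s_1}\otimes B^{r_2,s_2})$ are indexed by a partition $\la$, Proposition \ref{prop:H for max} evaluates $\Hb_{B^{r_1,s_1},B^{r_2,s_2}}$ there, and these vertices are precisely the images under $i_A$ of the $A_{n-1}$-highest weight vertices of $B_A^{r_1,s_1}\otimes B_A^{r_2,s_2}$. I would then compute $\Hb_{B_A^{r_1,s_1},B_A^{r_2,s_2}}$ on the corresponding vertex of weight $\nu$ by the parallel $0$-signature analysis of Lemma \ref{lem:e0} and Lemma \ref{lem:e0 ht}, now carried out for the cyclic type $A_{n-1}^{(1)}$ zero node, with both values fixed by the normalization $\Hb(u)=0$ and the increment rule \eqref{E:Hb0}. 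Each side is then a telescoping sum of $\pm1$'s accumulated along a path from $u(B_1\otimes B_2)$ to the vertex indexed by $\la$, so the claim becomes the statement that these two telescoping totals agree after applying $i_A$.

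The main obstacle is exactly this last coincidence, and it is where careful bookkeeping is essential: the classical increments in Proposition \ref{prop:H for max} are dictated by the attachment of the affine $0$-node, while the type $A$ increments are dictated by the $A_{n-1}^{(1)}$ zero node, and these two affine actions act differently on the common $A_{n-1}$-skeleton $i_A(B_A^R)=\tops(\max(B^R))$. Rather than comparing closed-form energy formulas (the route that misrepresents the coenergy normalization), I would track the $0$-signatures on each side step-by-step along the explicit paths $\as(h)$ and $\ta(h)$ of \eqref{E:as} and \eqref{E:ta} that move between adjacent values of $\la$, and verify that the contribution of each elementary step to $\Hb$ is the same on both sides. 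This reduces the entire Lemma to a local check on a single box (or vertical domino) addition, which is where the delicate accounting of the coenergy must be done with care.
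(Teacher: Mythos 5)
Your first two reductions are sound and match what the paper does, if only implicitly: both coenergies are constant on classical components, $\hw_{A_{n-1}}(\tops(\max(B)))$ consists exactly of the $I_0$-highest weight vertices of $\max(B)$ via \eqref{E:embedBR}, and the passage from composite $R_1,R_2$ to a single pair of rectangles via \eqref{E:DbCC}, \eqref{E:intrinsic} and Lemma \ref{L:embedR} is legitimate (the paper performs that reduction inside the proof of Proposition \ref{P:DmaxB} rather than here). The divergence is in the single-pair case, and it is where your proposal has a genuine gap. The paper's proof \emph{is} the closed-form comparison you explicitly decline to make: Proposition \ref{prop:H for max} evaluates $\Hb$ on the $I_0$-highest weight vertices of $\max(B^{r_1,s_1}\otimes B^{r_2,s_2})$ as $\frac{2}{\abs{\di}}(rs_2-\abs{\la})$, the cited type $A_{n-1}^{(1)}$ result of \cite{SW}, \cite{Sh:crystal} evaluates $\Hb_A$ on the corresponding vertices as $rs_2-\abs{\la}$ (both pinned down by \eqref{E:Hb0} and the normalization $\Hb(u)=0$, so there is no normalization ambiguity to ``misrepresent''), and the lemma is the matching of these two values.

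Your substitute --- matching increments step by step and deferring to ``a local check on a single box (or vertical domino) addition'' --- would not close as claimed. First, the sequences $\as(h)$ and $\ta(h)$ of \eqref{E:as} and \eqref{E:ta} connect $I_0$-components of a \emph{single} KR crystal $B^{r,s}$, whose highest weights differ by a $\di$-tile inside $\Pa^\di_n(r,s)$; they are the wrong tool here. The components of $\max(B^{r_1,s_1}\otimes B^{r_2,s_2})$ are indexed by \emph{arbitrary} $\la\subset (s_2^{\,r})$ (Proposition \ref{prop:ht max}), adjacent ones differ by a single box even when $\di=(1,1)$, and the elementary move between them is the $\hw_{I_0}\circ e_0^{\max}\circ w_0^r$ step of Lemma \ref{lem:e0 ht}, not $f_{\ta(h)}$. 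Second, and decisively, the asserted per-step coincidence fails for $\di=(1)$: the increments computed in the proof of Proposition \ref{prop:H for max} show that removing one box from $\la$ changes $\Hb$ by $2/\abs{\di}$ on the classical side --- i.e.\ by $2$ when $\geh_0=B_n$ --- whereas the corresponding type $A$ increment is $1$. This factor is precisely the substitution $q\mapsto q^{2/\abs{\di}}$ recorded in Theorem \ref{T:maxD=A}, so the identity must be read with that rescaling for kind $(1)$; your step-by-step accounting, if actually carried out, would uncover this mismatch rather than verify equality, and even in the cases $\abs{\di}=2$ where the increments do agree, the local computation carrying the entire content of the lemma is announced but never performed.
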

\begin{proof} This follows from Proposition \ref{prop:H for max}
and the analogous type $A_{n-1}^{(1)}$ result
\cite{Sh:crystal} \cite{SW}.
\end{proof}

\begin{proposition}
\label{P:DmaxB} $\Db_A = \Db\circ i_A$ on $B_A$.
\end{proposition}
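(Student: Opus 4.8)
The plan is to evaluate both $\Db_A$ and $\Db\circ i_A$ by means of the explicit expansion \eqref{E:DbCC} of the intrinsic coenergy and to compare the two expressions term by term. Write $c=c_1\otimes\dotsm\otimes c_p\in B_A^R$ with $c_i\in B_A^{r_i,s_i}$ and put $b=i_A(c)$. Since $i_A=i_A^R$ is the tensor product of the embeddings $i_A^{r_i,s_i}$ by \eqref{E:embed}, we have $b=i_A^{r_1,s_1}(c_1)\otimes\dotsm\otimes i_A^{r_p,s_p}(c_p)$, and by \eqref{E:embedBR} each factor lies in $\max(B^{r_i,s_i})=B((s_i^{r_i}))$. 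Applying \eqref{E:DbCC} to $B^R$, and the analogous type $A_{n-1}^{(1)}$ expansion to $B_A^R$, reduces the identity to matching the single-factor coenergy terms and the pairwise local energy terms separately.

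First I would dispose of the single-factor terms. On the type $A_{n-1}^{(1)}$ side each $B_A^{r,s}$ is a single $A_{n-1}$-component, so by Lemma \ref{L:DBrs}(1),(3) the coenergy $\Db_{B_A^{r,s}}$ is identically $0$; hence every term $\Db_{B_A^{r_i,s_i}}(c_i^{(1)})$ vanishes. On the $\geh^\di$ side, $u(B^{r,s})=b(r,s,(s^r))$ by \eqref{E:ubrs}, so by Lemma \ref{L:DBrs} the function $\Db_{B^{r,s}}$ vanishes on the entire $I_0$-component $\max(B^{r,s})=B((s^r))$. Because reordering by combinatorial $R$-matrices commutes with $i_A$ (Lemma \ref{L:embedR}), the swapped factor $b_i^{(1)}$ is the $i_A^{r_i,s_i}$-image of an element of $B_A^{r_i,s_i}$, hence lies in $B((s_i^{r_i}))$, so $\Db_{B^{r_i,s_i}}(b_i^{(1)})=0$ as well. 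Thus both sums of single-factor terms are zero.

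It then remains to match the pairwise energy terms. The composition of $R$-matrices that brings the $j$-th factor to position $i+1$ leaves the $i$-th factor untouched; combined with Lemma \ref{L:embedR} this identifies $b_i=i_A^{r_i,s_i}(c_i)$ and $b_j^{(i+1)}=i_A^{r_j,s_j}(c_j^{(i+1)})$, so that $b_i\otimes b_j^{(i+1)}=i_A^{R_i,R_j}(c_i\otimes c_j^{(i+1)})$. Feeding this into Lemma \ref{L:embedH} yields
\begin{equation*}
\Hb_{B^{r_i,s_i},B^{r_j,s_j}}(b_i\otimes b_j^{(i+1)})=\Hb_{B_A^{r_i,s_i},B_A^{r_j,s_j}}(c_i\otimes c_j^{(i+1)}),
\end{equation*}
so the remaining double sums agree term by term and $\Db(i_A(c))=\Db_A(c)$ for all $c\in B_A$, which is the asserted equality $\Db_A=\Db\circ i_A$.

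The only real obstacle is bookkeeping: one must check that the specific compositions of combinatorial $R$-matrices implicit in the symbols $b_i^{(1)}$ and $b_j^{(i+1)}$ are honest reorderings to which Lemma \ref{L:embedR} applies, and that the factor $b_i$ feeding the $\Hb$-term is genuinely unchanged by the swap defining $b_j^{(i+1)}$. Once this identification of factors is set up, the two substantive inputs---the vanishing of the single-factor coenergy on $\max(B^{r,s})=B((s^r))$ and the equality of local energies from Lemma \ref{L:embedH}---carry the proof.
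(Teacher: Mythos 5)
Your proof is correct and is essentially the paper's own argument: the paper reduces via \eqref{E:intrinsic}, induction, and Lemmata \ref{L:embedR} and \ref{L:embedH} to a single factor, which is exactly what your term-by-term comparison using the unfolded formula \eqref{E:DbCC} accomplishes. The two substantive inputs you identify (vanishing of $\Db$ on $\max(B^{r,s})=B((s^r))$ since $u(B^{r,s})=b(r,s,(s^r))$, and the matching of $\Hb$-terms via Lemmata \ref{L:embedR} and \ref{L:embedH}) are precisely those of the paper's proof.
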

\begin{proof} By \eqref{E:intrinsic}, induction, and Lemmata
\ref{L:embedR} and \ref{L:embedH}, we may reduce to the case of a
single tensor factor $B^{r,s}$. Since $B_A^{r,s} \cong
B_{A_{n-1}}(s^r)$ as $A_{n-1}$-crystals \cite{KMN2},
$\Db_{B_A^{r,s}}=0$. But $i_A$ sends the $A_{n-1}$-highest weight
vertex of $B_A^{r,s}$ to $b(r,s,(s^r))=u(B^{r,s})$, on which
$\Db_{B^{r,s}}$ has value $0$ by definition.
\end{proof}

\section{Main results}

\label{Sec_theorems}

\subsection{The decomposition theorem}

We prove Conjecture \ref{CJ:X=K} and any tensor
product of KR crystals.

\begin{theorem}
\label{Th_dec_X} Let $B^R\in\CCinf(\geh)$ where $\geh$ is of kind $\di\in\{(1),(2),(1,1)\}$.
Then for any $\la\in\Pa_n$ we have
\begin{equation*}
\Xb_{\la ,B^R}^\di(q)=q^{\frac{\abs{R}-\abs{\la}}{|\di|}}\sum_{\nu
\in \Pa_n}\sum_{\delta \in \Pa_n^\di} c_{\la,\delta}^\nu
\,\Xb_{\nu,B_A^R}^\emptyset(q^{\frac{2}{\left| \diamondsuit \right| }}).
\end{equation*}
\end{theorem}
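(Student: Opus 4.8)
The plan is to assemble the machinery already developed---the reversing automorphism $\sigma$ (Theorem \ref{T:sigma}), the energy shift under $\sigma$ (Theorem \ref{Th_SR}), the correspondence on $A_{n-1}$-highest weight vertices (Theorem \ref{Th_corespondence}), and the type $A$ reduction on $\max$-components (Theorem \ref{T:maxD=A})---into a single chain of identities. The genuine content sits in those cited results; the new work is bookkeeping.

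First I would rewrite the left-hand side as a sum over $\tops(B^R)$. By \eqref{E:topsdef}, $\tops(B^R)$ is the disjoint union of the $A_{n-1}$-components $B_{A_{n-1}}(b)$ for $b\in\hw_{I_0}(B^R)$; since each such $b$ is $I_0$-highest, hence $A_{n-1}$-highest, it is the unique $A_{n-1}$-highest weight vertex of its component, so $\hw_{A_{n-1}}^\la(\tops(B^R))=\hw_{I_0}^\la(B^R)$. Hence by \eqref{E:Xdef},
\[
\Xb_{\la,B^R}^\di(q)=\sum_{b\in\hw_{A_{n-1}}^\la(\tops(B^R))}q^{\Db(b)}.
\]
Next I would transport this sum across $\sigma$. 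For such $b$ we have $\la(b)=\la$, so Theorem \ref{Th_SR} gives $\Db(b)=\Db(\sigma(b))+(\abs{R}-\abs{\la})/\abs{\di}$, and Theorem \ref{Th_corespondence} makes $b\mapsto\sigma(b)$ a bijection onto $\hw_{A_{n-1}}^{\bla}(\hmax(B^R))$. Setting $c=\sigma(b)$ yields
\[
\Xb_{\la,B^R}^\di(q)=q^{\frac{\abs{R}-\abs{\la}}{\abs{\di}}}\sum_{c\in\hw_{A_{n-1}}^{\bla}(\hmax(B^R))}q^{\Db(c)}.
\]

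It then remains to evaluate the $\hmax$-sum. I would use the $I_0$-decomposition $\max(B^R)\cong\bigoplus_\nu B(\nu)^{\oplus c^\nu_{R_1,\dots,R_p}}$ (all summands have $\abs{\nu}=\abs{R}$), as in the proof of Lemma \ref{L:samesize}, and apply $\widehat{\,\cdot\,}$, so that $\hmax(B^R)$ is the corresponding direct sum of copies of the $\Bh(\nu)$. Inside each copy, Proposition \ref{P:rowtabhatiso} together with Proposition \ref{P:Bhatprop}(2) and the Littlewood--Richardson rule gives exactly $\sum_{\delta\in\Pa_n^\di}c^\nu_{\delta\la}$ many $A_{n-1}$-highest weight vertices of weight $\bla$. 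The crucial point is that $\Db$ is constant on each $I_0$-component (Section \ref{SS:Ddef}; cf. Lemma \ref{L:DBrs}(1)), so all these vertices inside a single copy $B_{I_0}(u)$ carry the common value $q^{\Db(u)}$. Grouping the highest weight vertices $u\in\hw_{I_0}(\max(B^R))$ by weight $\nu$, and noting that for $\abs{\nu}=\abs{R}$ every $\nu$-highest weight vertex of $B^R$ already lies in $\max(B^R)$, this collapses to
\[
\sum_{c\in\hw_{A_{n-1}}^{\bla}(\hmax(B^R))}q^{\Db(c)}=\sum_{\nu\in\Pa_n}\sum_{\delta\in\Pa_n^\di}c^\nu_{\la\delta}\,\Xb_{\nu,B^R}^\di(q),
\]
where $c^\nu_{\la\delta}=c^\nu_{\delta\la}$ and the sum is extended to all $\nu\in\Pa_n$ since the factors below vanish unless $\abs{\nu}=\abs{R}$. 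Finally Theorem \ref{T:maxD=A} replaces each $\Xb_{\nu,B^R}^\di(q)$ by $\Xb_{\nu,B_A^R}^\emptyset(q^{2/\abs{\di}})$, producing the asserted formula.

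The main obstacle is not any single hard estimate but the coherence of the bookkeeping: one must correctly identify the left side as a $\tops$-sum, verify that the intrinsic coenergy is an $I_0$-class function so that the hat-crystal multiplicities $\sum_\delta c^\nu_{\delta\la}$ factor cleanly out of the grading $q^{\Db(u)}$ rather than interacting with it, and track the support condition $\abs{\nu}=\abs{R}$ that both makes Theorem \ref{T:maxD=A} applicable and lets the final summation range over all of $\Pa_n$. Once these are in place the proof is a direct composition of the preceding theorems.
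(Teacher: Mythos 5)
Your proposal reproduces the paper's own proof essentially step for step: identify $\hw_{I_0}^\la(B^R)$ with $\hw_{A_{n-1}}^\la(\tops(B^R))$, transport the sum through $\sigma$ via Theorems \ref{Th_SR} and \ref{Th_corespondence}, decompose $\hmax(B^R)$ along the classical decomposition of $\max(B^R)$, use constancy of $\Db$ on $I_0$-components together with the count $\card\,\hw_{A_{n-1}}^{\bla}(\Bh(c))=\sum_{\delta\in\Pa_n^\di}c^\nu_{\la\delta}$ (the paper quotes \eqref{E:Lit}; your route through Propositions \ref{P:rowtabhatiso} and \ref{P:Bhatprop}(2) is the same computation), and finish with Theorem \ref{T:maxD=A}. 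So the strategy and all key ingredients are the intended ones.

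However, one displayed identity is false as written. You assert
\begin{equation*}
\sum_{c\in\hw_{A_{n-1}}^{\bla}(\hmax(B^R))}q^{\Db(c)}=\sum_{\nu\in\Pa_n}\sum_{\delta\in\Pa_n^\di}c^\nu_{\la\delta}\,\Xb_{\nu,B^R}^\di(q),
\end{equation*}
with the range of $\nu$ unrestricted, justified by vanishing of ``the factors below.'' But the type-$\di$ sums $\Xb^\di_{\nu,B^R}(q)$ do \emph{not} vanish for $\abs{\nu}<\abs{R}$: the term $\nu=\la$, $\delta=\emptyset$ already contributes $\Xb^\di_{\la,B^R}(q)$ itself, which is nonzero whenever $B(\la)$ occurs in $B^R$. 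Concretely, for $\di=(1,1)$ and $B^R=B^{2,1}$ one has $B^{2,1}\cong B(\omega_2)\oplus B(0)$, so for $\la=\emptyset$ the left side equals $1$ while your right side equals $1+q$. The vanishing that legitimizes dropping the constraint $\abs{\nu}=\abs{R}$ belongs to the type-$A$ sums $\Xb^\emptyset_{\nu,B_A^R}$ (every element of $B_A^R$ has $\gl_n$-weight of total size $\abs{R}$), not to the type-$\di$ sums. The repair is purely a matter of ordering, and your closing paragraph indicates you intend exactly this: keep the restriction $\abs{\nu}=\abs{R}$ on the identity above, apply Theorem \ref{T:maxD=A} (whose hypothesis is precisely $\abs{\nu}=\abs{R}$) to replace $\Xb^\di_{\nu,B^R}(q)$ by $\Xb^\emptyset_{\nu,B_A^R}(q^{2/\abs{\di}})$, and only then extend the summation to all $\nu\in\Pa_n$. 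With that local fix the argument is complete and coincides with the paper's.
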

\begin{proof}
We have
\begin{align*}
\Xb_{\la,B^R}^\di(q) &= q^{\frac{\abs{R}-\abs{\la}}{|\di|}}
\sum_{b\in \hw_{I_0}^\la(B^R)} q^{\Db(\sigma(b))} \\
&=
q^{\frac{\abs{R}-\abs{\la}}{|\di|}}\sum_{b\in\hw_{A_{n-1}}^{\bla}(\hmax(B^R))}
q^{\Db(b)}
\end{align*}
by \eqref{E:Xdef} and Theorems \ref{Th_SR} and
\ref{Th_corespondence}. $\max(B^R)$ has $I_0$-decomposition
\begin{equation*}
\max(B^R)=\bigoplus_{\substack{\nu \in \Pa_n
\\ \abs{\nu}=\abs{R}}}\bigoplus_{c\in \hw_{I_0}^\nu(B^R)} B(c).
\end{equation*}
For $c\in\hw_{I_0}(\max(B))$, let $\Bh(c):=\widehat{B(c)}$ for the
dual polynomial part of $B(c)$; see Section \ref{SS:subcrystals}.
Taking the dual polynomial part, we have
\begin{equation*}
\hmax(B^R)=\bigoplus_{\substack{\nu \in \Pa_n
\\ \abs{\nu}=\abs{R}}}\bigoplus_{c\in \hw_{I_0}^\nu(B^R)} \Bh(c).
\end{equation*}
Taking $\hw_{A_{n-1}}^{\bla}$, we have
\begin{equation*}
\hw_{A_{n-1}}^{\bla}(\hmax(B^R)) = \bigsqcup_{\substack{\nu\in \Pa_n
\\ \abs{\nu}=\abs{R}}} \bigsqcup_{c\in \hw_{I_0}^\nu(B^R)}
\hw_{A_{n-1}}^{\bla}(\Bh(c)).
\end{equation*}
For $c\in \hw_{I_0}^\nu(B^R)$, observe that $\Db(b)=\Db(c)$ for
$b\in \hw_{A_{n-1}}^{\bla}(\Bh(c))$ since these vertices $b$ all
belong to the same classical component $B(c)$. This gives
\begin{equation*}
\Xb_{\la,B^R}^\di(q)=q^{\frac{\abs{R}-\abs{\la}}{|\di|}}\sum_{\substack{\nu
\in \Pa_n \\ \abs{\nu}=\abs{R}}} \sum_{c\in \hw_{I_0}^\nu(B^R)}
q^{\Db(c)} \card\, \hw_{A_{n-1}}^{\bla}(\Bh(c)).
\end{equation*}
But by \eqref{E:Lit} we have
\begin{align*}
 \card\, \hw_{A_{n-1}}^{\bla}(\Bh(c)) = \sum_{\delta \in
 \Pa_n^\di}
c_{\la,\delta}^\nu.
\end{align*}
By Theorem \ref{T:maxD=A} we have
\begin{align*}
\Xb_{\la,B^R}^\di(q)&=q^{\frac{\abs{R}-\abs{\la}}{|\di|}}
\sum_{\substack{\nu \in \Pa_n \\ \abs{\nu}=\abs{R}}} \sum_{\delta
\in \Pa_n^\di} c_{\la,\delta}^\nu \sum_{c\in \hw_{I_0}^\nu(B^R)}
q^{\Db(c)} \\
&=q^{\frac{\abs{R}-\abs{\la}}{|\di|}} \sum_{\nu\in\Pa_n}
\sum_{\delta\in\Pa_n^\di} c_{\la\delta}^\nu \,\Xb_{\nu,B^R}^\emptyset(q^{\frac{2}{\left| \diamondsuit \right| }}).
\end{align*}
\end{proof}

\subsection{Link with parabolic Lusztig $q$-analogues}

We now give a brief overview on parabolic Lusztig $q$-analogues in type $
A_{n-1},B_{n},C_{n}$ and $D_{n}$. Assume $G=GL_{n},SO_{2n+1},SP_{2n}$ or $
SO_{2n}$. Consider $U$ a subset of $\Sigma _{G}^{+}$ and denote by $\pi _{U}$
the standard parabolic subgroup of $G$ (that is, containing the Borel
subgroup $B_{G})$ defined by $U$. Write $L_{U}$ for the Levi subgroup of the
parabolic $\pi _{U}$ and $\mathfrak{l}_{U}$ its corresponding Lie algebra.
Let $R_{U}$ be the subsystem of roots spanned by $U$ and $R_{U}^{+}$ the
subset of positive roots in $R_{U}$. Then $R_{U}$ and $R_{U}^{+}$ are
respectively the set of roots and the set of positive roots of $\mathfrak{l}
_{U}$.

\noindent The Levi subgroup $L_{U}$ corresponds to the removal, in the
Dynkin diagram of $G$, of the nodes which are not associated to a simple
root belonging to $U$. When $U\neq \Sigma _{G}^{+}$, write
\begin{equation*}
V=\Sigma _{G}^{+}\setminus U=\{\alpha _{j_{1}},\ldots ,\alpha _{j_{p}}\}
\end{equation*}
where for any $k=1,\ldots ,p$, $\alpha _{j_{k}}$ is a simple root of $\Sigma
_{G}^{+}$ and $j_{1}<\cdots <j_{p}$. Then set $l_{1}=j_{1}$, $
l_{k}=j_{k}-j_{k-1}$, $k=2,\ldots ,p$ and $l_{p+1}=n-j_{p}$. The Levi group $
L_{U}$ is isomorphic to a direct product of classical Lie groups determined
by the $(p+1)$-tuple $l_{U}=(l_{1},\ldots ,l_{p+1})$ of nonnegative integers
summing to $n$. Namely, we have
\begin{equation*}
L_{U}\simeq
\begin{cases}
GL_{l_{1}}\times \cdots \times GL_{l_{p}} & \text{if $G=GL_{n}$} \\
GL_{l_{1}}\times \cdots \times GL_{l_{p}}\times SO_{2l_{p+1}+1} & \text{if $
G=SO_{2n+1}$} \\
GL_{l_{1}}\times \cdots \times GL_{l_{p}}\times Sp_{2l_{p+1}} & \text{if $
G=Sp_{2n}$ } \\
GL_{l_{1}}\times \cdots \times GL_{l_{p}}\times SO_{2l_{p+1}} & \text{if $
G=SO_{2n}$.}
\end{cases}
\end{equation*}
Let $\mathcal{P}_{U}=\mathcal{P}_{l_{1}}\times \cdots \times \mathcal{P}
_{l_{p+1}}.\;$Then each $(p+1)$-partition of $\mathcal{P}_{U}$ can be
regarded as a dominant weight for $L_{U}.$ For any $\mathbf{\mu }\in
\mathcal{P}_{U}$, let $V^{L_{U}}(\mathbf{\mu })$ be the finite dimensional
irreducible representation of $L_{U}$ with highest weight $\mathbf{\mu }.$
We denote by $\mu \in \mathbb{N}^{n}$ the concatenation of the parts of the
partitions $\mu ^{(k)},$ $k=1,\ldots ,p.$

\noindent Define the partition function $\mathcal{P}^{U}$ by the formal identity
\begin{equation*}
\prod_{\alpha \in R_{G}^{+}\setminus R_{U}^{+}}\frac{1}{1-e^{\alpha }}
=\sum_{\beta \in \mathbb{Z}^{n}}\mathcal{P}^{U}(\beta )e^{\beta }.
\end{equation*}
Consider $\lambda \in \mathcal{P}_{n}$ and $\mathbf{\mu }\in \mathcal{P}_{U}$
then we have \cite[Theorem 8.2.1]{GW}
\begin{equation*}
\lbrack V^{G}(\lambda ):V^{L_{U}}(\mathbf{\mu })]=\sum_{w\in
W_{G}}(-1)^{\ell (w)}\mathcal{P}^{U}(w\circ \lambda -\mu ).
\end{equation*}
Here $[V^{G}(\lambda ):V^{L_{U}}(\mathbf{\mu })]$ is the branching
multiplicity of the irreducible $L_{U}$-module $V^{L_{U}}(\mathbf{\mu })$ in
the restriction of $V^{G}(\lambda )$ to $L_{U}$.\ For $
GL_{n},SO_{2n+1},SP_{2n},SO_{2n},$ we define the $q$-partition function $
\mathcal{P}_{q}^{U}$ from the formal identity
\begin{equation}
\prod_{\alpha \in R_{G}^{+}\setminus R_{U}^{+}}\frac{1}{1-qe^{\alpha }}
=\sum_{\beta \in \mathbb{Z}^{n}}\mathcal{P}_{q}^{U}(\beta )e^{\beta }.
\label{qpart}
\end{equation}
In type $B_{n},$ we shall also need another partition function.\ Consider
the weight function $L$ on the set $R_{SO_{2n+1}}^{+}$ of positive roots of $
SO_{2n+1}$ such that $L(\alpha )=2$ (resp. $L(\alpha )=1$) on the long
(resp.\ short) roots. The partition function $\mathcal{P}_{q}^{U,L}$ is
defined by
\begin{equation*}
\prod_{\alpha \in R_{SO_{2n+1}}^{+}\setminus R_{U}^{+}}\frac{1}{
1-q^{L(\alpha )}e^{\alpha }}=\sum_{\beta \in \mathbb{Z}^{n}}\mathcal{P}
_{q}^{U,L}(\beta )e^{\beta }.
\end{equation*}

\begin{definition}
\label{def_K}Let $\lambda $ be a partition of $\mathcal{P}_{n}$ and $\mathbf{
\mu }\in \mathcal{P}_{U}$.

\begin{enumerate}
\item  The parabolic Lusztig $q$-analogue $K_{\lambda ,\mu }^{G,U}(q)$ is
the polynomial
\begin{equation}
K_{\lambda ,\mu }^{G,U}(q)=\sum_{w\in W_{G}}(-1)^{\ell (w)}\mathcal{P}
_{q}^{U}(w\circ \lambda -\mu )  \label{def_Kdis}
\end{equation}
where $w\circ \lambda =w(\lambda +\rho _{G})-\rho _{G}.$

\item  The stable parabolic Lusztig $q$-analogue $^{\infty }K_{\lambda
,\mu }^{G,U}(q)$ is the polynomial
\begin{eqnarray}
^{\infty }K_{\lambda ,\mu }^{G,U}(q) &=&\sum_{w\in S_{n}}(-1)^{\ell (w)}
\mathcal{P}_{q}^{U}(w\circ \lambda -\mu )\text{ for }G=GL_{n},SP_{2n},SO_{2n}
\label{def_Kdis_infinite} \\
^{\infty }K_{\lambda ,\mu }^{G,U}(q) &=&\sum_{w\in S_{n}}(-1)^{\ell (w)}
\mathcal{P}_{q}^{U,L}(w\circ \lambda -\mu )\text{ for }G=SO_{2n+1}.
\end{eqnarray}
\end{enumerate}
\end{definition}

\bigskip

\noindent \textbf{Remarks :}

\noindent $\mathrm{(i):}$ When $U=\Sigma _{G}^{+}$, $\mathfrak{l}_{U}$ is
the Cartan subalgebra of $\mathfrak{g}$ and $K_{\lambda ,\mu }^{G,U}(q)$ is
the usual Lusztig $q$-analogue.

\noindent $\mathrm{(ii):}$ The terminology for the polynomials $^{\infty
}K_{\lambda ,\mu }^{G,U}(q)$ is motivated by the following identities proved
in \cite{Lec}

\begin{eqnarray*}
^{\infty }K_{\lambda ,\mu }^{G,U}(q) &=&^{\infty }K_{\lambda +\kappa ,\mu
+\kappa }^{G,U}(q)\text{ for }G=GL_{n},SO_{2n+1},SP_{2n},SO_{2n} \\
^{\infty }K_{\lambda ,\mu }^{G,U}(q) &=&K_{\lambda +k\kappa ,\mu +k\kappa
}^{G,U}(q)\text{ for }G=GL_{n},SP_{2n},SO_{2n}\text{ and }k\text{
sufficiently large}
\end{eqnarray*}
where $\kappa =(1,\ldots ,1)\in \mathcal{P}_{n}.$ In particular $^{\infty
}K_{\lambda ,\mu }^{GL_{n},U}(q)=K_{\lambda ,\mu }^{GL_{n},U}(q).$

\bigskip

The problem of the positivity of the coefficients appearing in the
polynomials $K_{\lambda ,\mu }^{G,U}(q)$ has been barely addressed in the
literature.

\begin{conjecture}
\label{conj}Let $\lambda $ be partition of $\mathcal{P}_{n}$ and $\mathbf{\
\mu }\in \mathcal{P}_{U}$ such that $\mu $ is a partition. Then $K_{\lambda
,\mu }^{G,U}(q)$ has nonnegative coefficients.
\end{conjecture}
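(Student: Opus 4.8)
The plan is to route positivity through the crystal-theoretic side, where it is manifest. First I would treat the stable analogue $^\infty K_{\la,\mu}^{G,U}(q)$. By \cite{Lec} this polynomial admits a decomposition of exactly the shape of the right-hand side of Conjecture \ref{CJ:X=K}, expressing it through the type $A_{n-1}$ Kostka polynomials. Theorem \ref{Th_dec_X} shows that the stable one-dimensional sum $\Xb^\di_{\la,B}(q)$ obeys the very same decomposition. Matching the two formulas under the dictionary that sends the Levi datum $(l_1,\dots,l_{p+1})$ together with the multipartition $\mu\in\Pa_U$ to a tensor product $B=B^{r_1,s_1}\otimes\dots\otimes B^{r_p,s_p}$ of Kirillov--Reshetikhin crystals of kind $\di$, and the $G$-highest weight $\la$ to the tracked $\geh_0$-weight, I would obtain the identification
\[
  {}^\infty K_{\la,\mu}^{G,U}(q) = \Xb^\di_{\la,B}(q).
\]
The right-hand side is, by \eqref{E:Xdef}, the generating polynomial $\sum_{b\in\hw_{I_0}^\la(B)} q^{\Db(b)}$ of the $\Z_{\ge0}$-valued intrinsic coenergy over a finite vertex set, so it has nonnegative coefficients; hence so does $^\infty K_{\la,\mu}^{G,U}(q)$.

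Next I would pass from the stable polynomial to the finite-rank polynomial $K_{\la,\mu}^{G,U}(q)$ demanded by the conjecture. For $G=GL_n$ this costs nothing: Remark (ii) gives $^\infty K_{\la,\mu}^{GL_n,U}(q)=K_{\la,\mu}^{GL_n,U}(q)$, so the argument above already settles the conjecture, recovering the classical positivity of parabolic Kostka polynomials (charge, or Kazhdan--Lusztig theory). For $G=Sp_{2n},SO_{2n}$ I would invoke the stabilization identity of Remark (ii), namely $^\infty K_{\la,\mu}^{G,U}(q)=K_{\la+k\kappa,\mu+k\kappa}^{G,U}(q)$ for $k$ large, where $\kappa=(1,\dots,1)$. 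Combined with the previous paragraph this proves positivity of $K_{\la',\mu'}^{G,U}(q)$ whenever $\la',\mu'$ lie deep inside the dominant cone, i.e.\ for weights obtained from arbitrary data by a large common shift.

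The hard part --- and the reason I expect the full statement to resist this approach --- is the regime of weights close to the walls of the dominant chamber (and, for $SO_{2n+1}$, the appearance of the distinct weight function $L$ and of spin weights). There the finite Weyl group $W_G$ is genuinely larger than $S_n$, the extra alternating terms in \eqref{def_Kdis} no longer cancel against their stable counterparts, and the crystal model underlying the $1$-d sum is unavailable. To close this gap one would need either a finite-rank crystal (or Kazhdan--Lusztig) realization of $K_{\la,\mu}^{G,U}(q)$ valid without the stability hypothesis, or a monotonicity argument bounding the difference $K_{\la,\mu}^{G,U}(q)-{}^\infty K_{\la,\mu}^{G,U}(q)$ by a manifestly nonnegative quantity. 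Supplying such a finite-rank interpretation is, in my view, the principal obstacle.
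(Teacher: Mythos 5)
Your proposal is not, and does not claim to be, a complete proof --- and in fact the statement you were handed is an \emph{open conjecture}: the paper itself says explicitly that ``as far as we are aware, Conjecture \ref{conj} has not been completely proved yet,'' recording only the partial results of Broer (Theorem \ref{TH_broer}) and its extension in \cite{Ha}. So the self-assessment in your final paragraph is exactly right: the passage from the stable polynomials ${}^{\infty}K_{\la,\mu}^{G,U}(q)$ to the finite-rank polynomials $K_{\la,\mu}^{G,U}(q)$ near the walls of the dominant chamber (together with the case $G=SO_{2n+1}$, where the modified weight function $L$ enters and the stabilization identity of Remark (ii) is not even available) is precisely the obstruction, and neither you nor the paper closes it. What your first two paragraphs establish is essentially the paper's own Theorem \ref{Th_X=K} read in reverse.

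There is, however, a second gap that you do not flag. The identification ${}^{\infty}K_{\la,\mu}^{G,U}(q)=\Xb^{\di}_{\la,B}(q)$ --- more precisely Theorem \ref{Th_X=K}, which reads $\Xb_{\la,B}^{\di}(q)=q^{2(||R||+|R|-|\la|)/|\di|}\;{}^{\infty}K_{\widehat{\la},\widehat{\mu}}^{G,U}(q^{-1})$, with complemented weights $\widehat{\la},\widehat{\mu}$ and $q\mapsto q^{-1}$ (harmless for positivity, but not the verbatim identity you wrote) --- requires every block $\mu^{(k)}$ of $\mathbf{\mu}$ to be a \emph{rectangle}, and the widths to be decreasing: KR crystals $B^{r,s}$ are indexed by rectangles $(s^{r})$, and no tensor product of KR crystals is attached to a non-rectangular block. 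The conjecture, by contrast, allows an arbitrary $\mathbf{\mu}\in\mathcal{P}_{U}$ whose concatenation $\mu$ is a partition. Consequently your crystal argument yields stable positivity only in the rectangular case (which is what Theorem \ref{th_qdual2}(2) combined with Theorem \ref{Th_dec_X} already gives), and your claim that the $GL_n$ case is thereby ``settled'' is likewise overstated: charge and Kazhdan--Lusztig positivity cover Kostka--Foulkes polynomials and the rectangular-block parabolic analogues, but positivity for general dominant $\mathbf{\mu}$ in type $A$ belongs to the same open circle of questions --- this is exactly why Theorem \ref{TH_broer} carries a rectangularity hypothesis. In short, the proposal proves strictly less than it announces even in its ``easy'' cases, and the full conjecture remains open, consistent with the paper's own statement.
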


We have the following result due to Broer \cite{Bro}.

\begin{theorem}
\label{TH_broer}Let $\lambda $ be a partition of $\mathcal{P}_{n}$ and $
\mathbf{\mu }=(\mu ^{(1)},\ldots ,\mu ^{(p)})$ a dominant weight of $L_{U}$
such that the $\mu ^{(k)}$'s are rectangular partitions of decreasing widths
with $\mu ^{(p)}=0$ when $L_{U}$ is not a direct product of linear groups.\
Then $K_{\lambda ,\mu }^{G,U}(q)$ has nonnegative coefficients.
\end{theorem}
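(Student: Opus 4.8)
The plan is to realize $K_{\lambda,\mu}^{G,U}(q)$ as a graded multiplicity in the cohomology of a homogeneous vector bundle over the cotangent bundle of the partial flag variety $G/\pi_U$, and then to extract nonnegativity from a higher cohomology vanishing theorem. Write $\mathfrak{p}_U=\mathfrak{l}_U\oplus\mathfrak{u}_U$ for the parabolic subalgebra, so that the roots of the nilradical $\mathfrak{u}_U$ are exactly $R_G^+\setminus R_U^+$, the roots appearing in \eqref{qpart}. Let $p\colon T^*(G/\pi_U)\cong G\times_{\pi_U}\mathfrak{u}_U^*\to G/\pi_U$ be the projection and let $\mathcal{V}_{\mathbf{\mu}}$ be the $G$-equivariant bundle on $G/\pi_U$ induced from the irreducible $L_U$-module $V^{L_U}(\mathbf{\mu})$ (inflated along $\pi_U$). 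Then the global sections form a graded $G$-module
\[
  \Gamma\bigl(T^*(G/\pi_U),\,p^*\mathcal{V}_{\mathbf{\mu}}\bigr)=\bigoplus_{d\ge0}\Gamma\bigl(G/\pi_U,\,\mathcal{V}_{\mathbf{\mu}}\otimes S^d(\mathfrak{u}_U^*)\bigr),
\]
and a Weyl-character-formula computation (Bott's theorem applied degree by degree, exactly as in the Borel case of Hesselink and Brylinski) identifies the $q$-graded Euler characteristic $\sum_i(-1)^i\,\mathrm{ch}_q H^i$ of $p^*\mathcal{V}_{\mathbf{\mu}}$, decomposed into $G$-irreducibles, with the alternating sum defining $K_{\lambda,\mu}^{G,U}(q)$ in \eqref{def_Kdis}. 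In type $B_n$ the weighted partition function $\mathcal{P}_q^{U,L}$ forces the length-weighted grading on $S(\mathfrak{u}_U^*)$, and I would treat that case by the same recipe with the grading normalized by $L$.

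Granting this identification, the positivity is immediate from vanishing: if
\[
  H^i\bigl(T^*(G/\pi_U),\,p^*\mathcal{V}_{\mathbf{\mu}}\bigr)=0\qquad\text{for all } i>0,
\]
then $K_{\lambda,\mu}^{G,U}(q)$ is the genuine graded multiplicity of $V^G(\lambda)$ in the honest $G$-module $\Gamma(T^*(G/\pi_U),p^*\mathcal{V}_{\mathbf{\mu}})$, whose coefficients are dimensions and hence nonnegative. Thus the whole content of the theorem is the vanishing statement.

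To prove the vanishing I would use the moment map $\pi\colon T^*(G/\pi_U)\to\overline{\mathcal{O}}$ onto the closure of the Richardson nilpotent orbit attached to $\pi_U$; it is proper and, after normalization when necessary, a resolution of singularities. The strategy has three steps: (i) show that in the classical cases at hand $\overline{\mathcal{O}}$ is normal with rational singularities (Kraft--Procesi, Broer), so that $R^i\pi_*\mathcal{O}=0$ for $i>0$; (ii) upgrade this to the twisted sheaf by a Grauert--Riemenschneider/Kempf-type argument, the point being that the rectangular-with-decreasing-widths hypothesis on $\mathbf{\mu}$, together with $\mu^{(p)}=0$ away from type $A$, is exactly what forces the requisite positivity of the twisting weight relative to the roots of $\mathfrak{u}_U$, keeping $p^*\mathcal{V}_{\mathbf{\mu}}$ in the range where $R^i\pi_*=0$; and (iii) conclude by the Leray spectral sequence, using that $\overline{\mathcal{O}}$ is affine so that $H^i(\overline{\mathcal{O}},\pi_*(p^*\mathcal{V}_{\mathbf{\mu}}))=0$ for $i>0$. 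An alternative to (i)--(ii) is to construct a Frobenius splitting of $T^*(G/\pi_U)$ compatible with $p^*\mathcal{V}_{\mathbf{\mu}}$ and reduce modulo $p$.

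The hard part is step (ii): ensuring the vanishing survives the twist by $\mathcal{V}_{\mathbf{\mu}}$. Termwise Bott vanishing for $\mathcal{V}_{\mathbf{\mu}}\otimes S^d(\mathfrak{u}_U^*)$ fails in general, so the argument must be geometric and must genuinely exploit the rectangular shapes of the $\mu^{(k)}$ — this is precisely where Broer's hypotheses enter and where the delicate dominance estimate lives. I expect the bookkeeping to be most intricate in types $B_n$ and $D_n$, where the short/long root distinction and the excluded spin directions complicate the positivity cone governing the twist.
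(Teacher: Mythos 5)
The paper does not prove this statement at all: it is quoted as a known result of Broer \cite{Bro} (with its extension attributed to \cite{Ha}), so there is no internal proof to compare against. Your proposal is, in outline, a reconstruction of Broer's actual strategy from the cited literature: identify $K_{\lambda,\mu}^{G,U}(q)$ with the graded Euler characteristic of $\bigoplus_{d\ge0} H^{*}\bigl(G/\pi_U,\,\mathcal{V}_{\mathbf{\mu}}\otimes S^d(\mathfrak{u}_U^{*})\bigr)$, and then kill the higher cohomology so that this Euler characteristic becomes an honest graded multiplicity, whose coefficients are dimensions. That framework, and the reduction of positivity to a vanishing theorem, are correct.

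The genuine gap is that you never prove the vanishing, and that is the entire content of the theorem. Your step (ii) --- showing that the twist by $\mathcal{V}_{\mathbf{\mu}}$ stays in the range where $R^i\pi_{*}=0$, with the claim that the rectangular-with-decreasing-widths hypothesis ``is exactly what forces the requisite positivity'' --- is precisely the delicate part of Broer's work, and in your writeup it is an expectation, not an argument. Nothing in the proposal shows how rectangularity of the $\mu^{(k)}$, the decreasing widths, or the condition $\mu^{(p)}=0$ outside type $A$ actually enter; as you note yourself, termwise Bott vanishing fails, and for general dominant $\mathbf{\mu}$ higher cohomology genuinely occurs --- otherwise Conjecture \ref{conj} would follow for all $\mathbf{\mu}$, which the paper explicitly says is open. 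A complete proof must either carry out Broer's filtration/induction argument establishing the vanishing under these hypotheses, or cite it; as written, the proposal is a correct reduction plus an acknowledged missing theorem. Two smaller points: the polynomial $K_{\lambda,\mu}^{G,U}(q)$ in the statement is defined via the unweighted partition function $\mathcal{P}_q^{U}$ for every $G$, so your aside about $\mathcal{P}_q^{U,L}$ in type $B_n$ addresses a different polynomial (the stable one); and the Euler-characteristic identification for the vector bundle $\mathcal{V}_{\mathbf{\mu}}$, as opposed to a line bundle, needs a filtration or Kostant-type argument --- routine, but it should be stated rather than attributed directly to Bott's theorem.
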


This theorem has been recently extended in \cite{Ha}. Nevertheless, as far
as we are aware, Conjecture \ref{conj} has not been completely proved yet.

\bigskip

Let $\eta =(\eta _{1},\ldots ,\eta _{p})$ be a $p$-tuple of positive
integers summing $n.$ Consider $\lambda \in \mathcal{P}_{n}$ and $\mathbf{\
\mu }=(\mu ^{(1)},\ldots ,\mu ^{(p)})$ a $p$-tuple of partitions such that $
\mu ^{(k)}$ belongs to $\mathcal{P}_{\eta _{k}}$ for any $k=1,\ldots ,p.$
Recall that $\mathfrak{K}_{\mu ^{(1)},\ldots ,\mu ^{(p)}}^{\lambda
,\diamondsuit }$ is the multiplicity of $V^{G}(\lambda )$ in $W^{G}(\mu
^{(1)})\otimes \cdots \otimes W^{G}(\mu ^{(p)}).\ $Write $\mu \in \mathbb{N}
^{n}$ for the $n$-tuple obtained by reading successively the parts of the
partitions $\mu ^{(1)},\ldots ,\mu ^{(p)}$ defining $\mathbf{\mu }$ from
left to right. Let $a$ be the minimal integer such that
\begin{eqnarray}
a &\geq &\frac{\left| \mu \right| -\left| \lambda \right| }{2}\text{ and}
\label{def-lambdahat} \\
\widehat{\lambda } &=&(a-\lambda _{n},\ldots ,a-\lambda _{1})\in \mathbb{N}
^{n},\widehat{\mu }=(a-\mu _{n},\ldots ,a-\mu _{1})\in \mathbb{N}^{n}.
\notag
\end{eqnarray}
Then $\widehat{\lambda}$ is a partition of length $n$.

\noindent For any $k=1,\ldots ,p$, set $\widehat{\eta }_{k}=\eta _{p-k+1}$
and $\widehat{\eta }=(\widehat{\eta }_{1},\ldots ,\widehat{\eta }_{p})$.
Denote by $\widehat{\mathbf{\mu }}=(\widehat{\mu }^{(1)},\ldots ,\widehat{
\mu }^{(p)})$ the $p$-tuple of partitions such that $\widehat{\mu }
^{(1)}=(\mu _{1},\ldots ,\mu _{\widehat{\eta }_{1}})\in \mathcal{P}_{
\widehat{\eta }_{1}}$ and $\widehat{\mu }^{(k)}=(\mu _{\widehat{\eta }
_{1}+\cdots +\widehat{\eta }_{k-1}+1},\ldots ,\mu _{\widehat{\eta }_{1}+\cdots
+\widehat{\eta }_{k}})\in \mathcal{P}_{\widehat{\eta }_{k}}$ for any $
k=2,\ldots ,p.$ The Lie groups $GL_{n},SO_{2n+1},SP_{2n},SO_{2n}$ contain
Levi subgroups $L_{U}$ isomorphic to $GL_{\widehat{\eta }_{1}}\times \cdots
\times GL_{\widehat{\eta }_{p}}.\;$With the above terminology, the
corresponding subset of simple roots is
\begin{equation}
U=\{0<\alpha _{i}<\widehat{\eta }_{1}\}\cup _{1\leq k\leq p-1}\{\alpha
_{i}\mid \widehat{\eta }_{1}+\cdots +\widehat{\eta }_{k}<i<\widehat{\eta }
_{1}+\cdots +\widehat{\eta }_{k+1}\}.  \label{I_from_mu}
\end{equation}
In particular when $G=SO_{2n+1},SP_{2n}$ or $SO_{2n},$ $U$ never contains
the simple root $\alpha _{n}.$

\begin{example}
Consider $\mu ^{(1)}=(5,4,4),$ $\mu ^{(2)}=(6,3,2)$ and $\mu ^{(3)}=(4,3).\;$
Take $\lambda =(4,4,3,2,2,1,0,0).$ Then $a=8,$ $\widehat{\mu }^{(1)}=(5,4),$
$\widehat{\mu }^{(2)}=(6,5,2),$ $\widehat{\mu }^{(3)}=(4,4,3)$ and $\widehat{
\lambda }=(8,8,7,6,6,5,4,4).$
\end{example}

\noindent The coefficients $\mathfrak{K}_{\mu ^{(1)},\ldots ,\mu
^{(p)}}^{\lambda ,\diamondsuit }$ defined in (\ref{E:KRmult}) can in fact
be regarded as branching coefficients corresponding to the restriction to
Levi subgroup isomorphic to a direct product of linear groups. The following
duality was established in \cite{Lec} .

\begin{proposition}
\label{prop_dual}With the previous notation for $\widehat{\lambda },\widehat{
\mu }$ we have for $G=SO_{2n+1},SP_{2n}$ and $SO_{2n}$ $\mathfrak{K}_{\mu
^{(1)},\ldots ,\mu ^{(p)}}^{\lambda ,\diamondsuit }=^{\infty }K_{\widehat{
\lambda },\widehat{\mu }}^{G,U}(1).$
\end{proposition}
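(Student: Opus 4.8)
The plan is to identify both sides with a single sum of products of Littlewood--Richardson coefficients: on the left via Proposition \ref{prop_qdual}, and on the right via the Gross--Wallach formula together with Littlewood's branching rule \eqref{E:Lit}.

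First I would reduce the right-hand side to an honest branching multiplicity. At $q=1$ one has $\mathcal{P}_q^{U,L}=\mathcal{P}_q^{U}=\mathcal{P}^U$, so the $SO_{2n+1}$ case needs no separate treatment and both lines of Definition \ref{def_K}(2) read $\sum_{w\in S_n}(-1)^{\ell(w)}\mathcal{P}^U(w\circ\widehat{\la}-\widehat{\mu})$. Comparing this with the Gross--Wallach formula $[V^G(\la):V^{L_U}(\mu)]=\sum_{w\in W_G}(-1)^{\ell(w)}\mathcal{P}^U(w\circ\la-\mu)$ of \cite[Theorem 8.2.1]{GW}, and using the stabilization identities proved in \cite{Lec} (recalled just after Definition \ref{def_K}), I would check that the choice of $a$ in \eqref{def-lambdahat} places $\widehat{\la},\widehat{\mu}$ in the range where the summands indexed by $w\in W_G\setminus S_n$ contribute nothing, so that
\[
{}^{\infty}K_{\widehat{\la},\widehat{\mu}}^{G,U}(1)=[\,\downarrow_{L_U}^{G}V^G(\widehat{\la}):V^{L_U}(\widehat{\mu})\,],
\]
the multiplicity of $V^{L_U}(\widehat{\mu})$ in the restriction of $V^G(\widehat{\la})$ to $L_U\cong GL_{\widehat{\eta}_1}\times\dotsm\times GL_{\widehat{\eta}_p}$.

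Next I would compute this multiplicity in two stages, using that $L_U\subset GL_n\subset G$ (recall $U$ never contains $\al_n$). Restricting first from $G$ to $GL_n$ by Littlewood's formula \eqref{E:Lit}, and then from $GL_n$ to the Levi by the Littlewood--Richardson rule, gives
\[
[\,\downarrow_{L_U}^{G}V^G(\widehat{\la}):V^{L_U}(\widehat{\mu})\,]
=\sum_{(\gamma^+,\gamma^-)\in\widetilde{\mathcal{P}}_n}
\Bigl(\sum_{\delta\in\mathcal{P}_n^\di,\;\kappa\in\mathcal{P}_n}c_{\gamma^+,\gamma^-}^{\kappa}\,c_{\delta,\kappa}^{\widehat{\la}}\Bigr)\,
[\,\downarrow_{L_U}^{GL_n}V^{GL_n}(\gamma^+,\gamma^-):V^{L_U}(\widehat{\mu})\,],
\]
the last bracket being a branching multiplicity for a Levi of $GL_n$, given by the Littlewood--Richardson rule.

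The heart of the matter, and the step I expect to be the main obstacle, is to transform this branching expression into $\sum_{\nu\in\mathcal{P}_n}\sum_{\delta\in\mathcal{P}_n^\di}c_{\la\delta}^{\nu}\,c_{\mu^{(1)},\dotsc,\mu^{(p)}}^{\nu}$, which by Proposition \ref{prop_qdual} equals $\mathfrak{K}_{\mu^{(1)},\dotsc,\mu^{(p)}}^{\la,\di}$. This is a purely $GL$-theoretic duality driven by the hats: since $V^{GL_n}(\widehat{\la})\cong V^{GL_n}(\bla)\otimes(\det)^{\otimes a}$ and each $\widehat{\mu}^{(k)}$ arises from $\mu$ by the same complementation in the rectangle $(a^n)$, the contravariant Littlewood coefficients attached to $\widehat{\la}$ and $\widehat{\mu}$ are carried, under complementation, back to the covariant coefficients $c_{\la\delta}^{\nu}$ and $c_{\mu^{(1)},\dotsc,\mu^{(p)}}^{\nu}$. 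I would make this precise by invoking the complementation symmetry of Littlewood--Richardson coefficients in a common rectangle to trade the rational weights $(\gamma^+,\gamma^-)$ for the partition $\nu$ and to identify the $\widehat{\mu}$-branching factor with $c_{\mu^{(1)},\dotsc,\mu^{(p)}}^{\nu}$, the index $\delta\in\mathcal{P}_n^\di$ surviving from the Littlewood term. Matching the two expressions proves the proposition; this reindexing is exactly the reciprocity established in \cite{Lec}, so in practice the cleanest route is to cite that result, the argument above being a reconstruction of its combinatorial core.
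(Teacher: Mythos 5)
The paper does not actually prove this proposition: it is imported from \cite{Lec} with the single sentence ``The following duality was established in \cite{Lec},'' so there is no in-paper argument to compare against, and your closing recommendation to cite \cite{Lec} is exactly what the authors do. Your sketch of what that proof should look like has the right overall architecture: evaluate the left side as $\sum_{\nu\in\Pa_n}\sum_{\delta\in\Pa_n^\di}c_{\la\delta}^{\nu}\,c_{\mu^{(1)},\ldots,\mu^{(p)}}^{\nu}$ by Proposition \ref{prop_qdual}, and compute the right side as a stable branching coefficient through the chain $L_U\subset GL_n\subset G$ using Littlewood's formula \eqref{E:Lit} and rectangle complementation of Littlewood--Richardson coefficients.

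Two points keep the sketch from being a proof. First, the reduction of ${}^{\infty}K_{\widehat{\la},\widehat{\mu}}^{G,U}(1)$ to the honest branching multiplicity at $(\widehat{\la},\widehat{\mu})$ is unjustified: you assert that the minimal $a$ of \eqref{def-lambdahat} already kills the summands indexed by $w\in W_G\setminus S_n$, but the minimal $a$ equals $\max(\la_1,\mu_1,\lceil(|\mu|-|\la|)/2\rceil)$, while forcing those terms to vanish requires roughly $a>\la_1+(|\mu|-|\la|)/2$: flipping the sign of the smallest entry $a-\la_1+(\rho_G)_n$ of $\widehat{\la}+\rho_G$ lowers the coordinate sum by twice that amount, and $\mathcal{P}^U$ is supported on vectors of nonnegative coordinate sum, so the necessary vanishing condition can fail badly when $\la_1$ and $|\mu|-|\la|$ are both large. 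The correct reduction uses the translation invariance recorded after Definition \ref{def_K}, namely ${}^{\infty}K_{\widehat{\la},\widehat{\mu}}^{G,U}(1)=K_{\widehat{\la}+k\kappa,\widehat{\mu}+k\kappa}^{G,U}(1)$ for $k\gg0$ (your $q=1$ observation correctly handles $SO_{2n+1}$, where this identity is not stated); one then works with $a+k$ in place of $a$ and must check at the end that the complementation computation is independent of $k$. Second, that complementation computation --- converting the Littlewood-plus-Levi expression at the hatted weights into $\sum_{\nu,\delta}c_{\la\delta}^{\nu}c_{\mu^{(1)},\ldots,\mu^{(p)}}^{\nu}$ --- is the entire content of the theorem, and it is invoked rather than performed, ultimately being replaced by the citation. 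One further caution: Proposition \ref{prop_qdual} is itself quoted from the same Theorem 4.4.2 of \cite{Lec}; if that theorem is proved there as a statement about ${}^{\infty}K_{\widehat{\la},\widehat{\mu}}^{G,U}(q)$ rather than about tensor product multiplicities, then invoking it in a proof of the present duality would be circular, so a genuinely self-contained argument would also need to establish the left-hand evaluation directly (e.g.\ from the definition \eqref{E:kappadef} and Remark \ref{R:classicalmult}).
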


We then define for $G=SO_{2n+1},SP_{2n}$ and $SO_{2n}$ the $q$-analogue $
\mathfrak{K}_{\mu ^{(1)},\ldots ,\mu ^{(p)}}^{\lambda ,\diamondsuit }(q)$ of
$\mathfrak{K}_{\mu ^{(1)},\ldots ,\mu ^{(p)}}^{\lambda ,\diamondsuit }$ by
setting
\begin{equation*}
\mathfrak{K}_{\mu ^{(1)},\ldots ,\mu ^{(p)}}^{\lambda ,\diamondsuit
}(q)=^{\infty }K_{\widehat{\lambda },\widehat{\mu }}^{G,U}(q).
\end{equation*}

\begin{theorem}
\label{th_qdual2}\cite{Lec}

\begin{enumerate}
\item  We have the decomposition
\begin{equation}
\mathfrak{K}_{\mu ^{(1)},\ldots ,\mu ^{(p)}}^{\lambda ,\diamondsuit }(q)=q^{
\tfrac{\left| \mu \right| -\left| \lambda \right| }{\left| \diamondsuit
\right| }}\sum_{\nu \in \mathcal{P}_{n}}\ \sum_{\delta \in \mathcal{P}
_{n}^{\diamondsuit }}c_{\lambda ,\delta }^{\nu }\ K_{\nu ,\mu
}^{GL_{n},U}(q^{\frac{2}{\left| \diamondsuit \right| }}).  \label{Dfrak}
\end{equation}

\item  The polynomial $\mathfrak{K}_{\mu ^{(1)},\ldots ,\mu ^{(p)}}^{\lambda
,\diamondsuit }(q)$ has nonnegative integer coefficients when the $\mu ^{(k)}
$'s are rectangular partitions of decreasing widths
\end{enumerate}
\end{theorem}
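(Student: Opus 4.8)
The result is quoted from \cite{Lec}; the argument I would give runs as follows. Write $t=q^{2/\abs{\di}}$ and let $R_A^+=\{\veps_i-\veps_j\mid i<j\}$ be the positive roots of the subsystem $GL_n\subset G$. Since $U$ consists only of type $A$ simple roots (it never contains $\alpha_n$), we have $R_U^+\subset R_A^+$ and hence a disjoint decomposition $R_G^+\setminus R_U^+=(R_A^+\setminus R_U^+)\sqcup(R_G^+\setminus R_A^+)$. This factors the defining product of the $q$-partition function (the $L$-weighted one $\mathcal{P}_q^{U,L}$ in type $B$, and $\mathcal{P}_q^U$, i.e. $L\equiv1$, in types $C,D$) as
\begin{equation*}
\prod_{\alpha\in R_G^+\setminus R_U^+}\frac{1}{1-q^{L(\alpha)}e^\alpha}=\Omega_U^A(t)\cdot E^\di,\qquad \Omega_U^A(t)=\prod_{\alpha\in R_A^+\setminus R_U^+}\frac{1}{1-te^\alpha},
\end{equation*}
because the type $A$ roots are long in type $B$ (so they carry $q^2=t$) and carry $q=t$ in types $C,D$. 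The complementary factor $E^\di$ runs over $R_G^+\setminus R_A^+$, namely $\veps_i+\veps_j$ together with the $\veps_i$ in type $B$ and the $2\veps_i$ in type $C$, and by Littlewood's identities
\begin{equation*}
E^\di=\sum_{\delta\in\Pa^\di}q^{\abs{\delta}/\abs{\di}}\,s_\delta(e^{\veps_1},\dotsc,e^{\veps_n}),
\end{equation*}
the products $\prod_{i<j}(1-qx_ix_j)^{-1}$, $\prod_{i\le j}(1-qx_ix_j)^{-1}$ and $\prod_i(1-qx_i)^{-1}\prod_{i<j}(1-q^2x_ix_j)^{-1}$ giving the partitions with even columns, even rows, and arbitrary shape, the exponent of $q$ being forced by homogeneity.

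Next I would symmetrize over $S_n$. Since $\mathbf{1}=(1,\dotsc,1)$ is $S_n$-invariant and $\rho_G-\rho_{GL_n}$ is a multiple of $\mathbf{1}$, for $w\in S_n$ the dot actions agree: $w(\widehat{\la}+\rho_G)-\rho_G=w(\widehat{\la}+\rho_{GL_n})-\rho_{GL_n}$; this is exactly where stability (summation over $S_n$ rather than $W_G$) is used. Applying the alternating symmetrizer $\sum_{w\in S_n}(-1)^{\ell(w)}w$ to $\bigl(\prod_{\alpha\in R_G^+\setminus R_U^+}(1-q^{L(\alpha)}e^\alpha)^{-1}\bigr)e^{\widehat{\mu}+\rho_{GL_n}}$ and dividing by the Weyl denominator produces $\sum_{\widehat{\la}}{}^{\infty}K_{\widehat{\la},\widehat{\mu}}^{G,U}(q)\,s_{\widehat{\la}}$; as $E^\di$ is $S_n$-symmetric it pulls out of the symmetrizer, and the remaining factor is the parabolic Hall--Littlewood series $\sum_\kappa K_{\kappa,\widehat{\mu}}^{GL_n,U}(t)\,s_\kappa$. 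Extracting the coefficient of $s_{\widehat{\la}}$ and using $s_\delta s_\kappa=\sum_{\widehat{\la}}c_{\delta\kappa}^{\widehat{\la}}s_{\widehat{\la}}$ gives
\begin{equation*}
{}^{\infty}K_{\widehat{\la},\widehat{\mu}}^{G,U}(q)=\sum_{\delta\in\Pa^\di}\sum_\kappa q^{\abs{\delta}/\abs{\di}}\,c_{\delta\kappa}^{\widehat{\la}}\,K_{\kappa,\widehat{\mu}}^{GL_n,U}(t).
\end{equation*}

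It remains to pass back to $\la,\mu,\nu$. Because $\mathcal{P}_t^{U}$ is supported on the (coordinate-sum-zero) root lattice, $K_{\kappa,\widehat{\mu}}^{GL_n,U}(t)$ vanishes unless $\abs{\kappa}=\abs{\widehat{\mu}}$; combined with $\abs{\delta}+\abs{\kappa}=\abs{\widehat{\la}}$ and $\abs{\widehat{\la}}=na-\abs{\la}$, $\abs{\widehat{\mu}}=na-\abs{\mu}$ this forces $\abs{\delta}=\abs{\mu}-\abs{\la}$ on the support, so the prefactor collapses to the single monomial $q^{(\abs{\mu}-\abs{\la})/\abs{\di}}$. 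Finally I would invoke the complementation symmetry of Littlewood--Richardson coefficients in the $n\times a$ rectangle, $c_{\delta\kappa}^{\widehat{\la}}=c_{\delta,\la}^{\nu}=c_{\la\delta}^{\nu}$ with $\nu=\kappa^\vee$ the complement of $\kappa$, together with the corresponding complementation/reversal invariance of the $GL_n$ parabolic Lusztig $q$-analogue, which identifies $K_{\kappa,\widehat{\mu}}^{GL_n,U}(t)$ with $K_{\nu,\mu}^{GL_n,U}(t)$ (no residual power of $t$, since $\abs{\kappa}=\abs{\widehat{\mu}}$ forces $\abs{\nu}=\abs{\mu}$); this invariance follows from the shift-invariance recorded in Remark~(ii) applied to $\widehat{\la}=a\mathbf{1}-w_0\la$ and $\widehat{\mu}=a\mathbf{1}-w_0\mu$ together with the $w_0$-reversal symmetry of parabolic Kostka polynomials. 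Substituting yields exactly \eqref{Dfrak}. Part (2) is then immediate: when the $\mu^{(k)}$ are rectangular of decreasing widths the polynomials $K_{\nu,\mu}^{GL_n,U}(t)$ have nonnegative coefficients by Theorem \ref{TH_broer}, while $c_{\la\delta}^{\nu}\ge0$ and the prefactor is a nonnegative power of $q$, so every coefficient of \eqref{Dfrak} is nonnegative.

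I expect the reversal/complementation bookkeeping of the last paragraph to be the crux: one must check that the Levi datum $U$ and the parameter $t$ are transported consistently under $\la\mapsto\widehat{\la}$, $\mu\mapsto\widehat{\mu}$ and the reversal $w_0$ (in particular that the reversal of the Levi composition $\eta\mapsto\widehat{\eta}$ is exactly absorbed into the $U$ appearing on the right of \eqref{Dfrak}), so that no spurious power of $t$ survives beyond the prefactor already produced by $E^\di$.
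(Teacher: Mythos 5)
The paper gives no proof of Theorem \ref{th_qdual2} for you to be compared against: the result is quoted from \cite{Lec}, the only argument in the text being the remark that Assertion 2 follows from Theorem \ref{TH_broer} (directly for $Sp_{2n}$ and $SO_{2n}$, via Assertion 1 and the $GL_n$ case of Broer's theorem for $SO_{2n+1}$) --- and your derivation of Assertion 2 is consistent with that remark. Your outline of Assertion 1 has the correct architecture: the splitting $R_G^+\setminus R_U^+=(R_A^+\setminus R_U^+)\sqcup(R_G^+\setminus R_A^+)$ is legitimate because $U$ never contains $\alpha_n$; the $q$-powers in the Littlewood expansions are right (the type $A$ roots are long in $B_n$, hence carry $q^{L(\alpha)}=q^2=t$, and carry $q=t$ in types $C,D$); the $S_n$-symmetrization is valid because $\rho_G-\rho_{GL_n}$ is a scalar multiple of $\mathbf{1}=(1,\dotsc,1)$; and the support argument collapsing $q^{\abs{\delta}/\abs{\di}}$ to the prefactor $q^{(\abs{\mu}-\abs{\la})/\abs{\di}}$ is correct.

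There is, however, one step that fails as written: the complementation. After symmetrizing, the series you obtain is $\sum_{\kappa}K^{GL_n,U}_{\kappa,\widehat{\mu}}(t)\,s_{\kappa}$ where $\kappa$ runs over \emph{all} $GL_n$-dominant weights in $\Z^n$, not only over partitions, and the non-partition terms genuinely contribute. Take $G=Sp_4$ (so $\di=(2)$, $t=q$), $n=2$, $\mu^{(1)}=\mu^{(2)}=(2)$, $\la=\emptyset$; then $a=2$, $\widehat{\la}=(2,2)$, $\widehat{\mu}=(0,0)$, and a direct computation gives ${}^{\infty}K^{G,U}_{\widehat{\la},\widehat{\mu}}(q)=q^2+q^4$, matching the right-hand side of \eqref{Dfrak}, which is $q^2\bigl(K^{GL_2,U}_{(2,2),(2,2)}(q)+K^{GL_2,U}_{(4,0),(2,2)}(q)\bigr)=q^2(1+q^2)$. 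But the only \emph{partition} $\kappa$ with $\abs{\kappa}=\abs{\widehat{\mu}}=0$ is $\kappa=\emptyset$, which accounts only for the term $q^2$; the term $q^4$ arises from $\delta=(4)$ paired with the dominant weight $\kappa=(2,-2)$, for which $K^{GL_2,U}_{(2,-2),(0,0)}(q)=q^2$ and $c^{(2,2)}_{(4),(2,-2)}=1$, and $(2,-2)$ has no complement inside the $2\times 2$ rectangle in your sense. So restricting the complementation argument to partitions $\kappa\subset(a^n)$ yields a false identity. The repair is that both symmetries extend to arbitrary dominant $\kappa$: one has $c^{\widehat{\la}}_{\delta\kappa}=c^{\nu}_{\delta\la}$ and $K^{GL_n,U}_{\kappa,\widehat{\mu}}(t)=K^{GL_n,U^{*}}_{\nu,\mu}(t)$ with $\nu=a\mathbf{1}-w_0\kappa$, where $U^{*}=-w_0(U)$ is the reversed Levi; the second identity follows from $\mathcal{P}^{U^{*}}_t(-w_0\beta)=\mathcal{P}^{U}_t(\beta)$, $-w_0\rho_{GL_n}=\rho_{GL_n}$, and the substitution $w\mapsto w_0ww_0$. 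After this change of variables the coefficient $c^{\nu}_{\delta\la}$ (with $\delta,\la$ partitions) kills every non-partition $\nu$, which yields \eqref{Dfrak}. Note that the exchange $U\leftrightarrow U^{*}$ is exactly the absorption of the reversal $\eta\mapsto\widehat{\eta}$ that you flagged as the crux; it is real, and it is resolved by this duality of $q$-analogues rather than by rectangle complementation of partitions alone.
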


\noindent \textbf{Remarks :}

\noindent $\mathrm{(i):}$ \textbf{\ }Assertion 2 of the previous theorem
follows directly from Theorem \ref{TH_broer} for $G=SP_{2n}$ and $SO_{2n}.$
For $G=SO_{2n+1},$ we have to use Assertion 1 and Theorem \ref{TH_broer} for
$G=GL_{n}$.

\noindent $\mathrm{(ii):}$ Proposition \ref{prop_dual} generalizes a similar
duality result in type $A_{n-1}.$ For $(\mu ^{(1)},\ldots ,\mu ^{(p)})$ a $p$
-tuple of partitions, we have $K_{\nu ,\mu }^{GL_{n},U}(1)=c_{\mu
^{(1)},\ldots ,\mu ^{(p)}}^{\nu }$ where $c_{\mu ^{(1)},\ldots ,\mu
^{(p)}}^{\nu }$ is the multiplicity of $V^{GL_{n}}(\nu )$ in $V^{GL_{n}}(\mu
^{(1)})\otimes \cdots \otimes V^{GL_{n}}(\mu ^{(p)}).$ We set for completion
\begin{equation}
\mathfrak{K}_{\mu ^{(1)},\ldots ,\mu ^{(p)}}^{\lambda ,\emptyset }(q)=K_{\nu
,\mu }^{GL_{n},U}(q).  \label{dual_A}
\end{equation}

Recall the following theorem connecting one-dimensional sums in affine type 
$A_{n}^{(1)}$ with parabolic Lusztig $q$-analogues for $GL_{n}$.

\begin{theorem}
\cite{Sh}\label{th-shimo} Let $B$ be the tensor product of type $A_{n}^{(1)}$ KR
crystals associated to the $p$-tuple of rectangular partitions $
(R^{(1)},...,R^{(p)})$ of decreasing widths.\ Then for any partition $
\lambda $ in $\mathcal{P}_{n}$, we have
\begin{equation*}
\overline{X}_{\lambda ,B}^{\emptyset }(q)=q^{||R||}\mathfrak{K}
_{R^{(1)},\ldots ,R^{(p)}}^{\lambda ,\emptyset }(q^{-1})=q^{||R||}\ K_{\nu
,\mu }^{GL_{n},U}(q^{-1})
\end{equation*}
where $U$ is defined in (\ref{I_from_mu}) and
\begin{align} \label{E:||R||}
  ||R|| = \sum_{1\le i<j\le p} |R_i \cap R_j|.
\end{align}
\end{theorem}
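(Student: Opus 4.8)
The plan is to prove the two asserted equalities separately. The second equality, $\mathfrak{K}_{R^{(1)},\dots,R^{(p)}}^{\lambda,\emptyset}(q)=K_{\nu,\mu}^{GL_n,U}(q)$, is not substantive: it is the definition \eqref{dual_A} read backwards, once one unwinds the notation. Here $\mu\in\mathbb{N}^n$ is the concatenation of the rows of $R^{(1)},\dots,R^{(p)}$ (so $R^{(i)}=(s_i^{r_i})$ contributes the $\eta_i=r_i$ equal parts $s_i$), $U$ is the parabolic \eqref{I_from_mu} whose Levi is $GL_{r_1}\times\dots\times GL_{r_p}$, and the subscript $\nu$ is identified with $\lambda$. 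So the real content is the first equality, which I would reduce in two steps: an energy--coenergy complementarity, and then the identification of the energy one-dimensional sum with the generalized Kostka polynomial.

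First I would pass from the intrinsic coenergy $\Db$ used in \eqref{E:Xdef} to the ordinary energy $D$. For two type $A_n^{(1)}$ KR crystals the maximum of the local coenergy on $B^{r_i,s_i}\otimes B^{r_j,s_j}$ is $\abs{R_i\cap R_j}=\min(r_i,r_j)\min(s_i,s_j)$, so that the single-pair energy and coenergy satisfy $H+\Hb=\abs{R_i\cap R_j}$, which follows from \eqref{E:Hb0} together with the opposite normalizations of the two functions. Summing these contributions via \eqref{E:DbCC} yields the global complementarity $D_B(b)+\Db_B(b)=\sum_{i<j}\abs{R_i\cap R_j}=\Vert R\Vert$ for all $b\in B$. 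Consequently $\Xb_{\lambda,B}^{\emptyset}(q)=\sum_b q^{\Db(b)}=q^{\Vert R\Vert}\sum_b q^{-D(b)}$, and the theorem reduces to the energy one-dimensional sum identity $\sum_{b\in\hw_{I_0}^{\lambda}(B)} q^{D(b)}=\mathfrak{K}_{R^{(1)},\dots,R^{(p)}}^{\lambda,\emptyset}(q)$.

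Next I would establish this identity by a statistic-preserving bijection. The generalized Kostka polynomial $\mathfrak{K}_{R}^{\lambda,\emptyset}(q)=K_{\nu,\mu}^{GL_n,U}(q)$ admits a combinatorial description as $\sum_T q^{\operatorname{charge}(T)}$, where $T$ ranges over Littlewood--Richardson tableaux of shape $\lambda$ and rectangular content $R$, and $\operatorname{charge}$ is the Lascoux--Sch\"utzenberger statistic extended to rectangular content via catabolism. On the crystal side, $B\cong\bigotimes_i B_{A_{n-1}}(s_i^{r_i})$ as an $I_0$-crystal, so its $I_0$-highest weight vertices of weight $\lambda$ are indexed by the same Littlewood--Richardson data through column insertion. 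The remaining point is that under this indexing the energy $D$ matches charge: I would prove this through \eqref{E:DbCC}, computing each local contribution $\Hb_{B^{r_i,s_i},B^{r_j,s_j}}$ from the combinatorial $R$-matrix, which in type $A$ is tableau switching, and checking that the accumulated local energies reproduce the catabolism-based charge. The base case of tensor factors $B^{1,s}$ is exactly the Nakayashiki--Yamada theorem \cite{NY}, and the rectangular case follows from the realization of the affine type $A$ crystal structure by promotion, together with invariance of both sides under the combinatorial $R$-matrix \cite{Sh:crystal,SW}.

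The main obstacle is precisely this last matching of the energy function with charge for general rectangular tensor factors: unlike the $B^{1,s}$ case, neither the affine $0$-action nor the combinatorial $R$-matrix is given by an explicit closed formula, so the comparison must be organized around the structural properties of the energy (constancy on $I_0$-components, the recursion \eqref{E:Hb0}, and compatibility with the $R$-matrix) and the corresponding recursive catabolism definition of charge, rather than by direct computation. Everything else --- the complementarity reduction and the passage between the three combinatorial incarnations of the generalized Kostka polynomial --- is routine bookkeeping once this comparison is in place.
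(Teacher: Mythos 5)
First, a point of reference: the paper does not prove Theorem \ref{th-shimo} at all --- it is recalled verbatim from \cite{Sh} --- so there is no internal argument to measure your proposal against; the question is only whether your reconstruction stands on its own. The peripheral parts of it do. The identity $D_{B}(b)+\Db_{B}(b)=||R||$ in type $A$ is exactly the paper's \eqref{E:DDb}, and your derivation of it is sound (in type $A$ each single factor has $\Db_{B^{r,s}}\equiv 0$, so \eqref{E:DbCC} together with the local complementarity $H+\Hb=|R_i\cap R_j|$ gives the claim); likewise your observation that the second asserted equality is just \eqref{dual_A} read backwards is correct.

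The genuine gap is that everything else --- which is the entire content of the theorem --- is asserted or planned rather than proved. Two identifications are needed: (i) under the bijection between $\hw_{I_0}^{\la}(B)$ and Littlewood--Richardson tableaux of shape $\la$ and rectangular content $R$, the energy $D$ equals charge; and (ii) the charge generating function over those tableaux equals the parabolic Lusztig $q$-analogue $K_{\nu,\mu}^{GL_n,U}(q)$ of Definition \ref{def_K}. You present (ii) as a known ``combinatorial description'' of $\mathfrak{K}_{R}^{\la,\emptyset}(q)$, but in this paper that polynomial is \emph{defined} by the $q$-partition function via \eqref{dual_A}, so (ii) is not a definition: it is a rectangular-content generalization of the Lascoux--Sch\"utzenberger charge theorem, itself a hard result that in the literature is obtained through the catabolism, Demazure-character and nilpotent-variety arguments of \cite{KS} and \cite{Sh:crystal}, and you give it no proof and no citation. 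As for (i), you explicitly label it the ``main obstacle'' and offer only the shape of an argument (structural properties of $\Hb$, the recursion \eqref{E:Hb0}, invariance under the combinatorial $R$-matrix) without carrying out the induction that matches accumulated local energies with the catabolism recursion for charge. Since (i) and (ii) together are precisely what \cite{Sh} (building on \cite{NY}, \cite{SW}, \cite{Sh:crystal}, \cite{KS}) establishes, your proposal is a correct roadmap through that literature, but as a proof it has a hole exactly where the theorem lives; to close it you would either have to execute the energy--charge comparison for general rectangles and prove the charge description of $K_{\nu,\mu}^{GL_n,U}(q)$, or replace both steps by precise citations to the results in \cite{SW}, \cite{Sh:crystal}, \cite{KS} that contain them.
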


\begin{theorem}
\label{Th_X=K}Let $B$ be a tensor product of $p$ KR crystals. Assume the
widths of the rectangles $R^{(1)},\ldots ,R^{(p)}$ associated to $B$ are
decreasing and the large rank hypothesis is satisfied. Then, for any $
\lambda \in \mathcal{P}_{n}$
\begin{equation*}
\overline{X}_{\lambda ,B}^{\diamondsuit }(q)=q^{\frac{2(||R||+|R|-|\lambda |)
}{\left| \diamondsuit \right| }}\ \mathfrak{K}_{R^{(1)},\ldots
,R^{(p)}}^{\lambda ,\diamondsuit }(q^{-1})=q^{\frac{2(||R||+|R|-|\lambda |)}{
\left| \diamondsuit \right| }}\,{}^{\infty }K_{\widehat{\lambda },\widehat{
\mu }}^{G,U}(q^{-1})
\end{equation*}
where $U$ is defined in (\ref{I_from_mu}) and $||R||$ in \eqref{E:||R||}.
\end{theorem}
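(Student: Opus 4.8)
The plan is to obtain this purely formally, by chaining together three decomposition formulas that all share the Littlewood--Richardson shape $\sum_{\nu\in\Pa_n}\sum_{\delta\in\Pa_n^\di}c^\nu_{\la\delta}(\cdots)$ and then matching $q$-substitutions. The three inputs are: the main decomposition Theorem \ref{Th_dec_X}, which writes $\Xb^\di_{\la,B^R}$ in terms of the type $A$ one-dimensional sums $\Xb^\emptyset_{\nu,B_A^R}$; Theorem \ref{th-shimo}, which identifies each $\Xb^\emptyset_{\nu,B_A^R}$ with a $GL_n$ parabolic Lusztig $q$-analogue $K^{GL_n,U}_{\nu,\mu}$; and Lecouvey's decomposition Theorem \ref{th_qdual2}(1), which expands $\mathfrak{K}^{\la,\di}_{R^{(1)},\dotsc,R^{(p)}}$ over that very same family $K^{GL_n,U}_{\nu,\mu}$. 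Throughout I take $\mu^{(k)}=R^{(k)}$, so that $\abs{\mu}=\abs{R}$; the decreasing-width hypothesis on the $R^{(k)}$ is exactly what licenses the use of Theorem \ref{th-shimo}.

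First I would apply Theorem \ref{Th_dec_X} and then substitute Theorem \ref{th-shimo}, applied to the type $A$ crystal $B_A^R$, evaluated at $q^{2/\abs{\di}}$. Since that theorem carries the prefactor $q^{||R||}$ and the argument $q^{-1}$, each summand $\Xb^\emptyset_{\nu,B_A^R}(q^{2/\abs{\di}})$ becomes $q^{2||R||/\abs{\di}}\,K^{GL_n,U}_{\nu,\mu}(q^{-2/\abs{\di}})$. Pulling the common power $q^{2||R||/\abs{\di}}$ out of the sum, this would yield
\begin{equation*}
\Xb^\di_{\la,B^R}(q)=q^{\frac{\abs{R}-\abs{\la}+2||R||}{\abs{\di}}}\sum_{\nu\in\Pa_n}\sum_{\delta\in\Pa_n^\di}c^\nu_{\la\delta}\,K^{GL_n,U}_{\nu,\mu}(q^{-2/\abs{\di}}).
\end{equation*}

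Next I would recognize the remaining double sum. Evaluating Theorem \ref{th_qdual2}(1) at $q^{-1}$ (with $\mu^{(k)}=R^{(k)}$, hence $\abs{\mu}=\abs{R}$) gives exactly
\begin{equation*}
\sum_{\nu\in\Pa_n}\sum_{\delta\in\Pa_n^\di}c^\nu_{\la\delta}\,K^{GL_n,U}_{\nu,\mu}(q^{-2/\abs{\di}})=q^{\frac{\abs{R}-\abs{\la}}{\abs{\di}}}\,\mathfrak{K}^{\la,\di}_{R^{(1)},\dotsc,R^{(p)}}(q^{-1}).
\end{equation*}
Substituting this back and adding the exponents, $\tfrac{\abs{R}-\abs{\la}+2||R||}{\abs{\di}}+\tfrac{\abs{R}-\abs{\la}}{\abs{\di}}=\tfrac{2(||R||+\abs{R}-\abs{\la})}{\abs{\di}}$, produces the first asserted equality; the second then follows at once from the definition $\mathfrak{K}^{\la,\di}_{R^{(1)},\dotsc,R^{(p)}}(q)={}^\infty K^{G,U}_{\widehat{\la},\widehat{\mu}}(q)$. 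The work here is entirely bookkeeping: the only points needing care are the two nested substitutions $q\mapsto q^{2/\abs{\di}}$ and $q\mapsto q^{-1}$, and the observation that the index sets and coefficients $\sum_{\nu}\sum_{\delta}c^\nu_{\la\delta}$ appearing in Theorems \ref{Th_dec_X} and \ref{th_qdual2}(1) coincide term by term, so that the two expansions can be identified directly. I do not anticipate a genuine obstacle, since all the substantive content has already been absorbed into the three cited theorems, and the remaining task is to verify that their $q$-gradings assemble into the single exponent $\tfrac{2(||R||+\abs{R}-\abs{\la})}{\abs{\di}}$.
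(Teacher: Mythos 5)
Your proposal is correct and matches the paper's own proof, which simply cites Theorems \ref{th-shimo}, \ref{th_qdual2} and \ref{Th_dec_X} and leaves the chaining implicit; you have carried out exactly that chaining, and your exponent bookkeeping (including the substitutions $q\mapsto q^{2/\abs{\di}}$ and $q\mapsto q^{-1}$, and the identification $\abs{\mu}=\abs{R}$) is accurate.
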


\begin{proof}
This follows from Theorems \ref{th-shimo}, \ref{th_qdual2} and \ref{Th_dec_X}.
\end{proof}

\begin{theorem}
\label{TH_transpose}Let $B$ be a tensor product of KR crystals. Assume the
large rank hypothesis is satisfied. Then, for any $\lambda \in \mathcal{P}
_{n}$
\begin{equation*}
\overline{X}_{\lambda ^{t},B^{t}}^{\diamondsuit ^{t}}(q)=q^{\frac{
2(||R||+|R|-|\lambda |)}{\left| \diamondsuit \right| }}\ \overline{X}
_{\lambda ,B}^{\diamondsuit }(q^{-1}).
\end{equation*}
\end{theorem}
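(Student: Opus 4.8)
The plan is to derive the transpose symmetry from the decomposition theorem \ref{Th_dec_X}, thereby reducing the whole statement to the analogous symmetry in type $A_{n-1}^{(1)}$. Write $R^t=(R_1^t,\dotsc,R_p^t)$ for the sequence of transposed rectangles, so that $B^t=B^{R^t}$, and choose $n$ large enough that $B^R,B^{R^t}\in\CCinf$ and $\la,\la^t\in\Pa_n^\infty$ simultaneously (possible since the stable $1$-d sums depend only on the rectangles and the kind). I would first record the elementary invariances under transposition: transposing each rectangle preserves total size and all pairwise overlaps, so $\abs{R^t}=\abs{R}$ and $||R^t||=||R||$, while obviously $\abs{\la^t}=\abs{\la}$ and $\abs{\di^t}=\abs{\di}$. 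Moreover $\di\mapsto\di^t$ interchanges $(2)$ and $(1,1)$ and fixes $(1)$, which is exactly partition transpose on two-cell shapes, so $\delta\mapsto\delta^t$ restricts to a bijection $\Pa_n^\di\to\Pa_n^{\di^t}$. These identities ensure that the prefactors produced by \ref{Th_dec_X} on the two sides agree, concentrating all content in the type-$A$ factors.

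Concretely, I would expand the left-hand side $\Xb_{\la^t,B^{R^t}}^{\di^t}(q)$ by \ref{Th_dec_X} and reindex its double sum by $\nu\mapsto\nu^t$, $\delta\mapsto\delta^t$. The transpose invariance of Littlewood-Richardson coefficients, $c_{\la^t\delta^t}^{\nu^t}=c_{\la\delta}^\nu$, together with the bijection above, matches the index set and the coefficients with those obtained by expanding $\Xb_{\la,B^R}^\di(q^{-1})$ on the right-hand side. After this reindexing the entire identity collapses to a single relation among the type-$A$ building blocks,
\[
\Xb_{\nu^t,B_A^{R^t}}^\emptyset(q)=q^{||R||}\,\Xb_{\nu,B_A^R}^\emptyset(q^{-1}),
\]
to be used with $q$ replaced by $q^{2/\abs{\di}}$. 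Tracking exponents, this reduces both sides of the theorem to the common expression $q^{(2||R||+\abs{R}-\abs{\la})/\abs{\di}}\sum_{\nu\in\Pa_n}\sum_{\delta\in\Pa_n^\di}c_{\la\delta}^\nu\,\Xb_{\nu,B_A^R}^\emptyset(q^{-2/\abs{\di}})$, establishing the theorem modulo the displayed type-$A$ identity.

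The crux, and the only genuinely new input, is this type-$A$ transpose symmetry of one-dimensional sums. Via Theorem \ref{th-shimo} it is equivalent to the transpose symmetry $K_{\nu,\mu}^{GL_n,U}(q)=q^{||R||}K_{\nu^t,\mu^t}^{GL_n,U^t}(q^{-1})$ of parabolic Lusztig $q$-analogues (generalized Kostka polynomials) for $GL_n$; it is precisely the $\di=\emptyset$ avatar of the theorem. I would quote this from the type-$A$ literature (\cite{SW}, \cite{Sh:crystal}), or, failing a clean citation, re-derive it from the order-independence \eqref{E:Dbiso} of $\Db$ together with the complementation/transpose duality on type-$A_{n-1}^{(1)}$ KR crystals. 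I expect this type-$A$ symmetry to be the main obstacle; once it is in hand, the invariances, the Littlewood-Richardson transpose, and the exponent verification are entirely routine. Equivalently, one could route the argument through Theorem \ref{Th_X=K}, reducing to the same transpose identity for the $GL_n$ polynomials, but \ref{Th_dec_X} is preferable here since it carries no decreasing-width hypothesis.
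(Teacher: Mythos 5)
Your proposal follows essentially the same route as the paper: expand via Theorem \ref{Th_dec_X}, use the invariances $\abs{R^t}=\abs{R}$, $||R^t||=||R||$, $\abs{\di^t}=\abs{\di}$ together with $c_{\la^t\delta^t}^{\nu^t}=c_{\la\delta}^{\nu}$, and reduce everything to the type-$A$ transpose identity $\Xb_{\la^t,B^t}^{\emptyset}(q)=q^{||R||}\,\Xb_{\la,B}^{\emptyset}(q^{-1})$. The paper obtains that identity exactly as you anticipate, from Theorem \ref{th-shimo} combined with the transpose symmetry of the generalized Kostka polynomials, the correct reference for which is \cite{KS} rather than \cite{SW} or \cite{Sh:crystal}.
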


\begin{proof}
For $\diamondsuit =\emptyset ,$ the equality $\frak{K}_{(R^{(1)})^{t},\ldots
,(R^{(p)})^{t}}^{\lambda ^{t},\emptyset }(q)=q^{\left\| B\right\| }\frak{K}
_{R^{(1)},\ldots ,R^{(p)}}^{\lambda ,\diamondsuit }(q^{-1})$ was proved in
\cite{KS}. By using Theorem \ref{th-shimo}, one obtains $\overline{X}
_{\lambda ^{t},B^{t}}^{\emptyset }(q)=q^{\left\| B\right\| }\overline{X}
_{\lambda ,B}^{\emptyset }(q^{-1}).$ Theorem \ref{Th_dec_X} gives
\begin{equation*}
\overline{X}_{\lambda ^{t},B^{t}}^{\diamondsuit ^{t}}(q)=q^{\frac{\left|
B\right| -\left| \lambda \right| }{\left| \diamondsuit \right| }}\sum_{\nu
\in \mathcal{P}_{n}}\sum_{\delta \in \mathcal{P}_{n}^{\diamondsuit
}}c_{\lambda ^{t},\delta ^{t}}^{\nu ^{t}}\ \overline{X}_{\nu
^{t},B^{t}}^{\emptyset }(q^{\frac{2}{\left| \diamondsuit \right| }}).
\end{equation*}
But $c_{\lambda ^{t},\delta ^{t}}^{\nu ^{t}}=c_{\lambda ,\delta }^{\nu }.$
Thus by using the previous identity for $\diamondsuit =\emptyset $
\begin{equation*}
\overline{X}_{\lambda ^{t},B^{t}}^{\diamondsuit ^{t}}(q)=q^{\frac{2\left\|
B\right\| }{\left| \diamondsuit \right| }+\frac{\left| B\right| -\left|
\lambda \right| }{\left| \diamondsuit \right| }}\sum_{\nu \in \mathcal{P}
_{n}}\sum_{\delta \in \mathcal{P}_{n}^{\diamondsuit }}c_{\lambda ,\delta
}^{\nu }\ \overline{X}_{\nu ,B}^{\emptyset }(q^{-\frac{2}{\left|
\diamondsuit \right| }})=q^{\frac{2(||R||+|R|-|\lambda |)}{\left|
\diamondsuit \right| }}\ \overline{X}_{\lambda ,B}^{\diamondsuit }(q^{-1}).
\end{equation*}
\end{proof}

\section{Splitting preserves energy}
\label{S:energy}

In this section we assume $\geh$ is of affine type with $\geh_0$ of type $B_n$, $C_n$, or $D_n$.

For $B\in\CC(\geh)$ we define the opposite grading $D:B\to\Z$ (the intrinsic energy)
to $\Db_B$. We show in Theorem \ref{T:Dsplit} that it is invariant under the
row-splitting map $S$. The normalization of $D$ is somewhat subtle.
For example, $\Db$ is nonnegative with minimum value zero, while $D$
may be negative. Also, if $B_1,B_2\in\CC$
are both tensor products of KR crystals, then the formula relating $H_{B_1,B_2}$
and $\Hb_{B_1,B_2}$, requires knowledge of all the KR tensor factors in $B_1$ and $B_2$.

For this reason, instead of an inductive definition analogous to that of $\Db_B$
we make the following definitions.

For $B_i=B^{R_i}=B^{r_i,s_i}\in\CCinf(\geh)$ for $i\in\{1,2\}$ we define
\begin{align}\label{E:H2}
  H_{B_1,B_2}(b_1\otimes b_2) = |R_1\cap R_2| - \Hb_{B_1,B_2}(b_1\otimes b_2)
\end{align}
for $b_1\in B_1$ and $b_2\in B_2$, where $|R_1\cap R_2| = \min(r_1,r_2)\min(s_1,s_2)$
is the number of cells in the rectangular partition given by the intersection of the Young
diagrams of the rectangular partitions $R_1$ and $R_2$. We define
\begin{align} \label{E:D1}
  D_{B^{R_1}}(b) = - \Db_{B^{R_1}}(b)\qquad\text{for $b\in B^{R_1}$.}
\end{align}
Analogous to \eqref{E:DbCC} we define
\begin{align} \label{E:DCC}
  D_{B^R}(b) = \sum_{i=1}^p D_{B^{R_i}}(b_i^{(1)}) +
  \sum_{1\le i<j\le p} H_{B_i,B_j}(b_i\otimes b_j^{(i+1)}).
\end{align}
We make the same definitions \eqref{E:H2}, \eqref{E:D1}, and \eqref{E:DCC}
for type $A_{n-1}^{(1)}$ also. Then \eqref{E:D1} reads $D_{B^{R_1}_A}=-\Db_{B^{R_1}_A}\equiv0$.
Using \eqref{E:DbCC} we deduce that
\begin{align}
\label{E:DDb}
  D_{B^R}(b) &= ||R|| - \Db_{B^R}(b) &\qquad&\text{for $b\in B^R$} \\
\label{E:DDbA}
  D_{B^R_A}(b) &= ||R|| - \Db_{B^R_A}(b) &\qquad&\text{for $b\in B^R_A$}
\end{align}
where $||R||$ is defined in \eqref{E:||R||}.
$D_{B^R_A}$ has nonnegative values with minimum value $0$ in the large rank case,
while $D_{B^R}$ has negative values in general.

\begin{proposition} \label{P:DAD} For any sequence of rectangles $R$ such that
$B^{R_i}\in\CCinf(\geh)$,
\begin{align}\label{E:DAD}
  D_{B_A^R} = D_{B^R} \circ i_A^R.
\end{align}
\end{proposition}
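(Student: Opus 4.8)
The plan is to deduce this directly from the corresponding statement for the intrinsic coenergy grading, namely Proposition \ref{P:DmaxB}, by observing that the passage from $\Db$ to $D$ introduces the \emph{same} constant shift on both crystals.

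First I would recall \eqref{E:DDb} and \eqref{E:DDbA}, which read $D_{B^R} = ||R|| - \Db_{B^R}$ on $B^R$ and $D_{B_A^R} = ||R|| - \Db_{B_A^R}$ on $B_A^R$. The essential point is that the shift $||R||$ depends only on the common sequence of rectangles $R$ (see \eqref{E:||R||}), and not on the ambient crystal nor on the element; in particular it is identical for $B^R$ and $B_A^R$ and is unchanged under the embedding $i_A^R$. Then for $b\in B_A^R$ I would compute
\[
  D_{B^R}(i_A^R(b)) = ||R|| - \Db_{B^R}(i_A^R(b)) = ||R|| - \Db_{B_A^R}(b) = D_{B_A^R}(b),
\]
where the first and third equalities are \eqref{E:DDb} and \eqref{E:DDbA} and the middle equality is exactly Proposition \ref{P:DmaxB}. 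This is \eqref{E:DAD}.

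Since the entire computational content has already been absorbed into Proposition \ref{P:DmaxB} (which in turn rests on Lemmata \ref{L:embedR} and \ref{L:embedH} and Proposition \ref{prop:H for max}), there is no genuine obstacle here; the only thing to verify is the bookkeeping that the two occurrences of $||R||$ agree, which is immediate from the definition. If one preferred an argument not routed through \eqref{E:DDb}--\eqref{E:DDbA}, one could instead mimic the inductive proof of Proposition \ref{P:DmaxB}: using \eqref{E:DCC} together with Lemmata \ref{L:embedR} and \ref{L:embedH} one reduces to a single tensor factor $B^{r,s}$, where $D_{B_A^{r,s}}\equiv 0$ while $i_A^R$ carries the $A_{n-1}$-highest weight vertex of $B_A^{r,s}$ to $u(B^{r,s})$, on which $\Db_{B^{r,s}}$ (and hence $D_{B^{r,s}}$) also vanishes. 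The reduction to Proposition \ref{P:DmaxB} is, however, the cleaner route.
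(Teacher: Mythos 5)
Your primary argument is correct, and it is not the route the paper takes: the paper proves the proposition by re-running the induction behind Proposition \ref{P:DmaxB} with $D$ and $H$ in place of $\Db$ and $\Hb$ --- that is, using \eqref{E:DCC} and Lemma \ref{L:embedR} it reduces to the single-factor case (where both sides vanish) and to the identity $H_{B^{R_1}_A,B^{R_2}_A} = H_{B^{R_1},B^{R_2}} \circ i_A^{R_1,R_2}$, which follows from Lemma \ref{L:embedH} together with the fact that $H$ and $\Hb$ differ by the same constant $|R_1\cap R_2|$ on both sides. You instead treat the proposition as a formal corollary of Proposition \ref{P:DmaxB}: since $D_{B^R} = ||R|| - \Db_{B^R}$ and $D_{B_A^R} = ||R|| - \Db_{B_A^R}$ by \eqref{E:DDb} and \eqref{E:DDbA}, and since the shift $||R||$ of \eqref{E:||R||} depends only on the sequence $R$ and not on the ambient crystal or the element, the identity $\Db_{B_A^R} = \Db_{B^R}\circ i_A^R$ transports verbatim to $D$. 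There is no circularity: both \eqref{E:DDb}--\eqref{E:DDbA} (derived purely formally from \eqref{E:DbCC} and the definitions) and Proposition \ref{P:DmaxB} are established in the paper before this point. The two arguments rest on the same underlying computations (Lemmata \ref{L:embedR}, \ref{L:embedH}, ultimately Proposition \ref{prop:H for max}); yours is slightly more economical, since it reuses the already-available constant-shift relations rather than repeating the tensor-factor reduction, while the paper's version stays self-contained at the level of the $H$-functions. Your fallback sketch is essentially the paper's proof, so the statement is covered either way.
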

\begin{proof} As in the proof of Proposition \ref{P:DmaxB},
we reduce to checking the case $R=(R_1)$
and
\begin{align} \label{E:HAD}
  H_{B^{R_1}_A,B^{R_2}_A} = H_{B^{R_1},B^{R_2}} \circ i_A^{R_1,R_2}.
\end{align}
For \eqref{E:DAD} for $R=(R_1)$ we see that both sides yield zero by definition.
Equation \eqref{E:HAD} follows from Lemma \ref{L:embedH} and the definitions.
\end{proof}

Let $S_A:B^R_A \to B^{S(R)}_A$ be type $A_{n-1}^{(1)}$ row-splitting
of the first tensor factor and $\spl_A:B^R_A\to B^{\rows(R)}_A$
the type A complete splitting into rows (split first factor if possible
and use R-matrices).

\begin{proposition} \label{P:splitembed}
With $R$ such that $B^{R_i}\in\CCinf(\geh)$,
\begin{equation}
\label{E:splitembed}
\begin{split}
i_A^{S(R)} \circ S_A &= S \circ i_A^R \\
i_A^{\rows(R)} \circ \spl_A &= \spl \circ i_A^R.
\end{split}
\end{equation}
\end{proposition}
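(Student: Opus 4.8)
The plan is to prove the first identity for a single row-splitting step and then bootstrap to the full splitting $\spl$ by induction. Since $i_A^R = i_A^{R_1}\otimes\dotsm\otimes i_A^{R_p}$ is a tensor product of the single-factor embeddings \eqref{E:embedKR}, and both $S$ and $S_A$ act only on the first tensor factor, the first identity in \eqref{E:splitembed} reduces immediately to the case $R=(R_1)$ with $R_1=(s^r)$, where it reads $i_A^{S(R_1)}\circ S_A = S\circ i_A^{R_1}$ as maps $B_A^{r,s}\to B^{r-1,s}\otimes B^{1,s}$. For the second identity, recall that $\spl$ is built from the single-factor map $S$ (applied after swapping the leftmost wide factor to the front) together with combinatorial $R$-matrices, and $\spl_A$ is built by the identical sequence of steps in type $A_{n-1}^{(1)}$. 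Each $R$-matrix step is intertwined with the embeddings by Lemma \ref{L:embedR}, and each $S$-step is intertwined by the (reduced) first identity; chaining these intertwining relations along the common sequence of steps yields $\spl\circ i_A^R = i_A^{\rows(R)}\circ\spl_A$. Thus everything reduces to the single-factor first identity.

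For this reduced statement, I would observe that both $S\circ i_A^{R_1}$ and $i_A^{S(R_1)}\circ S_A$ are morphisms of $A_{n-1}$-crystals: $i_A^{R_1}$ and $i_A^{S(R_1)}$ are $A_{n-1}$-crystal embeddings by \eqref{E:embedKR}, while $S$ commutes with all $A_{n-1}$-arrows (these are $I_0$-arrows, hence good, so \eqref{E:Sgood} applies) and $S_A$ is a morphism of type $A_{n-1}^{(1)}$ crystals. Since $B_A^{r,s}\cong B_{A_{n-1}}(s^r)$ is an irreducible $A_{n-1}$-crystal, it is connected and generated from its highest weight vertex $u(B_A^{r,s})$ by the lowering operators $f_i$ $(i\in I_{A_{n-1}})$. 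A morphism of $A_{n-1}$-crystals is therefore determined by its value on $u(B_A^{r,s})$, so it suffices to check that the two composites agree there.

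Finally I would evaluate both sides on $u(B_A^{r,s})$. The embedding $i_A^{R_1}$ sends $u(B_A^{r,s})$ to $u(B^{r,s})=b(r,s,(s^r))$, and by the proof of Proposition \ref{prop:row split} we have $S(u(B^{r,s}))=u(B^{r-1,s})\otimes u'$, where $u'$ is the unique element of $B^{1,s}$ of weight $s(\omega_r-\omega_{r-1})=s\veps_r$. On the other side, the type $A_{n-1}^{(1)}$ analogue of Proposition \ref{prop:row split} gives $S_A(u(B_A^{r,s}))=u(B_A^{r-1,s})\otimes t$, where $t$ is the type $A$ single-row tableau $r^s$; applying $i_A^{S(R_1)}$ and using that the canonical inclusion of Remark \ref{R:topsBnu} reinterprets a type $A$ tableau as the identical KN tableau, this becomes $u(B^{r-1,s})\otimes r^s$. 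The crux is then the identification $u'=r^s$: the single-row KN tableau $r^s$ has weight $s\veps_r$ and is the unique element of $B^{1,s}$ of that weight, so it coincides with $u'$. Hence both composites send $u(B_A^{r,s})$ to $u(B^{r-1,s})\otimes r^s$, completing the single-factor case and with it the proposition. The only delicate point is this last weight-uniqueness identification of $u'$ with the concrete tableau $r^s$; the remainder is a formal propagation of the equality over connected $A_{n-1}$-crystals.
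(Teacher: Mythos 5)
Your proof is correct and follows essentially the paper's own route: both arguments reduce to the single-tensor-factor case $B^{r,s}$ via Lemma \ref{L:embedR} together with the step-by-step definitions of $\spl$ and $\spl_A$, and then verify the one-factor identity. Where the paper settles that last step by observing that $\tops(\max(B^{r,s}))$ consists of unbarred KN tableaux and declaring the identity ``easily verified,'' you supply a precise substitute: both composites are $A_{n-1}$-crystal morphisms out of the connected crystal $B_A^{r,s}$, so they agree once compared on $u(B_A^{r,s})$, which you check correctly using the characterization of $u'$ (the unique element of $B^{1,s}$ of weight $s(\omega_r-\omega_{r-1})$, namely the one-row tableau $r^s$) from the proof of Proposition \ref{prop:row split}.
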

\begin{proof}
By Lemma \ref{L:embedR} and the definitions, we may reduce the statement on $\spl$
to that of $S$ and check $S$ only in the
single tensor factor case $B=B^R=B^{r,s}$. In this case $\tops(\max(B))$ is the $A_{n-1}$-component
of $b(r,s,(s^r))\in B(s^r)\subset B^{r,s}$, $\tops(\max(B))$ consists of type $B_n$, $C_n$, or $D_n$
KN tableaux of shape $(s^r)$ with no barred letters, and \eqref{E:splitembed} is easily verified.
\end{proof}

\begin{theorem} \label{T:Dsplit} For $B=B^R\in\CCinf(\geh^\di)$,
\begin{align}
\label{E:Dsplit}
D_{B^R} &= D_{B^{S(R)}} \circ S \\
\label{E:Dsplitall}
D_{B^R} &= D_{B^{\rows(R)}} \circ \spl.
\end{align}
\end{theorem}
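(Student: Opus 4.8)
The plan is to deduce \eqref{E:Dsplitall} from \eqref{E:Dsplit} and then to prove \eqref{E:Dsplit}. By construction $\spl_R$ is a composition of combinatorial $R$-matrices and single row-splittings $\SR$ of the first tensor factor. Since $D_{B^R}=\|R\|-\Db_{B^R}$ by \eqref{E:DDb}, since $\Db$ is invariant under $I$-crystal isomorphisms by \eqref{E:Dbiso}, and since $\|R\|$ depends only on the multiset of rectangles and is thus unchanged when the $R$-matrices permute tensor factors, $D$ is preserved at every $R$-matrix step. Hence \eqref{E:Dsplitall} follows from \eqref{E:Dsplit} by induction on the number of splittings, each acting on the first factor. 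It remains to prove \eqref{E:Dsplit}, that splitting the first factor $B^{r,s}$ into $B^{r-1,s}\otimes B^{1,s}$ preserves $D$.

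Writing $B^R=B^{r,s}\otimes B''$ and expanding both sides by the additive formula \eqref{E:DCC}, I would argue by induction on the number $p$ of tensor factors. The inductive step isolates two statements: the single-crystal self-energy identity
\[
  D_{B^{r,s}} = D_{B^{r-1,s}\otimes B^{1,s}}\circ \SR,
\]
and the compatibility of the local energy $H$ with row-splitting of the first factor against one further factor $B^{r_j,s_j}$ (after the bookkeeping $R$-matrices implicit in \eqref{E:DCC}). Reducing to these one- and two-crystal statements is essential, because the notion of good arrow and the good-arrow connectedness of Lemma \ref{L:goodarrows} are available only on a single KR crystal.

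For the self-energy identity, recall that $\Db_{B^{r,s}}$ is characterized on $B^{r,s}$ by Lemma \ref{L:DBrs}: it is constant on $I_0$-components, jumps by $+1$ across every good $0$-arrow, and vanishes at $u(B^{r,s})$; moreover $B^{r,s}$ is good-arrow connected. Since $\SR$ is an $I_0$-embedding commuting with good arrows by \eqref{E:Sgood}, the pulled-back function $\Db_{B^{r-1,s}\otimes B^{1,s}}\circ\SR$ is again constant on $I_0$-components, and across a good $0$-arrow $b\to e_0(b)$ one has $\SR(e_0(b))=e_0(\SR(b))$, so Lemma \ref{lem_decrease} makes the pulled-back function jump by $+1$ as well. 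The additive constant is fixed on $\tops(\max(B^{r,s}))$, where the identity is supplied directly by the known type $A_{n-1}^{(1)}$ row-splitting \cite{SW, Sh:crystal} together with Propositions \ref{P:DAD} and \ref{P:splitembed}; as $u(B^{r,s})$ lies there, this determines $D_{B^{r,s}}$ completely. The local-energy compatibility is handled by the same good-arrow propagation on $B^{r,s}\otimes B^{r_j,s_j}$, using \eqref{E:Hb0} for the $0$-steps and pinning the constant through Lemma \ref{L:embedH} and Proposition \ref{prop:H for max} on the maximal component.

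The hard part is verifying the hypotheses of Lemma \ref{lem_decrease} at each good $0$-arrow: one must control $\veps_0$ of the individual factors $b_1$ and $b_2'$ of $\SR(b)$ (and of its $R$-matrix image) so that Lemma \ref{L:DBrs}(2) indeed supplies the per-factor increments of $+1$. This is precisely the point at which the affine definition of $\SR$ through the isomorphism \eqref{E:newsplitiso} and the relation of Corollary \ref{C:DBrs} between $\Db$ and the affine grading $\widehat D$ must be invoked, and where the type-by-type ($\di$-dependent) combinatorics of Section \ref{SS:rowsplit} enters.
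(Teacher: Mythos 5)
Your reduction of \eqref{E:Dsplitall} to \eqref{E:Dsplit} via invariance of $D$ under combinatorial $R$-matrices is fine, but your proof of \eqref{E:Dsplit} has a genuine gap at exactly the point you yourself flag as ``the hard part,'' and that part is the entire content of the theorem. To make the pulled-back function $\Db_{B^{r-1,s}\otimes B^{1,s}}\circ\SR$ jump by $+1$ across every good $0$-arrow of $B^{r,s}$, Lemma \ref{lem_decrease} requires that each factor on which $e_0$ could act --- $b_1$, and $b_2'$ after the combinatorial $R$-matrix --- itself satisfies the per-factor increment, which by Lemma \ref{L:DBrs}(2) demands that $\veps_0$ of that factor exceed the level of its KR crystal. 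Nothing in the construction of $\SR$ gives this: a good $0$-arrow upstairs only provides the global bound $\veps_0(b)>\ell$, and there is no a priori control of how $\veps_0$ distributes over the two factors of $\SR(b)$ or over its $R$-matrix image. You defer this verification to ``the type-by-type combinatorics of Section \ref{SS:rowsplit},'' but no such computation exists there or in your argument; identifying where the difficulty sits is not the same as resolving it.

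The second half of your plan is also internally inconsistent. You correctly observe that good arrows and the connectedness of Lemma \ref{L:goodarrows} are available only on a single KR crystal, yet you then propose to prove the local-energy compatibility ``by the same good-arrow propagation on $B^{r,s}\otimes B^{r_j,s_j}$'' --- no such tool exists on a two-fold tensor product. Moreover, expanding both sides by \eqref{E:DCC} forces you to compare elements obtained by applying $\SR$ and combinatorial $R$-matrices in different orders, and precisely this kind of compatibility is left conjectural in the paper (Remark \ref{R:splitunique}). The paper's actual proof sidesteps every one of these issues: since both sides of \eqref{E:Dsplit} are constant on $I_0$-components, it restricts to $b\in\tops(B^R)$, applies Theorem \ref{Th_SR} to both $b$ and $S(b)$ to reduce \eqref{E:Dsplit} to the single identity $D(\sigma(b))=D(S(\sigma(b)))$, then uses Proposition \ref{P:Ssigmacommute} and Theorem \ref{Th_corespondence} to place everything inside $\hmax(B^R)\subset\max(B^R)$, and finally transfers the statement through the embedding $i_A$ (Propositions \ref{P:DAD} and \ref{P:splitembed}) to the known type $A_{n-1}^{(1)}$ identity of \cite{Sh:crystal}. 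The type-$A$ ingredients that you relegate to ``pinning the additive constant'' in fact carry the whole proof once the $\sigma$-reduction is made; without that reduction, the per-factor $\veps_0$ analysis your approach requires remains open.
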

\begin{proof} We need only prove
\eqref{E:Dsplit}. Since energy functions are constant on $I_0$-components, it suffices to
check \eqref{E:Dsplit} on $b\in \tops(B^R)$. We have
\begin{equation*}
D(b)=D(\sigma(b))-\frac{\abs{R}-\abs{\la(b)}}{|\di|}
\end{equation*}
by Theorem \ref{Th_SR} and \eqref{E:DDb}.
Since $S$ is an embedding of $I_0$-crystals, $S(b)\in\tops(B^{S(R)})$.
Applying the previous argument to $S(b)$ we have
\begin{equation*}
D(S(b))=D(\sigma (S(b)))-\frac{\abs{S(R)}-\abs{\la(S(b))}}{|\di|}.
\end{equation*}
But $\abs{S(R)}=\abs{R}$ and $\abs{\la(S(b))}=\abs{\la(b)}$
since $S$ is an embedding of $I_0$-crystals.
So it suffices to prove that $D(\sigma (b))=D(\sigma (S(b)))$. By
Proposition \ref{P:Ssigmacommute},
this is equivalent to $D(\sigma (b))=D(S(\sigma (b)))$. So
by Theorem \ref{Th_corespondence} we are reduced to prove the equality $D(c)=D(S(c))$ for any
$c\in\hmax(B^R)$. Since $D$ is constant on $I_0$-components we need only show
$D(c)=D(S(c))$ for $c\in \hw_{I_0}(\max(B^R))=\hw_{A_{n-1}}(\tops(\max(B^R)))$.
By Proposition \ref{P:DAD} applied for $R$ and $S(R)$, the desired equality reduces
to the identity $D_A(a)=D_A(S_A(a))$ for any $a\in B_A^R$ which
was established in \cite{Sh:crystal}.
\end{proof}

\begin{remark} In the statement of Theorem \ref{T:Dsplit}, it should be unnecessary
to assume that $\geh$ is reversible and $B^R\in\CCinf(\geh)$.
However for $S$ to make sense there cannot be spin nodes in the $R_i$.
\end{remark}

\appendix

\section{Proofs for Section \ref{S:hat}}
\label{A:hatcrystal}

\subsection{Proof of Proposition \ref{P:Bhatprop}}

\begin{proof}[Proof of Proposition \ref{P:Bhatprop}] Let $b\in\Bhp^\di(\nu,\delta)$ for $\delta\in\Pa_n^\di$.
Observe that the letters of the canonical subtableau $C_\delta^\di$
collectively do not affect any $A_{n-1}$-string. Now $b|^{\nu\setminus\delta}$
is a semistandard tableau in the alphabet $\{\bar{n},\dotsc,\bar{1}\}$. It is well-known
that the set of skew tableaux of a fixed shape, form an $A_{n-1}$-crystal. This proves 1.

For Assertion 2, based on the above observations,
$b$ is $A_{n-1}$-highest weight, if and only if $b|^{\nu/\delta}$
is $A_{n-1}$-highest weight as an element of the type $A_{n-1}$ skew tableau crystal. But it is well-known that
such a skew tableau is $A_{n-1}$-highest weight if and only if its row-reading word is Yamanouchi.
Finally, since the tableau has letters in $\{\bar{n},\dotsc,\bar{1}\}$, if it is $A_{n-1}$-highest weight,
then its weight must have the form $\bla$ for some $\la\in\Pa_n$.

For Assertion 3, suppose $b$ admits $f_n$.
\begin{enumerate}
\item $\di=(1,1)$: The application of $f_n$ to $b$, changes an $n$ (which by the signature
rule, must be in a corner cell of $\delta$) to a $\overline{n-1}$. Since every $n$ sits atop a $\bar{n}$,
Assertion 3 follows.
\item $\di=(1)$: The application of $f_n$ to $b$ changes some $0$ to $\bar{n}$
or some $n$ to $0$ (say in row $i$). The tableau $f_n(b)$ contains
$C_{\dm}^{(1)}$ where $\dm\in\Pa_n^{(1)}$ is obtained from $\delta$ by removing a cell in row $i$.
The only way that $f_n(b)$ is not in $\Bhp^{(1)}(\nu,\dm)$ is if
$f_n(b)|^{\nu/\dm}$ contains two letters $\bar{n}$ in the same column,
either because the changed letter became $\bar{n}$ and now lies beneath another $\bar{n}$,
or because in $b$ there was a pair of letters $\bar{n}$ atop each other but one was in $\delta$
and the other not in $\delta$, but now in $f_n(b)$ both are outside $\dm$.
However the assumption that $\delta_{i+1}\le\delta_i$
and the signature rule, imply that this cannot occur.
\item $\di=(2)$: The application of $f_n$ to $b$ changes some $n$ to $\bar{n}$
(say in the $i$-th row). The tableau $f_n(b)$ contains $C_{\dm}^{(2)}$ where
$\dm\in\Pa_n^{(2)}$ is obtained from $\delta$ by removing two cells in row $i$.
Similarly to the case $\di=(1)$, one may deduce Assertion 3.
\end{enumerate}

We prove Assertions 4 and 5 by induction on
$\abs{\delta}=\abs{\nu}-\abs{\la}$. Equivalently we find a sequence
$\mba$ of indices in $I_0$ such that $f_\mba(b)=\rowtab(b^\nu)$. By
Assertion 1 we may assume $b$ is a $A_{n-1}$-lowest weight vertex.

If $\abs{\delta}=0$ then $b=\rowtab(b^\nu)$ and the empty sequence
works. Suppose $\delta\ne\emptyset$.
Since $b$ is $A_{n-1}$-lowest weight and $\nu\in\Pa_n^\infty$
the skew tableau $b|^{\nu\setminus\delta}$ admits no
$A_{n-1}$-lowering operator and contains letters in $\{\overline{n-2},\dotsc,\bar{2},\bar{1}\}$.
So the letters outside $b|^\delta=C_\delta^\di$ are irrelevant for the $n$-signature.
In the various cases we see that $f_n(b)\in \Bhp^\di(\nu,\dm)$ where $\dm\in\Pa_n^\di$
is obtained from $\delta$ in the same way that $\lam$ is obtained from $\la$ in Lemma \ref{L:npath}.
Induction completes the proof.
\end{proof}

\begin{example} $\la=(4,3,3,1,1)\in\Pa(5,4)$ since $\delta=(3,3,1,1)\in\Pvd$.
\begin{equation*}
\rowtab(\ba(5,4,\la)) =
\begin{tabular}{|c|c|c|c|}
\hline
$\overline{n}$ & \lower.5mm\hbox{$\overline{n-2}$} & \lower.5mm\hbox{$\overline{n-2}$}
& \lower.5mm\hbox{$\overline{n-4}$} \\ \hline
$\mathbf{n}$ & \lower.5mm\hbox{$\overline{n-1}$} & \lower.5mm\hbox{$\overline{n-1}$}
& \lower.5mm\hbox{$\overline{n-3}$} \\ \hline
$\mathbf{\overline{n}}$ & $\overline{n}$ & $\overline{n}$ & \lower.5mm\hbox{$\overline{n-2}$} \\ \hline
$\mathbf{n}$ & $\mathbf{n}$ & $\mathbf{n}$ & \lower.5mm\hbox{$\overline{n-1}$} \\ \hline
$\mathbf{\overline{n}}$ & $\mathbf{\overline{n}}$ & $\mathbf{\overline{n}}$ & $\overline{n}$ \\ \hline
\end{tabular}
\ \text{ and }\rowtab(\bamin^{(1,1)}(5,4))=
\begin{tabular}{|c|c|c|c|}
\hline
$\overline{n}$ & $\overline{n}$ & $\overline{n}$ & $\overline{n}$ \\ \hline
$n$ & $n$ & $n$ & $n$ \\ \hline
$\overline{n}$ & $\overline{n}$ & $\overline{n}$ & $\overline{n}$ \\ \hline
$n$ & $n$ & $n$ & $n$ \\ \hline
$\overline{n}$ & $\overline{n}$ & $\overline{n}$ & $\overline{n}$ \\ \hline
\end{tabular}
\ .
\end{equation*}
\end{example}

\begin{proof}[Proof of Lemma \ref{L:npath}]
Let $\rowtab=\rowtab_{(s^r)}$, $b'=\rowtab(\ba(r,s,\lam))$ and $b=\rowtab(\ba(r,s,\la))$.
It suffices to show
\begin{align}
\label{E:holdhwv}
\veps_n(f_{\ta'(h)}(b')) &\ge \ell \\
\label{E:canlower}
\vphi_n(f_{\ta'(h)}(b')) &> 0 \\
\label{E:removedi}
  u_{\ell\La_n} \otimes b &= f_{\mathbf{\ta(h)}}(u_{\ell\La_n} \otimes b').
\end{align}
Let $\dm\in\Pvd$ be the complementary
partition to $\lam$ within $(s^r)$. We will need to keep track of certain letters that
may contribute to the $n$-signature.

Suppose $\di=(1,1)$. By Proposition \ref{P:Bhatprop}(2), the restriction of
$b^{\prime}$ to the skew shape $(s^{r})\setminus\dm$, has the letter $\bar{n}$
at the bottom of each column and a letter $\overline{n-1}$ atop the letter
$\bar{n}$ if it fits into $(s^{r})$. We may think that every $n$ not in the
top row is paired (in the $(n-1)$-signature) with the $\bar{n}$ sitting atop
it. The $n$'s in the top row (which may occur if $r$ is even) are unpaired and
occur at the end of the rowwise $n$-signature reading. There are $s$ unpaired
letters $\bar{n}$ in the bottom row, and an unpaired $\overline{n-1}$ in each
column of $\delta$ that is not of maximum height $2\lfloor r/2\rfloor$. We now
apply $f_{\ta^{\prime}(h)}$ to $b^{\prime}$; call the result $b^{\prime\prime
}$. It only changes letters at the top of the $p$-th column from the right,
from (reading down) $\overline{n-h+3},\dotsc,\overline{n-1},\overline{n}$, to
$\overline{n-h+1},\dotsc,\overline{n-2}$. The bottom row still consists of $s$
copies of $\overline{n}$ which occur at the beginning of the $n$-signature, so
\eqref{E:holdhwv} holds. The dominant elements in the $n$-signature of
$b^{\prime\prime}$ are the unpaired letters $n$ in the top row if $r$ is even,
and the copy of $n$ in the active column, since the relevant letters changed
from $\overline{n-1},\overline{n},n$ to $\overline{n-3},\overline{n-2},n$.
Therefore \eqref{E:canlower} holds. Applying $f_{n}$ to $b^{\prime\prime}$
changes the $n$ in the active column to the letter $\overline{n-1}$, with
final result $\rowtab(\ba(r,s,\la))$.

Now assume $\di=(2)$. Similarly, the restriction of $b^{\prime}$ to the skew
shape $(s^{r})\setminus\dm$, has the letter $\bar{n}$ at the bottom of each
column and a letter $\bar{n}$ or $\overline{n-1}$ atop the letter $\bar{n}$ if
it fits into $(s^{r})$. Moreover, each letter $n$ is paired with a letter
$\bar{n}$ in the $(n-1)$-signature of $b^{\prime}.\;$Therefore, $b^{\prime
\prime}=f_{\ta^{\prime}(h)}(b^{\prime})$ is obtained by changing the letters
at the top of the $p$-th and $(p+1)$-th columns from (reading down)
$\overline{n-h+2},\dotsc,\overline{n-1},\overline{n}$, to $\overline
{n-h+1},\dotsc,\overline{n-2},\overline{n-1}$. In $b^{\prime}$ and
$b^{\prime\prime}$ the bottom contains at least $\left\lceil
\genfrac{.}{.}{0pt}{}{s}{2}
\right\rceil $ letters $\bar{n}$ which are unpaired in the $n$-signature. Thus
\eqref{E:holdhwv} holds. In the columns $p$ and $p+1$, the letters
$\overline{n}$ are changed in $\overline{n-1}.$ Therefore \eqref{E:canlower}
holds. Applying $f_{n}$ to $b^{\prime\prime}$ changes the $n$ in the active
column $p$ to the letter $\overline{n}$, with final result $\rowtab
(\ba(r,s,\la))$ as desired.

The case $\di=(1)$ is similar.
\end{proof}

\section{Proofs for Section \ref{S:sigmasec}}
\label{A:sigma}

In this appendix we assume $\di=(1,1)$ and $\geh^\di=D_n^{(1)}$.

\subsection{Reduction to relation on automorphisms of $B^{r,s}$}
Our first reduction for proving Proposition \ref{P:KRsigmaall}
in the case $\di=(1,1)$ is to rephrase it in terms of a relation among
various automorphisms on $B^{r,s}$.
Recall the automorphism $\varsigma$ on $B^{r,s}$ from Section \ref{SS:KR}.

Let $\varsigma'\in\Aut(D_n^{(1)})$ be defined by the permutation of $I^s$ given by
$(n-1,n)$. $\varsigma'$ is also not a special automorphism. It coincides with
$*\in\Aut(D_n^{(1)})$ if $n$ is odd. There is a unique bijection $\varsigma':B^{r,s}\to B^{\varsigma'(r),s}$
\begin{align} \label{E:tauprimedef}
  \varsigma' e_i = e_{\varsigma'(i)} \varsigma'\qquad\text{for all $i\in I$.}
\end{align}
It is explicitly given by exchanging $n$'s with $\bar{n}$'s in
KN tableaux. For $r\in I_0$ nonspin, $\varsigma'$ is an involution on
$B^{r,s}$.

\begin{lemma} \label{L:reduce}
If
\begin{align} \label{E:suff}
  \varsigma' \sigma = \sigma \varsigma
\end{align}
holds on $B^{r,s}$, then Proposition \ref{P:KRsigmaall} holds.
\end{lemma}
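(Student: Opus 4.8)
The plan is to observe that the full conclusion of Proposition~\ref{P:KRsigmaall}, namely \eqref{E:sigmaprop} for every $i\in I$, is already available for most indices directly from the construction of $\sigma$, and that the hypothesis \eqref{E:suff} supplies exactly the one missing piece. By Proposition~\ref{P:sigma}(1), $\sigma$ intertwines $e_i$ with $e_{\sigma(i)}$ across every good arrow; since every $i$-arrow with $i\in I_0$ is good, the identity $\sigma e_i = e_{\sigma(i)}\sigma$ holds on all of $B^{r,s}$ for each $i\in I_0$. In particular, taking $i=n$ (so $\sigma(n)=0$) already yields $\sigma e_n = e_0\sigma$ with no restriction. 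Thus the only case of \eqref{E:sigmaprop} not yet established is $i=0$, where the good-arrow hypothesis $\veps_0>\ell$ may fail, and it suffices to prove $\sigma e_0 = e_n\sigma$ as an identity of operators on $B^{r,s}$.

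First I would rewrite $e_0$ via \eqref{eq:e0}, which expresses $e_0=\varsigma\circ e_1\circ\varsigma$ in terms of the classical operator $e_1$. The strategy is then to push $\sigma$ through this expression using three ingredients: the hypothesis \eqref{E:suff} in the form $\sigma\varsigma=\varsigma'\sigma$; the already-known good-arrow relation $\sigma e_1=e_{n-1}\sigma$ (valid since $1\in I_0$ and $\sigma(1)=n-1$); and the defining intertwining property \eqref{E:tauprimedef} of $\varsigma'$ together with the fact that $\varsigma'$ is an involution on $B^{r,s}$ for $r$ nonspin. Concretely, the computation I would carry out is
\begin{align*}
\sigma e_0 &= \sigma\varsigma e_1\varsigma = \varsigma'\sigma e_1\varsigma = \varsigma' e_{n-1}\sigma\varsigma \\
&= \varsigma' e_{n-1}\varsigma'\sigma = e_n\varsigma'\varsigma'\sigma = e_n\sigma,
\end{align*}
where the second and fourth equalities use \eqref{E:suff}, the third uses the $I_0$ good-arrow relation, the fifth uses $\varsigma'(n-1)=n$ in \eqref{E:tauprimedef}, and the last uses $\varsigma'^2=\text{id}$. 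Since $\sigma(0)=n$, this is precisely \eqref{E:sigmaprop} for $i=0$, which together with the $I_0$-cases completes the verification for all $i\in I$.

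I do not expect any obstacle within this lemma itself: it is a purely formal reduction, converting the full Dynkin-equivariance of $\sigma$ into the single commutation identity \eqref{E:suff} relating the two fork-swapping automorphisms $\varsigma$ (nodes $0,1$) and $\varsigma'$ (nodes $n-1,n$). The genuine work is deferred to establishing \eqref{E:suff} itself, which will require the explicit description of $\varsigma$ through $\pm$-diagrams and the map $\mathfrak{S}$ from Section~\ref{SS:KR}, and that is where I would expect the real combinatorial difficulty to lie.
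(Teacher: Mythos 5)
Your proof is correct and takes essentially the same route as the paper's: both reduce Proposition \ref{P:KRsigmaall} to the single missing case $i=0$ (the $I_0$-cases being covered by Proposition \ref{P:sigma}(1) since all $I_0$-arrows are good) and establish $\sigma e_0 = e_n\sigma$ by the identical chain of identities, using \eqref{E:suff} twice, the good-arrow relation $\sigma e_1 = e_{n-1}\sigma$, the intertwining property \eqref{E:tauprimedef} of $\varsigma'$, and involutivity. The only cosmetic difference is that the paper computes $\varsigma'\sigma e_0 = \cdots = \varsigma' e_n\sigma$ and cancels the involution $\varsigma'$ at the very end, whereas you cancel it inside the chain.
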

\begin{proof} We have
\begin{align*}
  \varsigma' \sigma e_0 = \sigma \varsigma e_0 = \sigma e_1 \varsigma = e_{n-1} \sigma \varsigma = e_{n-1} \varsigma' \sigma = \varsigma' e_n \sigma.
\end{align*}
Applying the involution $\varsigma'$, we have $\sigma e_0 = e_n\sigma$ on
$B^{r,s}$. By Proposition \ref{P:sigma} it follows that $\sigma$
satisfies \eqref{E:sigmaprop} as required.
\end{proof}

\subsection{Rule for $\rowtab(\sigma(\Phi(P)))$ for a $\pm$-diagram $P$}
We give a rule for $\rowtab(\sigma(\Phi(P)))$ for any $\pm$-diagram $P$.

\medskip \noindent\textbf{Rule} \vspace{-2mm}

\begin{itemize}
\item[1.]  Rotate $P$ 180 degrees and place it in the $r\times s$ rectangle
so that the NE corners of the rotated $P$ and the rectangle coincide.

\item[2.]  Fill each column of the inner shape of $P$ by sequences of the form
$\overline{k},\ldots ,\overline{n-2},\overline{n-1}$ reading from the top, place
$\overline{n}$ in each node where $+$ is situated, and fill all columns from top to bottom in
the rest of the rectangle by sequences of the form $n,\overline{n},n,
\overline{n},\ldots$, always starting with $n$.

\item[3.]  In each row perform the following substitution. Suppose there are $
k_{+}$ $n$'s and $k_{-}$ $\overline{n}$'s in the row. Then replace them with
$(n-1)^{k_{-}}n^{k_{+}-k_{-}}(\overline{n-1})^{k_{-}}$ if $k_{+}\geq k_{-}$,
and $(n-1)^{k_{+}}\overline{n}^{k_{-}-k_{+}}(\overline{n-1})^{k_{+}}$
otherwise.
\end{itemize}

\begin{example} \label{X:Rule}
Let $n=9,r=6,s=7$.
\begin{equation*}
\vcenter{
\tableau[sby]{ - \\ \kuu \\ \kuu &\kuu&+&- \\ \kuu&\kuu&\kuu&+ \\ \kuu&\kuu&\kuu&\kuu&-&- \\ \kuu&\kuu&\kuu&\kuu&\kuu&+}
}
\mapsto
\vcenter{
\tableau[sby]{ +&\kuu&\kuu&\kuu&\kuu&\kuu \\ -&-&\kuu&\kuu&\kuu&\kuu \\ \bl&\bl&+&\kuu&\kuu&\kuu \\
\bl&\bl&-&+&\kuu&\kuu \\ \bl&\bl&\bl&\bl&\bl&\kuu \\ \bl&\bl&\bl&\bl&\bl&-}
}
\mapsto
\vcenter{
\tableau[sby]{
9&\kyu&\hachi&\nana&\roku&\go&\yon\\
\kyu&9&9&\hachi&\nana&\roku&\go\\
9&\kyu&\kyu&\kyu&\hachi&\nana&\roku\\
\kyu&9&9&9&\kyu&\hachi&\nana\\
9&\kyu&\kyu&\kyu&9&9&\hachi\\
\kyu&9&9&9&\kyu&\kyu&9}
}
\mapsto
\vcenter{
\tableau[sby]{
8&\hachi&\hachi&\nana&\roku&\go&\yon \\
8&9&\hachi&\hachi&\nana&\roku&\go\\
8&\kyu&\kyu&\hachi&\hachi&\nana&\roku\\
8&8&9&\hachi&\hachi&\hachi&\nana\\
8&8&8&\hachi&\hachi&\hachi&\hachi\\
8&8&8&9&\hachi&\hachi&\hachi
}
}
\end{equation*}
\end{example}

\begin{proposition}
\label{prop:sigma tau} For any $\pm $-diagram $P$ for $B^{r,s}$, the above rule gives $\rowtab(\sigma (\Phi (P)))$.
\end{proposition}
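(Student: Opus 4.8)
The plan is to pin down the element $\sigma(\Phi(P))$ from the characterization of $\sigma$ in Proposition \ref{P:sigma}, namely that $\sigma$ is the unique map obeying \eqref{E:sigmaprop} along good arrows and normalized by $\sigma(m(B^{r,s}))=\bamin^{(1,1)}(r,s)$. Since every $i$-arrow with $i\in I_0$ is good, I may use $\sigma e_i=e_{n-i}\sigma$ (and $\sigma f_i=f_{n-i}\sigma$) freely for $i\in I_0$, but I must scrupulously avoid invoking $\sigma e_0=e_n\sigma$, which is the very statement the appendix is building toward. I would begin with the base family of $I_0$-highest vertices $\Phi(P)=b(r,s,\la)$ (the inner shape $\la$ being the $J$-weight): here Lemma \ref{L:sigma} gives $\sigma(b(r,s,\la))=\ba(r,s,\la)$, and Remark \ref{R:rowtabmins} describes $\rowtab(\ba(r,s,\la))$ explicitly as the canonical tableau $C^{(1,1)}_\delta$ on the complementary shape $\delta$ together with the unique Yamanouchi filling on $(s^r)\setminus\delta$. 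One then checks directly that Steps 1--3 of the Rule reproduce this tableau; in particular the $180^\circ$ rotation of Step 1 is exactly the passage from $\la$ to its rotated complement $\delta$ in $(s^r)$.

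To reach a general $\pm$-diagram I would use that $B^{r,s}$ is connected by good arrows to $m(B^{r,s})$ (Lemma \ref{L:goodarrows}): fixing a good-arrow path from $\Phi(P)$ down to $m(B^{r,s})$ and applying $\sigma$ converts each step $e_i$ with $i\in I_0$ into $e_{n-i}$, and each good step $e_0$ into $e_n$. On the row-tableau side the operators with indices in $A_{n-1}=\{1,\dots,n-1\}$ and the operator $e_n$ all commute with $\rowtab$, so the only genuinely new ingredient is the action of $e_0$ (the $\sigma$-image of the original $e_n$-steps) on the row tableaux lying in $\max(B^{r,s})=B((s^r))$. This action is governed by the $\varsigma$/$\mathfrak S$ description of $e_0$ recalled in Section \ref{SS:KR} and by Lemma \ref{lem:e1 action}, and the closed-form Steps 2--3 of the Rule are precisely a packaging of its cumulative effect. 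Along the way one must confirm that $\sigma(\Phi(P))$ actually lands in $\max(B^{r,s})$ (consistent with $\sigma(\Phi(P))$ being $\{0,1,\dots,n-2\}$-highest, as follows from $\sigma e_i=e_{n-i}\sigma$ on the $J$-highest vertex $\Phi(P)$), so that $\rowtab_{(s^r)}$ is the correct object to compare against.

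The main obstacle will be Step 3, the row substitution, together with the type-$D_n$ admissibility of the columns it produces. Steps 1--2 yield a naive filling in which every non-inner column alternates $n,\overline n,n,\dots$, and this must be repaired row by row into the $(n-1),n,\overline{n-1},\overline n$ patterns so that the array is a bona fide element of $\Image(\rowtab_{(s^r)})$ equal to $\sigma(\Phi(P))$. Verifying that this substitution matches the iterated $e_0$-action requires a careful parity analysis depending on $r$ and on the heights of the inner columns of $P$; this is where the real combinatorial weight lies, rather than in the base case or the formal bookkeeping. I would carry it out by tracking the $(n-1)$- and $n$-signatures of the intermediate tableaux, in close parallel with the signature computations used to prove Lemma \ref{L:npath} (with the roles of the nodes $0,1$ and $n,n-1$ exchanged under $\sigma$).
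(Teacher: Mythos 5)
Your skeleton---determine $\sigma(\Phi(P))$ from the good-arrow characterization in Proposition \ref{P:sigma}, anchor the computation at Lemma \ref{L:sigma}, and handle the $0$-arrows on the image side through the $\varsigma$/$\mathfrak{S}$ description---is indeed the skeleton of the paper's argument. But there is a genuine gap: your plan never ties the good-arrow path to the combinatorial data of $P$, and without that the verification cannot even be set up. Lemma \ref{L:goodarrows} gives only the \emph{existence} of some path from $\Phi(P)$ to $m(B^{r,s})$; the Rule, however, is a closed formula in terms of the inner shape and the signs of $P$, so one needs a path whose steps are written explicitly in terms of $P$. This is exactly the ingredient the paper imports and you omit: for $P$ with no column admitting an added $+$, \cite[Prop.~2.2]{OSa} expresses $\Phi(P)$ as an explicit lowering sequence applied to $b(r,s,\la)$, where $\la$ is the \emph{outer} shape of $P$ and the multiplicities $a_i$ are partial sums of the numbers of $-$'s; since all indices of that sequence lie in $I_0$, every arrow involved is good, and Lemma \ref{L:sigma} converts it into an explicit sequence (containing the block $f_0^{a_r}$) applied to $\ba(r,s,\la)$. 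General $P$ is then reached by a second explicit recursion, $\Phi(P)=f_{(1,\dotsc,h)}\Phi(P')$ from \cite{Sc}, i.e.\ induction on the number of columns admitting a $+$. Your ``fix a good-arrow path'' step silently presupposes such formulas: your later appeal to ``the heights of the inner columns of $P$'' can only enter the argument through them.

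Second, the heart of the matter---that Steps 2--3 of the Rule package the cumulative effect of the $f_0$-block---is precisely what the paper proves in Lemmas \ref{lem:sig1}--\ref{lem:sig4}: one first applies the classical part of the sequence (Lemma \ref{lem:sig1}), then reduces the evaluation of $f_0^{a_r}$ to $\{3,\dotsc,n\}$-highest elements, where it is computed via Lemma \ref{lem:e0}, and finally compares with the Rule's output through Lemma \ref{lem:sig3}. You explicitly defer this (``this is where the real combinatorial weight lies''), so what you have is a program, not a proof. A smaller but related point: your justification that $\sigma(\Phi(P))\in\max(B^{r,s})$---being ``consistent with'' $\{0,\dotsc,n-2\}$-highestness---is not an argument; in the paper this membership is never assumed but falls out of the explicit computation, since the sequences applied to $\ba(r,s,\la)\in B((s^r))$ are shown to produce row tableaux of the full rectangular shape $(s^r)$.
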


The proof of this key technical result is given in the following subsection.
We use it to finish up the proofs of Section \ref{S:sigmasec}.

\begin{proposition}
For any $b\in \hw_J(B^{r,s})$, we have $\sigma \varsigma (b)=\varsigma' \sigma (b)$.
\end{proposition}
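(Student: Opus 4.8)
The plan is to reduce the assertion to the explicit combinatorial Rule of Proposition \ref{prop:sigma tau}. Fix $b\in\hw_J(B^{r,s})$ and set $P=\Phi^{-1}(b)$, a $\pm$-diagram of shape $\La/\la$. Since $b$ is $J$-highest, in Definition \ref{def:tau} one may take the empty sequence $\as$, so that $\varsigma(b)=\Phi(\mathfrak{S}(P))$; in particular $\varsigma(b)$ is again $J$-highest and corresponds to the $\pm$-diagram $\mathfrak{S}(P)$. By Proposition \ref{prop:sigma tau} both $\sigma(\Phi(P))$ and $\sigma(\Phi(\mathfrak{S}(P)))$ lie in $\max(B^{r,s})\cong B(s\omega_r)$ (the Rule produces a tableau of the rectangular shape $(s^r)$), so it suffices to compare the two sides after applying the injective embedding $\rowtab=\rowtab_{(s^r)}$.

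For the left-hand side, Proposition \ref{prop:sigma tau} applied to $\mathfrak{S}(P)$ identifies $\rowtab(\sigma\varsigma(b))$ with the output of the Rule on $\mathfrak{S}(P)$. For the right-hand side, the same proposition applied to $P$ identifies $\rowtab(\sigma(b))$ with the Rule on $P$. It then remains to describe $\varsigma'$ on $\max(B^{r,s})$ in the row-tableau realization. Since $\varsigma'$ intertwines $e_i$ with $e_{\varsigma'(i)}$ for all $i\in I$, fixing $1,\dotsc,n-2$ and interchanging $n-1$ and $n$, it is a twisted $I_0$-automorphism of $B(s\omega_r)$; on each single-row factor $B(s\omega_1)$ it acts by exchanging the letters $n$ and $\bar n$. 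Both $\rowtab\circ\varsigma'$ and the map obtained from $\rowtab$ by exchanging $n\leftrightarrow\bar n$ in every entry are twisted $I_0$-embeddings agreeing on the highest weight vector $b(r,s,(s^r))$ (which contains no letter $n$ or $\bar n$ as $r$ is nonspin), so by the uniqueness of $\rowtab$ they coincide. Hence $\rowtab(\varsigma'\sigma(b))$ is obtained from the Rule on $P$ by the rowwise exchange $n\leftrightarrow\bar n$, and the Proposition is equivalent to the combinatorial identity
\[
  \mathrm{Rule}(\mathfrak{S}(P)) = \varsigma'\bigl(\mathrm{Rule}(P)\bigr),
\]
with $\varsigma'$ acting by the rowwise exchange $n\leftrightarrow\bar n$.

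To prove this identity I would first analyze Step 3 of the Rule. In a row containing $k_+$ letters $n$ and $k_-$ letters $\bar n$ after Steps 1--2, the exchange $n\leftrightarrow\bar n$ turns the normalized block $(n-1)^{k_-}n^{k_+-k_-}(\overline{n-1})^{k_-}$ (case $k_+\ge k_-$) into $(n-1)^{k_-}\bar n^{\,k_+-k_-}(\overline{n-1})^{k_-}$, which is precisely the Step 3 output for the swapped counts $(k_-,k_+)$; the symmetric statement holds when $k_+<k_-$, while the remaining entries of the row (the barred letters $\le\overline{n-1}$ coming from the inner shape) are untouched by the exchange. Thus the identity reduces to showing that, row by row, Steps 1--2 applied to $\mathfrak{S}(P)$ produce the same non-$\{n,\bar n\}$ entries as for $P$ while interchanging the numbers of $n$'s and $\bar n$'s.

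The main obstacle is this last verification, which is the geometric heart of the matter: one must track how the parity-dependent sign interchanges defining $\mathfrak{S}$ (columns of $\la$ of height $\equiv r-1$ versus $\equiv r\pmod 2$) are transported, under the $180^\circ$ rotation of Step 1 and the alternating column-fill $n,\bar n,n,\bar n,\dotsc$ of Step 2, into a per-row exchange of the $n$- and $\bar n$-counts. I expect a careful but mechanical case analysis on column heights modulo $2$ and on the parity of $r$, organized so that the inner-shape columns (carrying the fixed barred fillings) are handled separately from the columns of $\La/\la$ (carrying the signs that $\mathfrak{S}$ redistributes). As a guiding consistency check, the identity is forced at the level of Dynkin automorphisms by $\sigma\varsigma\sigma^{-1}=\varsigma'$, since $\sigma$ sends the nodes $0,1$ to $n,n-1$; the combinatorial computation merely makes this precise on $B^{r,s}$.
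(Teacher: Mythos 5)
Your reduction is the same as the paper's: identify $\varsigma(b)=\Phi(\mathfrak{S}(P))$ for $J$-highest $b$, apply Proposition \ref{prop:sigma tau} to both $P$ and $\mathfrak{S}(P)$, note that $\varsigma'$ acts on row tableaux by exchanging $n\leftrightarrow\bar n$, and check that Step 3 of the Rule intertwines this exchange with swapping the counts $(k_+,k_-)$. All of that is correct (and your explicit verification of the Step-3 compatibility is a point the paper leaves implicit). But the proof then stops exactly at the decisive claim: that Steps 1--2 applied to $\mathfrak{S}(P)$ produce, row by row, the same entries outside $\{n,\bar n\}$ as for $P$ while interchanging the numbers of $n$'s and $\bar n$'s. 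You call this ``the geometric heart of the matter'' and say you \emph{expect} a case analysis to settle it; that expectation is the entire content of the paper's proof, so as written the argument has a genuine gap. Moreover, your closing ``consistency check'' ($\sigma\varsigma\sigma^{-1}=\varsigma'$ as Dynkin automorphisms) cannot be promoted to a proof here without circularity: at this stage $\sigma$ is only known to intertwine good arrows (Proposition \ref{P:sigma}), and its full $I$-equivariance is precisely what Proposition \ref{P:KRsigmaall}, via Lemma \ref{L:reduce} and the present proposition, is meant to establish.

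The missing step is in fact short, and needs no parity case analysis on $r$. Since $P$ and $\mathfrak{S}(P)$ have the same inner shape $\la$, the cells of the rectangle receiving letters in $\{n,\bar n\}$ after Step 2 are the same for both, namely all cells outside the rotated inner shape; so the non-$\{n,\bar n\}$ entries agree cell by cell. Next, in the rotated picture a column reads, top to bottom: inner cells, then the $+$ (if present), then the $-$ (if present) together with the rest of the column. By Step 2 the filling below the inner cells is therefore $\bar n,n,\bar n,n,\dotsc$ if the column carries a $+$, and $n,\bar n,n,\bar n,\dotsc$ if it does not (the presence or absence of a $-$ is irrelevant, since $-$ cells are filled like the rest). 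Thus among columns of a fixed inner height $i$ there are only two possible fillings, and they are cell-wise $n\leftrightarrow\bar n$ swaps of each other. Finally, the definition of $\mathfrak{S}$ in Section \ref{SS:KR} says exactly that, within each such height class, $\mathfrak{S}$ complements the set of columns carrying a $+$ (interchanging the numbers of $+$ and $-$ when $i\equiv r-1 \pmod 2$, and replacing $p_i$ pairs by $c_i-p_i$ pairs when $i\equiv r \pmod 2$). Summing the column contributions, every row has its $n$-count and $\bar n$-count switched between $P$ and $\mathfrak{S}(P)$, which combined with your Step-3 analysis gives $\sigma\Phi(\mathfrak{S}(P))=\varsigma'\sigma\Phi(P)$ as required.
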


\begin{proof}
Compare $P$ and $\mathfrak{S}(P)$ where $\SSS$ is the involution on $\pm$-diagrams
corresponding to the automorphism $\varsigma$ on $B^{r,s}$.
The inner shapes are the same and in each
column, if there is $+$ in $P$, then there is no $+$ in $\mathfrak{S}(P)$,
and vice versa. Therefore, at the moment when Rule 2 is finished, the number
of $n$'s and $\overline{n}$'s in each row are switched for $P$ and $
\mathfrak{S}(P)$. Hence, we have $\sigma \Phi (\mathfrak{S}(P))=\varsigma
^{\prime }\sigma \Phi (P)$. This is what we needed to show.
\end{proof}

\begin{proof}[Proof of Proposition \ref{P:KRsigmaall}]
Let $b\in B^{r,s}$. Let $b^\circ = \hw_J(b)$ and let $\bs=(i_1,i_2,\dotsc)$ be a finite sequence in $J$
such that $b=f_{\bs}(b^\circ)$. Then
\begin{align*}
\sigma \varsigma (b)& =f_{\sigma \varsigma (\bs)} \sigma \varsigma (b^\circ), \\
\varsigma'\sigma (b)& =f_{\varsigma'\sigma (\bs)}\varsigma'\sigma (b^\circ),
\end{align*}
where the Dynkin automorphisms $\varsigma$, $\varsigma'$, and $\sigma$ act on sequences of Dynkin nodes in the obvious way.
Since $\sigma \varsigma (\bs)=\varsigma' \sigma (\bs)$
and $\sigma \varsigma (b^\circ)=\varsigma'\sigma (b^\circ)$ by Lemma \ref{L:sigma},
we obtain $\sigma \varsigma (b)=\varsigma'\sigma (b)$.
\end{proof}

\subsection{Proof of Proposition \ref{prop:sigma tau}}

We need some notation. Let $r'=\lfloor
r/2\rfloor$. For $\la\in \Pa(r,s)$ and $0\le j\le r'$, define $c_j$ by
\begin{equation*}
\la=\sum_{j=0}^{r'}(c_j-c_{j+1})\omega_{r-2j}
\end{equation*}
with $c_0=s$ and $c_{r'+1}=0$. Then a sequence $(c_1,c_2,\ldots,c_{r^{
\prime}})$ such that $s\ge c_1\ge c_2\ge \cdots\ge c_{r^{\prime}}\ge0$ is in
one-to-one correspondence with the $I_0$-highest element $b(r,s,\la)\in B^{r,s}$.

It remains to prove Proposition~\ref{prop:sigma tau}.
First we assume the $\pm$-diagram $P$ has no column for
which a + can be added. Let $\la$ be the outer shape of $P$, $c_i^-$ the
number of columns that has a $-$ at height $i$ in $P$. Set
$a_i=\sum_{j=1}^ic_j^-$. By \cite[Prop. 2.2]{OSa} we have
\begin{equation*}
\Phi(P)=f_{(1^{a_r},\dotsc,(n-1)^{a_r},n^{a_r},(n-2)^{a_r},\dotsc,
(r+1)^{a_r},r^{a_r},\dotsc,2^{a_2},1^{a_1})}b(r,s,\la).
\end{equation*}
(The notation $a_i$ in \cite[Prop. 2.2]{OSa}, is equal to $\sum_{j=1}^rc_j^-$.) Hence, by Lemma \ref{L:sigma}
and the definition of $\sigma$ we obtain
\begin{equation*}
\sigma(\Phi(P))=f_{((n-1)^{a_r},\dotsc,1^{a_r},0^{a_r},2^{a_r},\dotsc,(n-2)^{a_2},(n-1)^{a_1})}\ba(r,s,\la).
\end{equation*}

\begin{lemma}
\label{lem:sig1} The row tableau
\begin{equation*}
t_1 =
f_{(2^{a_r},\dotsc,(n-r-1)^{a_r},(n-r)^{a_r},\dotsc,(n-2)^{a_2},(n-1)^{a_1})}\ba(r,s,\la)
\end{equation*}
differs from $\ba(r,s,\la)$ only in the top row, which is given by
\begin{equation*}
n^{s-\la _{1}}\overline{n-1}^{\la _{1}-\la _{3}-c_{2}^{-}}
\overline{n-3}^{\la _{3}-\la _{5}-c_{4}^{-}}\cdots \overline{n-r+1}
^{\la _{r-1}-c_{r}^{-}}\overline{2}^{a_{r}}
\end{equation*}
for $r$ is even and
\begin{equation*}
\overline{n}^{s-\la _{2}-c_{1}^{-}}\overline{n-2}^{\la _{2}-\la
_{4}-c_{3}^{-}}\overline{n-4}^{\la _{4}-\la _{6}-c_{5}^{-}}\cdots
\overline{n-r+1}^{\la _{r-1}-c_{r}^{-}}\overline{2}^{a_{r}}
\end{equation*}
for $r$ is odd.
\end{lemma}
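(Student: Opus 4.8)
The plan is to prove Lemma~\ref{lem:sig1} by an explicit computation of the action of the $A_{n-1}$-lowering operators on the row-tableau realization of $\ba(r,s,\la)$. Recall from the computation preceding the lemma that $t_1$ is obtained from $\ba(r,s,\la)$ by applying the tail $(2^{a_r},\dots,(n-r-1)^{a_r},(n-r)^{a_r},\dots,(n-2)^{a_2},(n-1)^{a_1})$ of the sequence that computes $\sigma(\Phi(P))$; since $f_{\as}$ applies its rightmost index first, this amounts to applying $f_{n-1}^{a_1}$, then $f_{n-2}^{a_2}$, and so on down to $f_2^{a_r}$. The first step is to write out $\rowtab_{(s^r)}(\ba(r,s,\la))$ explicitly via Remark~\ref{R:rowtabmins}: on the subshape $\delta$ complementary to $\la$ in $(s^r)$ it is the canonical domino tableau $C_\delta^{(1,1)}$, and on $(s^r)\setminus\delta$ it is the Yamanouchi tableau in the barred alphabet with each column reading $\bar n,\overline{n-1},\dots$ from the bottom. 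I would record, separately for $r$ even and $r$ odd, the multiset of letters in each row, noting that the parity of $r$ controls whether the topmost cells of the domino columns carry $n$ or $\bar n$, and hence the exact form of the top row.

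Next I would describe the action of the operators $f_i$ for $i\in\{2,\dots,n-1\}$. These are the Kashiwara operators of the subalgebra $A_{n-1}$, so on a row tableau (an element of $B(\nu_1\omega_1)\otimes\cdots$) their $i$-signature is computed by the tensor rule \eqref{E:tensor}, reading rows from top to bottom. The relevant contributions come from the barred letters $\overline{i+1}$ (each carrying a $\vphi_i$) and $\bar i$ (each carrying an $\veps_i$), together with the unbarred letter $n$ from the domino columns when $i=n-1$ (carrying an $\veps_{n-1}$); each $f_i$ then replaces the unique unpaired $\overline{i+1}$ it selects by $\bar i$. I would apply the operators in the prescribed order and verify, by induction on the number of applied batches, two assertions simultaneously: that after each batch $f_{n-k}^{a_k}$ the only cells altered relative to $\ba(r,s,\la)$ lie in the top row, and that the running top row passes through the expected intermediate strings. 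The multiplicities $a_i=\sum_{j\le i}c_j^-$ and the hypothesis that no $+$ can be added to $P$ — so that $P$ carries only $-$ signs and its outer shape $\la$ together with the column heights $c_i^-$ determine it completely — are exactly what make the counts $\la_k-\la_{k+2}-c_{k+1}^-$ come out correctly.

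The bulk of the work, and the main obstacle, will be the signature bookkeeping showing that the cascade never disturbs any row below the top. The delicate point is the interaction, at each height $i$, between the Yamanouchi barred letters of $(s^r)\setminus\delta$ and the domino letters of $C_\delta^{(1,1)}$: one must check that in the $i$-signature every $\overline{i+1}$ lying in a lower row is paired off, either against an $\bar i$ created by the previous batch or against an unbarred $n$, so that the single unpaired letter moved by each $f_i$ always sits in the top row. Once this pairing is established, reading off the surviving top row in the two parity cases yields precisely the strings $n^{s-\la_1}\overline{n-1}^{\la_1-\la_3-c_2^-}\cdots\overline{2}^{a_r}$ for $r$ even and $\overline{n}^{s-\la_2-c_1^-}\overline{n-2}^{\la_2-\la_4-c_3^-}\cdots\overline{2}^{a_r}$ for $r$ odd, completing the proof.
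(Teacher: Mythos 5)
Your proposal follows essentially the same route as the paper's own proof: the paper likewise works with the row-tableau form of $\ba(r,s,\la)$, computes the relevant $i$-signatures, and observes that the signature contributions of all lower rows cancel in adjacent pairs, so that each batch of lowering operators only ever modifies the top row (the paper states this cancellation for the $(n-1)$-signature in the $r$ even case and then says ``we proceed similarly''). Two harmless slips in your sketch, which the bookkeeping you propose would expose anyway: the top cell of a domino column is always $n$ (the parity of $r$ only governs whether a domino column can reach the top row of the rectangle, hence whether unbarred $n$'s appear there), and the canceling partner of a lower-row $\overline{i+1}$ is an original letter of the tableau sitting directly above it in its own column (an $\bar{i}$ from the Yamanouchi part, or an $n$ from a domino when $i=n-1$), not a letter created by an earlier batch.
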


\begin{proof}
We consider the $r$ even case. Consider the $(n-1)$-signature. $+$'s in the $
(2j)$-th row and $-$'s in the $(2j+1)$-th row from bottom cancel out for any $
j=1,\ldots ,r-1$. Hence $f_{((n-1)^{a_{1}})}$ acts only on the top row. We
proceed similarly.
\end{proof}

Let $t$ be the row tableau constructed by the Rule 3. The following lemma
allows us to calculate the action of Kashiwara operators on $t$ before
applying Rule 3.

\begin{lemma}
\label{lem:sig2} Let $t_{-}$ be another row tableau obtained by putting $
n^{k_{+}}\overline{n}^{k_{-}}$ instead of applying Rule 3 in each row. One
can formally apply $e_n$ and $f_n$ on $t_{-}$. Then the action of
$e_n$ (resp. $f_n$) commutes with applying Rule 3. A similar fact hold also for $e_{n-1}$
and $f_{n-1}$ by replacing $n^{k_{+}}\overline{n}^{k_{-}}$ with $
\overline{n}^{k_{-}}n^{k_{+}}$.
\end{lemma}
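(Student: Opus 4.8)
The plan is to reduce everything to a single row and to the behaviour of the $n$-signature. Since a row tableau is by construction an element of the tensor product $B(\nu_1\omega_1)\otimes\dotsm\otimes B(\nu_p\omega_1)$ with the successive rows as tensor factors, the operators $e_n,f_n$ act through the signature rule, reading each row from right to left and the rows from top to bottom; recall that for node $n$ in type $D_n$ the letters $n-1$ and $n$ each contribute a $+$ and $\overline{n},\overline{n-1}$ each contribute a $-$. The first step is to observe that Rule~3 preserves the $n$-signature of every row, and hence leaves $\varepsilon_n,\varphi_n$ unchanged row by row. This is a direct computation: the block $n^{k_+}\overline{n}^{k_-}$ has reading word $\overline{n}^{k_-}n^{k_+}$ with $n$-signature $-^{k_-}+^{k_+}$, while its Rule~3 image $(n-1)^{k_-}n^{k_+-k_-}(\overline{n-1})^{k_-}$ has reading word $(\overline{n-1})^{k_-}n^{k_+-k_-}(n-1)^{k_-}$, whose $n$-signature is again $-^{k_-}+^{k_+}$ (and symmetrically in the other branch of Rule~3); moreover Rule~3 fixes every letter of the row outside the block. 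Since the per-row signature data agree, the reduced $n$-signature of the whole tableau, and in particular the row in which $e_n$ or $f_n$ acts, is the same for $t_-$ and for $t$.

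Next I would check that within that row the modified letter is transported correctly by Rule~3; this is where the content lies. The essential observation is that in the canonical form the plus-signs contributed by the $(n-1)$'s lie, in the reading word, to the right of those contributed by the $n$'s, so any cancellation coming from lower rows consumes the $(n-1)$'s before the $n$'s. Consequently $f_n$ acts on a block $n$ whenever one is present, and on a block $n-1$ only in the degenerate case $k_+=k_-$ where the canonical form has no $n$; dually $e_n$ acts on the rightmost surviving $-$, which (external $\overline{n-1}$'s lying weakly to the left in the reading word and being cancelled first) is a block $\overline{n-1}$. In each of these few cases one compares the multiplicities of the letters $n-1,n,\overline{n},\overline{n-1}$ produced on the two sides — on the $t$ side by the genuine operator, on the $t_-$ side by the formal operator followed by re-sorting and a reapplication of Rule~3 — and verifies that the two semistandard rows coincide. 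For instance, for $f_n$ with $k_+>k_-$ both sides replace one $n$ by $\overline{n-1}$, turning the block into $n^{k_+-1}\overline{n}^{k_-}$, and the counts of $n-1,n$ and of all the $\overline{n-1}$'s (Rule~3's and external) match; in the case $k_+=k_-$, where the $t$ side changes an $n-1$ into $\overline{n}$ while the $t_-$ side changes an $n$ into $\overline{n-1}$, one checks that after Rule~3 both yield $(n-1)^{k_+-1}\overline{n}\,(\overline{n-1})^{k_+-1}$ plus the unchanged external letters.

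Finally, the statement for $e_{n-1},f_{n-1}$ follows by the same argument applied to the reversed encoding $\overline{n}^{k_-}n^{k_+}$: for node $n-1$ the letters $n-1,\overline{n}$ contribute $+$ and $n,\overline{n-1}$ contribute $-$, and one checks exactly as above that Rule~3 preserves the $(n-1)$-signature $-^{k_+}+^{k_-}$ of the reversed block. Alternatively one can deduce the node-$(n-1)$ case from the node-$n$ case by conjugating with the involution $\varsigma'$, which exchanges $n$ with $\overline{n}$ and the Dynkin nodes $n-1$ with $n$. The main obstacle is the point isolated in the second paragraph: establishing, via the cancellation-order observation, that $e_n,f_n$ (resp. $e_{n-1},f_{n-1}$) always act on block letters, so that the formal action on $t_-$ and the genuine action on $t$ are intertwined by the re-sorting built into Rule~3. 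Once this is granted, the remaining verifications are routine letter-multiplicity counts.
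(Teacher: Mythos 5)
Your proposal is correct and takes essentially the same route as the paper, whose entire proof is the reduction to a single row (``It suffices to prove the statement for a one-row tableau. This is done easily.''): your observation that Rule 3 preserves each row's $n$-signature word is exactly what justifies that reduction, and your case analysis is the omitted one-row verification. The only blemish is a bookkeeping slip in the $k_+=k_-$ case, where both sides in fact yield $(n-1)^{k_+-1}\,\overline{n}\,(\overline{n-1})^{k_+}$ together with the unchanged external letters (the $\overline{n-1}$ newly created by the operator must be counted); since the same omission occurs on both sides of your comparison, the conclusion is unaffected.
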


\begin{proof}
It suffices to prove the statement for a one-row tableau. This is done easily.
\end{proof}

\begin{lemma}
\label{lem:sig3} The row tableau $t_{2}$ for
\begin{equation*}
e_{(1^{a_r},\dotsc,(n-2)^{a_r},(n-1)^{a_r})}t
\end{equation*}
differs from $t$ only in the bottom row, which is given by $1^{a_{r}}
\overline{n}^{s-a_{r}}$.
\end{lemma}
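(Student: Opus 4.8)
The plan is to apply the operator string $e_{(1^{a_r},\dotsc,(n-1)^{a_r})}$ to the explicit row tableau $t$ furnished by the Rule one homogeneous block at a time, in the order prescribed by the convention that applies the rightmost index first (so that the block $e_{n-1}^{a_r}$ acts first and $e_1^{a_r}$ last), and to prove that each block changes only the bottom row. First I would pin down the bottom row. By Rule 2 the bottom row of the pre-substitution tableau $t_-$ consists solely of the letters $n$ and $\bar n$, and a direct count, using $a_i=\sum_{j\le i}c_j^-$ together with the standing assumption that $P$ admits no column to which a $+$ can be added, shows that it has exactly $a_r$ copies of $n$ and $s-a_r$ copies of $\bar n$. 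Hence after Rule 3 the bottom row of $t$ is $(n-1)^{s-a_r}n^{2a_r-s}(\overline{n-1})^{s-a_r}$ when $2a_r\ge s$, and $(n-1)^{a_r}\bar n^{s-2a_r}(\overline{n-1})^{a_r}$ otherwise.

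The bottom-row evolution is then uniform in the two cases. The block $e_{n-1}^{a_r}$ raises every $n$ to $n-1$ and every $\overline{n-1}$ to $\bar n$; since the number of such letters is $(2a_r-s)+(s-a_r)=a_r$ (resp.\ $0+a_r=a_r$), exactly $a_r$ steps are consumed and the bottom row becomes $(n-1)^{a_r}\bar n^{s-a_r}$. The remaining blocks $e_{n-2}^{a_r},\dotsc,e_1^{a_r}$ involve only letters in $\{1,\dotsc,n-1\}$ and $\{\overline{n-1},\dotsc,\bar2\}$, never $n$ or $\bar n$, and they successively lower the $a_r$ copies of $n-1$ through $n-2,n-3,\dotsc$ down to $1$, producing the claimed bottom row $1^{a_r}\bar n^{s-a_r}$. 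For the $e_{n-1}^{a_r}$ block I would lean on Lemma \ref{lem:sig2}: because $e_{n-1}$ commutes with Rule 3 once each $\{n,\bar n\}$-block is arranged as $\bar n^{k_-}n^{k_+}$, the computation can be carried out on $t_-$, which contains no letters $n-1,\overline{n-1}$ so that its $(n-1)$-signature sees only $n$ and $\bar n$, and then transported back through Rule 3.

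The crux, and the main obstacle, is the localization statement that for each $i$ the $a_r$ applications of $e_i$ fall on bottom-row cells only. This is the exact analogue of the cancellation exploited in the proof of Lemma \ref{lem:sig1}: I would show that, under the no-addable-$+$ hypothesis, the rows of $t$ (equivalently of $t_-$) form a staircase in which the $i$-signature contributions of the non-bottom rows pair off between consecutive rows, the $+$'s coming from one row annihilating the $-$'s of the next, so that the reduced signature retains precisely the $a_r$ dominant minus-signs supplied by the bottom row. Carrying this out rigorously demands a row-by-row signature computation together with a split on the parity of $r$, which controls whether the bottom cell of a maximal column is $n$ or $\bar n$ and hence the precise values $k_+$ and $k_-$ in each row. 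I expect the $i=n-1$ step to be the most delicate, since it is the only block in which barred and unbarred letters interact; there Lemma \ref{lem:sig2} combined with the explicit staircase should reduce the claim to a finite count, after which the monotone descent under $e_{n-2}^{a_r},\dotsc,e_1^{a_r}$ is routine.
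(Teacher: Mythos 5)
Your proposal follows the same skeleton as the paper's proof: pass to $t_-$ via Lemma \ref{lem:sig2}, pin down the bottom row, apply the blocks $e_{n-1}^{a_r},e_{n-2}^{a_r},\dotsc,e_1^{a_r}$ in that order, and argue that every application lands in the bottom row. Your two-case computation of the bottom row after Rule 3 and of its evolution to $(n-1)^{a_r}\overline{n}^{\,s-a_r}$ and finally to $1^{a_r}\overline{n}^{\,s-a_r}$ is correct, and the direction of your proposed cancellation (pluses of a row annihilating minuses of the row above it, so that the bottom row retains the rightmost surviving minuses) is the right one for this convention, in which the bottom row is the first tensor factor.

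However, there is a genuine error in the supporting claim on which you rest the $e_{n-1}$ block: you assert that $t_-$ ``contains no letters $n-1,\overline{n-1}$ so that its $(n-1)$-signature sees only $n$ and $\bar n$.'' This is false. By Rule 2, every nonempty column of the rotated inner shape of $P$ is filled with $\overline{k},\dotsc,\overline{n-2},\overline{n-1}$, so its lowest cell is an $\overline{n-1}$; Rule 3 (and hence the passage to $t_-$) leaves these inner-shape letters untouched. Under the no-addable-$+$ hypothesis the inner shape is typically nonempty (for instance when every column carries a $+$ or a $\mp$ pair), so $t_-$ does contain letters $\overline{n-1}$, each contributing a minus to the $(n-1)$-signature. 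These are precisely the letters that threaten to absorb the $a_r$ applications of $e_{n-1}$, so the ``easy'' route you describe for the first block does not exist, and the entire weight of the lemma falls on the localization argument you only sketch. That argument can be made to work — the point is that the no-addable-$+$ hypothesis guarantees that directly below each such $\overline{n-1}$ sits a $\bar n$ (a $+$ cell or the top of an alternating filling), whose plus sign cancels it, which is essentially the consecutive-row pairing you propose — but as written your proposal both misstates the letter content of $t_-$ and leaves the crux unproven, so the key step is not established. (For comparison, the paper's own proof simply asserts that $e_{((n-1)^{a_r})}$ acts only on the lowest row; its assertion is correct as stated, whereas your substitute justification is not.)
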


\begin{proof}
In view of the previous lemma we can replace $t$ with $t_{-}$. Note that the
lowest row of $t$ is given by $n^{a_{r}}\overline{n}^{s-a_{r}}$. Since $
e_{((n-1)^{a_{r}})}$ acts only on the lowest row, we get $(n-1)^{a_{r}}
\overline{n}^{s-a_{r}}$. The application of $e_{(1^{a_r},\dotsc,(n-2)^{a_r})}$ is easier.
\end{proof}

Next we want to show $f_{(0^{a_r})}t_1=t_2$. To do this we calculate the $
\{3,4,\ldots\}$-highest element of $t_1$ and $t_2$. Let $N(\alpha)$ (resp. $
N^{\prime }(\alpha)$) be the number of letter $\alpha$ in $t_1$ (resp. $
(t_2)_-$) (see Lemma \ref{lem:sig2} for the definition of $t_-$). Define a
sequence $\boldsymbol{a}$ by $\boldsymbol{a}= \boldsymbol{a}_r\sqcup\cdots\sqcup\boldsymbol{a}_2
\sqcup\boldsymbol{a}_1$ where
\begin{equation*}
\boldsymbol{a}_j=((j+2)^{s+N(\overline{3})},\dotsc,(n-2)^{s+N(\overline{n-j-1})},
(n-\delta_j^{(2)})^{s+N(\overline{n-j})})
\end{equation*}
for $j=1,2,\ldots,r-1$, and $\boldsymbol{a}_r=((r+2)^{s-N(\overline{2})},\dotsc,
(n-2)^{s-N(\overline{2})},(n-\delta_r^{(2)})^{s-N(\overline{2})})$.
Here $\delta_j^{(2)}=1$ if $j$ is even, $=0$ otherwise and $\sqcup$ means the
concatenation of sequences. We also define $
\boldsymbol{a}^{\prime }= \boldsymbol{a}^{\prime }_1\boldsymbol{a}^{\prime
}_2\cdots\boldsymbol{a}^{\prime }_r$ by replacing $N(\overline{k})$ with $
N^{\prime }(\overline{k})$ for $k=3,4,\ldots,n-1$ and $N(\overline{2})$ with
$N^{\prime }(1)$ in $\boldsymbol{a}$. Then we have the following lemma.

\begin{lemma}
\label{lem:sig4}

\begin{itemize}
\item[(1)]  $e_{\boldsymbol{a}}t_{1}$ is a $\{3,4,\ldots \}$-highest element
whose $j$-th row from bottom is given by $(j+2)^{s}$ for $j=1,\ldots ,r-1$
and $(r+2)^{s-a_{r}}\overline{2}^{a_{r}}$ for $j=r$.

\item[(2)]  $e_{\boldsymbol{a}^{\prime }}t_{2}$ is a $\{3,4,\ldots \}$
-highest element whose $j$-th row from bottom is given by $
1^{a_{r}}3^{s-a_{r}}$ for $j=1$ and $(j+1)^{a_{r}}(j+2)^{s-a_{r}}$ for $
j=2,\ldots ,r$.

\item[(3)]  $N(\overline{k})=N^{\prime }(\overline{k})$ for $k=3,4,\ldots
,n-1$ and $N(\overline{2})=N^{\prime }(1)$.
\end{itemize}
\end{lemma}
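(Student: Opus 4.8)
The plan is to establish all three assertions by direct computation with the $A_{n-1}$-crystal (Kashiwara) operators, taking advantage of the fact that both $t_1$ and $(t_2)_-$ are already completely determined by the preceding lemmas: the rows of $t_1$ below the top are the rows of $\ba(r,s,\la)$ described in Remark \ref{R:rowtabmins}, its top row is given by Lemma \ref{lem:sig1}, and $(t_2)_-$ is read off from the Rule together with Lemma \ref{lem:sig3}. Throughout I would track the action of each $e_i$ across the row-tensor structure by the signature rule \eqref{E:tensor}. The operators $e_i$ with $3\le i\le n-2$ involve only the middle letters $i,i+1,\overline{i+1},\overline{i}$ and so are insensitive to the Rule 3 substitution; the letters $n,\overline{n},n-1,\overline{n-1}$ are handled by Lemma \ref{lem:sig2}, which lets me commute Rule 3 past $e_{n-1}$ and $e_n$. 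This reduces the analysis of $t_2$ to one on the simpler tableau $(t_2)_-$.

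For part (1) I would apply the groups in the order dictated by $e_{\boldsymbol{a}}$ (rightmost first, so $\boldsymbol{a}_1$ acts first, then $\boldsymbol{a}_2$, and so on), arguing by induction on the number of groups applied. Each group $\boldsymbol{a}_j$ is a run of raising operators through a consecutive range of indices whose exponents $s+N(\overline{\,\cdot\,})$ (resp.\ $s-N(\overline{2})$ for the bottom group) are chosen so that, after pairing the $+$'s and $-$'s in each $i$-signature, $e_i$ acts on exactly the intended cells of one diagonal layer of barred letters, converting that layer into unbarred small letters. The inductive step checks that the signature pairings confine $e_i$ to the correct cells and that the prescribed exponent equals the number of admissible applications, so that once the full sequence is exhausted no further $e_i$ with $i\ge 3$ can act; reading off the stabilized tableau then yields the rows $(j+2)^s$ for $1\le j\le r-1$ and $(r+2)^{s-a_r}\overline{2}^{a_r}$ for $j=r$.

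Part (2) proceeds identically with $t_2$ and $\boldsymbol{a}'$, invoking Lemma \ref{lem:sig2} whenever a top-layer operator $e_{n-1}$ or $e_n$ is reached so that the computation can be carried out on $(t_2)_-$ and the Rule 3 substitution reapplied at the end. Part (3) is a self-contained counting identity, which I would prove by writing $N(\overline{k})$ explicitly in terms of $\la$, the column counts $c_i^-$, and the partial sums $a_i$ using the explicit form of $t_1$, doing the same for $N'(\overline{k})$ and $N'(1)$ from $(t_2)_-$ via the Rule and Lemma \ref{lem:sig3}, and comparing the two expressions term by term. This is precisely the identity that makes the exponents appearing in $\boldsymbol{a}$ and $\boldsymbol{a}'$ coincide.

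The hard part will be the signature bookkeeping in parts (1) and (2): one must verify, layer by layer, that the pairing of signs in each $i$-signature across the row-tensor product really does isolate $e_i$ on the intended cells, and that the chosen exponents match the number of legal applications exactly, so that the terminal tableau is genuinely $\{3,4,\dots,n\}$-highest of the asserted shape. Once the bracketing is set up cleanly for one representative diagonal --- in particular the interaction between the already-straightened rows and the layer currently being raised --- the general case follows from the inductive layer structure, but arranging that bracketing without error is where the real care is needed.
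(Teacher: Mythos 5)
Your plan is essentially the paper's own proof: for (1) and (2) the paper likewise just computes $e_{\boldsymbol{a}}t_1$ and $e_{\boldsymbol{a}'}t_2$ directly via Lemma \ref{lem:sig2}, and for (3) it records the explicit counts ($N(\overline{n-j})=\la_j-c_{j+1}^-$ or $\la_{j+1}$ according to the parities of $j$ and $r$, together with $N(\overline{2})=N'(1)=a_r$), exactly as you propose. The one point to tighten is your justification that $e_i$ for $3\le i\le n-2$ ignores the Rule 3 substitution: for $i=n-2$ the substitution does introduce the relevant letters $n-1$ and $\overline{n-1}$, so you must instead observe that they occur in equal numbers with the $\overline{n-1}$'s first in reading order, hence cancel in the $(n-2)$-signature.
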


\begin{proof}
For (1) and (2) simply calculate the action of $e_{\boldsymbol{a}}$ and $e_{
\boldsymbol{a}^{\prime }}$ using Lemma \ref{lem:sig2}. For (3) note that for
$1\leq j\leq n-3$ $N(\overline{n-j})=\la _{j}-c_{j+1}^{-}$ if $j$ is
odd, $=\la _{j+1}$ otherwise when $r$ is even, and $N(\overline{n-j}
)=\la _{j+1}$ if $j$ is odd, $=\la _{j}-c_{j+1}^{-}$ otherwise when $
r$ is odd. We also have $N(\overline{2})=N^{\prime }(1)=a_{r}$. For the
definition of the partition $\la ,c_{j}^{-}$ or $a_{r}$ see the
paragraph before Lemma \ref{lem:sig1}.
\end{proof}

Now we can prove Proposition \ref{prop:sigma tau} under the assumption that $
P$ has no column for which a + can be added. Using Lemma \ref{lem:e0} with $
\alpha=0,\beta=a_r,\gamma=s-a_r$ and with applying $e_1^{a_r}$, the results
in Lemma \ref{lem:sig4} show that $f_{(0^{a_r})}e_{\boldsymbol{a}}t_1=e_{
\boldsymbol{a}}t_2$. Since $f_0$ commutes with $e_j$ for $3\le j\le n$, we
obtain $f_{(0^{a_r})}t_1=t_2$, but this equality is what we wanted to show.

Finally, we prove Proposition \ref{prop:sigma tau} for general $\pm $
-diagram $P$. We show by induction on the number of columns for which a +
can be added. If there is no such column, the statement is proven already.
Now let $P$ be a $\pm $-diagram with at least one column for which a + can
be added. Let $c$ be the rightmost such column. Let $P^{\prime }$ be the $
\pm $-diagram obtained from $P$ by adding a + in column $c$. Let $h$ be the
height of this added +. Then it is known \cite{Sc} that $\Phi
(P)=f_{(1,\dotsc,h-1,h)}\Phi (P^{\prime })$. Hence, we have
\begin{equation*}
\sigma (\Phi (P))=f_{(n-1,\dotsc,n-h)}\sigma (\Phi (P^{\prime })).
\end{equation*}
Since we know the row tableau of $\sigma (\Phi (P^{\prime }))$ is given by
the Rule by induction hypothesis, it suffices to calculate the right hand
side and see it agrees with the row tableau of $\sigma (\Phi (P))$ given by
the Rule. Careful calculation using Lemma \ref{lem:sig2} shows that the
application of $f_{(n-1,\dotsc,n-h)}$ changes the row tableau of $\sigma
(\Phi (P^{\prime }))$ only in the rightmost column with letters $\overline{
n-h+1},\overline{n-h+2},\ldots ,\overline{n},\ldots $ reading from top to $
\overline{n-h},\overline{n-h+1},\ldots ,\overline{n-1},\ldots $. This
completes the proof of Proposition \ref{prop:sigma tau}.

\end{document}